\documentclass[9pt]{amsart}
\textwidth=14.5cm \oddsidemargin=1cm
\evensidemargin=1cm
\usepackage{amsmath}
\usepackage{amsxtra}
\usepackage{amscd}
\usepackage{amsthm}
\usepackage{amsfonts}
\usepackage{amssymb}
\usepackage{eucal}
\usepackage[all]{xy}
\usepackage{graphicx}
\usepackage[usenames]{color}
\usepackage[hidelinks]{hyperref}

\newtheorem{cor}[subsubsection]{Corollary}

\newtheorem{lem}[subsubsection]{Lemma}
\newtheorem{prop}[subsubsection]{Proposition}

\newtheorem{conj}[subsubsection]{Conjecture}

\newtheorem{thm}[subsubsection]{Theorem}
\newtheorem{mainthm}[subsubsection]{Main Theorem}

\theoremstyle{remark}
\newtheorem{rem}[subsubsection]{Remark}


\theoremstyle{definition}

\theoremstyle{remark}

\newcommand{\thmref}[1]{Theorem~\ref{#1}}

\newcommand{\secref}[1]{Sect.~\ref{#1}}
\newcommand{\lemref}[1]{Lemma~\ref{#1}}
\newcommand{\propref}[1]{Proposition~\ref{#1}}
\newcommand{\corref}[1]{Corollary~\ref{#1}}
\newcommand{\conjref}[1]{Conjecture~\ref{#1}}

\numberwithin{equation}{section}

\newcommand{\nc}{\newcommand}
\nc{\renc}{\renewcommand}
\nc{\ssec}{\subsection}
\nc{\sssec}{\subsubsection}
\nc{\on}{\operatorname}

\nc{\ips}{{\iota_P^{(S)}}}
\nc{\ipms}{{\iota_{P^-}^{(S)}}}
\nc{\sfpps}{{\sfp_P^{(S)}}}
\nc{\sfppms}{{\sfp_{P^-}^{(S)}}}

\nc\ol{\overline}
\nc\ul{\underline}
\nc\wt{\widetilde}
\nc\tboxtimes{\wt{\boxtimes}}
\nc\tstar{\wt{\star}}
\nc{\alp}{\alpha}

\nc{\ZZ}{{\mathbb Z}}
\nc{\NN}{{\mathbb N}}
\nc{\OO}{{\mathbb O}}
\renc{\SS}{{\mathbb S}}
\nc{\DD}{{\mathbb D}}
\nc{\GG}{{\mathbb G}}

\nc{\Fq}{{\mathbb F}_q}
\nc{\Fqb}{\ol{\mathbb F}_q}
\nc{\Ql}{{\mathbb Q}_\ell}
\nc{\Qlb}{{\ol{\mathbb Q}_\ell}}
\nc{\id}{\text{id}}
\nc\X{\mathcal X}

\nc{\red}{\on{red}}
\nc{\Ho}{\on{Ho}}
\nc{\Hom}{\on{Hom}}
\nc{\coHom}{\ul{\on{coHom}}}
\nc{\coMaps}{{\bf{coMaps}}}
\nc{\coef}{\on{coef}}
\nc{\Lie}{\on{Lie}}
\nc{\Loc}{\on{Loc}}
\nc{\coLoc}{\on{coLoc}}
\nc{\Pic}{\on{Pic}}
\nc{\Bun}{\on{Bun}}
\nc{\IC}{\on{IC}}
\nc{\Aut}{\on{Aut}}
\nc{\rk}{\on{rk}}
\nc{\Sh}{\on{Sh}}
\nc{\Perv}{\on{Perv}}
\nc{\pos}{{\on{pos}}}
\nc{\Conv}{\on{Conv}}
\nc{\Sph}{\on{Sph}}
\nc{\Sym}{\on{Sym}}
\nc{\BunBb}{\overline{\Bun}_B}
\nc{\BunNb}{\overline{\Bun}_N}
\nc{\BunTb}{\overline{\Bun}_T}
\nc{\BunBbm}{\overline{\Bun}_{B^-}}
\nc{\BunBbel}{\overline{\Bun}_{B,el}}
\nc{\BunBbmel}{\overline{\Bun}_{B^-,el}}
\nc{\Buno}{\overset{o}{\Bun}}
\nc{\BunPb}{{\overline{\Bun}_P}}
\nc{\BunBM}{\Bun_{B(M)}}
\nc{\BunBMb}{\overline{\Bun}_{B(M)}}
\nc{\BunPbw}{{\widetilde{\Bun}_P}}
\nc{\BunBP}{\widetilde{\Bun}_{B,P}}
\nc{\GUb}{\overline{G/U}}
\nc{\GUPb}{\overline{G/U(P)}}

\nc{\Hhom}{\underline{\on{Hom}}}
\nc\syminfty{\on{Sym}^{\infty}}
\nc\lal{\ol{\lambda}}
\nc\xl{\ol{x}}
\nc\thl{\ol{\theta}}
\nc\nul{\ol{\nu}}
\nc\mul{\ol{\mu}}
\nc\Sum\Sigma
\nc{\oX}{\overset{o}{X}{}}
\nc{\hl}{\overset{\leftarrow}h{}}
\nc{\hr}{\overset{\rightarrow}h{}}
\nc{\M}{{\mathcal M}}
\nc{\N}{{\mathcal N}}
\nc{\F}{{\mathcal F}}
\nc{\D}{{\mathcal D}}
\nc{\Q}{{\mathcal Q}}
\nc{\Y}{{\mathcal Y}}
\nc{\G}{{\mathcal G}}
\nc{\E}{{\mathcal E}}
\nc{\CalC}{{\mathcal C}}
\nc\Dh{\widehat{\D}}

\nc{\C}{{\mathcal C}}
\nc{\K}{{\mathcal K}}
\renewcommand{\H}{{\mathcal H}}

\nc{\T}{{\mathcal T}}
\nc{\V}{{\mathcal V}}
\renc{\P}{{\mathcal P}}
\nc{\A}{{\mathcal A}}
\nc{\B}{{\mathcal B}}
\nc{\U}{{\mathcal U}}

\nc{\Gr}{{\on{Gr}}}

\nc{\frn}{{\check{\mathfrak u}(P)}}

\nc{\fC}{\mathfrak C}
\nc{\fT}{\mathfrak T}
\nc{\p}{\mathfrak p}
\nc{\q}{\mathfrak q}
\nc\f{{\mathfrak f}}

\nc{\qo}{{\mathfrak q}}
\nc{\po}{{\mathfrak p}}
\nc{\s}{{\mathfrak s}}
\nc\w{\text{w}}

\renewcommand{\mod}{{\on{-mod}}}

\nc\mathi\iota
\nc\Spec{\on{Spec}}
\nc\Proj{\on{Proj}}
\nc\Mod{\on{Mod}}
\nc{\tw}{\widetilde{\mathfrak t}}
\nc{\pw}{\widetilde{\mathfrak p}}
\nc{\qw}{\widetilde{\mathfrak q}}
\nc{\jw}{\widetilde j}

\nc{\grb}{\overline{\Gr}}
\nc{\I}{\mathcal I}

\nc{\lambdach}{{\check\lambda}}
\nc{\Lambdach}{{\check\Lambda}{}}
\nc{\much}{{\check\mu}}
\nc{\omegach}{{\check\omega}}
\nc{\nuch}{{\check\nu}}
\nc{\etach}{{\check\eta}}
\nc{\alphach}{{\check\alpha}}
\nc{\oblvtach}{{\check\oblvta}}
\nc{\rhoch}{{\check\rho}}
\nc{\ch}{{\check h}}

\nc{\Hb}{\overline{\H}}


\emergencystretch=2cm

\nc{\BA}{{\mathbb{A}}}
\nc{\BC}{{\mathbb{C}}}
\nc{\BE}{{\mathbb{E}}}
\nc{\BF}{{\mathbb{F}}}
\nc{\BG}{{\mathbb{G}}}
\nc{\BL}{{\mathbb{L}}}
\nc{\BM}{{\mathbb{M}}}
\nc{\BO}{{\mathbb{O}}}
\nc{\BD}{{\mathbb{D}}}
\nc{\BN}{{\mathbb{N}}}
\nc{\BP}{{\mathbb{P}}}
\nc{\BQ}{{\mathbb{Q}}}
\nc{\BR}{{\mathbb{R}}}
\nc{\BV}{{\mathbb{V}}}
\nc{\BZ}{{\mathbb{Z}}}
\nc{\BS}{{\mathbb{S}}}
\nc{\Deep}{{\bf{deep}}}
\nc{\deep}{deep}

\nc{\CA}{{\mathcal{A}}}
\nc{\CB}{{\mathcal{B}}}

\nc{\CE}{{\mathcal{E}}}
\nc{\CF}{{\mathcal{F}}}
\nc{\CH}{{\mathcal{H}}}

\nc{\CL}{{\mathcal{L}}}
\nc{\CC}{{\mathcal{C}}}
\nc{\CG}{{\mathcal{G}}}
\nc{\CalD}{{\mathcal{D}}}
\nc{\CM}{{\mathcal{M}}}
\nc{\CN}{{\mathcal{N}}}
\nc{\CK}{{\mathcal{K}}}
\nc{\CO}{{\mathcal{O}}}
\nc{\CP}{{\mathcal{P}}}
\nc{\CQ}{{\mathcal{Q}}}
\nc{\CR}{{\mathcal{R}}}
\nc{\CS}{{\mathcal{S}}}
\nc{\CT}{{\mathcal{T}}}
\nc{\CU}{{\mathcal{U}}}
\nc{\CV}{{\mathcal{V}}}
\nc{\CW}{{\mathcal{W}}}
\nc{\CX}{{\mathcal{X}}}
\nc{\CY}{{\mathcal{Y}}}
\nc{\CZ}{{\mathcal{Z}}}
\nc{\CI}{{\mathcal{I}}}

\nc{\csM}{{\check{\mathcal A}}{}}
\nc{\oM}{{\overset{\circ}{\mathcal M}}{}}
\nc{\obM}{{\overset{\circ}{\mathbf M}}{}}
\nc{\oCA}{{\overset{\circ}{\mathcal A}}{}}
\nc{\obA}{{\overset{\circ}{\mathbf A}}{}}
\nc{\ooM}{{\overset{\circ}{M}}{}}
\nc{\osM}{{\overset{\circ}{\mathsf M}}{}}
\nc{\vM}{{\overset{\bullet}{\mathcal M}}{}}
\nc{\nM}{{\underset{\bullet}{\mathcal M}}{}}
\nc{\oD}{{\overset{\circ}{\mathcal D}}{}}
\nc{\obD}{{\overset{\circ}{\mathbf D}}{}}
\nc{\oA}{{\overset{\circ}{A}}{}}
\nc{\op}{{\overset{\bullet}{\mathbf p}}{}}
\nc{\cp}{{\overset{\circ}{\mathbf p}}{}}
\nc{\oU}{{\overset{\bullet}{\mathcal U}}{}}
\nc{\oZ}{{\overset{\circ}{\mathcal Z}}{}}
\nc{\ofZ}{{\overset{\circ}{\mathfrak Z}}{}}
\nc{\oF}{{\overset{\circ}{\fF}}}

\nc{\fa}{{\mathfrak{a}}}
\nc{\ofa}{\overset{\circ}{\mathfrak{a}}}
\nc{\fb}{{\mathfrak{b}}}
\nc{\fd}{{\mathfrak{d}}}
\nc{\ff}{{\mathfrak{f}}}
\nc{\fg}{{\mathfrak{g}}}
\nc{\fgl}{{\mathfrak{gl}}}
\nc{\fh}{{\mathfrak{h}}}
\nc{\fj}{{\mathfrak{j}}}
\nc{\fl}{{\mathfrak{l}}}
\nc{\fm}{{\mathfrak{m}}}
\nc{\ofm}{\overset{\circ}{\mathfrak{m}}}
\nc{\fn}{{\mathfrak{n}}}
\nc{\fu}{{\mathfrak{u}}}
\nc{\fp}{{\mathfrak{p}}}
\nc{\fr}{{\mathfrak{r}}}
\nc{\fs}{{\mathfrak{s}}}
\nc{\ft}{{\mathfrak{t}}}
\nc{\oft}{\overset{\circ}{\mathfrak{t}}}
\nc{\fz}{{\mathfrak{z}}}
\nc{\fsl}{{\mathfrak{sl}}}
\nc{\hsl}{{\widehat{\mathfrak{sl}}}}
\nc{\hgl}{{\widehat{\mathfrak{gl}}}}
\nc{\hg}{{\widehat{\mathfrak{g}}}}
\nc{\hm}{{\widehat{\mathfrak{m}}}}
\nc{\chg}{{\widehat{\mathfrak{g}}}{}^\vee}
\nc{\hn}{{\widehat{\mathfrak{n}}}}
\nc{\chn}{{\widehat{\mathfrak{n}}}{}^\vee}

\nc{\fA}{{\mathfrak{A}}}
\nc{\fB}{{\mathfrak{B}}}
\nc{\fD}{{\mathfrak{D}}}
\nc{\fE}{{\mathfrak{E}}}
\nc{\fF}{{\mathfrak{F}}}
\nc{\fG}{{\mathfrak{G}}}
\nc{\fK}{{\mathfrak{K}}}
\nc{\fL}{{\mathfrak{L}}}
\nc{\fM}{{\mathfrak{M}}}
\nc{\fN}{{\mathfrak{N}}}
\nc{\fP}{{\mathfrak{P}}}
\nc{\fU}{{\mathfrak{U}}}
\nc{\fV}{{\mathfrak{V}}}
\nc{\fZ}{{\mathfrak{Z}}}

\nc{\ba}{{\mathbf{a}}}
\nc{\bb}{{\mathbf{b}}}
\nc{\bc}{{\mathbf{c}}}
\nc{\bd}{{\mathbf{d}}}
\nc{\bbf}{{\mathbf{f}}}
\nc{\be}{{\mathbf{e}}}
\nc{\bi}{{\mathbf{i}}}
\nc{\bj}{{\mathbf{j}}}
\nc{\bh}{{\mathbf{h}}}
\nc{\bm}{{\mathbf{m}}}
\nc{\bn}{{\mathbf{n}}}
\nc{\bo}{{\mathbf{o}}}
\nc{\bp}{{\mathbf{p}}}
\nc{\bq}{{\mathbf{q}}}
\nc{\bu}{{\mathbf{u}}}
\nc{\bv}{{\mathbf{v}}}
\nc{\bx}{{\mathbf{x}}}
\nc{\bs}{{\mathbf{s}}}
\nc{\by}{{\mathbf{y}}}
\nc{\bw}{{\mathbf{w}}}
\nc{\bA}{{\mathbf{A}}}
\nc{\bK}{{\mathbf{K}}}
\nc{\bB}{{\mathbf{B}}}
\nc{\bC}{{\mathbf{C}}}
\nc{\bG}{{\mathbf{G}}}
\nc{\bD}{{\mathbf{D}}}
\nc{\bE}{{\mathbf{E}}}
\nc{\bH}{{{\mathbf{H}}}}
\nc{\bL}{{\mathbf{L}}}
\nc{\bM}{{\mathbf{M}}}
\nc{\bN}{{\mathbf{N}}}
\nc{\bO}{{\mathbf{O}}}
\nc{\bQ}{{\mathbf{Q}}}
\nc{\bV}{{\mathbf{V}}}
\nc{\bW}{{\mathbf{W}}}
\nc{\bX}{{\mathbf{X}}}
\nc{\bZ}{{\mathbf{Z}}}
\nc{\bS}{{\mathbf{S}}}

\nc{\sA}{{\mathsf{A}}}
\nc{\sB}{{\mathsf{B}}}
\nc{\sC}{{\mathsf{C}}}
\nc{\sD}{{\mathsf{D}}}
\nc{\sF}{{\mathsf{F}}}
\nc{\sG}{{\mathsf{G}}}
\nc{\sH}{{\mathsf{H}}}
\nc{\sK}{{\mathsf{K}}}
\nc{\sM}{{\mathsf{M}}}
\nc{\sN}{{\mathsf{N}}}
\nc{\sO}{{\mathsf{O}}}
\nc{\sW}{{\mathsf{W}}}
\nc{\sQ}{{\mathsf{Q}}}
\nc{\sP}{{\mathsf{P}}}
\nc{\sR}{{\mathsf{R}}}
\nc{\sT}{{\mathsf{T}}}
\nc{\sV}{{\mathsf{V}}}
\nc{\sZ}{{\mathsf{Z}}}
\nc{\sfi}{{\mathsf{i}}}
\nc{\sfj}{{\mathsf{j}}}
\nc{\sfp}{{\mathsf{p}}}
\nc{\sfq}{{\mathsf{q}}}
\nc{\sfs}{{\mathsf{s}}}
\nc{\sft}{{\mathsf{t}}}
\nc{\sr}{{\mathsf{r}}}
\nc{\bk}{{\mathsf{k}}}
\nc{\sa}{{\mathsf{s}}}
\nc{\sg}{{\mathsf{g}}}
\nc{\sn}{{\mathsf{n}}}
\nc{\sh}{{\mathsf{h}}}
\nc{\sff}{{\mathsf{f}}}
\nc{\sfb}{{\mathsf{b}}}
\nc{\sfc}{{\mathsf{c}}}
\nc{\sfe}{{\mathsf{e}}}
\nc{\sd}{{\mathsf{d}}}

\nc{\BK}{{\bar{K}}}

\nc{\tA}{{\widetilde{\mathbf{A}}}}
\nc{\tB}{{\widetilde{\mathcal{B}}}}
\nc{\tg}{{\widetilde{\mathfrak{g}}}}
\nc{\tG}{{\widetilde{G}}}
\nc{\TM}{{\widetilde{\mathbb{M}}}{}}
\nc{\tO}{{\widetilde{\mathsf{O}}}{}}
\nc{\tU}{{\widetilde{\mathfrak{U}}}{}}
\nc{\TZ}{{\tilde{Z}}}
\nc{\tx}{{\tilde{x}}}
\nc{\tbv}{{\tilde{\bv}}}
\nc{\tfP}{{\widetilde{\mathfrak{P}}}{}}
\nc{\tz}{{\tilde{\zeta}}}
\nc{\tmu}{{\tilde{\mu}}}

\nc{\urho}{\underline{\rho}}
\nc{\uB}{\underline{B}}
\nc{\uC}{{\underline{\mathbb{C}}}}
\nc{\ui}{\underline{i}}
\nc{\uj}{\underline{j}}
\nc{\ofP}{{\overline{\mathfrak{P}}}}
\nc{\oB}{{\overline{\mathcal{B}}}}
\nc{\og}{{\overline{\mathfrak{g}}}}
\nc{\oI}{{\overline{I}}}

\nc{\eps}{\varepsilon}
\nc{\hrho}{{\hat{\rho}}}

\nc{\one}{{\mathbf{1}}}
\nc{\two}{{\mathbf{t}}}

\nc{\Rep}{{\mathop{\operatorname{\rm Rep}}}}
\nc{\Tot}{{\mathop{\operatorname{\rm Tot}}}}
\nc{\Ker}{{\mathop{\operatorname{\rm Ker}}}}
\nc{\im}{{\mathop{\operatorname{\rm Im}}}}
\nc{\Hilb}{{\mathop{\operatorname{\rm Hilb}}}}
\nc{\End}{{\mathop{\operatorname{\rm End}}}}
\nc{\Ext}{{\mathop{\operatorname{\rm Ext}}}}
\nc{\CHom}{{\mathop{\operatorname{{\mathcal{H}}\it om}}}}
\nc{\CEnd}{{\mathop{\operatorname{{\mathcal{E}}\it nd}}}}
\nc{\GL}{{\mathop{\operatorname{\rm GL}}}}
\nc{\gr}{{\mathop{\operatorname{\rm gr}}}}
\nc{\HN}{{\mathop{\operatorname{\rm HN}}}}
\nc{\Id}{{\mathop{\operatorname{\rm Id}}}}
\nc{\de}{{\mathop{\operatorname{\rm def}}}}
\nc{\length}{{\mathop{\operatorname{\rm length}}}}
\nc{\supp}{{\mathop{\operatorname{\rm supp}}}}

\nc{\Cliff}{{\mathsf{Cliff}}}
\nc{\Fl}{\on{Fl}}
\nc{\Fib}{{\mathsf{Fib}}}
\nc{\Coh}{{\on{Coh}}}
\nc{\QCoh}{{\on{QCoh}}}
\nc{\IndCoh}{{\on{IndCoh}}}
\nc{\FCoh}{{\mathsf{FCoh}}}

\nc{\reg}{{\text{\rm reg}}}

\nc{\cplus}{{\mathbf{C}_+}}
\nc{\cminus}{{\mathbf{C}_-}}
\nc{\cthree}{{\mathbf{C}_\bullet}}
\nc{\Qbar}{{\bar{Q}}}
\nc\Eis{\on{Eis}}
\nc\Eisb{\ol\Eis{}}
\nc\Eisr{\on{Eis}^{rat}{}}
\nc\wh{\widehat}
\nc{\Def}{\on{Def_{\check{\fb}}(E)}}
\nc{\barZ}{\overline{Z}{}}
\nc{\barbarZ}{\overline{\barZ}{}}
\nc{\barpi}{\overline\pi}
\nc{\barbarpi}{\overline\barpi}
\nc{\barpip}{\overline\pi{}^+}
\nc{\barpim}{\overline\pi{}^-}

\nc{\fq}{\mathfrak q}

\nc{\fqb}{\ol{\sfq}{}}
\nc{\fpb}{\ol{\sfp}{}}
\nc{\fpr}{{\mathsf{pair}^{rat}}{}}
\nc{\fqr}{{\sfq^{rat}}{}}

\nc{\hattimes}{\wh\otimes}

\nc{\bOmega}{{\overline{\Omega(\check \fn)}}}

\nc{\seq}[1]{\stackrel{#1}{\sim}}

%
%
%
%

\nc{\cT}{{\check{T}}}
\nc{\cG}{{\check{G}}}
\nc{\cM}{{\check{M}}}
\nc{\cB}{{\check{B}}}
\nc{\cP}{{\check{P}}}

\nc{\ct}{{\check{\mathfrak t}}}
\nc{\cg}{{\check{\fg}}}
\nc{\cb}{{\check{\fb}}}
\nc{\cn}{{\check{\fn}}}

\nc{\cLambda}{{\check\Lambda}}

\nc{\cla}{{\check\lambda}}
\nc{\cmu}{{\check\mu}}
\nc{\cnu}{{\check\nu}}
\nc{\ceta}{{\check\eta}}

\nc{\DefbE}{{\on{Def}_{\cB}(E_\cT)}}

\nc{\imathb}{{\ol{\imath}}}
\nc{\rlr}{\overset{\longrightarrow}{\underset{\longrightarrow}\longleftarrow}}

\nc{\oBun}{\overset{\circ}\Bun}
\nc{\LS}{\on{LS}}
\nc{\BunBbb}{\ol{\ol{Bun}}_B}
\nc{\BunBr}{\Bun_B^{rat}}
\nc{\BunBrsg}{\Bun_B^{rat,\on{s.g.}}}
\nc{\BunBrp}{\Bun_B^{rat,polar}}
\nc{\BunBrpbg}{\Bun_B^{rat,polar,\on{b.g.}}}
\nc{\BunBrpsg}{\Bun_B^{rat,polar,\on{s.g.}}}
\nc{\BunTrp}{\Bun_T^{rat,polar}}
\nc{\BunTrpbg}{\Bun_T^{rat,polar,\on{b.g.}}}
\nc{\BunTrpsg}{\Bun_T^{rat,polar,\on{s.g.}}}
\nc{\BunNr}{\Bun_N^{rat}}
\nc{\BunNre}{\Bun_N^{enh,rat}}
\nc{\BunTr}{\Bun_T^{rat}}
\nc{\Vect}{\on{Vect}}
\nc{\Whit}{\on{Whit}}
\nc{\CTb}{\ol{\on{CT}}}
\nc{\Ran}{{\on{Ran}}}
\nc{\Ranu}{{\on{Ran}^{\on{untl}}}}
\nc{\Ranustr}{{\on{Ran}^{\on{untl}}_{\on{str}}}}
\nc{\Ranusubset}{{\Ran^{\subseteq,\on{untl}}}}
\nc{\Ranusubsetx}{{\Ran^{\on{untl}}_{x\subseteq}}}
\nc{\CTr}{\on{CT}^{rat}{}}
\nc\jmathr{\jmath^{rat}{}}
\nc{\ux}{\underline{x}}
\nc{\clambda}{{\check\lambda}}
\nc{\calpha}{{\check\alpha}}
\nc{\ind}{{\mathbf{ind}}}
\nc{\coinv}{{\mathbf{coinv}}}
\nc{\oblv}{{\mathbf{oblv}}}
\nc{\free}{{\mathbf{free}}}
\nc{\ox}{{\overline{x}}}
\nc{\cLa}{\check{\Lambda}}
\nc{\StinftyCat}{\on{DGCat}}
\nc{\inftyCat}{\infty\on{-Cat}}
\nc{\inftygroup}{\infty\on{-Grpd}}
\nc{\Dmod}{\on{D-mod}}
\nc{\CMaps}{{\mathcal Maps}}
\nc{\Maps}{\on{Maps}}
\nc{\affSch}{\on{Sch}^{\on{aff}}}
\nc{\dr}{{\on{dR}}}
\nc{\oCF}{\overset{\circ}\CF}
\nc{\oCY}{\overset{\circ}\CY}
\nc{\oCZ}{\overset{\circ}\CZ}
\nc{\opi}{\overset{\circ}\pi}
\nc{\leqG}{\underset{G}\leq}
\nc{\leqM}{\underset{M}\leq}
\nc{\leqGad}{\underset{G_{ad}}\leq}
\nc{\leqMad}{\underset{M_{ad}}\leq}
\nc{\Tr}{\on{Tr}}
\nc{\Frob}{{\on{Frob}}}
\nc{\DGCat}{\on{DGCat}}
\nc{\tDGCat}{2\on{-DGCat}_{\on{u.g.}}}
\nc{\ev}{\on{ev}}
\nc{\mmod}{\on{-}\mathbf{mod}}
\nc{\sotimes}{\overset{!}\otimes}
\nc{\Shv}{\on{Shv}}
\nc{\Spc}{\on{Spc}}
\nc{\Res}{\on{Res}}
\nc{\bDelta}{{\mathbf{\Delta}}}
\nc{\bMaps}{{\mathbf{Maps}}}
\nc{\cD}{\mathcal D}
\nc{\ocD}{\overset{\circ}\cD}
\nc{\ppart}{(\!(t)\!)}
\nc{\qqart}{[\![t]\!]}
\nc{\oCU}{\overset{\circ}{\CU}}
\nc{\Exc}{{\mathcal{E}xc}}
\nc{\Sht}{\on{Sht}}
\nc{\Nilp}{{\on{Nilp}}}
\nc{\Drinf}{\on{Drinf}}
\nc{\Sing}{\on{Sing}}
\nc{\IndLisse}{\Lisse}
\nc{\Shvl}{\on{Shv}_{\on{lisse}}} 
\nc{\Lisse}{\on{Lisse}}
\nc{\Mir}{\on{Mir}}
\nc{\fSet}{\on{fSet}}
\nc{\qLisse}{\on{QLisse}}
\nc{\Ev}{\on{Ev}}
\nc{\Sat}{\on{Sat}}
\nc{\Se}{\on{Se}}
\nc{\coSht}{\on{co-Sht}}
\nc{\coCK}{\on{co-}\!\CK}
\nc{\FLE}{\on{FLE}}
\nc{\BRST}{\on{BRST}}
\nc{\KL}{\on{KL}}
\nc{\crit}{{\on{crit}}}
\nc{\Op}{{\on{Op}}}
\nc{\MOp}{\on{MOp}}
\nc{\Wak}{\on{Wak}}
\nc{\Av}{\on{Av}}
\nc{\semiinf}{{\frac{\infty}{2}}}
\nc{\DS}{\on{DS}}
\nc{\dR}{{\on{dR}}}
\nc{\Poinc}{{\on{Poinc}}}
\renc{\det}{\on{det}}
\nc{\oG}{\overset{\circ}{G}}
\nc{\Sectna}{\on{Sect}_\nabla}
\nc{\mf}{{\on{mon-free}}}

\begin{document}


\vskip1cm

\title[Proof of the geometric Langlands conjecture IV]{Proof of the geometric Langlands conjecture IV: \\
ambidexterity}



\author[Arinkin, Beraldo, Chen, Faergeman, Gaitsgory, Lin, Raskin, Rozenblyum]
{D.~Arinkin, D.~Beraldo, L.~Chen, J.~F\ae{}rgeman, \\ D.~Gaitsgory, K.~Lin, S.~Raskin and N.~Rozenblyum} 

\date{\today}

\begin{abstract} This paper performs the following steps toward the proof of GLC in the de Rham setting:

\smallskip

\noindent(i) We deduce GLC for $G=GL_n$; 

\noindent(ii) We prove that the Langlands functor $\BL_G$ constructed in \cite{GLC1}, when restricted to the
cuspidal category, is \emph{ambidextrous}; 

\noindent(iii) We reduce GLC to the study of a \emph{classical vector bundle with connection}, 
denoted $\CA_{G,\on{irred}}$, on the stack $\LS_\cG^{\on{irred}}$ of irreducible local systems; 

\noindent(iv) We prove that GLC is equivalent to the contractibility of the space of generic oper structures
on irreducible local systems; 

\noindent(v) Using \cite{BKS}, we deduce GLC for classical groups. 
\end{abstract}

\dedicatory{To David Kazhdan}

\maketitle




\tableofcontents

\section*{Introduction}

This paper is the fourth in the series of five papers, whose combined content will prove the geometric 
Langlands conjecture (GLC), as it was formulated in \cite[Conjecture 1.6.7]{GLC1}. 

\ssec{What is done in this paper?}

\sssec{}

In the papers \cite{GLC1,GLC2,GLC3} we constructed the \emph{Langlands functor}
\begin{equation} \label{e:L Intro}
\BL_G:\Dmod_{\frac{1}{2}}(\Bun_G)\to \IndCoh_\Nilp(\LS_\cG),
\end{equation}
and GLC says that \eqref{e:L Intro} is an equivalence.

\sssec{}

The main result of \cite{GLC3} says that \eqref{e:L Intro} induces an equivalence
$$\Dmod_{\frac{1}{2}}(\Bun_G)_{\Eis}\to \IndCoh_\Nilp(\LS_\cG)_{\on{red}},$$
where:

\begin{itemize}

\item $\Dmod_{\frac{1}{2}}(\Bun_G)_{\Eis}\subset \Dmod_{\frac{1}{2}}(\Bun_G)$ is the
full subcategory generated by Eisenstein series from proper Levi subgroups;

\smallskip

\item $\IndCoh_\Nilp(\LS_\cG)_{\on{red}}\subset \IndCoh_\Nilp(\LS_\cG)$ is the
full subcategory consisting of objects, set-theoretically supported on the locus
of reducible local systems.

\end{itemize} 

\sssec{}

As one of the first steps in this paper we will show that GLC is equivalent to the statement that the 
induced functor
\begin{equation} \label{e:L cusp Intro}
\BL_{G,\on{cusp}}:\Dmod_{\frac{1}{2}}(\Bun_G)_{\on{cusp}}\to \IndCoh_\Nilp(\LS^{\on{irred}}_\cG),
\end{equation}
is an equivalence (see \corref{c:reduce to cusp}). (Note also that $\IndCoh_\Nilp(\LS^{\on{irred}}_\cG)$ is the same as the usual 
$\QCoh(\LS^{\on{irred}}_\cG)$ category). 

\medskip

Thus, the proof of GLC amounts to the study of the functor $\BL_{G,\on{cusp}}$.

\sssec{}

Before we even begin the discussion of the main results of this paper, we observe (see \secref{ss:GLn})
that the above considerations already allow us to deduce GLC for $G=GL_n$.

\medskip

Namely, the fact that $\BL_{G,\on{cusp}}$ is fully faithful for $GL_n$ follows from \cite{Ga2} (or, in a more modern language,
from \cite{Be1}). 

\medskip

We then show that its essential surjectivity is equivalent to the existence of (non-zero) Hecke eigensheaves
attached to irreducible local systems, which was established in \cite{FGV} using geometric methods (or, alternatively, in 
\cite{BD1} using localization at the critical level). 

\sssec{}

The main result of this paper, \thmref{t:ambidex}, which we call the Ambidexterity Theorem, says that the left and right adjoints
of the functor $\BL_{G,\on{cusp}}$ are isomorphic. 

\medskip

This already gets us pretty close to the statement that $\BL_{G,\on{cusp}}$ is an equivalence. Yet, we will need to ``milk" the
ambidexterity statement some more in order to obtain the actual proof. Some of this milking will be preformed in this paper,
and some will be delegated to its sequel.

\sssec{}

An additional crucial input comes from the paper \cite{FR} (combined with \cite{Be2}), which says that the functor $\BL_{G,\on{cusp}}$ is conservative. 
This implies that in order to prove GLC, it is sufficient to show that the monad
\begin{equation} \label{e:monad Intro}
\BL_{G,\on{cusp}}\circ \BL_{G,\on{cusp}}^L
\end{equation}
acting on $\QCoh(\LS^{\on{irred}}_\cG)$ is isomorphic to the identity functor.

\medskip

We observe (see \secref{sss:AG cusp}) that the monad \eqref{e:monad Intro} is given by tensor product with an associative algebra object 
\begin{equation} \label{e:A G Intro}
\CA_{G,\on{irred}}\in \QCoh(\LS^{\on{irred}}_\cG).
\end{equation} 

The monad \eqref{e:monad Intro} is an equivalence if and only if the unit map
\begin{equation} \label{e:unit Intro}
\CO_{\LS^{\on{irred}}_\cG}\to \CA_{G,\on{irred}}
\end{equation} 
is an isomorphism in $\QCoh(\LS^{\on{irred}}_\cG)$.

\sssec{}

Now, the Ambidexterity Theorem tells us something about the structure of $\CA_{G,\on{irred}}$. Namely, it implies
that $\CA_{G,\on{irred}}$ is self-dual as an object of $\QCoh(\LS^{\on{irred}}_\cG)$. In particular, it is 
perfect, and hence compact. 

\medskip

However, we prove more: we show (assuming that $G$ is semi-simple) that
$\CA_{G,\on{irred}}$ is a \emph{classical vector bundle}, equipped with a flat connection 
(see \thmref{t:A G irred}).

\medskip

Thus, we can view $\CA_{G,\on{irred}}$ as a classical local system on $\LS^{\on{irred}}_\cG$.
We also show that this local system has a finite monodromy (see \propref{p:finite monodromy});
this latter statement will play a role in the final step of the proof of GLC in the next paper in this
series. 

\sssec{} \label{sss:opers Intro}

The above additional pieces of information concerning $\CA_{G,\on{irred}}$ result from \corref{c:A=B Op}, which says that the fiber
of $\CA_{G,\on{irred}}$ at a given irredicuble local system $\sigma$ is isomorphic to the homology of
the space of \emph{generic oper structures} on $\sigma$.  

\medskip

We will explain the mechanism for this in \secref{ss:opers Intro}. 

\ssec{How is ambidexterity proved?}

\sssec{}

The proof of the Ambidexterity Theorem is obtained by essentially staring at what we call the Fundamental Commutative Diagram
(see \cite[Diagram (18.14)]{GLC2}):
\begin{equation} \label{e:FCD}
\CD
\Whit^!(G)_\Ran @>{\on{CS}_G}>{\sim}>  \Rep(\cG)_\Ran \\
@A{\on{coeff}_G}AA @AA{\Gamma^{\on{spec},\IndCoh}_\cG}A \\
\Dmod_{\frac{1}{2}}(\Bun_G) @>{\BL_G}>>  \IndCoh_\Nilp(\LS_\cG) \\
@A{\Loc_G}AA @AA{\on{Poinc}^{\on{spec}}_{\cG,*}}A \\
\KL(G)_{\crit,\Ran} @>{\FLE_{G,\crit}}>{\sim}> \IndCoh^*(\Op^{\on{mon-free}}_\cG)_\Ran,
\endCD
\end{equation}
where we ignore some cohomological shifts and twists by constant lines.  

\begin{rem} 

In fact, \eqref{e:FCD} is a special case at levels ($\crit$ for $G$, $\infty$ for $\cG$) of an analogous diagram that
is expected to exist in the quantum case:
\begin{equation} \label{e:FCD quant}
\CD
\Whit^!(G)_{\kappa,\Ran} @>{\FLE_{\cG,\check\kappa}^\vee}>{\sim}> \KL(\cG)_{-\check\kappa,\Ran} \\
@A{\on{coeff}_G}AA  @AA{\Gamma_{\cG,-\check\kappa}}A \\ 
\Dmod_\kappa(\Bun_G) @>{\BL_{G,\kappa}}>> \Dmod_{-\check\kappa}(\Bun_\cG)_{\on{co}} \\
@A{\Loc_{G,\kappa}}AA  @AA{\on{Poinc}_{\cG,*}}A \\
\KL(G)_{\kappa,\Ran}  @>{\FLE_{G,\kappa}}>{\sim}>  \Whit_*(\cG)_{-\check\kappa,\Ran}.
\endCD
\end{equation} 

A remarkable feature of the quantum diagram is that it is \emph{self-dual}: i.e., if we dualize all categories
and arrows in \eqref{e:FCD quant} we obtain a similar diagram, but for $((G,\kappa),(\cG,-\check\kappa))$ replaced by
$((\cG,\check\kappa),(G,-\kappa))$. 

\end{rem} 

\sssec{}

We break \eqref{e:FCD} into the upper and lower portions, i.e.,
\begin{equation} \label{e:FCD up}
\CD
\Whit^!(G)_\Ran @>{\on{CS}_G}>{\sim}>  \Rep(\cG)_\Ran \\
@A{\on{coeff}_G}AA @AA{\Gamma^{\on{spec},\IndCoh}_\cG}A \\
\Dmod_{\frac{1}{2}}(\Bun_G) @>{\BL_G}>>  \IndCoh_\Nilp(\LS_\cG) 
\endCD
\end{equation}
and
\begin{equation} \label{e:FCD down}
\CD
\Dmod_{\frac{1}{2}}(\Bun_G) @>{\BL_G}>>  \IndCoh_\Nilp(\LS_\cG) \\
@A{\Loc_G}AA @AA{\on{Poinc}^{\on{spec}}_{\cG,*}}A \\
\KL(G)_{\crit,\Ran} @>{\FLE_{G,\crit}}>{\sim}> \IndCoh^*(\Op^{\on{mon-free}}_\cG)_\Ran,
\endCD
\end{equation} 
and we combine \eqref{e:FCD up} (resp., \eqref{e:FCD down}) with the inclusion of (resp., projection to) the
cuspidal subcategory:
$$
\CD
\Whit^!(G)_\Ran @>{\on{CS}_G}>{\sim}>  \Rep(\cG)_\Ran \\
@A{\on{coeff}_G}AA @AA{\Gamma^{\on{spec}}_\cG}A \\
\Dmod_{\frac{1}{2}}(\Bun_G) @>{\BL_G}>>  \IndCoh_\Nilp(\LS_\cG) \\
@A{\be}AA @AA{\jmath_*}A \\
\Dmod_{\frac{1}{2}}(\Bun_G)_{\on{cusp}} @>{\BL_{G,\on{cusp}}}>>  \IndCoh_\Nilp(\LS^{\on{irred}}_\cG) 
\endCD
$$
and
$$
\CD
\Dmod_{\frac{1}{2}}(\Bun_G)_{\on{cusp}} @>{\BL_{G,\on{cusp}}}>>  \IndCoh_\Nilp(\LS^{\on{irred}}_\cG) \\
@A{\be^L}AA @AA{\jmath^*}A \\
\Dmod_{\frac{1}{2}}(\Bun_G) @>{\BL_G}>>  \IndCoh_\Nilp(\LS_\cG) \\
@A{\Loc_G}AA @AA{\on{Poinc}^{\on{spec}}_{\cG,*}}A \\
\KL(G)_{\crit,\Ran} @>{\FLE_{G,\crit}}>{\sim}> \IndCoh^*(\Op^{\on{mon-free}}_\cG)_\Ran,
\endCD
$$
respectively.

\sssec{}

Thus, we obtain the diagrams
\begin{equation} \label{e:FCD up cusp}
\CD
\Whit^!(G)_\Ran @>{\on{CS}_G}>{\sim}>  \Rep(\cG)_\Ran \\
@A{\on{coeff}_G\circ \be}AA @AA{\Gamma^{\on{spec},\IndCoh}_\cG\circ \jmath_*}A \\
\Dmod_{\frac{1}{2}}(\Bun_G)_{\on{cusp}}  @>{\BL_{G,\on{cusp}}}>>  \IndCoh_\Nilp(\LS^{\on{irred}}_\cG) 
\endCD
\end{equation}
and 
\begin{equation} \label{e:FCD down cusp}
\CD
\Dmod_{\frac{1}{2}}(\Bun_G)_{\on{cusp}}  @>{\BL_{G,\on{cusp}}}>>  \IndCoh_\Nilp(\LS^{\on{irred}}_\cG) \\
@A{\be^L\circ \Loc_G}AA @AA{\jmath^*\circ \on{Poinc}^{\on{spec}}_{\cG,*}}A \\
\KL(G)_{\crit,\Ran} @>{\FLE_{G,\crit}}>{\sim}> \IndCoh^*(\Op^{\on{mon-free}}_\cG)_\Ran,
\endCD
\end{equation} 
respectively. 

\medskip

The key feature of the latter diagrams is that in \eqref{e:FCD up cusp} the right vertical arrow is fully faithful,
and in \eqref{e:FCD down cusp} the left vertical arrow is a \emph{Verdier quotient} (a.k.a., is a localization).  

\sssec{}

Starting from diagrams \eqref{e:FCD up cusp} and \eqref{e:FCD down cusp}, the ambidexterity assertion is proved
as follows.

\medskip

Consider the functor \emph{dual} to $\BL_{G,\on{cusp}}$ (with respect to the natural self-dualities of the two
sides, see \secref{s:left}). 

\medskip

The point now is that the vertical arrows in \eqref{e:FCD up cusp} admit \emph{left} adjoints, and these left adjoints
are \emph{essentially}\footnote{Essentially:=up to some twists.} isomorphic to the duals of the original functors. 
Combined with the fact that the right vertical arrow is fully faithful, this implies that the dual of $\BL_{G,\on{cusp}}$ is isomorphic to 
the left adjoint of $\BL_{G,\on{cusp}}$.

\medskip

Similarly, the vertical arrows in \eqref{e:FCD down cusp} admit \emph{right} adjoints, and these right adjoints
are \emph{essentially} isomorphic to the duals of the original functors. 
Combined with the fact that the left vertical arrow is a Verdier quotient, 
this implies that the dual of $\BL_{G,\on{cusp}}$ is isomorphic to the right adjoint of $\BL_{G,\on{cusp}}$.

\medskip

Thus, we have identified both the left and right adjoints of $\BL_{G,\on{cusp}}$ with its dual.

\ssec{Relation to opers} \label{ss:opers Intro}

We now turn to the statements announced in \secref{sss:opers Intro} that relate the fiber of the object $\CA_{G,\on{irrred}}$
at a given $\sigma\in \LS_\cG^{\on{irred}}$ to the homology of the space $\Op^{\on{gen}}_{\cG,\sigma}$
of generic oper structures on $\sigma$. 

\sssec{}

Let 
$$\CB^{\Op}_{G,\on{irred}}\in \QCoh(\LS^{\on{irred}}_\cG),$$
be the object obtained by applying the \emph{left} forgetful functor
$$\oblv^l:\Dmod(\LS^{\on{irred}}_\cG)\to \QCoh(\LS^{\on{irred}}_\cG)$$
to the object
$$(\pi^{\on{irred}}_\Ran)_!(\omega_{\Op^{\on{mon-free,irred}}_\cG(X^{\on{gen}})_\Ran}),$$
where:

\begin{itemize}

\item $\Op^{\on{mon-free}}_\cG(X^{\on{gen}})_\Ran$ is the space of pairs $(\sigma,o)$, where $\sigma\in \LS_\cG$,
and $o$ is a generic oper structure on it;

\smallskip

\item $\pi_\Ran:\Op^{\on{mon-free}}_\cG(X^{\on{gen}})_\Ran\to \LS_\cG$ is the tautological map $(\sigma,o)\mapsto \sigma$;

\smallskip

\item $\Op^{\on{mon-free,irred}}_\cG(X^{\on{gen}})_\Ran$ and $\pi^{\on{irred}}_\Ran$ is the base change of the above objects
along the inclusion $\LS^{\on{irred}}_\cG\to \LS_\cG$.

\end{itemize} 

By construction, $\CB^{\Op}_{G,\on{irred}}$ is naturally a co-commutative coalgebra in $\QCoh(\LS^{\on{irred}}_\cG)$.

\medskip

Note that since the map $\pi^{\on{irred}}_\Ran$ is pseudo-proper (see \secref{sss:ind-proper}), 
the fiber of $\CB^{\Op}_{G,\on{irred}}$ at a given $\sigma\in \LS^{\on{irred}}_\cG$
is indeed given by the homology of the space 
$$\Op^{\on{gen}}_{\cG,\sigma}:=\{\sigma\} \underset{\LS_\cG}\times \Op^{\on{mon-free}}_\cG(X^{\on{gen}})$$
of generic oper structures on $\sigma$. 

\sssec{}

The point of departure is \thmref{t:up and down opers}, which says that the comonad on 
$$\IndCoh_\Nilp(\LS^{\on{irred}}_\cG) \simeq \QCoh(\LS^{\on{irred}}_\cG)$$ 
given by 
$$\on{Poinc}^{\on{spec}}_{\cG,*,\on{irred}}\circ (\on{Poinc}^{\on{spec}}_{\cG,*,\on{irred}})^R, \quad 
\on{Poinc}^{\on{spec}}_{\cG,*,\on{irred}}:=\jmath^*\circ \on{Poinc}^{\on{spec}}_{\cG,*}$$
is isomorphic to the comonad given by tensoring with $\CB^{\Op}_{G,\on{irred}}$. 

\sssec{}

Consider the comonad 
$$\BL_G\circ \BL_G^R$$ acting on $\QCoh(\LS^{\on{irred}}_\cG)$. Since it is $\QCoh(\LS^{\on{irred}}_\cG)$-linear, 
it is given by tensor product with a co-associative coalgebra object, denoted 
$$\CB_{G,\on{irred}}\in \QCoh(\LS^{\on{irred}}_\cG).$$

\medskip

The fact that the left vertical arrow in \eqref{e:FCD down cusp} is a Verdier quotient implies that we have an isomorphism of comonads
$$\BL_G\circ \BL_G^R \simeq \on{Poinc}^{\on{spec}}_{\cG,*,\on{irred}}\circ (\on{Poinc}^{\on{spec}}_{\cG,*,\on{irred}})^R.$$

Combining with \thmref{t:up and down opers}, we obtain an isomorphism\footnote{One can show that this isomorphism respects the 
co-associative coalgebra structures, but we will neither prove nor use this fact.}  
$$\CB_{G,\on{irred}} \simeq \CB^{\Op}_{G,\on{irred}}.$$

\medskip

However, the Ambidexterity Theorem implies that $\CA_{G,\on{irred}}\simeq \CB_{G,\on{irred}}$ (as plain objects of $\QCoh(\LS^{\on{irred}}_\cG)$). 
Combining, we obtain an isomorphism
\begin{equation} \label{e:A=B Op Intro}
\CA_{G,\on{irred}}\simeq \CB^{\Op}_{G,\on{irred}},
\end{equation}
also as plain objects of $\QCoh(\LS^{\on{irred}}_\cG)$.

\medskip

From here we obtain the desired statements relating the fiber of $\CA_{G,\on{irred}}$ at a given irreducible local system $\sigma$
with the homology of $\Op^{\on{gen}}_{\cG,\sigma}$.  

\sssec{}

Note, however, that the isomorphism \eqref{e:A=B Op Intro} gives us more. Namely, since we already know that $\CA_{G,\on{irred}}$ is concentrated
in cohomological degree $0$, we obtain that the connected components of $\Op^{\on{gen}}_{\cG,\sigma}$ 
are \emph{homologically contractible}.

\medskip

And since GLC is equivalent to the fact that the map \eqref{e:unit Intro} is an isomorphism, we obtain that it is equivalent to either
of the following: 

\begin{itemize}

\item For every irreducible $\sigma$, the space $\Op^{\on{gen}}_{\cG,\sigma}$ is homologically contractible;

\item For every irreducible $\sigma$, the space $\Op^{\on{gen}}_{\cG,\sigma}$ is connected.

\end{itemize} 

\sssec{}

Recall now that a recent result of \cite{BKS} proves the homological contractibility of the spaces $\Op^{\on{gen}}_{\cG,\sigma}$,
whenever $G$ (and hence $\cG$) is classical.

\medskip

Hence, we obtain that GLC is a theorem for classical $G$. 

\ssec{Contents}

We now briefly review the contents of this paper section-by-section.

\sssec{}

In \secref{s:summary} we review the contents of \cite{GLC1,GLC2,GLC3} relevant for this paper, and 
draw some consequences. In particular:

\begin{itemize}

\item We show that the functor $\BL_G$ is an equivalence if and only if the corresponding functor $\BL_{G,\on{cusp}}$ is;

\item We show that $\BL_{G,\on{cusp}}$ is an equivalence if and only if the object $\CA_{G,\on{irred}}$ is isomorphic
to the structure sheaf;

\smallskip

\item We deduce GLC for $GL_n$.

\end{itemize}

\sssec{}

In \secref{s:left} we review the self-duality identifications on the two sides of \eqref{e:L cusp Intro}, and we show that
the \emph{left} adjoint of $\BL_{G,\on{cusp}}$ identifies with its dual, up to a twist. This uses the compatibility of the functor $\BL_G$ with the
Whittaker model, i.e., the upper portion of \eqref{e:FCD}. 

\medskip

In \secref{s:right}, we show that the \emph{right} adjoint of $\BL_{G,\on{cusp}}$ also identifies with its dual (up to the
same twist). This uses the compatibility of the functor $\BL_G$ with localization at the critical level, i.e., the lower portion of \eqref{e:FCD}. 

\medskip

Combining, we deduce the Ambidexterity Theorem, which says that the left and right adjoints of $\BL_{G,\on{cusp}}$ are isomorphic.

\medskip

From here, we deduce that the object $\CA_{G,\on{irred}}\in \QCoh(\LS^{\on{irred}}_\cG)$ is self-dual, and hence
compact.

\sssec{}

In \secref{s:opers} we express $\CA_{G,\on{irred}}$ via the space of generic oper structures. 

\medskip

As a result, we prove that $\CA_{G,\on{irred}}$ is a classical vector bundle (when $G$ is semi-simple).

\medskip

And we deduce GLC for classical groups. 

\sssec{}

In \secref{s:proof up and down} we reduce \thmref{t:BG via Op} stated in the previous section to the combination
of two general assertions about the space of rational maps. These assertions are proved in \secref{s:proof of ind oblv} and Sects. 
\ref{s:proof of Ran emb abs abs}+\ref{s:proof of Ran emb abs rel}, respectively. 

\sssec{Conventions and notation}

The conventions and notation in this paper follow those in \cite{GLC3}. 

\ssec{Acknowledgements}

We wish to thank Justin Campbell for collaboration on other papers constituting this 
project, namely, \cite{GLC2,GLC3}. 

\medskip

D.B. is grateful to Ian Grojnowski, Massimo Pippi, Michael Singer, Bertrand To\"en and Gabriele Vezzosi for several stimulating conversations.

\medskip

The work of D.A. was supported by NSF grant DMS-1903391.
The work of D.G. was supported by NSF grant DMS-2005475. 
The work of S.R. was supported by a Sloan Research Fellowship and NSF grant DMS-2101984 and DMS-2416129 while this work was in preparation.

\section{Summary of the Langlands functor} \label{s:summary}

In the papers \cite{GLC1}, a functor
$$\BL_G:\Dmod_{\frac{1}{2}}(\Bun_G)\to \IndCoh_\Nilp(\LS_\cG)$$
was constructed.

\medskip

The geometric Langlands conjecture, a.k.a. GLC (\cite[Conjecture 1.6.7]{GLC1}), says that the functor $\BL_G$ is an equivalence. 
For the duration of this paper, we will assume the validity of GLC for proper Levi subgroups of $G$. 

\medskip

In this section we will summarize the properties of $\BL_G$ relevant for this paper, and draw some consequences. 

\ssec{The functor \texorpdfstring{$\BL_G$}{LG} via the Whittaker model}

In this subsection we will recall the ``main" feature of the functor $\BL_G$; its compatibility with the Whittaker model. 

\sssec{}

Recall (see \cite[Sects. 9.3 and 9.4]{GLC2}) that the category $\Dmod_{\frac{1}{2}}(\Bun_G)$ is related to the
\emph{Whittaker category} by a pair of adjoint functors
$$\on{Poinc}_{G,!}:\Whit^!(G)_\Ran\rightleftarrows \Dmod_{\frac{1}{2}}(\Bun_G):\on{coeff}_G.$$

\sssec{}

Recall also that the category $\IndCoh_\Nilp(\LS_\cG)$ is related to the category $\Rep(\cG)_\Ran$ by a pair of adjoint functors
$$\Loc_\cG^{\on{spec}}:\Rep(\cG)_\Ran \rightleftarrows \IndCoh_\Nilp(\LS_\cG):\Gamma^{\on{spec},\IndCoh}_\cG,$$
see \cite[Sect. 17.6]{GLC2}.

\medskip

Note, however, that the functor $\Loc_\cG^{\on{spec}}$ factors as
$$\Rep(\cG)_\Ran \to \QCoh(\LS_\cG)\overset{\Xi_{\{0\},\Nilp}}\hookrightarrow \IndCoh_\Nilp(\LS_\cG),$$
and the functor $\Gamma^{\on{spec},\IndCoh}_\cG$ factors as
$$\IndCoh_\Nilp(\LS_\cG)\overset{\Psi_{\{0\},\Nilp}}\twoheadrightarrow \QCoh(\LS_\cG)\overset{\Gamma^{\on{spec}}_\cG}\longrightarrow \Rep(\cG)_\Ran,$$
where
$$\Xi_{\{0\},\Nilp}: \QCoh(\LS_\cG)\rightleftarrows \IndCoh_\Nilp(\LS_\cG):\Psi_{\{0\},\Nilp}$$
are the natural embedding and projection, respectively. 

\sssec{}

%

We record the following (see \secref{sss:proof Gamma spec ff} for the proof):

\begin{prop} \label{p:Gamma ff}
The functor 
$$\Gamma^{\on{spec}}_\cG:\QCoh(\LS_\cG)\to \Rep(\cG)_\Ran$$
is fully faithful.
\end{prop}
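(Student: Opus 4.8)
The plan is to identify $\Gamma^{\on{spec}}_\cG$ with a known fully faithful functor by unwinding its definition through the spectral Poincar\'e/localization picture. Recall that $\Gamma^{\on{spec}}_\cG : \QCoh(\LS_\cG) \to \Rep(\cG)_\Ran$ is right adjoint to the functor $\Loc_\cG^{\on{spec}} : \Rep(\cG)_\Ran \to \QCoh(\LS_\cG)$ (the composite of $\on{Loc}_\cG^{\on{spec}}$ into $\IndCoh_\Nilp$ with the projection $\Psi_{\{0\},\Nilp}$; equivalently the genuine $\QCoh$-level localization functor). Showing $\Gamma^{\on{spec}}_\cG$ is fully faithful is equivalent to showing that the counit $\Loc_\cG^{\on{spec}} \circ \Gamma^{\on{spec}}_\cG \to \Id_{\QCoh(\LS_\cG)}$ is an isomorphism, i.e.\ that $\Loc_\cG^{\on{spec}}$ is a localization (Verdier quotient) onto $\QCoh(\LS_\cG)$.

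First I would recall the basic geometry: $\LS_\cG$ is (the de Rham prestack of) $\on{Maps}(X_\dR, B\cG)$, and there is a presentation of $\QCoh(\LS_\cG)$ via the Ran space, where over a finite subset $\ux \subset X$ one has the local Hecke/restriction data. The functor $\Loc_\cG^{\on{spec}}$ is built by taking an object of $\Rep(\cG)_\Ran$ — a compatible family of $\Rep(\cG)^{\otimes \ux}$-modules — and spreading it over $\LS_\cG$ using the evaluation maps $\LS_\cG \to \LS_\cG(\mathcal{D}_{\ux}) \to B\cG^{\ux}$. The key input is that $\QCoh(\LS_\cG)$ is \emph{generated} under colimits by the essential image of $\Loc_\cG^{\on{spec}}$: this is essentially the statement that $\LS_\cG$ is "1-affine enough" relative to $\Rep(\cG)_\Ran$, or more precisely that the Ran-space version of the local-to-global map induces a colimit description of $\QCoh(\LS_\cG)$. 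This should be available from the companion papers \cite{GLC1,GLC2}; I would cite the relevant generation statement there.

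Then I would run the standard argument: $\Gamma^{\on{spec}}_\cG$ is fully faithful iff the counit $\Loc_\cG^{\on{spec}} \circ \Gamma^{\on{spec}}_\cG \to \Id$ is an equivalence. Because $\Loc_\cG^{\on{spec}}$ generates the target and the counit is a map of $\QCoh(\LS_\cG)$-linear functors, it suffices to check the counit is an equivalence after precomposing with $\Loc_\cG^{\on{spec}}$ and evaluating on generators $V_\ux \in \Rep(\cG)_\Ran$; equivalently, it suffices to show that for every object $\CF$ in the image of $\Loc_\cG^{\on{spec}}$ the natural map $\Loc_\cG^{\on{spec}}\Gamma^{\on{spec}}_\cG(\CF) \to \CF$ is an isomorphism. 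This in turn reduces — by the projection formula and $\QCoh$-linearity of both $\Loc$ and $\Gamma$ over $\QCoh(\LS_\cG)$ — to the single computation that $\Gamma^{\on{spec}}_\cG(\CO_{\LS_\cG})$ is the unit object of $\Rep(\cG)_\Ran$, i.e.\ that $\on{R}\Gamma(\LS_\cG,\CO) \simeq \Qlb$ (or $k$) compatibly with the Ran structure, which is the statement that $\LS_\cG$ is cohomologically connected with no higher functions — a consequence of the contractibility properties of $\LS_\cG$ established earlier in the series.

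\textbf{Main obstacle.} The crux is the generation statement — that $\QCoh(\LS_\cG)$ is generated under colimits (and the needed linearity structures) by the image of $\Loc_\cG^{\on{spec}}$, equivalently that the Ran-localization map is a "colimit presentation." The subtlety is that $\LS_\cG$ is not 1-affine, so one cannot naively descend; one must use the refined description of $\QCoh(\LS_\cG)$ via rational/Ran-space test schemes and the fact that the relevant factorization category $\Rep(\cG)_\Ran$ sees enough of the geometry. I expect the honest proof in \secref{sss:proof Gamma spec ff} to package this as: (1) reduce to the affine-over-$\LS_\cG$ situation via the atlas $\on{pt} \to B\cG$ spread over $X$, (2) invoke a version of the statement that $\QCoh$ of the relevant space of $\cG$-bundles with connection is computed by the corresponding Chevalley-type complex, and (3) bootstrap from a point (the computation $\on{R}\Gamma(\LS_\cG,\CO)\simeq k$) using $\QCoh$-linearity. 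Everything else is formal adjunction-chasing once generation and the unit computation are in hand.
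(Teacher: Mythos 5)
Your high-level framing is correct: $\Gamma^{\on{spec}}_\cG$ is fully faithful iff the counit $\Loc^{\on{spec}}_\cG\circ\Gamma^{\on{spec}}_\cG\to\Id$ is an isomorphism, and $\QCoh(\LS_\cG)$-linearity (via rigidity) reduces that to a single check on $\CO_{\LS_\cG}$. But your subsequent reduction is wrong and hides the actual content. You translate the needed statement into ``$\Gamma^{\on{spec}}_\cG(\CO_{\LS_\cG})$ is the unit of $\Rep(\cG)_\Ran$,'' and then further into ``$R\Gamma(\LS_\cG,\CO)\simeq k$, i.e.\ $\LS_\cG$ is cohomologically connected with no higher functions.'' Neither step is valid. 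First, the counit being an isomorphism on $\CO_{\LS_\cG}$ says only that $\Loc^{\on{spec}}_\cG(\Gamma^{\on{spec}}_\cG(\CO))\to\CO$ is an isomorphism, which is strictly weaker than $\Gamma^{\on{spec}}_\cG(\CO)$ coinciding with the unit (that would require the \emph{unit} of the adjunction to be an isomorphism on the unit object, which is about fully faithfulness of $\Loc^{\on{spec}}_\cG$, not $\Gamma^{\on{spec}}_\cG$). Second, and more seriously, $R\Gamma(\LS_\cG,\CO)\simeq k$ is simply false: $\LS_\cG$ is a positive-dimensional Artin stack with many global functions (traces of monodromy, etc.), and no such contractibility is established anywhere in the series. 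Your ``main obstacle'' paragraph compounds this by citing a nonexistent cohomological triviality of $\LS_\cG$.

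The paper's actual proof abstracts to a D-prestack $\CY$ (here $\CY=\on{pt}/\cG$) and proves (Proposition~\ref{p:loc unital fund}, via Corollary~\ref{c:loc unital bis}) that the counit $\Loc_\CY\circ\Loc^R_\CY\to\Id$ is an isomorphism for any $\CY$ with affine diagonal satisfying the passability hypotheses of \secref{sss:good D stacks}. The reduction is not to a cohomology-vanishing statement but to a kernel computation: using passability, $\Loc_\CY\circ\Loc^R_\CY$ is given by a kernel on $\on{Sect}(X,\CY)\times\on{Sect}(X,\CY)$, and one needs to show this kernel maps isomorphically to $(\Delta_{\on{Sect}(X,\CY)})_*(\CO)$. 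This is then identified with a parametrized factorization-homology statement (Proposition~\ref{p:fact hom rel}, a relative version of \cite[Proposition~4.6.5]{BD2}), whose content is that the Ran integral of the unit pullback computes the diagonal — a statement about the unital/cofinal structure of the Ran space and the compatibility of $s_{\CY,\Ran}$ with push-forward, not about the global geometry of $\LS_\cG$ being trivial. That Ran-space contractibility (the map $\Gamma^{\IndCoh}(\Ran,\omega_\Ran)\to k$ is an isomorphism) is the only ``contractibility'' that enters, and it is a statement about $\Ran$, not about $\LS_\cG$. So the missing ingredient in your argument is exactly the Beilinson--Drinfeld factorization-homology mechanism; without it, the reduction to $\CO_{\LS_\cG}$ leaves you with a statement you cannot prove by appealing to any cohomological triviality of $\LS_\cG$.
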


\sssec{}

The Langlands functor $\BL_G$ is 
\emph{essentially}\footnote{See \cite[Sects. 1.4 and 1.6]{GLC1} for what the word ``essentially" refers to.} 
determined by the property that it makes the diagram
\begin{equation} \label{e:L and coeff}
\CD
\Whit^!(G)_\Ran @>{\on{CS}_G}>{\sim}>  \Rep(\cG)_\Ran \\
@A{\on{coeff}_G[2\delta_{N_{\rho(\omega_X)}}]}AA @AA{\Gamma^{\on{spec},\IndCoh}_\cG}A \\
\Dmod_{\frac{1}{2}}(\Bun_G) @>{\BL_G}>>  \IndCoh_\Nilp(\LS_\cG)
\endCD
\end{equation} 
commute, where  $\on{CS}_G$ is the geometric Casselman-Shalika equivalence (see \cite[Sect. 1.4]{GLC2}),
and 
$$\delta_{N_{\rho(\omega_X)}}=\dim(\Bun_{N_{\rho(\omega_X)}}).$$

\begin{rem}

The commutation of \eqref{e:L and coeff} is the point of departure for any of the constructions of the 
Langlands functor. 

\end{rem} 

\sssec{}

It is shown in \cite[Theorem 16.1.2]{GLC3} that the functor $\BL_G$ admits a left adjoint, to be denoted $\BL_G^L$.
Passing to the left adjoints in \eqref{e:L and coeff}, we obtain a commutative diagram
\begin{equation} \label{e:LL and Poinc}
\CD
\Whit^!(G)_\Ran @<{\on{CS}^{-1}_G}<{\sim}<  \Rep(\cG)_\Ran \\
@V{\on{Poinc}_{G,!}[-2\delta_{N_{\rho(\omega_X)}}]}VV @VV{\Loc^{\on{spec}}_\cG}V \\
\Dmod_{\frac{1}{2}}(\Bun_G) @<{\BL_G^L}<<  \IndCoh_\Nilp(\LS_\cG).
\endCD
\end{equation}

\ssec{Langlands functor and Eisenstein series}

In this subsection we will summarize the properties of $\BL_G$ relevant for this paper that have to do with the
Eisenstein series and constant term functors. 

\sssec{}

A key property of the functor $\BL_G$ is that it commutes with the Eisenstein functors, i.e., for a parabolic $P$
with Levi quotient $M$, the diagram
\begin{equation} \label{e:L and Eis}
\CD
\Dmod_{\frac{1}{2}}(\Bun_M) @>{\BL_M}>>  \IndCoh_\Nilp(\LS_\cM) \\
@V{\Eis_{!,\on{twk}}}VV @V{\Eis^{\on{spec}}}VV \\
\Dmod_{\frac{1}{2}}(\Bun_G) @>{\BL_G}>> \IndCoh_\Nilp(\LS_\cG)
\endCD
\end{equation} 
commutes, where $\Eis_{!,\on{twk}}$ is a ``tweaked" !-Eisenstein functor, where the 
tweak involves a translation along $\Bun_M$ and a cohomological shift (the precise
details of the tweak are irrelevant for this paper), see \cite[Theorem 14.2.2]{GLC3}. 

\sssec{}

Passing to the right adjoints along the vertical arrows in \eqref{e:L and Eis}, we obtain 
a diagram
\begin{equation} \label{e:L and CT}
\CD
\Dmod_{\frac{1}{2}}(\Bun_M) @>{\BL_M}>>  \IndCoh_\Nilp(\LS_\cM)   \\
@A{\on{CT}_{*,\on{twk}}}AA @A{\on{CT}^{\on{spec}}}AA \\
\Dmod_{\frac{1}{2}}(\Bun_G) @>{\BL_G}>> \IndCoh_\Nilp(\LS_\cG),
\endCD
\end{equation} 
which \emph{a priori} commutes up to a natural transformation (in the above diagram, 
$\on{CT}_{*,\on{twk}}$ is a ``tweaked" Constant Term functor, set to be the right adjoint of
$\Eis_{!,\on{twk}}$).

\medskip

However, one of the main results of the paper \cite{GLC3}, namely, Theorem 15.1.2 in {\it loc. cit.}, says
that the natural transformation in \eqref{e:L and CT} is an isomorphism. I.e., the diagram
\eqref{e:L and CT} commutes. 

\sssec{}

Note that by passing to left adjoints along all arrows in \eqref{e:L and CT}, we obtain a commutative diagram 
\begin{equation} \label{e:LL and Eis}
\CD
\Dmod_{\frac{1}{2}}(\Bun_M) @<{\BL^L_M}<<  \IndCoh_\Nilp(\LS_\cM)  \\
@V{\Eis_{!,\on{twk}}}VV @V{\Eis^{\on{spec}}}VV \\
\Dmod_{\frac{1}{2}}(\Bun_G) @<{\BL^L_G}<< \IndCoh_\Nilp(\LS_\cG). 
\endCD
\end{equation} 

\sssec{}

Let 
$$\Dmod_{\frac{1}{2}}(\Bun_G)_{\Eis}\subset \Dmod_{\frac{1}{2}}(\Bun_G)$$
be the full subcategory generated by the essential images of the functors $\Eis_!$ (equivalently, $\Eis_{!,\on{twk}}$) for proper
parabolics,

\medskip

Let 
$$\IndCoh_\Nilp(\LS_\cG)_{\on{red}}\subset  \IndCoh_\Nilp(\LS_\cG)$$
be the full subcategory consisting of objects, set-theoretically supported on the locus
$$\LS^{\on{red}}_\cG\subset \LS_\cG$$
consisting of \emph{reducible} local systems, i.e.., the union of the images of the (proper) maps
$$\LS_\cP\to \LS_\cG$$
for proper parabolic subgroups.

\medskip

Combining diagrams \eqref{e:L and Eis} and \eqref{e:LL and Eis} we obtain that the functors 
$\BL_G$ and $\BL_G^L$ send the subcatergories
$$\Dmod_{\frac{1}{2}}(\Bun_G)_{\Eis} \text{ and }  \IndCoh_\Nilp(\LS_\cG)_{\on{red}}$$
to one another, thereby inducing a pair of adjoint functors
\begin{equation} \label{e:L and LL on Eis}
\BL_{G,\Eis}:\Dmod_{\frac{1}{2}}(\Bun_G)_{\Eis}\rightleftarrows \IndCoh_\Nilp(\LS_\cG)_{\on{red}}:\BL^L_{G,\Eis}.
\end{equation}

\medskip

The main result of \cite{GLC3}, namely, Theorem 17.1.2 in {\it loc. cit.} says:

\begin{thm} \label{t:equiv on Eis}
The adjoint functors in \eqref{e:L and LL on Eis} are (mutually inverse) equivalences.
\end{thm}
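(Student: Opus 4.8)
The plan is to leverage the standing hypothesis that GLC holds for every proper Levi subgroup $M$ — so that each $\BL_M$ is an equivalence — together with the compatibility of $\BL_G$ with the Eisenstein functors recorded in \eqref{e:L and Eis} and in its left-adjoint form \eqref{e:LL and Eis}. Since the functors in \eqref{e:L and LL on Eis} are already given as an adjoint pair $\BL^L_{G,\Eis}\dashv \BL_{G,\Eis}$, it is enough to show that the unit $\Id\to \BL_{G,\Eis}\circ \BL^L_{G,\Eis}$ and the counit $\BL^L_{G,\Eis}\circ \BL_{G,\Eis}\to \Id$ are isomorphisms; this forces them to be mutually inverse equivalences. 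Both $\BL^L_{G,\Eis}$ and $\BL_{G,\Eis}$ preserve colimits (the former as a left adjoint; the latter because $\BL_G$ does and both subcategories are closed under colimits), so each of these natural transformations may be tested on a generating set.

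Next I would fix such generating sets. By construction $\Dmod_{\frac{1}{2}}(\Bun_G)_{\Eis}$ is generated under colimits by the objects $\Eis_{!,\on{twk}}(\M)$ for $P$ a proper parabolic and $\M\in \Dmod_{\frac{1}{2}}(\Bun_M)$. On the spectral side the analogous input is that $\IndCoh_\Nilp(\LS_\cG)_{\on{red}}$ — defined as the objects set-theoretically supported on $\LS^{\on{red}}_\cG$ — coincides with the subcategory generated under colimits by the images of $\Eis^{\on{spec}}$ over proper parabolics; this rests on the properness of the maps $\LS_\cP\to \LS_\cG$ (so that objects supported on the closed subset $\LS^{\on{red}}_\cG$ are generated by pushforwards from the $\LS_\cP$), together with the corresponding generation statement for $\LS_\cP\to \LS_\cM$. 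Granting this, it suffices to verify that the counit is invertible on all $\Eis_{!,\on{twk}}(\M)$ and the unit on all $\Eis^{\on{spec}}(\F)$.

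The computation on generators is then formal. Whiskering the counit with $\Eis_{!,\on{twk}}$ and using first \eqref{e:L and Eis} and then \eqref{e:LL and Eis} yields a chain of canonical isomorphisms
\[
\BL^L_{G,\Eis}\circ \BL_{G,\Eis}\circ \Eis_{!,\on{twk}}\;\simeq\;\BL^L_{G,\Eis}\circ \Eis^{\on{spec}}\circ \BL_M\;\simeq\;\Eis_{!,\on{twk}}\circ \BL^L_M\circ \BL_M,
\]
and since $\BL_M$ is an equivalence, $\BL^L_M\circ \BL_M\simeq \Id$, so the composite is $\Eis_{!,\on{twk}}$. A triangle-identity (mate) chase then identifies this composite isomorphism with the whiskering of the counit, so the counit is invertible on generators. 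The unit is handled in the mirror-image way, whiskering with $\Eis^{\on{spec}}$, using \eqref{e:LL and Eis} then \eqref{e:L and Eis}, and invoking $\BL_M\circ \BL^L_M\simeq \Id$. Hence both unit and counit are isomorphisms and \eqref{e:L and LL on Eis} is an adjoint equivalence. (One may repackage this as follows: both subcategories are presented as colimits, over the poset of proper parabolics, of the categories $\Dmod_{\frac{1}{2}}(\Bun_M)$, resp.\ $\IndCoh_\Nilp(\LS_\cM)$, with transition functors the Eisenstein functors; the compatibility squares upgrade $(\BL_M)_M$ to a map of these colimit diagrams, which is a termwise equivalence by the hypothesis on proper Levis, hence an equivalence of colimits.)

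The main obstacle is not this bookkeeping but the two non-formal ingredients it relies on: (i) matching the ``support'' description of $\IndCoh_\Nilp(\LS_\cG)_{\on{red}}$ with the ``generated by spectral Eisenstein'' description, which requires understanding $\IndCoh$ with nilpotent singular support along the reducible loci of $\LS_\cG$; and (ii) making the colimit presentations of the two subcategories genuinely functorial and checking that $\BL_G$ intertwines them — equivalently, that the isomorphisms produced from the squares \eqref{e:L and Eis}, \eqref{e:LL and Eis} really are the unit and counit. On the automorphic side this hinges on the geometric computation of the constant term of an Eisenstein series (a second-adjointness / Bernstein--Zelevinsky-type statement controlling the higher compatibilities among Eisenstein functors for nested parabolics), and on the spectral side on the parallel analysis of the reducible locus in $\LS_\cG$; these are the steps where the real work lies.
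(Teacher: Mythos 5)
This theorem is not proved in the present paper: it is imported verbatim from \cite{GLC3} as Theorem 17.1.2 there, and the paper introduces it with the words ``the main result of \cite{GLC3}.'' So there is no proof in this paper to compare your proposal against; the theorem is essentially the whole content of \cite{GLC3}.

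Your reduction to checking unit and counit on generators, using the commuting squares \eqref{e:L and Eis}, \eqref{e:LL and Eis} and the standing hypothesis that each $\BL_M$ is an equivalence, is a reasonable high-level road map; but, as you partly acknowledge, it defers all of the substance. Of the two items you flag, (i) is the genuine crux: the identification of $\IndCoh_\Nilp(\LS_\cG)_{\on{red}}$ (defined by set-theoretic support on $\LS^{\on{red}}_\cG$) with the colimit-closure of the essential images of the functors $\Eis^{\on{spec}}$ is not a routine support lemma, because the singular-support condition $\Nilp$ interacts nontrivially with restriction to the reducible strata and with pushforward along $\LS_\cP\to \LS_\cG$; establishing this devissage is most of what \cite{GLC3} does. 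Moreover, it is needed even to define the adjoint pair \eqref{e:L and LL on Eis}: the square \eqref{e:LL and Eis} only controls $\BL^L_G$ on objects coming through $\Eis^{\on{spec}}$, so without (i) you do not yet know that $\BL^L_G$ carries $\IndCoh_\Nilp(\LS_\cG)_{\on{red}}$ into $\Dmod_{\frac{1}{2}}(\Bun_G)_{\Eis}$ at all. Your item (ii) is also real: \eqref{e:LL and Eis} is obtained from \eqref{e:L and Eis} by passing through \eqref{e:L and CT}, whose commutation is itself the separate Theorem 15.1.2 of \cite{GLC3}, and the ``triangle-identity (mate) chase'' you invoke must then be a genuine Beck--Chevalley compatibility among all three squares (including the coherences relating different parabolics), which, while formal, is not a one-line triangle identity. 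In short, your proposal correctly identifies the formal reduction but reduces the theorem to its hardest inputs rather than proving it; those inputs are where \cite{GLC3} spends essentially all of its pages.
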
 

\ssec{The Langlands functor on the cuspidal part}

In this subsection we will study the restriction of $\BL_G$ to the cuspidal subcategory. 
We will show that GLC is equivalent to the statement that the resulting functor
$$\BL_{G,\on{cusp}}:\Dmod_{\frac{1}{2}}(\Bun_G)_{\on{cusp}}\to 
\IndCoh_\Nilp(\LS^{\on{irred}}_\cG)\simeq \QCoh(\LS^{\on{irred}}_\cG)$$
is an equivalence. 

\sssec{}

Let
$$\Dmod_{\frac{1}{2}}(\Bun_G)_{\on{cusp}}:=\left(\Dmod_{\frac{1}{2}}(\Bun_G)_{\Eis}\right)^\perp$$
be the cuspidal subcategory. 

\medskip

Tautologically, $\Dmod_{\frac{1}{2}}(\Bun_G)_{\on{cusp}}$ is the intersection of the kernels of the functors
$\on{CT}_*$ (equivalently, $\on{CT}_{*,\on{twk}}$) for all proper parabolics.

\sssec{}

Denote by $\be$ the tautological embedding
$$\Dmod_{\frac{1}{2}}(\Bun_G)_{\on{cusp}}\hookrightarrow \Dmod_{\frac{1}{2}}(\Bun_G).$$

Since $\Dmod_{\frac{1}{2}}(\Bun_G)_{\Eis}$ is generated by objects that are compact in $\Dmod_{\frac{1}{2}}(\Bun_G)$,
the functor $\be$ admits a left adjoint, to be denoted $\be^L$. 

\sssec{}

Let
$$\LS^{\on{irred}}_\cG \overset{\jmath}\hookrightarrow \LS_\cG$$
be the embedding of the irreducible locus. We can regard
$\IndCoh_\Nilp(\LS^{\on{irred}}_\cG)$ as a full subcategory of $\IndCoh_\Nilp(\LS_\cG)$ of $\IndCoh_\Nilp(\LS_\cG)$
via $\jmath_*$, and as such it identifies with 
$$\left(\IndCoh_\Nilp(\LS_\cG)_{\on{red}}\right)^\perp.$$

Tautologically,
$$\IndCoh_\Nilp(\LS_\cG)_{\on{red}}=\on{ker}(\jmath^*).$$

\sssec{}

From the commutation of \eqref{e:L and CT}, we obtain that the functor $\BL_G$ sends
$\Dmod_{\frac{1}{2}}(\Bun_G)_{\on{cusp}}$ to $\IndCoh_\Nilp(\LS^{\on{irred}}_\cG)$.
Denote the resulting functor by
\begin{equation} \label{e:L cusp}
\BL_{G,\on{cusp}}:\Dmod_{\frac{1}{2}}(\Bun_G)_{\on{cusp}}\to \IndCoh_\Nilp(\LS^{\on{irred}}_\cG).
\end{equation}

I.e., we have a commutative diagram
\begin{equation} \label{e:L cusp diagram in}
\CD
\Dmod_{\frac{1}{2}}(\Bun_G) @>{\BL_G}>> \IndCoh_\Nilp(\LS_\cG) \\
@A{\be}AA @AA{\jmath_*}A  \\
\Dmod_{\frac{1}{2}}(\Bun_G)_{\on{cusp}} @>{\BL_{G,\on{cusp}}}>>  \IndCoh_\Nilp(\LS^{\on{irred}}_\cG).
\endCD
\end{equation} 

\medskip

Note that from the commutation of \eqref{e:L and Eis} we obtain a commutative diagram
\begin{equation} \label{e:L cusp diagram out}
\CD
\Dmod_{\frac{1}{2}}(\Bun_G) @>{\BL_G}>> \IndCoh_\Nilp(\LS_\cG) \\
@V{\be^L}VV @VV{\jmath^*}V  \\
\Dmod_{\frac{1}{2}}(\Bun_G)_{\on{cusp}} @>{\BL_{G,\on{cusp}}}>>  \IndCoh_\Nilp(\LS^{\on{irred}}_\cG).
\endCD
\end{equation} 

\sssec{}

Note also that when we view $\Dmod_{\frac{1}{2}}(\Bun_G)_{\on{cusp}}$ and $\IndCoh_\Nilp(\LS^{\on{irred}}_\cG)$
as quotient categories of $\Dmod_{\frac{1}{2}}(\Bun_G)$ and $\IndCoh_\Nilp(\LS_\cG)$ via $\be^L$ and $\jmath^*$, respectively, from
the commutation of \eqref{e:LL and Eis}, we obtain that there exists a well-defined functor
$$\BL^L_{G,\on{cusp}}:\IndCoh_\Nilp(\LS^{\on{irred}}_\cG)\to \Dmod_{\frac{1}{2}}(\Bun_G)_{\on{cusp}}$$
that makes the diagram 
\begin{equation} \label{e:LL cusp diagram out}
\CD
\Dmod_{\frac{1}{2}}(\Bun_G) @<{\BL^L_G}<< \IndCoh_\Nilp(\LS_\cG) \\
@V{\be^L}VV @VV{\jmath^*}V \\
\Dmod_{\frac{1}{2}}(\Bun_G)_{\on{cusp}} @<{\BL^L_{G,\on{cusp}}}<<  \IndCoh_\Nilp(\LS^{\on{irred}}_\cG).
\endCD
\end{equation} 
commute. 

\medskip

Taking into account \eqref{e:L cusp diagram out}, we obtain that the functors
\begin{equation} \label{e:L and LL on cusp}
\BL_{G,\on{cusp}}:\Dmod_{\frac{1}{2}}(\Bun_G)_{\on{cusp}}\rightleftarrows \IndCoh_\Nilp(\LS_\cG)_{\on{red}}:\BL^L_{G,\on{cusp}}
\end{equation}
are mutually adjoint, i.e.,
$$\BL^L_{G,\on{cusp}}\simeq (\BL_{G,\on{cusp}})^L.$$

\sssec{}

We claim, however:  

\begin{prop} \label{p:LL preserves cusp}
The functor $\BL^L_G$ sends $\IndCoh_\Nilp(\LS^{\on{irred}}_\cG)$ to $\Dmod_{\frac{1}{2}}(\Bun_G)_{\on{cusp}}$.
\end{prop}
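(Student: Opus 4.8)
The plan is to reduce the assertion to the statement that the endofunctor $\BL_G\circ\BL_G^L$ of $\IndCoh_\Nilp(\LS_\cG)$ preserves the full subcategory $\IndCoh_\Nilp(\LS^{\on{irred}}_\cG)$, and then to deduce the latter from the $\QCoh(\LS_\cG)$-linearity of the Langlands functor.

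First I would unwind the target. Since $\Dmod_{\frac{1}{2}}(\Bun_G)_{\on{cusp}}=\big(\Dmod_{\frac{1}{2}}(\Bun_G)_{\Eis}\big)^{\perp}$, it suffices to show that $\Hom\big(\CG,\BL_G^L(\CF)\big)=0$ for every $\CF\in\IndCoh_\Nilp(\LS^{\on{irred}}_\cG)$ (regarded as an object of $\IndCoh_\Nilp(\LS_\cG)$ via $\jmath_*$) and every $\CG\in\Dmod_{\frac{1}{2}}(\Bun_G)_{\Eis}$. By \thmref{t:equiv on Eis} the functor $\BL^L_{G,\Eis}$ is an equivalence onto $\Dmod_{\frac{1}{2}}(\Bun_G)_{\Eis}$; combined with the fact that $\BL_G^L$ restricted to $\IndCoh_\Nilp(\LS_\cG)_{\on{red}}$ is $\BL^L_{G,\Eis}$ followed by the inclusion of the Eisenstein subcategory, this shows that every such $\CG$ is of the form $\BL_G^L(\CG')$ with $\CG'\in\IndCoh_\Nilp(\LS_\cG)_{\on{red}}$. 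Applying the $(\BL_G^L,\BL_G)$-adjunction, the $\Hom$ in question becomes $\Hom\big(\CG',\BL_G\circ\BL_G^L(\CF)\big)$. Since $\IndCoh_\Nilp(\LS^{\on{irred}}_\cG)=\big(\IndCoh_\Nilp(\LS_\cG)_{\on{red}}\big)^{\perp}$, the required vanishing now follows once we know that $\BL_G\circ\BL_G^L(\CF)$ lies in $\IndCoh_\Nilp(\LS^{\on{irred}}_\cG)$.

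To establish that, I would use that $\BL_G$ is $\QCoh(\LS_\cG)$-linear; hence so is its left adjoint $\BL_G^L$, and hence so is $T:=\BL_G\circ\BL_G^L$. Recall that $\jmath^{*}\colon\IndCoh_\Nilp(\LS_\cG)\to\IndCoh_\Nilp(\LS^{\on{irred}}_\cG)$ is a localization whose kernel is $\IndCoh_\Nilp(\LS_\cG)_{\on{red}}$, and that it, together with its fully faithful right adjoint $\jmath_{*}$, is $\QCoh(\LS_\cG)$-linear (projection formula). Being $\QCoh(\LS_\cG)$-linear, $T$ preserves the kernel of $\jmath^{*}$ — a condition of set-theoretic support — and therefore descends to an endofunctor $T_{U}$ of the localization with $\jmath^{*}\circ T\simeq T_{U}\circ\jmath^{*}$. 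The tautological natural transformation $T\circ\jmath_{*}\to\jmath_{*}\circ T_{U}$ becomes an isomorphism after applying $\jmath^{*}$, so its cofiber is a $\QCoh(\LS_\cG)$-linear functor out of $\IndCoh_\Nilp(\LS^{\on{irred}}_\cG)$ — a module over $\QCoh(\LS^{\on{irred}}_\cG)$ — taking values in objects set-theoretically supported on $\LS^{\on{red}}_\cG$; such a functor vanishes. Hence $T\circ\jmath_{*}\simeq\jmath_{*}\circ T_{U}$, i.e. $T$ carries $\IndCoh_\Nilp(\LS^{\on{irred}}_\cG)$ into itself, as needed.

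The only non-formal point is this last step — that $\BL_G\circ\BL_G^L$ respects the decomposition of $\IndCoh_\Nilp(\LS_\cG)$ by the reducible and irreducible loci — and I expect it to rest entirely on the $\QCoh(\LS_\cG)$-linearity of the Langlands functor, since a priori there is no reason for a monad built out of $\BL_G$ and its adjoint to be compatible with that semiorthogonal decomposition on the nose.
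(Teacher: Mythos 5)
Your proposal is correct, but it takes a genuinely different route from the paper's argument in \secref{sss:2nd proof LL preserves cusp}. The paper is shorter: $\QCoh(\LS_\cG)$-linearity shows \emph{directly} that $\BL_G^L$ carries $\IndCoh_\Nilp(\LS^{\on{irred}}_\cG)=\IndCoh_\Nilp(\LS_\cG)\otimes_{\QCoh(\LS_\cG)}\QCoh(\LS^{\on{irred}}_\cG)$ into $\Dmod_{\frac{1}{2}}(\Bun_G)\otimes_{\QCoh(\LS_\cG)}\QCoh(\LS^{\on{irred}}_\cG)$, and then \corref{c:irrred is cusp} (a formal consequence of \propref{p:Eis is red}, which relies on the Braverman--Gaitsgory factorization of $\Eis_!$) places the target inside $\Dmod_{\frac{1}{2}}(\Bun_G)_{\on{cusp}}$. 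You avoid \corref{c:irrred is cusp} altogether: invoking \thmref{t:equiv on Eis} to write any Eisenstein object as $\BL_G^L(\CG')$ with $\CG'$ in the reducible part, the $(\BL_G^L,\BL_G)$-adjunction transports the orthogonality to the spectral side, where it becomes a preservation statement for the monad $T=\BL_G\circ\BL_G^L$ — and that in turn follows from $\QCoh(\LS_\cG)$-linearity of $T$. So you trade one heavy input (\propref{p:Eis is red}) for another (\thmref{t:equiv on Eis}, i.e.\ the main result of \cite{GLC3}), with the benefit that your closing argument lives entirely on the spectral side. One remark: the clinching assertion ``such a functor vanishes'' is stated tersely; the cleanest justification is via the idempotent $\jmath_*\CO_{\LS^{\on{irred}}_\cG}\in\QCoh(\LS_\cG)$, which acts as the identity on $\IndCoh_\Nilp(\LS^{\on{irred}}_\cG)$ and (by the projection formula) as zero on $\IndCoh_\Nilp(\LS_\cG)_{\on{red}}$, so any $\QCoh(\LS_\cG)$-linear functor between them must vanish. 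That same idempotent computation is also what underpins the paper's phrase ``viewed as a full subcategory,'' so the two proofs ultimately share the same linearity principle; the difference is that you apply it to the monad $\BL_G\circ\BL_G^L$ whereas the paper applies it to $\BL_G^L$ alone.
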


\begin{proof}

Follows from \cite[Proposition 17.2.2]{GLC2}. We will supply an alternative argument in \secref{sss:2nd proof LL preserves cusp}.

\end{proof} 

\begin{cor}
The following diagram commutes:
\begin{equation} \label{e:LL cusp diagram in}
\CD
\Dmod_{\frac{1}{2}}(\Bun_G) @<{\BL^L_G}<< \IndCoh_\Nilp(\LS_\cG) \\
@A{\be}AA @AA{\jmath_*}A  \\
\Dmod_{\frac{1}{2}}(\Bun_G)_{\on{cusp}} @<{\BL^L_{G,\on{cusp}}}<<  \IndCoh_\Nilp(\LS^{\on{irred}}_\cG).
\endCD
\end{equation} 
\end{cor}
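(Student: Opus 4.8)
The content of the claim is that $\be\circ\BL^L_{G,\on{cusp}}\simeq\BL^L_G\circ\jmath_*$ as functors $\IndCoh_\Nilp(\LS^{\on{irred}}_\cG)\to\Dmod_{\frac{1}{2}}(\Bun_G)$, and this will be a formal consequence of \propref{p:LL preserves cusp} together with the already-verified commutativity of \eqref{e:LL cusp diagram out}. The only genuinely non-formal ingredient is \propref{p:LL preserves cusp} itself — the assertion that $\BL^L_G$ carries the subcategory $\jmath_*\big(\IndCoh_\Nilp(\LS^{\on{irred}}_\cG)\big)$ into $\be\big(\Dmod_{\frac{1}{2}}(\Bun_G)_{\on{cusp}}\big)$ — which is where the real work lies, and which has already been settled (via \cite[Proposition 17.2.2]{GLC2}, with a second argument promised in \secref{sss:2nd proof LL preserves cusp}). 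So from the point of view of this corollary, there is no serious obstacle; the task is a short diagram chase.

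Here is the plan. First recall the elementary adjunction facts: $\be$ is a fully faithful right adjoint to $\be^L$, so $\be^L\circ\be\simeq\Id$; and $\jmath_*$ is fully faithful with left adjoint $\jmath^*$, so $\jmath^*\circ\jmath_*\simeq\Id$. Now by \propref{p:LL preserves cusp} the composite $\BL^L_G\circ\jmath_*$ takes values in the essential image of $\be$, hence factors in the $\infty$-categorical sense as $\BL^L_G\circ\jmath_*\simeq\be\circ F$ for an essentially unique functor $F:\IndCoh_\Nilp(\LS^{\on{irred}}_\cG)\to\Dmod_{\frac{1}{2}}(\Bun_G)_{\on{cusp}}$; applying $\be^L$ and using $\be^L\circ\be\simeq\Id$ identifies $F\simeq\be^L\circ\BL^L_G\circ\jmath_*$. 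On the other hand, precomposing the commutative square \eqref{e:LL cusp diagram out} with $\jmath_*$ and using $\jmath^*\circ\jmath_*\simeq\Id$ gives
\[
\be^L\circ\BL^L_G\circ\jmath_*\;\simeq\;\BL^L_{G,\on{cusp}}\circ\jmath^*\circ\jmath_*\;\simeq\;\BL^L_{G,\on{cusp}}.
\]
Combining the two, $F\simeq\BL^L_{G,\on{cusp}}$, and substituting back into $\be\circ F\simeq\BL^L_G\circ\jmath_*$ yields exactly the commutation $\be\circ\BL^L_{G,\on{cusp}}\simeq\BL^L_G\circ\jmath_*$ asserted by \eqref{e:LL cusp diagram in}.

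The only point deserving a word of care — though not an obstacle — is that one might wish to know the equivalence produced this way is the ``expected'' one, e.g. that it is compatible with the relevant (co)unit $2$-morphisms; but since the corollary asserts only that the square commutes, the coherent isomorphism of the two composite functors constructed above is precisely what is required. Thus the entire burden of the corollary rests on \propref{p:LL preserves cusp}, and everything else is pure category theory.
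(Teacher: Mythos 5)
Your argument is correct and is precisely the formal diagram chase the paper leaves implicit: the paper states the corollary immediately after \propref{p:LL preserves cusp} with no written proof, and the intended reasoning is exactly yours — use \propref{p:LL preserves cusp} to factor $\BL^L_G\circ\jmath_*$ through $\be$, then identify the factoring functor via the already-established commutativity of \eqref{e:LL cusp diagram out} together with $\be^L\circ\be\simeq\Id$ and $\jmath^*\circ\jmath_*\simeq\Id$. Nothing to add.
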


\sssec{}  \label{sss:reduce to cusp}

Taking into account \thmref{t:equiv on Eis}, we obtain:

\begin{cor} \label{c:reduce to cusp}
The functor $\BL_G$ is an equivalence if and only if so is the functor $\BL_{G,\on{cusp}}$.
\end{cor}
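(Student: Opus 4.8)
The plan is to deduce \corref{c:reduce to cusp} from the semiorthogonal decompositions that \thmref{t:equiv on Eis} provides on each side, together with the compatibilities \eqref{e:L cusp diagram in} and \eqref{e:L cusp diagram out} already established. First I would recall that we have a pair of ``short exact sequences'' of stable categories: on the automorphic side, $\Dmod_{\frac{1}{2}}(\Bun_G)_{\Eis}$ is a full subcategory with $\Dmod_{\frac{1}{2}}(\Bun_G)_{\on{cusp}}$ as its orthogonal complement, so that $\Dmod_{\frac{1}{2}}(\Bun_G)$ sits in a recollement with $\be$, $\be^L$; on the spectral side, $\IndCoh_\Nilp(\LS_\cG)_{\on{red}}$ is a full subcategory with $\IndCoh_\Nilp(\LS^{\on{irred}}_\cG)$ (embedded via $\jmath_*$) as its orthogonal complement, the corresponding projection being $\jmath^*$. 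The functor $\BL_G$ carries the first sequence to the second: the restriction to the $\Eis$/$\on{red}$ parts is $\BL_{G,\Eis}$, and the restriction to the $\on{cusp}$/$\on{irred}$ parts is $\BL_{G,\on{cusp}}$, compatibly with all four structure functors $\be,\be^L,\jmath_*,\jmath^*$ by \eqref{e:L cusp diagram in}, \eqref{e:L cusp diagram out}, \eqref{e:L and LL on Eis} and \thmref{t:equiv on Eis}.

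Granting this, the ``if and only if'' is a formal two-out-of-three statement for a morphism of semiorthogonal decompositions. For the ``only if'' direction: if $\BL_G$ is an equivalence, then for $M\in \Dmod_{\frac{1}{2}}(\Bun_G)_{\on{cusp}}$ and $N\in \IndCoh_\Nilp(\LS^{\on{irred}}_\cG)$ one computes $\Hom(\BL_{G,\on{cusp}}(M),N) \simeq \Hom(\jmath_*\BL_{G,\on{cusp}}(M),\jmath_*N) \simeq \Hom(\BL_G(\be M),\jmath_* N) \simeq \Hom(\be M,\BL_G^{-1}\jmath_* N)$, and one checks that $\BL_G^{-1}$ carries $\IndCoh_\Nilp(\LS^{\on{irred}}_\cG)$ into $\Dmod_{\frac{1}{2}}(\Bun_G)_{\on{cusp}}$ (which is exactly the content of \propref{p:LL preserves cusp} for the inverse, since $\BL_G^{-1}\simeq \BL_G^L$ when $\BL_G$ is an equivalence) — this gives full faithfulness of $\BL_{G,\on{cusp}}$, and essential surjectivity follows because $\jmath_*$ identifies $\IndCoh_\Nilp(\LS^{\on{irred}}_\cG)$ with $(\IndCoh_\Nilp(\LS_\cG)_{\on{red}})^\perp$ and $\BL_G$ preserves orthogonal complements of $\Eis$/$\on{red}$. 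For the ``if'' direction: assume $\BL_{G,\on{cusp}}$ is an equivalence. By \thmref{t:equiv on Eis}, $\BL_{G,\Eis}$ is already an equivalence. Now a functor between stable categories each equipped with a semiorthogonal decomposition, which restricts to equivalences on the two pieces and respects the gluing, is itself an equivalence: concretely, for any $M\in\Dmod_{\frac{1}{2}}(\Bun_G)$ the exact triangle $\be\be^R M \to M \to \Eis$-part (or the dual triangle using $\be^L$) is carried by $\BL_G$ to the analogous triangle for $\BL_G(M)$, and both outer terms map isomorphically, so $\BL_G(M)$ is built from the correct pieces; full faithfulness is checked on $\Hom$-groups using the same triangles and the vanishing $\Hom(\text{cusp},\Eis)=0$, $\Hom(\text{irred},\text{red})=0$ on both sides.

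Concretely I would organize the write-up as: (1) record the recollement data on each side and note $\be^R,\jmath^!$ exist (the latter because $\jmath$ is an open embedding of the irreducible locus, hence $\jmath^*\simeq \jmath^!$ up to shift, and $\be$ admits both adjoints as noted before \propref{p:LL preserves cusp}); (2) assemble from \eqref{e:L cusp diagram in}, \eqref{e:L cusp diagram out}, \propref{p:LL preserves cusp} and its corollary the statement that $(\BL_G,\BL_{G,\Eis},\BL_{G,\on{cusp}})$ is a map of recollements; (3) invoke \thmref{t:equiv on Eis}; (4) apply the two-out-of-three lemma for recollements in each direction. The main obstacle is really step (2) rather than step (4): one must make sure that $\BL_G$ is compatible not just with $\be$ and $\jmath_*$ but also with the \emph{co}localization functors $\be^L$ and $\jmath^*$ (this is \eqref{e:L cusp diagram out}, coming from \eqref{e:L and Eis}) \emph{and} that there is no mismatch between the left-adjoint gluing and the right-adjoint gluing — i.e. that the same subcategory $\IndCoh_\Nilp(\LS^{\on{irred}}_\cG)$ serves simultaneously as $\ker(\jmath^*\circ(-))$-complement and as the target of the colocalization; this is where \propref{p:LL preserves cusp} (that $\BL^L_G$ preserves the cuspidal subcategory) is essential, since without it $\BL_{G,\on{cusp}}$ and $\BL^L_{G,\on{cusp}}$ would only be adjoint, not parts of a genuine map of recollements. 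Once that bookkeeping is in place, the equivalence statement in both directions is formal.
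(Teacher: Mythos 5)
Your proposal is correct and follows the same route as the paper, which derives the corollary from \thmref{t:equiv on Eis} together with the compatibilities \eqref{e:L cusp diagram in}, \eqref{e:L cusp diagram out}, \eqref{e:LL cusp diagram in}, \eqref{e:LL cusp diagram out} and \propref{p:LL preserves cusp}; the paper simply leaves the formal two-out-of-three deduction implicit. Two small slips: the orthogonality in the paper's convention is $\Hom(\mathrm{Eis},\mathrm{cusp})=0$ and $\Hom(\mathrm{red},\mathrm{irred})=0$, not the reversed directions you wrote, and the paper only records $\be^L$, not $\be^R$; neither affects the argument, and your ``if'' direction becomes especially clean if phrased via the unit/counit of $(\BL_G^L,\BL_G)$ being checked termwise on the decomposition triangles rather than by tracking $\Hom$-groups directly.
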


\ssec{Spectral action}

In this subsection we will recall another crucial feature of the functor $\BL_G$: its
compatibility with the $\QCoh(\LS_\cG)$-actions on the two sides. 

\sssec{}

Recall (see e.g., \cite[Theorem 1.2.4]{GLC1}) that the Hecke action gives rise to an action of the monoidal
category $\QCoh(\LS_\cG)$ on $\Dmod_{\frac{1}{2}}(\Bun_G)$.

\sssec{}

We have: 

\begin{prop} \label{p:Eis is red}
With respect to the $\QCoh(\LS_\cG)$-action on $\Dmod_{\frac{1}{2}}(\Bun_G)$, the full subcategory 
$$\Dmod_{\frac{1}{2}}(\Bun_G)_{\Eis}\subset \Dmod_{\frac{1}{2}}(\Bun_G)$$
is set-theoretically supported on $\LS_\cG^{\on{red}}$, i.e.,
\begin{equation} \label{e:Eis is red}
\Dmod_{\frac{1}{2}}(\Bun_G)_{\Eis}\underset{\QCoh(\LS_\cG)}\otimes \QCoh(\LS^{\on{irred}}_\cG)=0.
\end{equation} 
\end{prop}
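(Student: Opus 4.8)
The plan is to reduce the vanishing \eqref{e:Eis is red} to the corresponding vanishing one parabolic at a time, and then to the compatibility of Eisenstein series with the Hecke/$\QCoh$-action. Since $\Dmod_{\frac{1}{2}}(\Bun_G)_{\Eis}$ is by definition generated, under colimits and the $\QCoh(\LS_\cG)$-action, by the essential images of $\Eis_{!,\on{twk}}$ for proper parabolics $P$ with Levi $M$, and since tensoring with $\QCoh(\LS^{\on{irred}}_\cG)$ is a colimit-preserving functor of $\QCoh(\LS_\cG)$-module categories, it suffices to show that for each such $P$ the composite
\[
\Dmod_{\frac{1}{2}}(\Bun_M) \overset{\Eis_{!,\on{twk}}}\longrightarrow \Dmod_{\frac{1}{2}}(\Bun_G) \to \Dmod_{\frac{1}{2}}(\Bun_G)\underset{\QCoh(\LS_\cG)}\otimes \QCoh(\LS^{\on{irred}}_\cG)
\]
vanishes. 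First I would recall that $\Eis_{!,\on{twk}}$ is, up to the harmless tweak, built from the correspondence $\Bun_M \leftarrow \Bun_P \to \Bun_G$, and that its compatibility with Hecke functors says precisely that it is a morphism of module categories over $\QCoh(\LS_\cM)$, where $\QCoh(\LS_\cM)$ acts on $\Dmod_{\frac{1}{2}}(\Bun_G)$ through the pushforward $\mathsf{r}_{P,*}\colon \QCoh(\LS_\cM)\to\QCoh(\LS_\cG)$ along $\LS_\cP \to \LS_\cG$ (and the pullback $\LS_\cP\to\LS_\cM$). Concretely this is the statement underlying the commutation of \eqref{e:L and Eis} on the spectral side, where $\Eis^{\on{spec}}$ is visibly $\QCoh$-linear along $\LS_\cM \leftarrow \LS_\cP \to \LS_\cG$.

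Granting this, the key geometric input is that the map $\LS_\cP \to \LS_\cG$ factors through the closed substack $\LS^{\on{red}}_\cG$ (by the very definition of $\LS^{\on{red}}_\cG$ as the union of images of these maps), and hence the image is disjoint from the open substack $\LS^{\on{irred}}_\cG$. Therefore the base change $\LS_\cP \underset{\LS_\cG}\times \LS^{\on{irred}}_\cG$ is empty, so the $\QCoh(\LS_\cG)$-linear structure that factors through $\QCoh(\LS_\cP)$ kills $\QCoh(\LS^{\on{irred}}_\cG)$: formally,
\[
\QCoh(\LS_\cP)\underset{\QCoh(\LS_\cG)}\otimes \QCoh(\LS^{\on{irred}}_\cG) \simeq \QCoh(\LS_\cP \underset{\LS_\cG}\times \LS^{\on{irred}}_\cG) \simeq \QCoh(\emptyset) = 0.
\]
Since every object in the essential image of $\Eis_{!,\on{twk}}$ is acted on by $\QCoh(\LS_\cG)$ through this $\QCoh(\LS_\cP)$-action, tensoring the whole essential image with $\QCoh(\LS^{\on{irred}}_\cG)$ over $\QCoh(\LS_\cG)$ produces zero. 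Running over all proper $P$ and taking the generated subcategory gives \eqref{e:Eis is red}.

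The main obstacle is the second step: making precise that $\Eis_{!,\on{twk}}$ intertwines the $\QCoh(\LS_\cG)$-action on the target with an action on $\Dmod_{\frac{1}{2}}(\Bun_M)$ that factors through $\QCoh(\LS_\cP)$ — i.e., that the $G$-Hecke action on an Eisenstein series is ``supported on $\LS_\cP$''. This is exactly the content of the Hecke-compatibility of the Eisenstein functor (the derived-Satake/spectral-Eisenstein statement behind \eqref{e:L and Eis}), which we are entitled to cite from \cite{GLC3}; alternatively, one can deduce it directly from \eqref{e:L and Eis} together with \propref{p:Gamma ff} and the $\QCoh$-linearity of $\BL_G$, transporting the spectral-side assertion (where $\Eis^{\on{spec}}$ is manifestly $\QCoh$-linear over $\LS_\cP$) back across $\BL_M$ and $\BL_G$. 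Once this module-theoretic statement is in hand, the emptiness of $\LS_\cP\underset{\LS_\cG}\times\LS^{\on{irred}}_\cG$ and the base-change formula for $\QCoh$ finish the argument with no further work.
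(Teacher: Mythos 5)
Your proposal is correct and takes essentially the same route as the paper. The paper's proof is one line: it invokes the result of \cite{BG} that $\Eis_!$ factors as
$$\Dmod_{\frac{1}{2}}(\Bun_M)\to \Dmod_{\frac{1}{2}}(\Bun_M)\underset{\QCoh(\LS_\cM)}\otimes \QCoh(\LS_\cP) \overset{\Eis_!^{\on{part.enh}}}\longrightarrow \Dmod_{\frac{1}{2}}(\Bun_G),$$
with $\Eis_!^{\on{part.enh}}$ being $\QCoh(\LS_\cG)$-linear, and then observes that this implies the claim (precisely your ``emptiness of $\LS_\cP\underset{\LS_\cG}\times \LS^{\on{irred}}_\cG$ kills the base change'' step). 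You correctly identify this factorization as the crux and give the same reduction.

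Two small caveats. First, your phrasing of the needed input (``an action on $\Dmod_{\frac{1}{2}}(\Bun_M)$ that factors through $\QCoh(\LS_\cP)$'') is imprecise: there is no $\QCoh(\LS_\cP)$-action on $\Dmod_{\frac{1}{2}}(\Bun_M)$ itself making $\Eis_{!,\on{twk}}$ equivariant; the correct statement is that $\Eis_!$ factors through the \emph{tensored-up} source $\Dmod_{\frac{1}{2}}(\Bun_M)\otimes_{\QCoh(\LS_\cM)}\QCoh(\LS_\cP)$, which \emph{is} a $\QCoh(\LS_\cP)$-module, with the second leg being $\QCoh(\LS_\cG)$-linear. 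You recover the right conclusion but should state the factorization correctly. Second, your suggested alternative derivation — deducing the compatibility from \eqref{e:L and Eis} together with \propref{p:Gamma ff} and the $\QCoh$-linearity of $\BL_G$ — does not obviously close: the commutative square \eqref{e:L and Eis} only tells you that $\BL_G\circ \Eis_{!,\on{twk}}\simeq \Eis^{\on{spec}}\circ\BL_M$, and to ``transport back'' you would need conservativity of the appropriate localization of $\BL_G$. That is either unavailable at this stage or would create a dependency on \thmref{t:cusp cons}, which the paper imports separately for other purposes. Citing \cite{BG} (as the paper does) or the forthcoming generalization mentioned in the footnote is the clean route; the alternative should be dropped or flagged as requiring conservativity.
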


This proposition is probably well-known. We will supply a proof for completeness in \secref{sss:Eis is red}.

\sssec{}

As a formal consequence of \propref{p:Eis is red}, we obtain:

\begin{cor} \label{c:irrred is cusp}
We have an inclusion 
\begin{equation} \label{e:irred is cusp}
\Dmod_{\frac{1}{2}}(\Bun_G)\underset{\QCoh(\LS_\cG)}\otimes \QCoh(\LS^{\on{irred}}_\cG) \subset 
\Dmod_{\frac{1}{2}}(\Bun_G)_{\on{cusp}}
\end{equation} 
as full subcategories of $\Dmod_{\frac{1}{2}}(\Bun_G)$.
\end{cor}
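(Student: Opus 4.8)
The plan is to deduce the inclusion \eqref{e:irred is cusp} purely formally from \propref{p:Eis is red}, using the recollement of $\QCoh(\LS_\cG)$-module categories attached to the open--closed decomposition $\LS^{\on{irred}}_\cG\hookrightarrow\LS_\cG\hookleftarrow\LS^{\on{red}}_\cG$. Write $\bC:=\Dmod_{\frac{1}{2}}(\Bun_G)$, viewed as a module over the rigid symmetric monoidal category $\QCoh(\LS_\cG)$. The first step is to record the general mechanism: the restriction functor $\jmath^*\colon\QCoh(\LS_\cG)\to\QCoh(\LS^{\on{irred}}_\cG)$ is a localization whose kernel is the $\otimes$-ideal $\QCoh_{\LS^{\on{red}}_\cG}(\LS_\cG)$ of objects set-theoretically supported on $\LS^{\on{red}}_\cG$, and this localization is smashing (equivalently, the right adjoint $\jmath_*$ is colimit-preserving and $\QCoh(\LS_\cG)$-linear); therefore the induced functor $\jmath^*_\bC:=\id_\bC\otimes\jmath^*\colon\bC\to\bC\underset{\QCoh(\LS_\cG)}\otimes\QCoh(\LS^{\on{irred}}_\cG)$ is again a localization, its kernel is $\bC_{\LS^{\on{red}}_\cG}:=\bC\underset{\QCoh(\LS_\cG)}\otimes\QCoh_{\LS^{\on{red}}_\cG}(\LS_\cG)$, and it admits a fully faithful right adjoint. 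Consequently, when $\bC\underset{\QCoh(\LS_\cG)}\otimes\QCoh(\LS^{\on{irred}}_\cG)$ is regarded as a full subcategory of $\bC$ (via that right adjoint, which is the meaning of \eqref{e:irred is cusp}), it coincides with the right-orthogonal complement $\left(\bC_{\LS^{\on{red}}_\cG}\right)^\perp$.

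Next I would reinterpret \propref{p:Eis is red} as the assertion $\Dmod_{\frac{1}{2}}(\Bun_G)_{\Eis}\subseteq\bC_{\LS^{\on{red}}_\cG}$. Indeed, $\Dmod_{\frac{1}{2}}(\Bun_G)_{\Eis}$ is a $\QCoh(\LS_\cG)$-submodule of $\bC$ (this is implicit in the very formulation of \propref{p:Eis is red}, and follows from the compatibility of the Eisenstein functors with the Hecke action), so the inclusion $\Dmod_{\frac{1}{2}}(\Bun_G)_{\Eis}\hookrightarrow\bC$ is $\QCoh(\LS_\cG)$-linear. Applying the functor $-\underset{\QCoh(\LS_\cG)}\otimes\QCoh(\LS^{\on{irred}}_\cG)$ to this inclusion and invoking \eqref{e:Eis is red}, the composite $\Dmod_{\frac{1}{2}}(\Bun_G)_{\Eis}\hookrightarrow\bC\to\bC\underset{\QCoh(\LS_\cG)}\otimes\QCoh(\LS^{\on{irred}}_\cG)$ factors through the zero category; hence every object of $\Dmod_{\frac{1}{2}}(\Bun_G)_{\Eis}$ lies in $\ker(\jmath^*_\bC)=\bC_{\LS^{\on{red}}_\cG}$.

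Finally I would combine the two steps: any object of $\bC\underset{\QCoh(\LS_\cG)}\otimes\QCoh(\LS^{\on{irred}}_\cG)=\left(\bC_{\LS^{\on{red}}_\cG}\right)^\perp$, viewed inside $\bC$, is a fortiori right-orthogonal to the smaller subcategory $\Dmod_{\frac{1}{2}}(\Bun_G)_{\Eis}$, i.e. lies in $\left(\Dmod_{\frac{1}{2}}(\Bun_G)_{\Eis}\right)^\perp=\Dmod_{\frac{1}{2}}(\Bun_G)_{\on{cusp}}$, which is exactly \eqref{e:irred is cusp}. I do not expect any genuine obstacle here --- as the text says, this is a formal consequence --- so the ``hard part'' is only to state cleanly the two structural inputs used: first, that $\Dmod_{\frac{1}{2}}(\Bun_G)_{\Eis}$ is a $\QCoh(\LS_\cG)$-submodule (so that \propref{p:Eis is red} is meaningful and its content can be transported along $-\otimes\QCoh(\LS^{\on{irred}}_\cG)$); and second, that $\QCoh(\LS^{\on{irred}}_\cG)$ is a smashing localization of $\QCoh(\LS_\cG)$, so that tensoring it into $\bC$ yields a localization with fully faithful right adjoint whose essential image is precisely $\left(\bC_{\LS^{\on{red}}_\cG}\right)^\perp$. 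Both are standard, and I would simply cite them.
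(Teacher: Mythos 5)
Your argument is correct and is exactly the formal mechanism the paper has in mind when it says the inclusion is "a formal consequence of Proposition~\ref{p:Eis is red}": you unpack the smashing localization $\jmath^*\colon\QCoh(\LS_\cG)\to\QCoh(\LS^{\on{irred}}_\cG)$, tensor it into $\bC=\Dmod_{\frac{1}{2}}(\Bun_G)$ to identify $\bC\underset{\QCoh(\LS_\cG)}\otimes\QCoh(\LS^{\on{irred}}_\cG)$ with $\bigl(\bC_{\LS^{\on{red}}_\cG}\bigr)^\perp$, use \eqref{e:Eis is red} to place $\Dmod_{\frac{1}{2}}(\Bun_G)_{\Eis}$ inside $\bC_{\LS^{\on{red}}_\cG}$, and pass to right-orthogonals. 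The paper gives no explicit proof, so there is nothing to diverge from, and your write-up correctly supplies the two structural inputs (that $\Dmod_{\frac{1}{2}}(\Bun_G)_{\Eis}$ is a $\QCoh(\LS_\cG)$-submodule, and that $\jmath^*$ is smashing) that the phrase "formal consequence" tacitly assumes.
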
 

In fact, a stronger assertion is true (to be proved in \secref{sss:cusp is irred}):

\begin{thm} \label{t:cusp is irred}
The inclusion \eqref{e:irred is cusp} is an equality.
\end{thm}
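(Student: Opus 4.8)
The plan is to promote the inclusion \eqref{e:irred is cusp} from Corollary \ref{c:irrred is cusp} to an equality by producing a retraction, i.e.\ by showing that every cuspidal object already lies in $\Dmod_{\frac{1}{2}}(\Bun_G)\underset{\QCoh(\LS_\cG)}\otimes \QCoh(\LS^{\on{irred}}_\cG)$. The key structural input is that $\QCoh(\LS_\cG)$ acts on $\Dmod_{\frac{1}{2}}(\Bun_G)$, and correspondingly on each of the subcategories $\Dmod_{\frac{1}{2}}(\Bun_G)_{\Eis}$ and $\Dmod_{\frac{1}{2}}(\Bun_G)_{\on{cusp}}$; the latter because $\Dmod_{\frac{1}{2}}(\Bun_G)_{\on{cusp}}=\bigcap_P \ker(\on{CT}_{*})$ and each $\on{CT}_*$ is $\QCoh(\LS_\cG)$-linear (via the map $\LS_{\cP}\to\LS_\cG$ and the corresponding functoriality on the Levi side). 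So the decomposition $\LS_\cG=\LS_\cG^{\on{irred}}\sqcup\LS_\cG^{\on{red}}$ induces, on each $\QCoh(\LS_\cG)$-module category $\CC$ involved, a semiorthogonal-type decomposition into the part supported on $\LS_\cG^{\on{irred}}$ — namely $\CC\otimes_{\QCoh(\LS_\cG)}\QCoh(\LS_\cG^{\on{irred}})$ — and the part supported on $\LS_\cG^{\on{red}}$.

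First I would observe, using Theorem \ref{t:equiv on Eis} transported through $\BL_G$ (or directly), that $\Dmod_{\frac{1}{2}}(\Bun_G)_{\Eis}$ is \emph{exactly} the part of $\Dmod_{\frac{1}{2}}(\Bun_G)$ set-theoretically supported on $\LS_\cG^{\on{red}}$: Proposition \ref{p:Eis is red} gives one inclusion, and the reverse inclusion follows from the fact that $\IndCoh_\Nilp(\LS_\cG)_{\on{red}}$ is precisely the part of $\IndCoh_\Nilp(\LS_\cG)$ supported on $\LS_\cG^{\on{red}}$ together with the equivalence $\BL_{G,\Eis}$ and the compatibility of $\BL_G$ with the $\QCoh(\LS_\cG)$-actions. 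Next I would use that, since $\Dmod_{\frac{1}{2}}(\Bun_G)_{\Eis}\subset\Dmod_{\frac{1}{2}}(\Bun_G)$ is a $\QCoh(\LS_\cG)$-submodule whose objects are supported on $\LS_\cG^{\on{red}}$, the complementary localization $\Dmod_{\frac{1}{2}}(\Bun_G)\otimes_{\QCoh(\LS_\cG)}\QCoh(\LS_\cG^{\on{irred}})$ is computed equally well as a quotient of $\Dmod_{\frac{1}{2}}(\Bun_G)_{\on{cusp}}$ — because tensoring the defining fiber sequence of categories with $\QCoh(\LS_\cG^{\on{irred}})$ kills the $\Eis$ part. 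This identifies $\Dmod_{\frac{1}{2}}(\Bun_G)\otimes_{\QCoh(\LS_\cG)}\QCoh(\LS_\cG^{\on{irred}})$ with $\Dmod_{\frac{1}{2}}(\Bun_G)_{\on{cusp}}\otimes_{\QCoh(\LS_\cG)}\QCoh(\LS_\cG^{\on{irred}})$.

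It then remains to see that the localization functor $\Dmod_{\frac{1}{2}}(\Bun_G)_{\on{cusp}}\to \Dmod_{\frac{1}{2}}(\Bun_G)_{\on{cusp}}\otimes_{\QCoh(\LS_\cG)}\QCoh(\LS_\cG^{\on{irred}})$ is an equivalence, i.e.\ that $\Dmod_{\frac{1}{2}}(\Bun_G)_{\on{cusp}}$ is already supported on $\LS_\cG^{\on{irred}}$ — equivalently, that $\Dmod_{\frac{1}{2}}(\Bun_G)_{\on{cusp}}\otimes_{\QCoh(\LS_\cG)}\QCoh(\LS_\cG^{\on{red}})=0$. This is where I expect the real work. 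The idea is: an object of $\Dmod_{\frac{1}{2}}(\Bun_G)_{\on{cusp}}$ supported on $\LS_\cG^{\on{red}}$ lies in $\Dmod_{\frac{1}{2}}(\Bun_G)_{\on{cusp}}\cap\Dmod_{\frac{1}{2}}(\Bun_G)_{\Eis}$ by the characterization of the $\Eis$ part in the first step, and this intersection is zero since the two subcategories are mutually orthogonal by definition of cuspidality. One has to be a little careful that "supported on $\LS_\cG^{\on{red}}$" genuinely lands an object in the subcategory $\Dmod_{\frac{1}{2}}(\Bun_G)_{\Eis}$ and not merely in its idempotent-completion or some colimit-closure; this should follow because the $\QCoh(\LS_\cG)$-action is compactly generated and the decomposition along the open--closed stratification of $\LS_\cG$ is compatible with the action, so the "red-supported" part of any $\QCoh(\LS_\cG)$-module category is the essential image of its own copy of $\QCoh(\LS_\cG^{\on{red}})$-comodules, which for $\Dmod_{\frac{1}{2}}(\Bun_G)$ is exactly $\Dmod_{\frac{1}{2}}(\Bun_G)_{\Eis}$. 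The main obstacle, then, is establishing cleanly that $\Dmod_{\frac{1}{2}}(\Bun_G)_{\Eis}$ equals the red-supported part of $\Dmod_{\frac{1}{2}}(\Bun_G)$ rather than being merely contained in it; granting that, the theorem is formal.
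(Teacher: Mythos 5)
Your plan reduces the theorem to the claim that $\Dmod_{\frac{1}{2}}(\Bun_G)_{\Eis}$ is \emph{exactly} the subcategory of $\Dmod_{\frac{1}{2}}(\Bun_G)$ set-theoretically supported on $\LS_\cG^{\on{red}}$, and you assert that the reverse inclusion (red-supported $\Rightarrow$ Eisenstein) follows from \thmref{t:equiv on Eis} plus $\QCoh(\LS_\cG)$-linearity of $\BL_G$. This is where there is a genuine gap. Given the recollement
$$\Dmod_{\frac{1}{2}}(\Bun_G)_{\Eis}\hookrightarrow \Dmod_{\frac{1}{2}}(\Bun_G)\twoheadrightarrow \Dmod_{\frac{1}{2}}(\Bun_G)_{\on{cusp}},$$
together with the already-known containments ($\Eis\subset$ red-supported, irred-supported $\subset$ cusp, from \propref{p:Eis is red} and \corref{c:irrred is cusp}), the statement ``Eis $=$ red-supported'' is formally \emph{equivalent} to ``cusp $=$ irred-supported,'' i.e.\ to the theorem you are trying to prove. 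So you have not reduced the problem — you have restated it.

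To see concretely why your cited ingredients do not suffice: take $\CF$ red-supported and consider the exact triangle with $\CF_{\Eis}\in\Eis$ and $\CF_{\on{cusp}}\in\on{cusp}$; then $\CF_{\on{cusp}}$ is red-supported, and $\BL_G(\CF_{\on{cusp}})$ is simultaneously red-supported (by linearity) and irred-supported (by \eqref{e:L cusp diagram in}), hence zero. But from $\BL_G(\CF_{\on{cusp}})=0$ you cannot deduce $\CF_{\on{cusp}}=0$ without knowing $\BL_{G,\on{cusp}}$ is \emph{conservative}. The equivalence $\BL_{G,\Eis}$ only controls $\BL_G$ on $\Eis$, not on a hypothetical larger red-supported subcategory. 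This conservativity is precisely \thmref{t:cusp cons} (imported from \cite{FR} and \cite{Be2}), and it is the one nontrivial external input that the paper's proof uses: having rewritten the assertion as
$$\Dmod_{\frac{1}{2}}(\Bun_G)_{\on{cusp}}\underset{\QCoh(\LS_\cG)}\otimes \QCoh(\LS_\cG)_{\on{red}}=0,$$
one applies $\BL_{G,\on{cusp}}$, observes the image lands in $\IndCoh_\Nilp(\LS^{\on{irred}}_\cG)\underset{\QCoh(\LS_\cG)}\otimes\QCoh(\LS_\cG)_{\on{red}}=0$, and then invokes conservativity. You flagged at the end that establishing ``Eis $=$ red-supported'' is the main obstacle, but your proposal does not name the tool that overcomes it, and the tools you do name cannot do the job.
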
 

\sssec{} \label{sss:linear monad}

By the construction of the functor $\BL_G$, it is $\QCoh(\LS_\cG)$-linear, where 
$\QCoh(\LS_\cG)$ acts on $\IndCoh_\Nilp(\LS_\cG)$ naturally.

\medskip

Since the symmetric monoidal category $\QCoh(\LS_\cG)$ is rigid, we obtain that the functor
$\BL_G^L$ is also equipped with a natural $\QCoh(\LS_\cG)$-linear structure. 

\medskip

Moreover, the monad 
$$\BL_G^L\circ \BL_G,$$
acting on $\IndCoh_\Nilp(\LS_\cG)$, is $\QCoh(\LS_\cG)$-linear. 

\sssec{Second proof of \propref{p:LL preserves cusp}} \label{sss:2nd proof LL preserves cusp}

By $\QCoh(\LS_\cG)$-linearity, the functor $\BL^L_G$ sends 
$$\IndCoh_\Nilp(\LS_\cG) \underset{\QCoh(\LS_\cG)}\otimes \QCoh(\LS^{\on{irred}}_\cG),$$
viewed as a full subcategory of $\IndCoh_\Nilp(\LS_\cG)$, to
$$\Dmod_{\frac{1}{2}}(\Bun_G)\underset{\QCoh(\LS_\cG)}\otimes \QCoh(\LS^{\on{irred}}_\cG),$$
viewed as a full subcategory of $\Dmod_{\frac{1}{2}}(\Bun_G)$.

\medskip

However, the former is tautologically the same as $\IndCoh_\Nilp(\LS^{\on{irred}}_\cG)$, while the latter is
contained in $\Dmod_{\frac{1}{2}}(\Bun_G)_{\on{cusp}}$ by \corref{c:irrred is cusp}.

\qed[\propref{p:LL preserves cusp}]

\sssec{Proof of \propref{p:Eis is red}} \label{sss:Eis is red} 

According to \cite{BG}\footnote{The paper \cite{BG} only treats the case of $P=B$. The general case will be treated
in a forthcoming paper of J.~F\ae{}german and A.~Hayash.}, the functor $\Eis_!$ can be factored as a composition
$$\Dmod_{\frac{1}{2}}(\Bun_M)\to
\Dmod_{\frac{1}{2}}(\Bun_M)\underset{\QCoh(\LS_\cM)}\otimes \QCoh(\LS_\cP) \overset{\Eis_!^{\on{part.enh}}}\longrightarrow 
\Dmod_{\frac{1}{2}}(\Bun_G),$$
where the functor $\Eis_!^{\on{part.enh}}$ is $\QCoh(\LS_\cG)$-linear.

\medskip

This implies the required assertion.

\qed[\propref{p:Eis is red}]

\ssec{Conservativity}

In this subsection we recall a crucial result from \cite{FR}, which says that the functor $\BL_{G,\on{cusp}}$
is conservative. 

\medskip

In a sense, this unveils the main reason why GLC holds: that the functor $\BL_G$ does not lose
information (in a very coarse sense, by sending some objects to zero). 

\sssec{} \label{sss:cons}

We now import the following result from \cite[Theorem A]{FR} (which is a combination of \cite[Theorem B]{FR}
and \cite[Theorem A]{Be2}):

\begin{thm} \label{t:cusp cons}
The functor $\BL_{G,\on{cusp}}$ is conservative.
\end{thm}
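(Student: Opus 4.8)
The plan is to deduce Theorem~\ref{t:cusp cons} from the upper Whittaker square \eqref{e:FCD up cusp} together with a single substantive geometric input, namely that cuspidal objects are \emph{tempered}; this is the shape of the argument carried out in \cite{FR} (which combines \cite[Theorem B]{FR} with \cite[Theorem A]{Be2}), and I will indicate how the pieces assemble and isolate the hard step.

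\emph{Step 1: reduction to the Whittaker coefficient functor.} The commutativity of \eqref{e:FCD up cusp} (a cuspidal instance of \eqref{e:L and coeff}) gives an identification of functors
\[
(\Gamma^{\on{spec},\IndCoh}_\cG\circ\jmath_*)\circ\BL_{G,\on{cusp}}\;\simeq\;\on{CS}_G\circ(\on{coeff}_G\circ\be)
\]
up to the shift by $2\delta_{N_{\rho(\omega_X)}}$. If $\BL_{G,\on{cusp}}(\F)=0$, applying the right vertical arrow shows $\on{CS}_G(\on{coeff}_G(\be(\F)))=0$, and since $\on{CS}_G$ is an equivalence we get $\on{coeff}_G(\be(\F))=0$. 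Hence $\BL_{G,\on{cusp}}$ is conservative as soon as the composite $\on{coeff}_G\circ\be$ is conservative. (Note that for this implication one does \emph{not} need the full faithfulness of $\Gamma^{\on{spec},\IndCoh}_\cG\circ\jmath_*$ — that property is what powers the ambidexterity argument, but here only the mere existence of the square is used.) So the first step is to reduce Theorem~\ref{t:cusp cons} to: a nonzero cuspidal object of $\Dmod_{\frac12}(\Bun_G)$ has nonzero Whittaker coefficients.

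\emph{Step 2: conservativity of $\on{coeff}_G$ on cuspidal objects.} For $G=GL_n$ this is the classical geometric Whittaker-genericity of cusp forms already invoked in \secref{ss:GLn}, but for general reductive $G$ cuspidality does not imply global genericity, and this is where \cite{FR} and \cite{Be2} are needed. The argument factors into two parts: (a) a \emph{temperedness} statement — every cuspidal object lies in the tempered subcategory of $\Dmod_{\frac12}(\Bun_G)$, proved via the vanishing of suitable Borel--Moore homology of Whittaker-type coinvariants along $\Bun_G$; and (b) the fact that the tempered subcategory is, essentially by construction, the localizing subcategory generated by the essential image of the Whittaker--Poincaré functor $\on{Poinc}_{G,!}$. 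Granting (a) and (b): if $\F$ is cuspidal with $\on{coeff}_G(\F)=0$, then $\Hom(\on{Poinc}_{G,!}(-),\F)\simeq\Hom(-,\on{coeff}_G(\F))=0$ by adjunction, and since $\F$ lies in the subcategory generated by $\on{im}(\on{Poinc}_{G,!})$ we conclude $\F=0$; thus $\on{coeff}_G\circ\be$ is conservative and Step~1 finishes the proof.

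The main obstacle is part (a) of Step~2: establishing that cuspidal $D$-modules on $\Bun_G$ are tempered (a form of the geometric Ramanujan property). This is the technical heart of \cite{FR}+\cite{Be2}; once it is available, everything else — the Borel--Moore homology bookkeeping in (b) and the passage back through the Whittaker square in Step~1 — is formal. I also record an alternative to Step~1 that bypasses Whittaker theory: since $\BL_{G,\on{cusp}}$ is $\QCoh(\LS^{\on{irred}}_\cG)$-linear (\secref{sss:linear monad}) and $\Dmod_{\frac12}(\Bun_G)_{\on{cusp}}\simeq\Dmod_{\frac12}(\Bun_G)\underset{\QCoh(\LS_\cG)}\otimes\QCoh(\LS^{\on{irred}}_\cG)$ is dualizable over the rigid base $\QCoh(\LS^{\on{irred}}_\cG)$ (using \thmref{t:cusp is irred} and \corref{c:irrred is cusp}), conservativity can be tested after $!$-fiberwise base change at the geometric points $\sigma\in\LS^{\on{irred}}_\cG$, reducing to nonvanishing of the image of any nonzero object of the cuspidal Hecke eigencategory at $\sigma$; but this reformulation still ultimately rests on the same temperedness input.
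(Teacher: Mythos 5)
The paper does not actually prove this theorem: it imports it verbatim as \cite[Theorem A]{FR}, which the text describes as a combination of \cite[Theorem B]{FR} and \cite[Theorem A]{Be2}. So there is no in-paper argument to compare your proposal against, and your proposal in fact defers the substantive content to the same two references. With that understood, your Step~1 reduction is correct and worth recording: the commutativity of \eqref{e:FCD up cusp} (i.e., \eqref{e:L and coeff} restricted along \eqref{e:L cusp diagram in}), together with $\on{CS}_G$ being an equivalence, shows that conservativity of $\on{coeff}_G\circ\be$ implies conservativity of $\BL_{G,\on{cusp}}$; as you observe, no fully-faithfulness of $\Gamma^{\on{spec},\IndCoh}_\cG\circ\jmath_*$ is needed for this direction, and this is exactly the logic behind the paper's remark that for $G=GL_n$ the theorem follows from \thmref{t:coeff ff on cusp}. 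Your Step~2 (temperedness of cuspidal $D$-modules, plus generation of the tempered subcategory by the image of $\on{Poinc}_{G,!}$, followed by the adjunction argument) is a plausible reconstruction of the architecture of \cite{FR}+\cite{Be2}, but you correctly acknowledge it is a citation rather than a proof, which matches the paper's own treatment.

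One genuine flaw: the ``alternative to Step~1'' at the end of your proposal is circular. You invoke \thmref{t:cusp is irred} to identify $\Dmod_{\frac{1}{2}}(\Bun_G)_{\on{cusp}}$ with the base change $\Dmod_{\frac{1}{2}}(\Bun_G)\underset{\QCoh(\LS_\cG)}\otimes\QCoh(\LS^{\on{irred}}_\cG)$, but the paper's proof of \thmref{t:cusp is irred} (see \secref{sss:cusp is irred}) uses \thmref{t:cusp cons} as its key input. (Only the inclusion of \corref{c:irrred is cusp} is available without the present theorem.) So that alternative route cannot replace the Whittaker-square reduction unless you supply an independent proof of \thmref{t:cusp is irred}.
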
 

\begin{rem}
Note that in the case when $G=GL_n$, the assertion of \thmref{t:cusp cons} follows immediately
from (the much more elementary) \thmref{t:coeff ff on cusp} below.
\end{rem}

\sssec{}

We now claim: 

\begin{thm} \label{t:L is cons}
The functor $\BL_G$ is conservative.
\end{thm}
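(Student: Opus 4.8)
The plan is to reduce the conservativity of the full functor $\BL_G$ to the conservativity of its cuspidal truncation $\BL_{G,\on{cusp}}$ (\thmref{t:cusp cons}) together with the equivalence on the Eisenstein part (\thmref{t:equiv on Eis}). The basic strategy is a d\'evissage with respect to the semiorthogonal decomposition
$$\Dmod_{\frac{1}{2}}(\Bun_G) = \langle \Dmod_{\frac{1}{2}}(\Bun_G)_{\on{cusp}},\ \Dmod_{\frac{1}{2}}(\Bun_G)_{\Eis}\rangle,$$
matched under $\BL_G$ with the decomposition of $\IndCoh_\Nilp(\LS_\cG)$ into $\IndCoh_\Nilp(\LS^{\on{irred}}_\cG)$ and $\IndCoh_\Nilp(\LS_\cG)_{\on{red}}$ via $\jmath_*$ and the reducible locus.

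\medskip

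Here are the steps in order. First I would fix $\CF\in \Dmod_{\frac{1}{2}}(\Bun_G)$ with $\BL_G(\CF)=0$ and form the exact triangle $\be(\be^L\CF)[-1]\to \CF'\to \CF$, or rather use that $\CF$ sits in a triangle relating its image under the cuspidal quotient and its ``Eisenstein part''; concretely, let $\CF_{\Eis}$ denote the (co)localization of $\CF$ onto $\Dmod_{\frac{1}{2}}(\Bun_G)_{\Eis}$ and $\CF_{\on{cusp}}=\be^L(\CF)$. Applying $\BL_G$ and using the commutative diagrams \eqref{e:L cusp diagram in} and \eqref{e:L cusp diagram out}, the vanishing of $\BL_G(\CF)$ gives, after applying $\jmath^*$, that $\BL_{G,\on{cusp}}(\CF_{\on{cusp}})=\jmath^*\BL_G(\CF)=0$; by \thmref{t:cusp cons} this forces $\CF_{\on{cusp}}=0$, i.e. $\CF\in \Dmod_{\frac{1}{2}}(\Bun_G)_{\Eis}$. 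Second, knowing now that $\CF$ is Eisenstein, I would invoke \thmref{t:equiv on Eis}: the functor $\BL_{G,\Eis}$ is an equivalence, so $\BL_G(\CF)=\jmath_*\big(\text{irred part}\big)\oplus(\text{red part})$ has its reducible component equal to $\BL_{G,\Eis}(\CF)$ up to the identifications, and its vanishing forces $\CF=0$. The one subtlety is checking that the two decompositions are genuinely compatible under $\BL_G$, i.e. that $\BL_G$ carries $\Dmod_{\frac{1}{2}}(\Bun_G)_{\Eis}$ into $\IndCoh_\Nilp(\LS_\cG)_{\on{red}}$ and the cuspidal part into the irreducible locus; but this is exactly what \eqref{e:L cusp diagram in}, \eqref{e:L cusp diagram out} and the discussion around \corref{c:reduce to cusp} provide, since $\BL_{G,\on{cusp}}$ and $\BL_{G,\Eis}$ are both well-defined.

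\medskip

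To assemble this cleanly I would argue as follows: conservativity is equivalent to the statement that $\BL_G(\CF)=0 \Rightarrow \CF=0$, and since $\Dmod_{\frac{1}{2}}(\Bun_G)$ is generated by its two pieces, it suffices to show $\BL_G$ is conservative on each of $\Dmod_{\frac{1}{2}}(\Bun_G)_{\on{cusp}}$ and $\Dmod_{\frac{1}{2}}(\Bun_G)_{\Eis}$ and that these are carried to ``transverse'' subcategories. The first is \thmref{t:cusp cons} (giving conservativity of $\be$ followed by $\BL_G$, since $\jmath_*$ is fully faithful and $\jmath^*\circ\BL_G\circ\be=\BL_{G,\on{cusp}}$). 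The second is immediate from \thmref{t:equiv on Eis}, an equivalence being in particular conservative. Transversality: if $\BL_G(\CF)=0$, then $\jmath^*\BL_G(\CF)=0$ kills the cuspidal projection and $\Psi_{\{0\},\Nilp}$ or restriction to the reducible locus kills the Eisenstein projection, hence $\CF$ is simultaneously cuspidal and Eisenstein, so $\CF\in \Dmod_{\frac{1}{2}}(\Bun_G)_{\on{cusp}}\cap\Dmod_{\frac{1}{2}}(\Bun_G)_{\Eis}=0$ (the latter is zero since $\Dmod_{\frac{1}{2}}(\Bun_G)_{\on{cusp}}=(\Dmod_{\frac{1}{2}}(\Bun_G)_{\Eis})^\perp$).

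\medskip

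The main obstacle I anticipate is purely organizational rather than deep: making precise that the semiorthogonal gluing triangle $\CF_{\on{cusp}}'\to \CF\to \CF_{\Eis}$ is sent by $\BL_G$ to the corresponding gluing triangle $\jmath_!\jmath^*\to \id\to$ (reducible part) on the spectral side, so that vanishing of the total object forces vanishing of both ends. This requires knowing that $\BL_G$ intertwines the recollement data — which follows from the commutations \eqref{e:L cusp diagram in}, \eqref{e:L cusp diagram out}, \eqref{e:LL cusp diagram in}, \eqref{e:LL cusp diagram out} established above, together with \corref{c:irrred is cusp}/\thmref{t:cusp is irred} identifying the cuspidal category with the irreducible-locus localization. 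Once that bookkeeping is in place the deduction is formal. Note in particular that no new geometric input beyond \thmref{t:cusp cons} and \thmref{t:equiv on Eis} is needed; the content of \thmref{t:L is cons} is the combination of the two.
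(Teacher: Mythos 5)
Your argument is correct and takes essentially the same route as the paper, whose proof is literally the one-line statement that \thmref{t:L is cons} follows ``immediately by combining Theorems \ref{t:cusp cons} and \ref{t:equiv on Eis}''; you have simply unwound that combination via the semiorthogonal decomposition and the commutation \eqref{e:L cusp diagram out}. (A minor slip in an aside only: the fiber of $\id\to\jmath_*\jmath^*$ is the reducible part, so the spectral gluing triangle is not ``$\jmath_!\jmath^*\to\id\to(\text{reducible part})$'' as you wrote, but this plays no role in your actual two-step deduction, which is right.)
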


\begin{proof}

The follows immediately by combining Theorems \ref{t:cusp cons} and \ref{t:equiv on Eis}. 

\end{proof}

\sssec{Proof of \thmref{t:cusp is irred}} \label{sss:cusp is irred}

The assertion of the theorem is equivalent to
\begin{equation} \label{e:cusp red}
\Dmod_{\frac{1}{2}}(\Bun_G)_{\on{cusp}}\underset{\QCoh(\LS_\cG)}\otimes \QCoh(\LS_\cG)_{\on{red}}=0,
\end{equation} 
where $\QCoh(\LS_\cG)_{\on{red}}=\on{ker}(\jmath^*)$.

\medskip

By \thmref{t:cusp cons}, it suffices to show that the functor $\BL_{G,\on{cusp}}$ annihilates the subcategory
\eqref{e:cusp red}. However, $\BL_{G,\on{cusp}}$ sends this category to
$$\IndCoh_\Nilp(\LS^{\on{irred}}_\cG)\underset{\QCoh(\LS_\cG)}\otimes \QCoh(\LS_\cG)_{\on{red}},$$
and the latter is obviously $0$.

\qed[\thmref{t:cusp is irred}]

\ssec{The algebra \texorpdfstring{$\CA_{G,\on{irred}}$}{AG}}

In this subsection we will introduce an object
$$\CA_{G,\on{irred}}\in \on{AssocAlg}(\QCoh(\LS^{\on{irred}}_\cG),$$
which encodes the monad $\BL_{G,\on{cusp}}\circ \BL^L_{G,\on{cusp}}$.

\medskip

We will show that the validity of GLC is equivalent to the fact that the unit map \eqref{e:unit map A}
is an isomorphism.

\medskip

The proof of GLC that will be presented in the sequel to this paper will amount to the showing that the algebraic
geometry and topology of $\LS^{\on{irred}}_\cG$ essentially force this map to be an isomorphism
(modulo a certain computation on the automorphic side). 

\sssec{}

Consider the monad $\BL_{G,\on{cusp}}\circ \BL^L_{G,\on{cusp}}$ on $\IndCoh_\Nilp(\LS^{\on{irred}}_\cG)$ corresponding to the adjoint functors 
\eqref{e:L and LL on cusp}. By \secref{sss:linear monad}, this monad is $\QCoh(\LS_\cG)$-linear.

\medskip

Since the action of $\QCoh(\LS_\cG)$ on $\IndCoh_\Nilp(\LS^{\on{irred}}_\cG)$ factors through
$$\jmath^*:\QCoh(\LS_\cG)\to \QCoh(\LS^{\on{irred}}_\cG),$$
we obtain that $\BL_{G,\on{cusp}}\circ \BL^L_{G,\on{cusp}}$ is $\QCoh(\LS^{\on{irred}}_\cG)$-linear.

\sssec{} \label{sss:AG cusp}

Note that $\Nilp|_{\LS^{\on{irred}}_\cG}=\{0\}$, so 
$$\IndCoh_\Nilp(\LS^{\on{irred}}_\cG)=\QCoh(\LS^{\on{irred}}_\cG).$$

\medskip

Hence, the monad $\BL_{G,\on{cusp}}\circ \BL^L_{G,\on{cusp}}$ is a $\QCoh(\LS^{\on{irred}}_\cG)$-linear
monad on $\QCoh(\LS^{\on{irred}}_\cG)$ itself, and thus corresponds to a unital associative algebra,
to be denoted
$$\CA_{G,\on{irred}}\in \QCoh(\LS^{\on{irred}}_\cG).$$

\sssec{}

The unit of the $(\BL^L_{G,\on{cusp}},\BL_{G,\on{cusp}})$-adjunction corresponds to the unit map
\begin{equation} \label{e:unit map A}
\CO_{\LS^{\on{irred}}_\cG}\to \CA_{G,\on{irred}}.
\end{equation}

Tautologically, the functor $\BL^L_{G,\on{cusp}}$ is fully faithful if and only if the map \eqref{e:unit map A}
is an isomorphism. 

\sssec{}

Given \thmref{t:cusp cons} and \corref{c:reduce to cusp}, we obtain:

\begin{cor}  \label{c:reduce to A irred}
The functor $\BL_G$ is an equivalence if and only if the map \eqref{e:unit map A} is an isomorphism in 
$\QCoh(\LS^{\on{irred}}_\cG)$. 
\end{cor}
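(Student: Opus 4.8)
The plan is to combine three facts that are already on the table: \corref{c:reduce to cusp}, which says that $\BL_G$ is an equivalence if and only if $\BL_{G,\on{cusp}}$ is; the conservativity of $\BL_{G,\on{cusp}}$ from \thmref{t:cusp cons}; and the tautological observation recorded above that $\BL^L_{G,\on{cusp}}$ is fully faithful exactly when the unit map \eqref{e:unit map A} is an isomorphism. By \corref{c:reduce to cusp} it suffices to show that $\BL_{G,\on{cusp}}$ is an equivalence if and only if \eqref{e:unit map A} is an isomorphism.

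One direction is immediate: if $\BL_{G,\on{cusp}}$ is an equivalence then its left adjoint $\BL^L_{G,\on{cusp}}$ is the inverse equivalence, in particular fully faithful, so \eqref{e:unit map A} is an isomorphism. For the converse, I would assume \eqref{e:unit map A} is an isomorphism, so that the unit $\eta\colon \Id \to \BL_{G,\on{cusp}}\circ \BL^L_{G,\on{cusp}}$ of the $(\BL^L_{G,\on{cusp}},\BL_{G,\on{cusp}})$-adjunction is an isomorphism, and then run the standard triangle-identity argument: writing $\epsilon\colon \BL^L_{G,\on{cusp}}\circ \BL_{G,\on{cusp}}\to \Id$ for the counit, the identity $(\BL_{G,\on{cusp}}\,\epsilon)\circ(\eta\,\BL_{G,\on{cusp}}) = \Id_{\BL_{G,\on{cusp}}}$ together with the invertibility of $\eta$ forces $\BL_{G,\on{cusp}}\,\epsilon$ to be invertible; since $\BL_{G,\on{cusp}}$ is conservative by \thmref{t:cusp cons}, $\epsilon$ itself is an isomorphism. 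With unit and counit both isomorphisms, $(\BL^L_{G,\on{cusp}},\BL_{G,\on{cusp}})$ is an adjoint equivalence, and in particular $\BL_{G,\on{cusp}}$ is an equivalence, as desired.

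There is essentially no hard step here — the statement is a formal packaging of \corref{c:reduce to cusp} and \thmref{t:cusp cons} through the algebra $\CA_{G,\on{irred}}$. Its role is organizational: it converts GLC into the concrete question of whether the algebra map $\CO_{\LS^{\on{irred}}_\cG}\to \CA_{G,\on{irred}}$ is an isomorphism, which is the problem taken up — via ambidexterity and the relation to spaces of generic oper structures — in the remainder of this paper and in its sequel.
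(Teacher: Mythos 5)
Your argument is correct and follows exactly the route the paper takes: \corref{c:reduce to cusp} reduces the question to $\BL_{G,\on{cusp}}$, and then the conservativity of $\BL_{G,\on{cusp}}$ from \thmref{t:cusp cons}, together with the observation that \eqref{e:unit map A} being an isomorphism is the same as $\BL^L_{G,\on{cusp}}$ being fully faithful, gives the equivalence by the standard triangle-identity argument. You have merely spelled out the formal categorical step that the paper leaves implicit; the content is identical.
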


\ssec{Proof of GLC for \texorpdfstring{$G=GL_n$}{GLn}} \label{ss:GLn}

In this subsection, we will show how \thmref{t:equiv on Eis} allows us to prove GLC in the case when $G=GL_n$.

\sssec{}

The point of departure is the following result, established in \cite{Be1} (or, in a slightly different language, in \cite{Ga2}):

\begin{thm}  \label{t:coeff ff on cusp}
The restriction of the functor $\on{coeff}_G$ to the subcategory
$$\Dmod_{\frac{1}{2}}(\Bun_G)_{\on{cusp}}\subset \Dmod_{\frac{1}{2}}(\Bun_G)$$
is fully faithful.
\end{thm}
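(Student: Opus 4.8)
The plan is to reduce to a statement about the counit of the $(\on{Poinc}_{G,!},\on{coeff}_G)$-adjunction. Since $\on{coeff}_G$ is the right adjoint of $\on{Poinc}_{G,!}$, full faithfulness of $\on{coeff}_G$ on $\Dmod_{\frac{1}{2}}(\Bun_G)_{\on{cusp}}$ will follow once one shows that the counit
\[
\on{Poinc}_{G,!}\circ\on{coeff}_G\longrightarrow\Id
\]
is an isomorphism on every cuspidal object; equivalently, that the cone of this map annihilates $\Dmod_{\frac{1}{2}}(\Bun_G)_{\on{cusp}}$. So the real task is to compute the functor $\on{Poinc}_{G,!}\circ\on{coeff}_G$ on cuspidal objects.

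The main step I would carry out is to equip $\on{Poinc}_{G,!}\circ\on{coeff}_G$ with a finite, functorial filtration indexed by the standard parabolics $P\subseteq G$, in which the top piece ($P=G$) is $\Id$ and matches the counit under this identification, while for a proper $P$ with Levi quotient $M$ the $P$-th associated graded piece is, up to cohomological shifts and twists by constant lines, of the form
\[
\Eis_{P,!}\circ\bigl(\on{Poinc}_{M,!}\circ\on{coeff}_M\bigr)\circ\on{CT}_{P,*}.
\]
Granting this, the theorem is immediate: on a cuspidal $\F$ every proper-$P$ graded piece vanishes because it factors through $\on{CT}_{P,*}(\F)=0$ (this being the definition of cuspidality here), and, the filtration being finite, $\on{Poinc}_{G,!}\circ\on{coeff}_G(\F)$ collapses to its $P=G$ piece, i.e.\ to $\F$, compatibly with the counit. (It is cleaner to reorganize so that it is the \emph{cone} of the counit that carries the $P$-filtration, now indexed by proper parabolics only, but that is harmless bookkeeping.)

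Constructing this filtration is the geometric heart of the matter, and is where \cite{Ga2} (and, in a more modern formulation, \cite{Be1}) do their work. Both $\on{coeff}_G$ and $\on{Poinc}_{G,!}$ are given by pull/push along the (Ran-space, $\rho(\omega_X)$-twisted) Whittaker correspondence, with the standard nondegenerate character on $\Bun_N$; so $\on{Poinc}_{G,!}\circ\on{coeff}_G$ is the integral transform along the space of pairs of generic $N$-reductions of a common $G$-bundle (cf.\ a twisted-Whittaker Zastava space), with kernel assembled from that character on each factor. The geometric input is the stratification of this space — equivalently, of the Drinfeld compactification $\BunNb$ in the absolute case, and of $\BunPb$ in the versions relative to Levi subgroups — by the defect of the generalized $N$-reduction, whose strata are indexed by standard parabolics; pulling it through the correspondence produces the filtration above. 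To identify the $P$-th graded piece one needs (a) the parabolic-induction compatibilities of the Poincar\'e and coefficient functors — base-change isomorphisms of ``geometric Jacquet'' type relating $\on{CT}_{P,*}\circ\on{Poinc}_{G,!}$, resp.\ $\on{coeff}_G\circ\Eis_{P,!}$, to the corresponding functor for $M$ composed with an integration along $\Bun_{U(P)}$-type spaces — and (b) a computation of the cohomology of those unipotent fibers, via the standard contractibility/factorization arguments over the Ran space; it is here that the character enters, forcing a stratum with defect along $P$ to contribute a $\on{CT}_{P,*}$-factor instead of a further copy of $\Id$.

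The step I expect to be the real obstacle is (b): keeping track, stratum by stratum and in the $\Ran$-space formulation, of the unipotent integrals together with the $\tfrac{1}{2}$-twist and the character, and verifying that the lower strata assemble \emph{exactly} into $\Eis_{P,!}\circ(\dots)\circ\on{CT}_{P,*}$ rather than merely into something filtered by such terms; the reduction of the theorem to this input is formal. For $G=GL_n$ there is also the alternative, more classical route of \cite{Ga2}: induct on $n$ using the mirabolic (Rankin--Selberg) unfolding, where the character strata degenerate transparently and the vanishing of the proper-parabolic contributions on cuspidal objects is directly visible.
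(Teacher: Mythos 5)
The paper itself does not prove this theorem: it imports it as a black box from \cite{Be1} (and, in an older language, \cite{Ga2}), and, as the remark after \corref{c:L ff on cusp} makes explicit, the result is known unconditionally only for $G=GL_n$ — for general $G$ it would only follow \emph{from} GLC, which is what this whole subsection is trying to establish. Your proposal, read as a reconstruction of what the cited references do, is broadly on track: the reduction to the counit $\on{Poinc}_{G,!}\circ\on{coeff}_G\to\Id$ being an isomorphism on cuspidal objects is correct, and the intended mechanism — a finite, parabolic-indexed filtration on the cone whose $P$-th graded piece factors through $\on{CT}_{P,*}$ on the input and $\Eis_{P,!}$ on the output — is indeed the degeneracy-stratification picture underlying Beraldo's extended Whittaker category.

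Two corrections are needed, though. First, the stratification you want is not the defect stratification of $\BunNb$; those strata are indexed by $\Lambda^{\on{pos}}$-valued divisors, not by parabolics. The parabolic-indexed stratification lives on the space of characters of $N/[N,N]$ (the datum that ``extends'' the Whittaker datum), with a stratum for each subset of simple roots on which the character is required to be nondegenerate; this is what bijects with standard parabolics and is what the cone of the counit is actually sliced by. Second, the step you flag as ``the real obstacle,'' item (b), is not merely the hardest: it is precisely the step that is open for general $G$ and proved only for $GL_n$. That identification of the $P$-th graded piece as genuinely of the shape $\Eis_{P,!}\circ(\cdots)\circ\on{CT}_{P,*}$, with nothing escaping, is the vanishing theorem of \cite{Ga2}, established via mirabolic unfolding; Beraldo's reformulation in \cite{Be1} repackages but does not remove the $GL_n$-dependence. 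A complete write-up should therefore state the $G=GL_n$ hypothesis from the outset — it is the standing assumption of the subsection in which the theorem appears, and it is load-bearing exactly at your step (b).
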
 

\sssec{}

From \thmref{t:coeff ff on cusp} we will now deduce:

\begin{cor} \label{c:L ff on cusp}
The functor $\BL_{G,\on{cusp}}$ is fully faithful.
\end{cor}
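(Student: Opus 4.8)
The plan is to factor $\BL_{G,\on{cusp}}$ through the functor $\on{coeff}_G|_{\on{cusp}}$ using the upper portion \eqref{e:L and coeff} of the Fundamental Commutative Diagram, and then to invoke \thmref{t:coeff ff on cusp}. Precisely, recall that \eqref{e:L and coeff} gives a commutative square in which $\on{CS}_G$ is an equivalence and $\on{coeff}_G[2\delta_{N_{\rho(\omega_X)}}]$ sits in the left vertical slot. Restricting along the embedding $\be:\Dmod_{\frac12}(\Bun_G)_{\on{cusp}}\hookrightarrow \Dmod_{\frac12}(\Bun_G)$ and using the commutative square \eqref{e:L cusp diagram in}, I would obtain a commutative diagram
\[
\begin{CD}
\Dmod_{\frac{1}{2}}(\Bun_G)_{\on{cusp}} @>{\BL_{G,\on{cusp}}}>> \IndCoh_\Nilp(\LS^{\on{irred}}_\cG) \\
@V{\on{coeff}_G\circ\be\,[2\delta]}VV @VV{\Gamma^{\on{spec},\IndCoh}_\cG\circ\jmath_*}V \\
\Rep(\cG)_\Ran @= \Rep(\cG)_\Ran,
\end{CD}
\]
where the bottom identification comes from $\on{CS}_G$ (and I have suppressed the shift in the notation $\delta=\delta_{N_{\rho(\omega_X)}}$). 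Since a cohomological shift is an equivalence, $\on{coeff}_G\circ\be\,[2\delta]$ is fully faithful by \thmref{t:coeff ff on cusp}. The composite $(\Gamma^{\on{spec},\IndCoh}_\cG\circ\jmath_*)\circ\BL_{G,\on{cusp}}$ is therefore fully faithful, and hence so is $\BL_{G,\on{cusp}}$ itself (if a composite $g\circ f$ is fully faithful then $f$ is fully faithful).

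A small point to check is that the right vertical composite $\Gamma^{\on{spec},\IndCoh}_\cG\circ\jmath_*$ really is the relevant arrow: one uses that $\jmath_*$ is fully faithful (it realizes $\IndCoh_\Nilp(\LS^{\on{irred}}_\cG)$ as a full subcategory), so no information is lost there, and that the commutativity of \eqref{e:L and coeff} combined with \eqref{e:L cusp diagram in} gives exactly the square above after composing. I would spell this out: $\BL_G\circ\be\simeq\jmath_*\circ\BL_{G,\on{cusp}}$, so $\Gamma^{\on{spec},\IndCoh}_\cG\circ\BL_G\circ\be\simeq \Gamma^{\on{spec},\IndCoh}_\cG\circ\jmath_*\circ\BL_{G,\on{cusp}}$, and the left-hand side is $\on{CS}_G\circ\on{coeff}_G\circ\be\,[2\delta]$ by \eqref{e:L and coeff}, which is fully faithful.

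The main (and essentially only) obstacle is conceptual packaging rather than a genuine difficulty: one must be careful that the shift $[2\delta_{N_{\rho(\omega_X)}}]$ and any twist by a constant line implicit in the commutation of \eqref{e:L and coeff} are honest equivalences of categories, so that fully faithfulness is unaffected. Everything else is formal 2-categorical nonsense: fully faithfulness of a left factor in a fully faithful composite, plus the fact — already recorded in the excerpt — that $\IndCoh_\Nilp(\LS^{\on{irred}}_\cG)\simeq\QCoh(\LS^{\on{irred}}_\cG)$ so there is no subtlety about which $\IndCoh$ target we land in. Note that this argument does not use \thmref{t:cusp cons} (conservativity) at all, which is consistent with the remark in the excerpt that for $GL_n$ the full statement $\BL_{G,\on{cusp}}$ being an equivalence will follow by separately establishing essential surjectivity via the existence of Hecke eigensheaves.
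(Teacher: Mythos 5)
Your overall route matches the paper's: combine \eqref{e:L and coeff} with \eqref{e:L cusp diagram in}, observe that the resulting left vertical composite $\on{coeff}_G[2\delta_{N_{\rho(\omega_X)}}]\circ\be$ is fully faithful by \thmref{t:coeff ff on cusp}, and pass this across the square to $\BL_{G,\on{cusp}}$. There is, however, a genuine gap in the final step. The parenthetical principle you invoke---``if a composite $g\circ f$ is fully faithful then $f$ is fully faithful''---is false in general: take $f:\Vect\to\Vect\times\Vect$, $f(V)=(V,V)$, and $g:\Vect\times\Vect\to\Vect$ the first projection; then $g\circ f=\on{id}$ is fully faithful, but $f$ is faithful without being full. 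What is true, and what is being used implicitly, is that the implication holds provided $g$ is \emph{faithful}.

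In your square $g=\Gamma^{\on{spec},\IndCoh}_\cG\circ\jmath_*$, and establishing its (fully) faithfulness is precisely the content of \propref{p:Gamma ff}, which asserts that $\Gamma^{\on{spec}}_\cG:\QCoh(\LS_\cG)\to\Rep(\cG)_\Ran$ is fully faithful. Combined with the tautological fully faithfulness of $\jmath_*$ and the identification $\IndCoh_\Nilp(\LS^{\on{irred}}_\cG)\simeq\QCoh(\LS^{\on{irred}}_\cG)$, this makes the right vertical composite fully faithful, and only then can you legitimately descend the fully faithfulness of $g\circ\BL_{G,\on{cusp}}$ to that of $\BL_{G,\on{cusp}}$. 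You explicitly noted the fully faithfulness of $\jmath_*$, but never that of $\Gamma^{\on{spec}}_\cG$; without the latter the argument does not close. So the gap is not structural---your route is the paper's---but you omitted the one nontrivial input on the spectral side, namely \propref{p:Gamma ff}.
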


\begin{proof}

From \eqref{e:L and coeff} and  \eqref{e:L cusp diagram in}, we obtain a commutative diagram
$$
\CD
\Whit^!(G)_\Ran @>{\on{CS}_G}>{\sim}>  \Rep(\cG)_\Ran \\
@A{\on{coeff}_G}[2\delta_{N_{\rho(\omega_X)}}]AA @AA{\Gamma^{\on{spec},\IndCoh}_\cG}A \\
\Dmod_{\frac{1}{2}}(\Bun_G) @>{\BL_G}>>  \IndCoh_\Nilp(\LS_\cG) \\
@A{\be}AA @AA{\jmath_*}A \\
\Dmod_{\frac{1}{2}}(\Bun_G)_{\on{cusp}} @>{\BL_{G,\on{cusp}}}>> \IndCoh_\Nilp(\LS^{\on{irred}}_\cG).
\endCD
$$

\medskip

It is sufficient to show that the composite right vertical arrow in the above diagram is fully faithful. Indeed,
this would imply that the fully-faithfulness of the functors
$$\on{coeff}_G[2\delta_{N_{\rho(\omega_X)}}] \circ \be \text{ and } \BL_{G,\on{cusp}}$$
are logically equivalent. 

\medskip

Note that since 
$$\QCoh(\LS^{\on{irred}}_\cG)=\IndCoh_\Nilp(\LS^{\on{irred}}_\cG),$$
the right vertical arrow in the above diagram can be identified with
\begin{equation} \label{e:Gamma cusp}
\QCoh(\LS^{\on{irred}}_\cG) \overset{\jmath_*}\hookrightarrow 
\QCoh(\LS_\cG)\overset{\Gamma^{\on{spec}}_\cG}\longrightarrow  \Rep(\cG)_\Ran.
\end{equation}

Now, in the composition \eqref{e:Gamma cusp} both arrows are fully faithful: this is obvious for
$\jmath_*$, and for $\Gamma^{\on{spec}}_\cG$ this is the content of \propref{p:Gamma ff}.

\end{proof}

\begin{rem}
Note that the proof of \corref{c:L ff on cusp} shows that it is actually logically equivalent to
\thmref{t:coeff ff on cusp}.  Hence, once we establish GLC, we will know that 
\thmref{t:coeff ff on cusp} also holds for any $G$.
\end{rem}

\sssec{}

By \corref{c:reduce to A irred}, in order to prove GLC, we need to show that the map \eqref{e:unit map A}
is an isomorphism in $\QCoh(\LS^{\on{irred}}_\cG)$. Since $\LS^{\on{irred}}_\cG$ is eventually 
coconnective, it is sufficient to show that for any field-valued point
$$\sigma:\Spec(K)\to \LS^{\on{irred}}_\cG,$$
the resulting map
\begin{equation} \label{e:unit map A sigma}
K\to \CA_{G,\sigma}
\end{equation}
is an isomorphism, where $\CA_{G,\sigma}$ denotes the fiber of $\CA_{G,\on{irred}}$ at $\sigma$.

\medskip

Applying base change to the functor $\BL_{G,\on{cusp}}$ along $\sigma$, we obtain a functor
$$\BL_{G,\sigma}:\Dmod_{\frac{1}{2}}(\Bun_G)_{\on{cusp}} \underset{\QCoh(\LS^{\on{irred}}_\cG)}\otimes \Vect_K\to \Vect_K.$$

Since the functor $\BL_{G,\on{cusp}}$ is fully faithful and admits a left adjoint, we obtain that $\BL_{G,\sigma}$ is also
fully faithful. In particular, $\BL_{G,\sigma}$ is conservative.
Hence, by Barr-Beck, it can be identified with the forgetful functor
$$\CA_{G,\sigma}\mod \to \Vect_K.$$

Such a functor can be fully faithful either when \eqref{e:unit map A sigma} is an isomorphism (which is what we want to show), or when 
$\CA_{G,\sigma}=0$. Thus, it remains to rule out the latter possibility. 

\sssec{}

We need to show that the category 
$$\Dmod_{\frac{1}{2}}(\Bun_G)_\sigma:=\Dmod_{\frac{1}{2}}(\Bun_G)_{\on{cusp}} \underset{\QCoh(\LS^{\on{irred}}_\cG)}\otimes \Vect_K$$
is non-zero. For this, we can further replace $K$ by its algebraic closure. 

\medskip

Performing base change $k\rightsquigarrow K$, we can assume that $K=k$. Then the category $\Dmod_{\frac{1}{2}}(\Bun_G)_\sigma$
is, by definition, the category of \emph{Hecke eigen-sheaves} with respect to $\sigma$.

\medskip

However, it was shown in \cite{FGV} that for $G=GL_n$ and $\sigma$ irreducible, the category $\Dmod_{\frac{1}{2}}(\Bun_G)_\sigma$ contains a non-zero object.

\begin{rem}

Alternatively, the proof of the existence of a non-zero Hecke eigensheaf for a given irreducible local system, valid for any $G$, 
follows by combining the \cite{BD1} construction of Hecke eigensheaves via localization at the critical level and the result of \cite{Ari},
which says that any irreducible local system carries a generic oper structure. 

\end{rem}

\qed[GLC for $G=GL_n$]

\section{Left adjoint as the dual} \label{s:left}

In this section we will establish the "first half" of the Ambidexterity Theorem, namely that the functor \emph{left adjoint}
to $\BL_{G,\on{cusp}}$ is, up to a certain twist, is canonically isomorphic to its dual. 

\medskip

In order to do so, we will first have to show that the source and the target of $\BL_{G,\on{cusp}}$ are canonically self-dual. 

\ssec{The dual automorphic category} \label{ss:autom co}

In this subsection we recall, following \cite{DG} or \cite{Ga1}, the description of the dual of the category $\Dmod_{\frac{1}{2}}(\Bun_G)$. 

\sssec{}

Recall the category $\Dmod_{\frac{1}{2}}(\Bun_G)_{\on{co}}$. It is defined as the \emph{colimit} 
$$\underset{U}{\on{colim}}\, \Dmod_{\frac{1}{2}}(U),$$
where $U$ runs over the poset of quasi-compact open substacks of $\Bun_G$, and for
$U_1\overset{j_{1,2}}\to U_2$
the corresponding functor 
$$\Dmod_{\frac{1}{2}}(U_1)\to \Dmod_{\frac{1}{2}}(U_2)$$
is $(j_{1,2})_*$.

\medskip

For a given quasi-compact open 
\begin{equation} \label{e:qc open}
U\overset{j}\hookrightarrow \Bun_G,
\end{equation} 
we let
$$j_{\on{co},*}:\Dmod_{\frac{1}{2}}(U)\to \Dmod_{\frac{1}{2}}(\Bun_G)_{\on{co}}$$
denote the tautological functor. 

\sssec{}

The category $\Dmod_{\frac{1}{2}}(\Bun_G)_{\on{co}}$ is endowed with a tautologically defined functor 
$$\on{Ps-Id}^{\on{nv}}:\Dmod_{\frac{1}{2}}(\Bun_G)_{\on{co}}\to \Dmod_{\frac{1}{2}}(\Bun_G),$$
characterized by the following property: 

\medskip

For a quasi-compact open as in \eqref{e:qc open}, we have 
\begin{equation} \label{e:PsId and j}
\on{Ps-Id}^{\on{nv}}\circ j_{\on{co},*}\simeq j_*,
\end{equation}
as functors 
$$\Dmod_{\frac{1}{2}}(U)\to \Dmod_{\frac{1}{2}}(\Bun_G).$$

\sssec{}

Note that Verdier duality on $\Bun_G$ gives rise to a canonical identification 
\begin{equation} \label{e:Verdier on Bun}
\Dmod_{\frac{1}{2}}(\Bun_G)^\vee \overset{\bD^{\on{Verdier}}}\simeq \Dmod_{\frac{1}{2}}(\Bun_G)_{\on{co}}.
\end{equation} 

It is characterized by the requirement that for \eqref{e:qc open}, we have
$$j_{co,*} \simeq (j^*)^\vee,$$
where we identify 
$$\Dmod_{\frac{1}{2}}(U)^\vee \simeq \Dmod_{\frac{1}{2}}(U)$$
via usual Verdier duality, also denoted $\bD^{\on{Verdier}}$.

\ssec{The dual of the cuspidal category} \label{ss:cusp co}

In this subsection we will use \secref{ss:autom co} to show that the cuspidal automorphic category is canonically self-dual. 

\sssec{}

Let 
$$\Dmod_{\frac{1}{2}}(\Bun_G)_{\on{co,Eis}}\subset \Dmod_{\frac{1}{2}}(\Bun_G)_{\on{co}}$$
be the full subcategory, generated by the essential images of the functors
$$\Eis_{\on{co},*}:\Dmod_{\frac{1}{2}}(\Bun_M)_{\on{co}}\to \Dmod_{\frac{1}{2}}(\Bun_G)_{\on{co}}$$
(see \cite[Sect. 1.4]{Ga1}) for \emph{proper} parabolic subgroups.

\medskip

Set
$$\Dmod_{\frac{1}{2}}(\Bun_G)_{\on{co,cusp}}:=\left(\Dmod_{\frac{1}{2}}(\Bun_G)_{\on{co,Eis}}\right)^\perp.$$

Let 
$$\be_{\on{co}}:\Dmod_{\frac{1}{2}}(\Bun_G)_{\on{co,cusp}}\hookrightarrow \Dmod_{\frac{1}{2}}(\Bun_G)_{\on{co}}$$
denote the tautological embedding. It admits a \emph{left adjoint}, making $\Dmod_{\frac{1}{2}}(\Bun_G)_{\on{co,cusp}}$
into a \emph{localization} of $\Dmod_{\frac{1}{2}}(\Bun_G)_{\on{co}}$. 

\sssec{}

The identification \eqref{e:Verdier on Bun} gives rise to an identification
\begin{equation} \label{e:dual of cusp}
\Dmod_{\frac{1}{2}}(\Bun_G)_{\on{cusp}}^\vee \overset{\bD^{\on{Verdier}}_{\on{cusp}}}\simeq \Dmod_{\frac{1}{2}}(\Bun_G)_{\on{co,cusp}},
\end{equation} 
so that
$$\be_{\on{co}}\simeq (\be^L)^\vee \text{ and } \be^L_{\on{co}}\simeq \be^\vee.$$

\sssec{} \label{sss:U 0}

Recall now that the category $\Dmod_{\frac{1}{2}}(\Bun_G)_{\on{co,cusp}}$ has the following property
(see \cite[Proposition 2.3.4]{Ga1}): there exists a quasi-compact open substack
$$U_0\overset{j_0}\hookrightarrow \Bun_G,$$
such that the functor
$$\be_{\on{co}}:\Dmod_{\frac{1}{2}}(\Bun_G)_{\on{co,cusp}} \to \Dmod_{\frac{1}{2}}(\Bun_G)_{\on{co}}$$
factors as
$$(j_0)_{\on{co,*}}\circ \be_{U_0,\on{co}},$$
for (an automatically uniquely defined fully faithful functor)
$$\be_{U_0,\on{co}}:\Dmod_{\frac{1}{2}}(\Bun_G)_{\on{co,cusp}}\to \Dmod_{\frac{1}{2}}(U_0).$$

\sssec{}

Furthermore, according to \cite[Theorem 2.2.7]{Ga1}, the functor $\on{Ps-Id}^{\on{nv}}$ sends 
$\Dmod_{\frac{1}{2}}(\Bun_G)_{\on{co,cusp}}$ to $\Dmod_{\frac{1}{2}}(\Bun_G)_{\on{cusp}}$, and the resulting functor
$$\on{Ps-Id}^{\on{nv}}_{\on{cusp}}:\Dmod_{\frac{1}{2}}(\Bun_G)_{\on{co,cusp}}\to \Dmod_{\frac{1}{2}}(\Bun_G)_{\on{cusp}}$$
is an equivalence. 

\medskip

We automatically have 
\begin{equation} \label{e:be via U}
\be\circ \on{Ps-Id}^{\on{nv}}_{\on{cusp}}\simeq (j_0)_*\circ \be_{U_0,\on{co}}. 
\end{equation}

\sssec{}

Thus, combining \eqref{e:dual of cusp} with the equivalence $\on{Ps-Id}^{\on{nv}}_{\on{cusp}}$ we obtain a self-duality
\begin{equation} \label{e:cusp self-dual}
\Dmod_{\frac{1}{2}}(\Bun_G)_{\on{cusp}}^\vee \overset{\on{Ps-Id}^{\on{nv}}_{\on{cusp}}\circ \bD^{\on{Verdier}}_{\on{cusp}}}\simeq 
\Dmod_{\frac{1}{2}}(\Bun_G)_{\on{cusp}}.
\end{equation} 

\sssec{}

For later use we introduce the following notation: 

\medskip

Let 
\begin{equation} \label{e:eU 0}
\be_{U_0}:\Dmod_{\frac{1}{2}}(\Bun_G)_{\on{cusp}}\to \Dmod_{\frac{1}{2}}(U_0)
\end{equation} 
be the uniquely defined functor, so that 
$$\be_{U_0,\on{co}}\simeq \be_{U_0}\circ \on{Ps-Id}^{\on{nv}}_{\on{cusp}}.$$

The functor $\be_{U_0}$ is automatically fully faithful and 
$$\be\simeq (j_0)_*\circ \be_{U_0}.$$

Let $\be^L_{U_0}$ denote the left adjoint of $\be_{U_0}$. We have:
\begin{equation} \label{e:e L via U}
\be^L\simeq \be^L_{U_0}\circ j_0^*.
\end{equation} 

\ssec{Duality and the Poincar\'e functors}

In this section we will recall the result of \cite{Lin} that says that the !- and *-versions of the geometric Poincar\'e functor become
isomorphic, once we project to the cuspidal automorphic category. 

\sssec{}

Recall (see \cite[Sect. 9.5]{GLC2}) that in addition to the functor
$$\on{Poinc}_{G,!}:\Whit^!(G)_\Ran\to \Dmod_{\frac{1}{2}}(\Bun_G),$$
there exists a naturally defined functor
$$\on{Poinc}_{G,*}:\Whit_*(G)_\Ran\to \Dmod_{\frac{1}{2}}(\Bun_G)_{\on{co}}.$$

\sssec{}

By construction, with respect the duality \eqref{e:Verdier on Bun} and the canonical duality
\begin{equation} \label{e:dual Whit}
(\Whit^!(G)_\Ran)^\vee \simeq \Whit_*(G)_\Ran,
\end{equation}
we have
\begin{equation} \label{e:dual of Poinc}
(\on{coeff}_G)^\vee \simeq \on{Poinc}_{G,*}.
\end{equation} 

\sssec{}

Recall now (see \cite[Theorem 1.3.13]{GLC2}) that, in addition to the duality \eqref{e:dual Whit}, there exists a canonical
equivalence
\begin{equation} \label{e:Theta}
\Theta_{\Whit(G)}:\Whit_*(G)_\Ran\simeq \Whit^!(G)_\Ran.
\end{equation} 

\medskip

The following assertion is a ``baby" version of the main result of \cite{Lin} (see Lemma 1.3.4 in {\it loc. cit.}): 

\begin{thm} \label{t:simple Kevin}
The functors
$$\be^L\circ \on{Poinc}_{G,!}\circ \Theta_{\Whit(G)} \text{ and } \be^L\circ \on{Ps-Id}^{\on{nv}} \circ \on{Poinc}_{G,*}[2\delta_{N_{\rho(\omega_X)}}],$$
$$\Whit_*(G)_\Ran\rightrightarrows \Dmod_{\frac{1}{2}}(\Bun_G)_{\on{cusp}}$$
are canonically isomorphic.
\end{thm}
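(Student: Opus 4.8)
The plan is to compare the two functors $\Whit_*(G)_\Ran \rightrightarrows \Dmod_{\frac{1}{2}}(\Bun_G)_{\on{cusp}}$ after precomposing with the equivalence $\Theta_{\Whit(G)}^{-1}$ (or equivalently working directly with $\Whit^!(G)_\Ran$ as source and then twisting back by $\Theta$), so that the statement becomes: the functors $\be^L\circ \on{Poinc}_{G,!}$ and $\be^L\circ \on{Ps-Id}^{\on{nv}}\circ \on{Poinc}_{G,*}\circ \Theta_{\Whit(G)}^{-1}[2\delta_{N_{\rho(\omega_X)}}]$ out of $\Whit^!(G)_\Ran$ are isomorphic. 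The key reduction is that, by \secref{sss:U 0} and the equivalence $\on{Ps-Id}^{\on{nv}}_{\on{cusp}}$, composing with $\be$ (which is conservative) loses no information, and $\be\circ \on{Ps-Id}^{\on{nv}}_{\on{cusp}} = (j_0)_* \circ \be_{U_0,\on{co}}$ by \eqref{e:be via U}; so it suffices to produce the isomorphism after applying $\be$, i.e.\ to work on a fixed quasi-compact open $U_0 \subset \Bun_G$ where Verdier duality is available without the ``co'' subtleties.

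First I would unwind, on a quasi-compact open $U$, the relationship between $\on{Poinc}_{G,!}$ and $\on{Poinc}_{G,*}$. On a quasi-compact truncation, both Poincaré functors are given by pull-push along the diagram relating $\Bun_N$-type stacks to $\Bun_G$, and they differ precisely by the interplay of $!$- and $*$-pushforward along a map that is \emph{not} proper on all of $\Bun_G$, but whose relevant geometry is controlled once we restrict to $U_0$. This is exactly the point of \cite[Theorem 2.2.7]{Ga1} (cited above): $\on{Ps-Id}^{\on{nv}}$ is an equivalence on the cuspidal part, and the $!$- versus $*$- discrepancy is a cohomological shift by $2\delta_{N_{\rho(\omega_X)}} = \dim(\Bun_{N_{\rho(\omega_X)}})$ together with the reidentification $\Theta_{\Whit(G)}$ of the two Whittaker models. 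So the strategy is: (i) write both composites as pull-push on $U_0$; (ii) identify the two pull-push expressions using the ULA/cleanness properties of the Whittaker sheaf on the relevant stack of bundles with reduction, which is where the shift $[2\delta_{N_{\rho(\omega_X)}}]$ enters; (iii) check that the two identifications of Whittaker categories intertwining the picture is precisely $\Theta_{\Whit(G)}$, which by construction (\cite[Theorem 1.3.13]{GLC2}) is the comparison of $!$- and $*$-averaging against the Whittaker character.

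The cleanest packaging, which I would actually follow, is the dual one: by \eqref{e:dual of Poinc} we have $(\on{coeff}_G)^\vee \simeq \on{Poinc}_{G,*}$, and by \secref{ss:cusp co}, $\be_{\on{co}} \simeq (\be^L)^\vee$. Dualizing the assertion ``$\be^L\circ \on{Poinc}_{G,!}\circ \Theta_{\Whit(G)} \simeq \be^L\circ \on{Ps-Id}^{\on{nv}}\circ \on{Poinc}_{G,*}[2\delta]$'' turns it into a statement comparing $\on{coeff}_G \circ \be$ with $\Theta_{\Whit(G)}^\vee \circ \on{coeff}_G \circ (\on{Ps-Id}^{\on{nv}})^\vee \circ \be_{\on{co}}[2\delta]$ landing in $\Whit_*(G)_\Ran$; using $\be\circ \on{Ps-Id}^{\on{nv}}_{\on{cusp}} \simeq (j_0)_*\circ\be_{U_0,\on{co}}$ this reduces to a clean comparison on the fixed quasi-compact $U_0$ of the $!$- and $*$-coefficient (i.e.\ Whittaker-coinvariants) functors, which is Lemma 1.3.4 of \cite{Lin} in disguise — and the present statement is declared to be the ``baby version'' of that result. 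So the final move is: cite the self-duality identifications \eqref{e:dual of Poinc}, \eqref{e:dual of cusp}, and the equivalence $\on{Ps-Id}^{\on{nv}}_{\on{cusp}}$ to transport \cite[Lemma 1.3.4]{Lin} into the form stated here.

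The main obstacle I expect is \textbf{bookkeeping of the twists and shifts}: getting the cohomological shift to come out as exactly $[2\delta_{N_{\rho(\omega_X)}}]$ (rather than, say, $\delta_{N_{\rho(\omega_X)}}$ or with an extra Tate twist) requires carefully tracking the normalization of $\on{coeff}_G$ versus $\on{Poinc}_{G,*}$, the half-density twist implicit in $\Dmod_{\frac{1}{2}}$, and the precise definition of $\Theta_{\Whit(G)}$ from \cite{GLC2} — exactly the ``essentially := up to some twists'' caveat flagged in the introduction. A secondary technical point is justifying that passing to the quasi-compact open $U_0$ of \cite[Proposition 2.3.4]{Ga1} is legitimate, i.e.\ that $\be^L$ factors through $j_0^*$ as in \eqref{e:e L via U} and that the Poincaré functors are ``clean'' relative to this open so that no boundary contributions appear; this is where one uses that cuspidal sheaves are supported away from the boundary in the relevant sense. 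Modulo these normalization checks, the proof is a formal consequence of the cited results.
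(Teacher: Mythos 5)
The paper does not actually prove \thmref{t:simple Kevin}: the statement is imported wholesale, with the preceding sentence identifying it as a ``baby version'' of the main result of \cite{Lin} and pointing to \cite[Lemma 1.3.4]{Lin}. Your proposal's final paragraph lands in the same place --- ``cite the self-duality identifications and transport \cite[Lemma 1.3.4]{Lin}'' --- so on that score you have correctly located the real input, and your bookkeeping with $(\on{coeff}_G)^\vee\simeq\on{Poinc}_{G,*}$, $(\be^L)^\vee\simeq\be_{\on{co}}$, and the $U_0$-factorization \eqref{e:be via U}, \eqref{e:e L via U} is a reasonable way to massage the statement into a form where \cite{Lin} applies.

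However, the middle paragraph of your proposal, where you sketch an independent geometric argument, contains a mis-step that is worth flagging. You describe the relationship between $\on{Poinc}_{G,!}$ and $\on{Poinc}_{G,*}$ as a ``$!$- versus $*$- discrepancy'' resolved by ``the ULA/cleanness properties of the Whittaker sheaf,'' giving a shift $[2\delta_{N_{\rho(\omega_X)}}]$ plus $\Theta_{\Whit(G)}$. This is not the mechanism. If the comparison were a cleanness fact, it would hold without the cuspidal projection $\be^L$, and it does not: the two Poincar\'e functors are genuinely different on all of $\Dmod_{\frac{1}{2}}(\Bun_G)$, and the isomorphism is created by passing to $\Dmod_{\frac{1}{2}}(\Bun_G)_{\on{cusp}}$. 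The content of \cite{Lin} is the miraculous-duality statement on $\Bun_G$ (the relation $\on{Poinc}^{\on{Vac,glob}}_{G,!}[-2\dim\Bun_G]\simeq\Mir_{\Bun_G}(\on{Poinc}^{\on{Vac,glob}}_{G,*})$, as the paper itself recalls in \S3.5), and the theorem here is extracted from that via the fact that $\on{Ps-Id}^{\on{nv}}$ --- which encodes $\Mir_{\Bun_G}$ --- becomes an equivalence on the cuspidal part (\cite[Theorem 2.2.7]{Ga1}). So your steps (ii)--(iii) would need to be replaced by the actual content of \cite{Lin}'s proof; they are not a cleanness argument. As written they give the false impression that the statement is elementary, when in fact it is one of the hard external inputs to the whole ambidexterity argument, and the paper treats it as such by citing \cite{Lin} and not proving it.
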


\ssec{Self-duality on the spectral side}

In this short subsection we set up our conventions regarding the self-duality of the category $\QCoh(\LS^{\on{irred}}_\cG)$. 

\sssec{}

Let us identify $\QCoh(\LS_\cG)$ with its own dual via the \emph{naive duality}
\begin{equation} \label{e:naive self-dual QCoh}
\QCoh(\LS_\cG)^\vee \overset{\bD^{\on{naive}}}\simeq \QCoh(\LS_\cG).
\end{equation} 

I.e., the corresponding anti-self equivalence of 
$$\QCoh(\LS_\cG)^c=\QCoh(\LS_\cG)^{\on{perf}}$$
is given by monoidal dualization.

\sssec{}

The self-duality \eqref{e:naive self-dual QCoh} induces a self-duality 
\begin{equation} \label{e:naive self-dual QCoh irred}
\QCoh(\LS^{\on{irred}}_\cG)^\vee \overset{\bD^{\on{naive}}}\simeq \QCoh(\LS^{\on{irred}}_\cG).
\end{equation} 

\sssec{}

Recall now (see, e.g.,  \cite[Sect. 11.3]{AGKRRV1}) that the canonical self-duality on $\Rep(\cG)$ gives rise to a self-duality of the category 
$\Rep(\cG)_\Ran$.

\sssec{}

We have:

\begin{lem} \label{l:Loc spec and dual}
With respect to the above self-dualities of $\QCoh(\LS_\cG)$ and $\Rep(\cG)_\Ran$, the 
functors 
$$\Loc_\cG^{\on{spec}}:\Rep(\cG)_\Ran \leftrightarrow \QCoh(\LS_\cG):\Gamma^{\on{spec}}_\cG$$
identify with each other's duals:
$$(\Loc_\cG^{\on{spec}})^\vee\simeq \Gamma^{\on{spec}}_\cG \text{ and }
(\Gamma^{\on{spec}}_\cG)^\vee \simeq \Loc_\cG^{\on{spec}}.$$
\end{lem}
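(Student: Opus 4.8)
The statement we must prove, \lemref{l:Loc spec and dual}, asserts that under the chosen self-dualities of $\QCoh(\LS_\cG)$ (the naive duality \eqref{e:naive self-dual QCoh}) and of $\Rep(\cG)_\Ran$ (the self-duality coming from the canonical self-duality of $\Rep(\cG)$), the adjoint pair $(\Loc_\cG^{\on{spec}}, \Gamma^{\on{spec}}_\cG)$ is sent to its own mate pair under dualization, i.e.\ $(\Loc_\cG^{\on{spec}})^\vee \simeq \Gamma^{\on{spec}}_\cG$ and $(\Gamma^{\on{spec}}_\cG)^\vee \simeq \Loc_\cG^{\on{spec}}$. Since these two identities are formally equivalent (the dual of a right adjoint is the left adjoint of the dual), it suffices to prove one of them, say $(\Loc_\cG^{\on{spec}})^\vee\simeq \Gamma^{\on{spec}}_\cG$.

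\smallskip

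First I would reduce to a local/finite statement. Recall that $\Loc_\cG^{\on{spec}}$ and $\Gamma^{\on{spec}}_\cG$ are assembled from the Ran-space picture: $\Rep(\cG)_\Ran$ is a colimit (equivalently limit) over finite subsets $I$ of the category $\Rep(\cG)^{\otimes I}$ (with the appropriate structure functors), and $\Loc_\cG^{\on{spec}}$ is built by factorization from the fundamental local functor $\Rep(\cG)\to \QCoh(\LS_\cG)$ that remembers, for a $\cG$-representation $V$ and a point $x\in X$, the associated vector bundle with its tautological structure; its right adjoint $\Gamma^{\on{spec}}_\cG$ is global sections. The key structural input is that $\QCoh(\LS_\cG)$ is \emph{dualizable} (in fact it is the global sections of a nice prestack, compactly generated by perfect complexes, and rigid as a symmetric monoidal category), so that its dual is computed by the naive duality, and dualization commutes with the colimits defining $\Rep(\cG)_\Ran$. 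This is the content for which I would cite \cite[Sect.\ 11.3]{AGKRRV1} and the surrounding theory. Then the statement to check is: at the level of a single copy of $\Rep(\cG)$, the functor $\Rep(\cG)\to \QCoh(\LS_\cG)$, $V\mapsto \CV_\LS$ (the universal associated bundle), has dual — with respect to the self-duality of $\Rep(\cG)$ given by $V\mapsto V^*$ and the naive duality of $\QCoh$ given by $\CF\mapsto \CF^\vee$ on perfect objects — equal to its own right adjoint (global sections).

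\smallskip

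Next, to verify the single-copy statement, I would argue on compact generators. Both $\Rep(\cG)^c$ and $\QCoh(\LS_\cG)^c$ consist of dualizable objects, and the functor $V\mapsto \CV_\LS$ is symmetric monoidal, hence preserves duals: $(\CV_\LS)^\vee \simeq (V^*)_\LS$. A symmetric monoidal functor $F$ between rigid symmetric monoidal categories, viewed as a functor of dualizable DG-categories, satisfies $F^\vee \simeq F^R$ precisely when one uses the self-dualities induced by the internal dualities on objects — this is a standard manipulation: for $V\in\Rep(\cG)^c$, $W\in\QCoh(\LS_\cG)^c$ one has
$$\Hom_{\QCoh(\LS_\cG)}(\CV_\LS, W)\simeq \Hom_{\QCoh(\LS_\cG)}(\CO_\LS, (\CV_\LS)^\vee\otimes W)\simeq \Hom_{\QCoh(\LS_\cG)}(\CO_\LS, (V^*)_\LS\otimes W),$$
and matching this against the defining pairing for $F^\vee$ identifies $F^\vee$ with $\Gamma^{\on{spec}}_\cG$ on compacts, hence everywhere by continuity. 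Care is needed about the precise normalization of the self-duality on $\Rep(\cG)_\Ran$ coming from factorization (there can be signs/shifts associated with the combinatorics of finite sets and with the unital structure of $\Ran$), so I would spell out that the chosen self-duality on $\Rep(\cG)_\Ran$ is exactly the one compatible with the factorization self-duality of $\QCoh(\LS_\cG)$ under the co/limit presentation, citing \cite{AGKRRV1}.

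\smallskip

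\textbf{Main obstacle.} The genuine difficulty is not the single-copy symmetric-monoidal bookkeeping but the \emph{passage through the Ran space}: one must know that forming the dual category commutes with the (filtered, or rather sifted) colimit over finite sets that defines $\Rep(\cG)_\Ran$ and simultaneously with the colimit/limit presenting $\QCoh(\LS_\cG)$ as global sections over $\LS_\cG$ (a limit over affines mapping in), and that $\Loc_\cG^{\on{spec}}$ and $\Gamma^{\on{spec}}_\cG$ are the colimit/limit of the local functors in a way compatible with these dualities. This requires dualizability of all the categories in sight and continuity (or the appropriate co-continuity) of the structure functors — all available in the derived-algebraic-geometry framework of \cite{AGKRRV1}, but needing to be invoked carefully. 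Once that formalism is in place, the lemma follows by reducing to the local statement and the elementary observation that a symmetric monoidal functor of rigid categories has dual isomorphic to its right adjoint.
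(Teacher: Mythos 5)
The paper itself states \lemref{l:Loc spec and dual} without proof, treating it as standard; so there is no argument in the text to compare your proposal against. Your outline is a reasonable reconstruction of what that standard argument should be, and I believe the overall strategy is correct.

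Two small points worth tightening. First, your formal reduction "the dual of a right adjoint is the left adjoint of the dual'' is the right slogan, but it requires knowing that $\Gamma^{\on{spec}}_\cG$ is continuous (so that the $(L,R)$-adjunction of continuous functors dualizes to an adjunction); this holds here because $\Loc^{\on{spec}}_\cG$ sends compact generators of $\Rep(\cG)_\Ran$ to compact (perfect) objects of $\QCoh(\LS_\cG)$, but you should say so. Second, and more substantively, the "single-copy'' step — that a symmetric monoidal continuous functor between rigid symmetric monoidal categories has $F^\vee \simeq F^R$ for the self-dualities induced by internal dualization — is indeed the key elementary input, but the way it interacts with the Ran colimit is where all the content lives, as you correctly flag. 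The self-duality on $\Rep(\cG)_\Ran$ from \cite[Sect.~11.3]{AGKRRV1} is assembled precisely so that, after pulling out a factor of $\Rep(\cG)^{\otimes I}$, it matches the pointwise internal duality; unwinding why this is compatible with the structure maps of the Ran diagram (which involve both $*$-pushforward along diagonal maps and monoidal multiplication) is exactly the bookkeeping your last paragraph is pointing at, and a careful write-up would need to invoke the unital/factorization formalism explicitly rather than just citing dualizability and "continuity of structure functors.'' With that caveat, the argument as outlined is correct and is most likely what the authors had in mind when omitting the proof.
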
 

\ssec{Left adjoint vs dual}

In this subsection we finally formulate and prove the main result of this section, \thmref{t:left adj as dual}, which says that
the left adjoint and the dual of $\BL_{G,\on{cusp}}$ are isomorphic, up to a twist. 

\sssec{} \label{sss:Phi cusp}

Consider the functor \emph{dual} to $\BL_{G,\on{cusp}}$
$$\BL_{G,\on{cusp}}^\vee: \QCoh(\LS_\cG^{\on{irred}})^\vee \to \Dmod_{\frac{1}{2}}(\Bun_G)_{\on{cusp}}^\vee.$$

Using the identifications \eqref{e:naive self-dual QCoh irred} and \eqref{e:cusp self-dual}, we can think of $\BL_{G,\on{cusp}}^\vee$ as a functor
\begin{equation} \label{e:Ldual L cusp}
\QCoh(\LS_\cG^{\on{irred}})\to \Dmod_{\frac{1}{2}}(\Bun_G)_{\on{cusp}}.
\end{equation}

Let
$$\Phi_{G,\on{cusp}}:\QCoh(\LS_\cG^{\on{irred}})\to \Dmod_{\frac{1}{2}}(\Bun_G)_{\on{cusp}}$$
denote the composition of the functor \eqref{e:Ldual L cusp} with the Chevalley involution and the shift $[-2\delta_{N_{\rho(\omega_X)}}]$, i.e., 
$$\Phi_{G,\on{cusp}}:=\tau_G\circ \BL_{G,\on{cusp}}^\vee[-2\delta_{N_{\rho(\omega_X)}}].$$

\sssec{}

We are going to prove:

\begin{thm} \label{t:left adj as dual}
The functor $\Phi_{G,\on{cusp}}$ identifies canonically with $\BL^L_{G,\on{cusp}}$.
\end{thm}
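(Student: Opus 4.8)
The plan is to exploit the upper portion of the Fundamental Commutative Diagram, namely \eqref{e:FCD up cusp}, exactly as sketched in the introduction, and to leverage the full faithfulness of its right vertical arrow. More precisely, recall from \secref{ss:GLn} (the proof of \corref{c:L ff on cusp}) that the right vertical composite
$$\Gamma^{\on{spec},\IndCoh}_\cG\circ \jmath_*:\QCoh(\LS^{\on{irred}}_\cG)\to \Rep(\cG)_\Ran$$
is fully faithful, being the composite of $\jmath_*$ with $\Gamma^{\on{spec}}_\cG$ (cf.\ \propref{p:Gamma ff}). The first step is therefore to observe that \eqref{e:FCD up cusp}, together with the commutation \eqref{e:L and coeff}, \eqref{e:L cusp diagram in}, identifies
$$(\Gamma^{\on{spec},\IndCoh}_\cG\circ \jmath_*)\circ \BL_{G,\on{cusp}}\simeq \on{CS}_G\circ \on{coeff}_G\circ \be\,[2\delta_{N_{\rho(\omega_X)}}].$$
Since $\Gamma^{\on{spec},\IndCoh}_\cG\circ \jmath_*$ is fully faithful, it admits a left adjoint, and the left adjoint of the left-hand side is $\BL^L_{G,\on{cusp}}$ composed with that left adjoint; on the right-hand side we pass to left adjoints to get $\be^L\circ \on{Poinc}_{G,!}\circ \on{CS}_G^{-1}[-2\delta_{N_{\rho(\omega_X)}}]$ (using that $\be^L$ is the left adjoint of $\be$, and that $\on{Poinc}_{G,!}$ is left adjoint to $\on{coeff}_G$, as recalled in \secref{ss:autom co}).

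The second step is the heart of the argument: I must identify the left adjoint of $\Gamma^{\on{spec},\IndCoh}_\cG\circ \jmath_*$, as a functor $\Rep(\cG)_\Ran\to \QCoh(\LS^{\on{irred}}_\cG)$, with the dual of $\BL_{G,\on{cusp}}^\vee$-machinery. By \lemref{l:Loc spec and dual}, the dual of $\Gamma^{\on{spec}}_\cG$ is $\Loc^{\on{spec}}_\cG$; composing with $\jmath^*$ (the dual of $\jmath_*$ under naive duality) gives $\jmath^*\circ\Loc^{\on{spec}}_\cG$, and since $\Loc^{\on{spec}}_\cG$ factors through $\QCoh(\LS_\cG)$, this is just $\jmath^*$ applied to spectral localization, landing in $\QCoh(\LS^{\on{irred}}_\cG)$. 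A duality bookkeeping shows this dual functor \emph{equals} the left adjoint of $\Gamma^{\on{spec},\IndCoh}_\cG\circ\jmath_*$ — this is a general fact for fully faithful functors whose source and target are self-dual and which are adjointable in the appropriate sense (the left adjoint of a functor equals the dual of its right adjoint-partner; here one uses that $\Gamma^{\on{spec}}_\cG$ admits $\Loc^{\on{spec}}_\cG$ as a left adjoint and is self-dual-adjointable by \lemref{l:Loc spec and dual}). Then, by construction, $\BL_{G,\on{cusp}}^\vee$ composed with the self-dualities \eqref{e:naive self-dual QCoh irred} and \eqref{e:cusp self-dual} is computed from the duals of the horizontal and vertical arrows of \eqref{e:FCD up cusp}, and matching these up with $\on{CS}_G^{-1}$, $\on{Poinc}_{G,!}$, $\Theta_{\Whit(G)}$ and the Chevalley twist $\tau_G$ produces exactly $\Phi_{G,\on{cusp}}$ after the shift $[-2\delta_{N_{\rho(\omega_X)}}]$.

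The third step is to reconcile the two expressions: from the adjunction side I obtain $\BL^L_{G,\on{cusp}}\simeq \be^L\circ\on{Poinc}_{G,!}\circ\on{CS}_G^{-1}[-2\delta_{N_{\rho(\omega_X)}}]$ (after composing the left adjoint of $\Gamma^{\on{spec},\IndCoh}_\cG\circ\jmath_*$ back in and using full faithfulness to cancel it against $\jmath^*\circ\Loc^{\on{spec}}_\cG$), and from the duality side I obtain that $\Phi_{G,\on{cusp}}$ is given by the \emph{same} formula but with $\on{Poinc}_{G,*}$, $\on{Ps-Id}^{\on{nv}}$ and $\Theta_{\Whit(G)}$ in place of the naive $\on{Poinc}_{G,!}$. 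Here \thmref{t:simple Kevin} (the ``baby Kevin Lin'' theorem) is precisely the bridge: it asserts that $\be^L\circ\on{Poinc}_{G,!}\circ\Theta_{\Whit(G)}$ and $\be^L\circ\on{Ps-Id}^{\on{nv}}\circ\on{Poinc}_{G,*}[2\delta_{N_{\rho(\omega_X)}}]$ agree as functors to $\Dmod_{\frac{1}{2}}(\Bun_G)_{\on{cusp}}$. Feeding this into the comparison, all twists match and we conclude $\Phi_{G,\on{cusp}}\simeq\BL^L_{G,\on{cusp}}$.

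The main obstacle I anticipate is the second step: carefully tracking the web of dualities, adjunctions and Chevalley twists so that the identification of $\BL_{G,\on{cusp}}^\vee$ via the duals of the arrows in \eqref{e:FCD up cusp} is compatible with the concrete description of $\on{CS}_G$, with the equivalence $\Theta_{\Whit(G)}$ of \eqref{e:Theta}, and with the self-duality \eqref{e:cusp self-dual} built out of $\on{Ps-Id}^{\on{nv}}_{\on{cusp}}\circ\bD^{\on{Verdier}}_{\on{cusp}}$. In particular one must verify that the duality \eqref{e:dual of Poinc}, $(\on{coeff}_G)^\vee\simeq\on{Poinc}_{G,*}$, interacts correctly with the passage to the cuspidal quotient and with the open substack $U_0$ of \secref{sss:U 0}, so that the twist $[-2\delta_{N_{\rho(\omega_X)}}]$ in the definition of $\Phi_{G,\on{cusp}}$ is the one that appears. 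Everything else — the adjunction manipulations of the first and third steps, and the invocation of \thmref{t:simple Kevin} — is formal once the duality dictionary is pinned down.
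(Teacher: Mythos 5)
Your argument follows essentially the same route as the paper's proof: reduce via \propref{p:Gamma ff} to comparing the precompositions of $\BL^L_{G,\on{cusp}}$ and $\Phi_{G,\on{cusp}}$ with the Verdier quotient $\jmath^*\circ\Loc^{\on{spec}}_\cG$, identify one side by passing to left adjoints in the Whittaker-compatibility square, and match the duality side using \lemref{l:Loc spec and dual}, the Chevalley identity from \cite[Lemma 1.4.12]{GLC2}, and \thmref{t:simple Kevin}. One caution on your second step: the claim that the dual of a fully faithful functor between self-dual categories coincides with its left adjoint is \emph{not} a general fact (in general one only has $(F^L)^\vee\simeq(F^\vee)^R$); the coincidence here is supplied specifically by \lemref{l:Loc spec and dual} for $\Gamma^{\on{spec}}_\cG$ and by the naive self-duality for $\jmath_*$, which you do invoke but should not present as formal.
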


This theorem is a particular case of \cite[Theorem 16.2.3]{GLC3}, and its proof is much simpler in that it only uses
\thmref{t:simple Kevin}, rather than the full force of the result from \cite{Lin}. We will supply a proof for the sake of
completeness, and it occupies the rest of this subsection.  

\sssec{}

By \propref{p:Gamma ff}, it suffices to establish an isomorphism 
$$\BL^L_{G,\on{cusp}} \circ \jmath^*\circ \Loc^{\on{spec}}_\cG \simeq \Phi_{G,\on{cusp}} \circ \jmath^*\circ \Loc^{\on{spec}}_\cG$$
as functors 
$$\Rep(\cG)\rightrightarrows  \Dmod_{\frac{1}{2}}(\Bun_G)_{\on{cusp}}.$$

We will do so by showing that both diagrams
\begin{equation} \label{e:LL and Poinc cusp}
\CD
\Whit^!(G)_\Ran @<{\on{CS}^{-1}_G}<{\sim}<  \Rep(\cG)_\Ran \\
@V{\on{Poinc}_{G,!}[-2\delta_{N_{\rho(\omega_X)}}]}VV @VV{\Loc^{\on{spec}}_\cG}V \\
\Dmod_{\frac{1}{2}}(\Bun_G) & & \QCoh(\LS_\cG) \\
@V{\be^L}VV @VV{\jmath^*}V  \\
\Dmod_{\frac{1}{2}}(\Bun_G)_{\on{cusp}} @<{\BL^L_{G,\on{cusp}}}<< \QCoh(\LS^{\on{irred}}_\cG)
\endCD
\end{equation} 
and 
\begin{equation} \label{e:LL and Poinc dual}
\CD
\Whit^!(G)_\Ran @<{\on{CS}^{-1}_G}<<  \Rep(\cG)_\Ran \\
@V{\on{Poinc}_{G,!}[-2\delta_{N_{\rho(\omega_X)}}]}VV @VV{\Loc^{\on{spec}}_\cG}V \\
\Dmod_{\frac{1}{2}}(\Bun_G) & & \QCoh(\LS_\cG) \\
@V{\be^L}VV @VV{\jmath^*}V  \\
\Dmod_{\frac{1}{2}}(\Bun_G)_{\on{cusp}} @<{\Phi_{G,\on{cusp}}}<< \QCoh(\LS^{\on{irred}}_\cG)
\endCD
\end{equation} 
commute.

\sssec{}

The commutation of \eqref{e:LL and Poinc cusp} is immediate from \eqref{e:LL and Poinc} and 
\eqref{e:LL cusp diagram out}. Thus, it remains to
deal with \eqref{e:LL and Poinc dual}. 

\medskip

First, according to \cite[Lemma 1.4.12]{GLC2}, we have
$$\tau_G\circ \on{CS}^{-1}_G\simeq \Theta_{\Whit(G)}\circ \on{CS}_G^\vee.$$

Combining with \thmref{t:simple Kevin}, this allows us to replace \eqref{e:LL and Poinc dual} by
\begin{equation} \label{e:LL and Poinc dual rewrite}
\CD
\Whit_*(G)_\Ran @<{\on{CS}^\vee_G}<<  \Rep(\cG)_\Ran \\
@V{\on{Poinc}_{G,*}}VV @VV{\Loc^{\on{spec}}_\cG}V \\
\Dmod_{\frac{1}{2}}(\Bun_G)_{\on{co}} & & \QCoh(\LS_\cG) \\
@V{\be^L \circ \on{Ps-Id}^{\on{nv}}}VV @VV{\jmath^*}V  \\
\Dmod_{\frac{1}{2}}(\Bun_G)_{\on{cusp}} @<{\BL^\vee_{G,\on{irred}}[-2\delta_{N_{\rho(\omega_X)}}]}<< \QCoh(\LS^{\on{irred}}_\cG).
\endCD
\end{equation} 

\sssec{}

Using 
$$\be^L\circ \on{Ps-Id}^{\on{nv}} \simeq  \on{Ps-Id}^{\on{nv}}_{\on{cusp}} \circ \be^\vee,$$
we can rewrite \eqref{e:LL and Poinc dual rewrite} as 
$$
\CD
\Whit_*(G)_\Ran @<{\on{CS}^\vee_G}<<  \Rep(\cG)_\Ran \\
@V{\on{Poinc}_{G,*}}VV @VV{\Loc^{\on{spec}}_\cG}V \\
\Dmod_{\frac{1}{2}}(\Bun_G)_{\on{co}} & & \QCoh(\LS_\cG) \\
@V{\on{Ps-Id}^{\on{nv}}_{\on{cusp}} \circ \be^\vee}VV @VV{\jmath^*}V  \\
\Dmod_{\frac{1}{2}}(\Bun_G)_{\on{cusp}} @<{\BL^\vee_{G,\on{irred}}[-2\delta_{N_{\rho(\omega_X)}}]}<< \QCoh(\LS^{\on{irred}}_\cG),
\endCD
$$
and further by
\begin{equation} \label{e:LL and Poinc dual rewrite again}
\CD
\Whit_*(G)_\Ran @<{\on{CS}^\vee_G}<<  \Rep(\cG)_\Ran \\
@V{\on{Poinc}_{G,*}}VV @VV{\Loc^{\on{spec}}_\cG}V \\
\Dmod_{\frac{1}{2}}(\Bun_G)_{\on{co}} & & \QCoh(\LS_\cG) \\
@V{\be^\vee}VV @VV{\jmath^*}V  \\
\Dmod_{\frac{1}{2}}(\Bun_G)_{\on{co,cusp}} @<{\BL^\vee_{G,\on{irred}}[-2\delta_{N_{\rho(\omega_X)}}]}<< \QCoh(\LS^{\on{irred}}_\cG),
\endCD
\end{equation} 
where we now think of $\BL^\vee_{G,\on{irred}}$ as a functor
$$\QCoh(\LS^{\on{irred}}_\cG)\to \Dmod_{\frac{1}{2}}(\Bun_G)_{\on{co,cusp}}$$
via \eqref{e:naive self-dual QCoh irred} and \eqref{e:dual of cusp}. 

\sssec{}

Passing to the dual functors in \eqref{e:LL and Poinc dual rewrite again}, we obtain that it is equivalent to
$$
\CD
\Whit_*(G)_\Ran @>{\on{CS}_G}>>  \Rep(\cG)_\Ran \\
@A{\on{coeff}_G}AA @AA{\Gamma^{\on{spec}}_\cG}A \\
\Dmod_{\frac{1}{2}}(\Bun_G)  & & \QCoh(\LS_\cG) \\
@A{\be}AA @AA{\jmath_*}A  \\
\Dmod_{\frac{1}{2}}(\Bun_G)_{\on{cusp}} @>{\BL_{G,\on{irred}}[-2\delta_{N_{\rho(\omega_X)}}]}>> \QCoh(\LS^{\on{irred}}_\cG).
\endCD
$$

However, the commutativity of the latter diagram follows from \eqref{e:L and coeff} and \eqref{e:L cusp diagram in}. 

\qed[\thmref{t:left adj as dual}]

\section{Right adjoint as the dual} \label{s:right}

In this section we will assume that $G$ is semi-simple\footnote{This assumption is just a convenience. The statement holds 
for any reductive $G$, just the proof would involve slightly more notation.}.

\medskip

We will prove the ``second half" of the ambidexterity theorem, namely, that the functor \emph{right adjoint} to $\BL_{G,\on{cusp}}$
is isomorphic to the (twist of) the dual of $\BL_{G,\on{cusp}}$. 

\medskip

The full Ambidexterity Theorem says that the left and right adjoints of $\BL_{G,\on{cusp}}$ are isomorphic. Of course, this statement 
\emph{a posteriori} follows from GLC, but in the current strategy, it constitutes a step in its proof. 

\ssec{The ambidexterity statement}

\sssec{}

We continue to regard the categories $\Dmod_{\frac{1}{2}}(\Bun_G)_{\on{cusp}}$ and $\QCoh(\LS^{\on{irred}}_\cG)$ as self-dual
via the identifications \eqref{e:naive self-dual QCoh irred} and \eqref{e:cusp self-dual}, respectively.  

\medskip

Recall the functor $\Phi_{G,\on{cusp}}$, see \secref{sss:Phi cusp}. We will prove:

\begin{thm} \label{t:right adj as dual}
The functor $\Phi_{G,\on{cusp}}$ identifies canonically with the right adjoint of $\BL_{G,\on{cusp}}$.
\end{thm}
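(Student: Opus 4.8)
The plan is to run the argument of \thmref{t:left adj as dual} with the lower portion \eqref{e:FCD down cusp} of the Fundamental Commutative Diagram in place of the upper portion, and with the roles of ``left adjoint / fully faithful'' and ``right adjoint / Verdier quotient'' interchanged. Recall that \eqref{e:FCD down cusp} is a square commuting up to the customary shifts and twists by constant lines, with top arrow $\BL_{G,\on{cusp}}$, bottom arrow the equivalence $\FLE_{G,\crit}$, left vertical arrow $\be^L\circ \Loc_G$, and right vertical arrow $\jmath^*\circ \on{Poinc}^{\on{spec}}_{\cG,*}$. Two facts serve as input. First, $\be^L\circ \Loc_G$ is a Verdier quotient (localization); consequently its dual $(\be^L\circ \Loc_G)^\vee$ is fully faithful, since dualizing a localization functor yields a fully faithful functor. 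Second, each of the two vertical arrows admits a right adjoint, and that right adjoint is canonically isomorphic --- up to the shift $[2\delta_{N_{\rho(\omega_X)}}]$, the Chevalley involution $\tau_G$, and the involution $\Theta_{\Whit(G)}$ on the Whittaker side --- to its own dual. On the spectral side this is immediate from \lemref{l:Loc spec and dual} and the self-duality \eqref{e:naive self-dual QCoh irred} (together with the fact that $\jmath^*$ and $\jmath_*$ are each other's adjoints and duals). On the automorphic side it is the critical-level counterpart of \thmref{t:simple Kevin}: it expresses that $\Loc_G$ at the critical level is self-dual, up to $\tau_G$ and $[2\delta_{N_{\rho(\omega_X)}}]$, with respect to the Verdier self-duality \eqref{e:cusp self-dual} of $\Dmod_{\frac{1}{2}}(\Bun_G)_{\on{cusp}}$ and the relevant self-duality of $\KL(G)_{\crit,\Ran}$, and is part of the structure encoded in \eqref{e:FCD down}.

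Granting these, I would first pass to right adjoints along the two vertical arrows of \eqref{e:FCD down cusp}, using $\FLE_{G,\crit}^R \simeq \FLE_{G,\crit}^{-1}$, obtaining a canonical isomorphism
\[
(\be^L\circ \Loc_G)^R \circ \BL^R_{G,\on{cusp}} \;\simeq\; \FLE_{G,\crit}^{-1}\circ (\jmath^*\circ \on{Poinc}^{\on{spec}}_{\cG,*})^R .
\]
Independently, dualizing the whole square \eqref{e:FCD down cusp} --- identifying the dualized left and right columns with the original ones via \eqref{e:cusp self-dual} and \eqref{e:naive self-dual QCoh irred} --- gives
\[
(\be^L\circ \Loc_G)^\vee \circ \BL^\vee_{G,\on{cusp}} \;\simeq\; \FLE_{G,\crit}^\vee\circ (\jmath^*\circ \on{Poinc}^{\on{spec}}_{\cG,*})^\vee .
\]
Now substitute the second input into the first, replacing $(\be^L\circ \Loc_G)^R$ and $(\jmath^*\circ \on{Poinc}^{\on{spec}}_{\cG,*})^R$ by their twisted duals; the compatibility $\tau_G\circ \on{CS}_G^{-1}\simeq \Theta_{\Whit(G)}\circ \on{CS}_G^\vee$ already used in the proof of \thmref{t:left adj as dual} (applied here on the $\FLE$/oper side) ensures that the discrepancy between $\FLE_{G,\crit}^{-1}$ and $\FLE_{G,\crit}^\vee$ is exactly absorbed into those twists. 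After the substitution both displayed isomorphisms take the form $(\be^L\circ \Loc_G)^\vee\circ X \simeq (\be^L\circ \Loc_G)^\vee\circ Y$, where $Y = \BL^\vee_{G,\on{cusp}}$ and $X$ is the twist of $\BL^R_{G,\on{cusp}}$ for which $X\simeq Y$ is equivalent to the sought $\BL^R_{G,\on{cusp}}\simeq \Phi_{G,\on{cusp}}$. Since $(\be^L\circ \Loc_G)^\vee$ is fully faithful, postcomposition with it is fully faithful on functor categories, so we may cancel it on the left and conclude $X\simeq Y$, i.e. $\BL^R_{G,\on{cusp}}\simeq \Phi_{G,\on{cusp}}$, as asserted.

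The step I expect to be the main obstacle is the second input on the automorphic side: identifying the right adjoint $\Gamma_G\circ\be$ of $\be^L\circ \Loc_G$ (where $\Gamma_G$ denotes the right adjoint of $\Loc_G$) with a suitable twist of $(\be^L\circ \Loc_G)^\vee$. This is the critical-level analogue of the main result of \cite{Lin} --- of which \thmref{t:simple Kevin} is only the ``baby'' case --- and it is precisely where the genuine geometric content of the lower portion of the Fundamental Commutative Diagram enters. Everything else is the bookkeeping of the shifts $[2\delta_{N_{\rho(\omega_X)}}]$, the twists by constant lines, and the involution $\tau_G$, carried out exactly as in \secref{s:left}.
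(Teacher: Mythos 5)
Your overall strategy is the one the paper actually uses: reduce via the Verdier-quotient property of $\Loc_{G,\on{cusp}}$ (\propref{p:Loc crit is Loc}) to an equality of functors after postcomposition with the fully faithful $\Loc_{G,\on{cusp}}^\vee$, then compare the diagram obtained by passing to right adjoints along \eqref{e:L and Loc cusp} with the diagram obtained by dualizing it, using that the right adjoints of the two vertical arrows agree with their duals up to twists, and that $\FLE_{G,\crit}^\vee\simeq \tau_G\circ \FLE_{G,\crit}^{-1}\circ \Theta_{\Op^\mf(\cG)}$. The skeleton of your argument, including the cancellation of $\Loc_{G,\on{cusp}}^\vee$ by full faithfulness, is correct. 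However, you have misidentified both of the ``second input'' duality facts.

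On the spectral side you cite \lemref{l:Loc spec and dual}, but that lemma is about $\Loc^{\on{spec}}_\cG$ and $\Gamma^{\on{spec}}_\cG$, i.e., the vertical arrow of the \emph{upper} portion of the Fundamental Commutative Diagram, used in the proof of \thmref{t:left adj as dual}. The correct ingredient here is the duality for the spectral Poincar\'e functor, \propref{p:dual of Poinc}, relating $(\on{Poinc}^{\on{spec}}_{\cG,*,\on{irred}})^\vee$ to $(\on{Poinc}^{\on{spec}}_{\cG,*,\on{irred}})^R\otimes \fl_{\on{Kost}}[-\delta_G]$. It is not immediate: its proof uses the ``kappa'' statement relating $\on{Poinc}^{\on{spec}}_{\cG,!}$ and $\on{Poinc}^{\on{spec}}_{\cG,*}\circ \Theta_{\Op^\mf(\cG)}$, then Serre duality on $\IndCoh(\LS_\cG)$ is converted to the naive duality \eqref{e:naive self-dual QCoh irred} via the symplectic structure of $\LS^{\on{irred}}_\cG$ (this is where the semi-simplicity hypothesis enters), and crucially the adjunction between $(\pi^{\on{irred}}_{\ul{x}})^{\IndCoh}_*$ and $(\pi^{\on{irred}}_{\ul{x}})^!$ relies on the ind-properness of $\pi^{\on{irred}}_{\ul{x}}$ over the irreducible locus.

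On the automorphic side, you frame the duality for $\Loc_{G,\on{cusp}}$ as a critical-level analogue of the main theorem of \cite{Lin}, of which \thmref{t:simple Kevin} is the baby case. That is not the structure of the argument. The paper's \propref{p:Loc crit and dual} (stating $\Loc_{G,\on{cusp}}^\vee\simeq \Loc_{G,\on{cusp}}^R\otimes \det(\Gamma(X,\CO_X)\otimes\fg)[\delta_G]$, with no $\tau_G$ or $[2\delta_{N_{\rho(\omega_X)}}]$) is deduced formally: reduce to a quasi-compact open $U_0$ via \eqref{e:be via U}, identify $\Loc_{G,\crit,U}^R$ with $\Gamma_{G,\crit}\circ j_{\on{co},*}$, and then invoke $\Loc_{G,\crit}^\vee\simeq\Gamma_{G,\crit}$, which is \cite[Proposition 10.5.7(b)]{GLC2}. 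No analogue of Lin's theorem enters \thmref{t:right adj as dual}; the point of working with the lower portion of the Fundamental Commutative Diagram is precisely that it avoids the miraculous-duality input. Finally, the residual bookkeeping of twists is not free either: matching the two diagrams requires the line identity $\fl_{\on{Kost}}\otimes\det(\Gamma(X,\CO_X)\otimes\fg)^{\otimes -1}\simeq (\fl_{G,N_{\rho(\omega_X)}}^{\otimes 1/2})^{\otimes 2}\otimes \fl_{N_{\rho(\omega_X)}}^{\otimes -2}$ from \cite[Proposition 15.1.10]{GLC3}.
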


\sssec{}

Before we prove the theorem, let us draw some consequences. First, by combining Theorems \ref{t:left adj as dual} and \ref{t:right adj as dual},
we obtain what we call the Ambidexterity Theorem: 

\begin{mainthm} \label{t:ambidex}
The left and right adjoints of $\BL_{G,\on{cusp}}$ are isomorphic.
\end{mainthm} 

\begin{cor}  \label{c:comp self-adj}
The endofunctor 
$$\BL_{G,\on{cusp}}\circ \BL^L_{G,\on{cusp}}$$
of $\QCoh(\LS^{\on{irred}}_\cG)$ is isomorphic to its own left and right adjoint.
\end{cor}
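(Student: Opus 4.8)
The plan is to deduce this as a purely formal consequence of the Ambidexterity Theorem (Main Theorem~\ref{t:ambidex}), via the standard behaviour of adjoints under composition. Write $F:=\BL_{G,\on{cusp}}$ and $F^L:=\BL^L_{G,\on{cusp}}$, so that the endofunctor in question is the monad $M:=F\circ F^L$ on $\QCoh(\LS^{\on{irred}}_\cG)$. I will use the general identities $(G_1\circ G_2)^L\simeq G_2^L\circ G_1^L$ and $(G_1\circ G_2)^R\simeq G_2^R\circ G_1^R$, valid whenever the adjoints in question exist.

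For the right adjoint: since $F^L$ is by construction the left adjoint of $F$, we have $(F^L)^R\simeq F$, hence $M^R\simeq (F^L)^R\circ F^R\simeq F\circ F^R$. By Main Theorem~\ref{t:ambidex} there is a (canonical) isomorphism $F^L\simeq F^R$, so $M^R\simeq F\circ F^L=M$. For the left adjoint: using $F^L\simeq F^R$ again, the left adjoint of $F^L$ coincides with the left adjoint of $F^R$, which is $F$ itself (as $F\dashv F^R$); thus $(F^L)^L\simeq F$, in particular it exists and $M^L$ is defined, and $M^L\simeq (F^L)^L\circ F^L\simeq F\circ F^L=M$.

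An equivalent, more structural phrasing: by \secref{sss:AG cusp} the monad $M$ is tensoring with the algebra $\CA_{G,\on{irred}}\in\QCoh(\LS^{\on{irred}}_\cG)$, and ambidexterity forces $\CA_{G,\on{irred}}$ to be self-dual, in particular perfect; for a perfect object $\CA$ both the left and right adjoints of $\CA\otimes(-)$ are given by $\CA^\vee\otimes(-)$, so $\CA_{G,\on{irred}}\simeq\CA_{G,\on{irred}}^\vee$ yields $M^L\simeq M\simeq M^R$ directly. Either way there is essentially no obstacle beyond the bookkeeping of which functor is the left and which is the right adjoint; the only substantive input is Main Theorem~\ref{t:ambidex}, and the existence of every adjoint invoked is automatic once $F$ is known to admit a left adjoint and $F^L$ is identified with $F^R$.
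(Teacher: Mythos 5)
Your first argument is exactly the standard formal deduction the paper has in mind — the paper states this as a corollary of the Ambidexterity Theorem with no proof, and your computation $M^R \simeq (F^L)^R\circ F^R \simeq F\circ F^R \simeq F\circ F^L = M$, $M^L \simeq (F^L)^L\circ F^L \simeq F\circ F^L = M$ is the expected content. The only caveat worth flagging is in your alternative phrasing: in the paper the self-duality of $\CA_{G,\on{irred}}$ (\corref{c:A G irred}) is itself \emph{derived from} \corref{c:comp self-adj}, so citing it as the input risks circularity unless one first unwinds "ambidexterity $\Rightarrow$ $\CA$ self-dual" independently (which amounts to redoing the first argument at the level of kernels); the first argument is cleaner and fully self-contained.
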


\sssec{}

Recall (see \secref{sss:AG cusp}) that the functor $\BL_{G,\on{cusp}}\circ \BL^L_{G,\on{cusp}}$
is given by tensoring with the associative algebra object
$$\CA_{G,\on{irred}}\in \QCoh(\LS^{\on{irred}}_\cG).$$

From \corref{c:comp self-adj} we obtain:

\begin{cor} \label{c:A G irred}
The object $\CA_{G,\on{irred}}\in \QCoh(\LS^{\on{irred}}_\cG)$ is self-dual. In particular, it 
belongs to $\QCoh(\LS^{\on{irred}}_\cG)^{\on{perf}}$, i.e., it is compact.
\end{cor}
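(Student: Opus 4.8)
The approach is to deduce the statement formally from \corref{c:comp self-adj}; all the genuine mathematical content (the Ambidexterity Theorem) is already in place, and what remains is bookkeeping inside the rigid symmetric monoidal category $\QCoh(\LS^{\on{irred}}_\cG)$.

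First I would recall, from \secref{sss:AG cusp}, that by construction the monad $T:=\BL_{G,\on{cusp}}\circ\BL^L_{G,\on{cusp}}$ on $\QCoh(\LS^{\on{irred}}_\cG)$ is the colimit-preserving, $\QCoh(\LS^{\on{irred}}_\cG)$-linear functor $(-)\underset{\QCoh(\LS^{\on{irred}}_\cG)}\otimes \CA_{G,\on{irred}}$, so that its right adjoint $T^R$ is the internal-Hom functor $\CHom(\CA_{G,\on{irred}},-)$. By \corref{c:comp self-adj} there is an isomorphism of functors $T\simeq T^R$.

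From this I would extract two consequences. Evaluating $T\simeq T^R$ on the unit object gives $\CA_{G,\on{irred}}=T(\CO_{\LS^{\on{irred}}_\cG})\simeq T^R(\CO_{\LS^{\on{irred}}_\cG})=\CHom(\CA_{G,\on{irred}},\CO_{\LS^{\on{irred}}_\cG})=:\CA_{G,\on{irred}}^\vee$, which is the asserted self-duality once we know $\CA_{G,\on{irred}}$ is dualizable. For the latter: since $T$ preserves all colimits, so does $T^R\simeq T$; hence the left adjoint $T$ of $T^R$ preserves compact objects, and therefore $\CA_{G,\on{irred}}=T(\CO_{\LS^{\on{irred}}_\cG})$ is compact, because $\CO_{\LS^{\on{irred}}_\cG}$ is compact in the rigid category $\QCoh(\LS^{\on{irred}}_\cG)$. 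A compact object of a rigid category is dualizable, so $\CA_{G,\on{irred}}$ is dualizable and, being isomorphic to $\CA_{G,\on{irred}}^\vee$, is self-dual; membership in $\QCoh(\LS^{\on{irred}}_\cG)^{\on{perf}}$ is precisely compactness.

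There is no substantial obstacle here: the real work is the already-established Ambidexterity Theorem. The only points deserving attention are soft ones. One is the input that $\QCoh(\LS^{\on{irred}}_\cG)$ is rigid --- in particular that its unit is compact and that its compact objects are dualizable --- which is a standard property of $\LS_\cG$ that passes to the open substack $\LS^{\on{irred}}_\cG$. The other, relevant only if one wants the self-duality and the resulting structures to be the canonical ones rather than merely to exist, is the remark that the isomorphisms of functors furnished by \corref{c:comp self-adj} can be upgraded to isomorphisms of $\QCoh(\LS^{\on{irred}}_\cG)$-linear functors, which follows from the constructions in \secref{s:left} and \secref{s:right}.
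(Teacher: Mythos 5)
Your proof is correct and is exactly the formal unwinding the paper intends when it deduces the corollary from \corref{c:comp self-adj}: writing $T=(-)\otimes\CA_{G,\on{irred}}$ with right adjoint $\CHom(\CA_{G,\on{irred}},-)$, the isomorphism $T\simeq T^R$ forces $T^R$ to preserve colimits, so $T$ preserves compacts and $\CA_{G,\on{irred}}=T(\CO_{\LS^{\on{irred}}_\cG})$ is compact (hence perfect and dualizable), while evaluating $T\simeq T^R$ at the unit gives $\CA_{G,\on{irred}}\simeq\CA_{G,\on{irred}}^\vee$. Your closing observation that the $\QCoh(\LS^{\on{irred}}_\cG)$-linearity of the ambidexterity isomorphism matters only if one wants the self-duality to be canonical, not for its mere existence, is also apt.
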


Eventually we will prove an even more precise version of the second part of the above corollary (see \secref{ss:proof A G irred}): 

\begin{mainthm} \label{t:A G irred}
The object $\CA_{G,\on{irred}}\in \QCoh(\LS^{\on{irred}}_\cG)$ is a classical vector bundle, which is equipped
with a naturally defined flat connection\footnote{Note that \corref{c:A=B Op} and 
\propref{p:finite monodromy} imply that the resulting local system on $\LS^{\on{irred}}_\cG$ has a finite monodromy}. 
\end{mainthm}

\begin{rem}
Note that it makes sense to talk about classical vector bundles on $\LS^{\on{irred}}_\cG$, since,
under the assumption that $\cG$ is semi-simple, the stack $\LS^{\on{irred}}_\cG$ is classical and
smooth.
\end{rem}

\sssec{}

The rest of this section is devoted to the proof of \thmref{t:right adj as dual}.

\ssec{Critical localization} 

In this subsection, we will show that the right adjoint of the critical localization functor (functor $\Loc_G$ below)
is \emph{essentially} isomorphic to its dual, once we restrict to the cuspidal automorphic category. 

\sssec{}

Let $\Loc_G$ be the functor
$$\KL(G)_{\crit,\Ran}\to \Dmod_{\frac{1}{2}}(\Bun_G)$$
of \cite[Sect. 14.1.4]{GLC2}.

\sssec{}

Denote by $\Loc_{G,\on{cusp}}$ the composite functor
$$\KL(G)_{\crit,\Ran} \overset{\Loc_G}\longrightarrow \Dmod_{\frac{1}{2}}(\Bun_G)\overset{\be^L}\to \Dmod_{\frac{1}{2}}(\Bun_G)_{\on{cusp}}.$$

We have the following counterpart of \propref{p:Gamma ff}:

\begin{prop} \label{p:Loc crit is Loc}
The functor $\Loc_{G,\on{cusp}}$ is Verdier quotient.
\end{prop}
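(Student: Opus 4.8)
The plan is to prove that the right adjoint $(\Loc_{G,\on{cusp}})^R$ is fully faithful, i.e.\ that the counit of the adjunction $(\Loc_{G,\on{cusp}},(\Loc_{G,\on{cusp}})^R)$ is an isomorphism. Since $\Loc_{G,\on{cusp}}=\be^L\circ\Loc_G$ with $\be^L$ already a Verdier quotient, but $\Loc_G$ itself very far from one, the idea is to play the question off against $\BL_{G,\on{cusp}}$, which by \thmref{t:cusp cons} is conservative (and, admitting the left adjoint $\BL^L_{G,\on{cusp}}$, is monadic over $\QCoh(\LS^{\on{irred}}_\cG)$). It therefore suffices to verify the counit after applying $\BL_{G,\on{cusp}}$. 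Composing the lower portion \eqref{e:FCD down} of the Fundamental Commutative Diagram with the projection to the cuspidal category as in \eqref{e:L cusp diagram out}, one gets $\BL_{G,\on{cusp}}\circ\Loc_{G,\on{cusp}}\simeq \on{Poinc}^{\on{spec}}_{\cG,*,\on{irred}}\circ\FLE_{G,\crit}$, and since $\FLE_{G,\crit}$ is an equivalence this reduces \propref{p:Loc crit is Loc} to the assertion that the spectral functor
$$\on{Poinc}^{\on{spec}}_{\cG,*,\on{irred}}=\jmath^*\circ\on{Poinc}^{\on{spec}}_{\cG,*}\colon \IndCoh^*(\Op^{\on{mon-free}}_\cG)_\Ran\longrightarrow \QCoh(\LS^{\on{irred}}_\cG)$$
is a Verdier quotient; one then transports its fully faithful right adjoint back along $\FLE_{G,\crit}$ and, using conservativity/monadicity of $\BL_{G,\on{cusp}}$, along $\BL_{G,\on{cusp}}$.

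The spectral assertion is the substantive part. Recall that $\on{Poinc}^{\on{spec}}_{\cG,*}$ is the pushforward along the tautological projection $\pi_\Ran\colon(\sigma,o)\mapsto\sigma$ from the space of monodromy-free opers to $\LS_\cG$, and that $\on{Poinc}^{\on{spec}}_{\cG,*,\on{irred}}$ is its base change along $\jmath$. I would establish the Verdier-quotient property in two steps. First, essential surjectivity, which over the irreducible locus is precisely the statement that every irreducible local system carries a generic (hence monodromy-free) oper structure --- this is Arinkin's theorem \cite{Ari} --- so that $\pi^{\on{irred}}_\Ran$ is surjective. Second, that the resulting pushforward is not merely essentially surjective but a localization, which is where the Ran-space geometry enters: because $\Op^{\on{mon-free}}_\cG$ carries a unital structure over the Ran space, $\on{Poinc}^{\on{spec}}_{\cG,*,\on{irred}}$ is computed by a descent-type colimit that forces the counit $\on{Poinc}^{\on{spec}}_{\cG,*,\on{irred}}\circ(\on{Poinc}^{\on{spec}}_{\cG,*,\on{irred}})^R\to\Id$ to be an isomorphism; concretely this is the Bousfield-localization criterion identifying $\QCoh(\LS^{\on{irred}}_\cG)$ with the quotient of $\IndCoh^*(\Op^{\on{mon-free}}_\cG)_\Ran$ (base-changed to the irreducible locus) by $\ker\big((\pi^{\on{irred}}_\Ran)_*\big)$, which one checks using pseudo-properness of $\pi^{\on{irred}}_\Ran$ (\secref{sss:ind-proper}) together with the surjectivity from the first step.

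The main obstacle I expect is exactly this upgrade from ``surjective'' to ``Verdier quotient'' for the Ran-space pushforward: for an ordinary proper surjection $f\colon Y\to Z$ the functor $f_*$ is not a localization (already $\BP^1\to\on{pt}$ fails), so the argument must genuinely use the unital Ran structure and the fact that one pushes forward $!$-sheaves, making the comonad $(\pi^{\on{irred}}_\Ran)_*\circ(\pi^{\on{irred}}_\Ran)^!$ trivial. A secondary point requiring care is the bookkeeping of counits in the first paragraph, so that conservativity of $\BL_{G,\on{cusp}}$ genuinely suffices to conclude, rather than only showing that $\Loc_{G,\on{cusp}}\circ(\Loc_{G,\on{cusp}})^R$ is an idempotent comonad. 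An alternative, purely automorphic route would sidestep the spectral side by identifying $(\Loc_{G,\on{cusp}})^R$ with $\Gamma_G\circ\be$, where $\Gamma_G$ is the right adjoint of the critical localization functor, and proving directly that global sections at the critical level are fully faithful on cuspidal sheaves --- the ``counterpart of \propref{p:Gamma ff}'' in the most literal sense --- but this needs the corresponding Kac--Moody input.
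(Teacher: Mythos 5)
Your main line of argument has a genuine gap. You want to deduce that $\Loc_{G,\on{cusp}}$ is a Verdier quotient from the facts that $\BL_{G,\on{cusp}}\circ\Loc_{G,\on{cusp}}\simeq\on{Poinc}^{\on{spec}}_{\cG,*,\on{irred}}\circ\FLE_{G,\crit}$, that $\FLE_{G,\crit}$ is an equivalence, that $\on{Poinc}^{\on{spec}}_{\cG,*,\on{irred}}$ is a Verdier quotient, and that $\BL_{G,\on{cusp}}$ is conservative. But conservativity of $\BL_{G,\on{cusp}}$ does not let you ``transport the fully faithful right adjoint back''. Passing to right adjoints in the commutative square gives $(\Loc_{G,\on{cusp}})^R\circ(\BL_{G,\on{cusp}})^R \simeq \FLE_{G,\crit}^{-1}\circ(\on{Poinc}^{\on{spec}}_{\cG,*,\on{irred}})^R$, which pins down $(\Loc_{G,\on{cusp}})^R$ only after precomposition with $(\BL_{G,\on{cusp}})^R$; to isolate $(\Loc_{G,\on{cusp}})^R$ you would effectively need $\BL_{G,\on{cusp}}$ to be fully faithful, which is (most of) what the whole paper is trying to prove. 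The abstract statement ``$B\circ L$ is a Verdier quotient and $B$ is conservative $\Rightarrow$ $L$ is a Verdier quotient'' is simply false: take $L:\Vect\to\Vect\times\Vect$, $V\mapsto(V,0)$, and $B:\Vect\times\Vect\to\Vect$, $(V,W)\mapsto V\oplus W$; then $B$ is conservative with both adjoints, $B\circ L=\Id$, yet $L^R(V',W')=V'$ and the counit $L\circ L^R\to\Id$ is not an isomorphism. So the first-paragraph reduction does not work, and you cannot cite \propref{p:loc unital}/\thmref{t:up and down opers} (a later, independent input) to settle the present proposition this way. You half-sense the difficulty in your last paragraph, but it is not merely a matter of bookkeeping the counits.

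The paper's proof is much more elementary and goes through your ``alternative, purely automorphic route''. The key ingredients are: (i) the cuspidal category lives on a quasi-compact open $U_0\subset\Bun_G$ (see \secref{sss:U 0}), so that $\be^L\simeq\be_{U_0}^L\circ j_0^*$ and hence $\Loc_{G,\on{cusp}}\simeq\be_{U_0}^L\circ\Loc_{G,U_0}$ where $\Loc_{G,U_0}:=j_0^*\circ\Loc_G$; (ii) for any quasi-compact open $U$, the functor $\Loc_{G,U}$ is a Verdier quotient by \cite[Theorem 13.4.2]{GLC2}; and (iii) $\be_{U_0}^L$ is a Verdier quotient, being left adjoint to the fully faithful $\be_{U_0}$. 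A composite of Verdier quotients is a Verdier quotient. What you flagged as the needed ``Kac--Moody input'' is exactly (ii); the ingredient you are missing is (i), which converts the non-quasi-compact $\Loc_G$ (indeed far from a Verdier quotient) into the controllable $\Loc_{G,U_0}$. This route requires no spectral-side input, no conservativity of $\BL_{G,\on{cusp}}$, and no facts about $\on{Poinc}^{\on{spec}}_{\cG,*}$.
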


\begin{proof}

Let $U_0$ be as in \secref{sss:U 0}. Denote 
$$\Loc_{G,U_0}:=j_0^*\circ \Loc_G, \quad \KL(G)_{\crit,\Ran}\to \Dmod_{\frac{1}{2}}(U_0),$$
so that
\begin{equation} \label{e:Loc via U}
\Loc_{G,\on{cusp}}\simeq \be_{U_0}^L\circ \Loc_{G,U_0}
\end{equation} 
where $\be^L_{U_0}$ is the left adjoint of the functor $\be_{U_0}$ of \eqref{e:eU 0}. 

\medskip

It is known that for any quasi-compact $U$, the corresponding functor $\Loc_{G,U}$ is a Verdier quotient
(see \cite[Theorem 13.4.2]{GLC2}). Now, the assertion follows from \eqref{e:Loc via U}, since $\be_{U_0}^L$ is also a Verdier quotient.

\end{proof}

\sssec{}

Recall (see \cite[Sect. 2.2.4]{GLC2}) that the category $\KL(G)_{\crit,\Ran}$ is also canonically self-dual.
Thus, using the self-duality of $\Dmod_{\frac{1}{2}}(\Bun_G)_{\on{cusp}}$ given by \eqref{e:cusp self-dual},
we can regard the dual of $\Loc_{G,\on{cusp}}$ as a functor 
\begin{equation}
\Loc_{G,\on{cusp}}^\vee:\Dmod_{\frac{1}{2}}(\Bun_G)_{\on{cusp}}\to \KL(G)_{\crit,\Ran}.
\end{equation}

The following assertion is a counterpart of \lemref{l:Loc spec and dual}:

\begin{prop} \label{p:Loc crit and dual}
We have a canonical identification between $\Loc_{G,\on{cusp}}^\vee$ and 
$$\Loc_{G,\on{cusp}}^R\otimes \det(\Gamma(X,\CO_X)\otimes \fg)[\delta_G],$$
where $\delta_G=\dim(\Bun_G)$.
\end{prop}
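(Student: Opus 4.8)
### Proof Plan for Proposition~\ref{p:Loc crit and dual}

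The plan is to reduce the statement, via the factorization $\Loc_{G,\on{cusp}} \simeq \be^L_{U_0}\circ \Loc_{G,U_0}$ of \eqref{e:Loc via U}, to the corresponding duality statement for the quasi-compact localization functor $\Loc_{G,U_0}:\KL(G)_{\crit,\Ran}\to \Dmod_{\frac{1}{2}}(U_0)$, together with a bookkeeping of how the self-dualities interact with $\be_{U_0}$, $\be^L_{U_0}$ and $j_0$. First I would record that, with respect to the canonical self-duality of $\KL(G)_{\crit,\Ran}$ (see \cite[Sect. 2.2.4]{GLC2}) and Verdier duality on $U_0$, the functor $\Loc_{G,U_0}$ has dual canonically isomorphic to $\Loc_{G,U_0}^R$ up to the twist by $\det(\Gamma(X,\CO_X)\otimes\fg)[\delta_G]$ --- this is the quasi-compact manifestation of the statement, and it should be available from the analysis of $\Loc_{G,U}$ in \cite[Sect. 13]{GLC2} (the twist being exactly the discrepancy between $!$- and $*$-normalizations of the localization functor on $\Bun_G$, which is governed by the line $\det R\Gamma(X,\fg_{\mathcal O})$ restricted to $\Bun_G$ and the shift $\delta_G=\dim\Bun_G$). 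Since $U_0$ is quasi-compact and the twist is a shift by a constant line, it restricts trivially from $\Bun_G$ to $U_0$.

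The second step is to combine this with the duality identifications for $\be_{U_0}$. From \eqref{e:dual of cusp} and the discussion in \secref{ss:cusp co}, the self-duality $\on{Ps-Id}^{\on{nv}}_{\on{cusp}}\circ \bD^{\on{Verdier}}_{\on{cusp}}$ of $\Dmod_{\frac{1}{2}}(\Bun_G)_{\on{cusp}}$ is arranged so that $\be_{U_0}$ and $\be^L_{U_0}$ are swapped under dualization (up to Verdier duality on $U_0$); more precisely, $(\be^L_{U_0})^\vee \simeq \be_{U_0}$ and $(\be_{U_0})^\vee\simeq \be^L_{U_0}$, using $\bD^{\on{Verdier}}$ on $\Dmod_{\frac12}(U_0)$. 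Dualizing the identity $\Loc_{G,\on{cusp}}\simeq \be^L_{U_0}\circ \Loc_{G,U_0}$ then gives
\begin{equation*}
\Loc_{G,\on{cusp}}^\vee \simeq \Loc_{G,U_0}^\vee\circ (\be^L_{U_0})^\vee \simeq \bigl(\Loc_{G,U_0}^R\circ \be_{U_0}\bigr)\otimes \det(\Gamma(X,\CO_X)\otimes\fg)[\delta_G].
\end{equation*}
On the other hand, taking the right adjoint of $\Loc_{G,\on{cusp}}\simeq \be^L_{U_0}\circ \Loc_{G,U_0}$ gives $\Loc_{G,\on{cusp}}^R \simeq \Loc_{G,U_0}^R\circ (\be^L_{U_0})^R \simeq \Loc_{G,U_0}^R\circ \be_{U_0}$, since $\be_{U_0}$ is by construction the right adjoint of $\be^L_{U_0}$ (both being fully faithful with $\be_{U_0}$ the inclusion of a colocalization). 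Comparing the two displays yields the claimed identification $\Loc_{G,\on{cusp}}^\vee \simeq \Loc_{G,\on{cusp}}^R\otimes \det(\Gamma(X,\CO_X)\otimes\fg)[\delta_G]$.

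The main obstacle I anticipate is the precise identification of the twist and shift at the quasi-compact level --- i.e., verifying that the dual of $\Loc_{G,U_0}$ really is $\Loc_{G,U_0}^R$ twisted by exactly $\det(\Gamma(X,\CO_X)\otimes\fg)[\delta_G]$ and not some other line. This is essentially a computation of the ``relative dualizing data'' for the map from the affine Grassmannian side to $\Bun_G$: the functor $\Loc_G$ is built from the $!$-pushforward along $\Gr_{G,\Ran}\to \Bun_G$ (or rather the chiral-homology/factorization version thereof), and its dual involves the $*$-pushforward, the two differing by the (shifted) determinant line of the tangent complex, which on $\Bun_G$ is $R\Gamma(X,\fg_E)$ for the universal bundle $E$; the ``constant'' part of this line is $\det R\Gamma(X,\fg\otimes\CO_X)=\det(\Gamma(X,\CO_X)\otimes\fg)$ (using $H^1(X,\CO_X)$ contributes via duality, or absorbing it into the definition of the $\frac12$-twisted category) and the Euler characteristic shift is $\chi(X,\fg_E)=\dim\Bun_G=\delta_G$ up to sign. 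I would extract this carefully from \cite[Sect. 13.4]{GLC2} and \cite[Sect. 2.2.4]{GLC2} rather than rederive it, since the normalizations there already fix all the relevant lines; the rest of the argument is the formal adjunction/duality juggling sketched above.
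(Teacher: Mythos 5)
Your proposal follows the same route as the paper's proof: reduce via the factorization $\Loc_{G,\on{cusp}}\simeq \be^L_{U_0}\circ\Loc_{G,U_0}$ and the formal duality identities $\Loc_{G,\on{cusp}}^\vee\simeq\Loc_{G,U_0}^\vee\circ\be_{U_0}$, $\Loc_{G,\on{cusp}}^R\simeq\Loc_{G,U_0}^R\circ\be_{U_0}$ to the quasi-compact statement $\Loc_{G,U}^\vee\simeq\Loc_{G,U}^R\otimes\det(\Gamma(X,\CO_X)\otimes\fg)[\delta_G]$. Where you leave the quasi-compact duality as something to be extracted from \cite[Sect. 13]{GLC2} via a heuristic about $!$- versus $*$-normalizations, the paper instead pins it down by passing to the $\Dmod_\crit$ normalization (which absorbs the twist and shift), observing $\Loc_{G,\crit,U}^R = \Gamma_{G,\crit}\circ j_{\on{co},*}$ and $\Loc_{G,\crit,U}^\vee=\Loc_{G,\crit}^\vee\circ j_{\on{co},*}$, and then invoking the global identification $\Loc_{G,\crit}^\vee\simeq\Gamma_{G,\crit}$ from \cite[Proposition 10.5.7(b)]{GLC2}.
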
 

\begin{proof} 

Let $\Loc_{G,U_0}$ be as in the proof of \propref{p:Loc crit is Loc}. It follows formally that we have the identifications
$$\Loc_{G,\on{cusp}}^\vee \simeq \Loc_{G,U_0}^\vee \circ \be_{U_0} \text{ and }
\Loc_{G,\on{cusp}}^R \simeq \Loc_{G,U_0}^R \circ \be_{U_0}.$$

\medskip

Thus, in order to prove \propref{p:Loc crit and dual}, it suffices to show that for a quasi-compact $U\subset \Bun_G$, 
with respect to the Verdier self-duality of $\Dmod_{\frac{1}{2}}(U)$, we have
\begin{equation} \label{e:adj and dual U}
\Loc_{G,U}^\vee \simeq \Loc_{G,U}^R\otimes \det(\Gamma(X,\CO_X)\otimes \fg)[\delta_G]
\end{equation}
as functors 
$$\Dmod_{\frac{1}{2}}(U) \to \KL(G)_{\crit,\Ran}.$$

\medskip

Denote by $\Loc_{G,\crit}$ the composition of $\Loc_G$ with the identification 
$$\Dmod_{\frac{1}{2}}(\Bun_G)\simeq \Dmod_\crit(\Bun_G)$$
of \cite[Equation (10.2)]{GLC2}, and similarly for $\Bun_G$ replaced by $U$. 

\medskip

In terms of the self-duality of $\Dmod_\crit(U)$ specified in \cite[Sect. 10.5.2]{GLC2}, we can reformulate 
\eqref{e:adj and dual U} as 
\begin{equation} \label{e:adj and dual U crit}
\Loc_{G,\crit,U}^\vee \simeq \Loc_{G,\crit,U}^R.
\end{equation}

Note that by the definition of the functor $\Loc_{G,\crit}$ in \cite[Sect. 11.3.3-11.3.4]{GLC2}, 
the functor $\Loc_{G,\crit,U}^R$ is the functor $\Gamma_{G,\crit}\circ j_{\on{co},*}$, where
$$\Gamma_{G,\crit}:\Dmod_\crit(\Bun_G)_{\on{co}}\to \KL(G)_{\crit,\Ran}$$
is the functor of \cite[Sect. 10.2.5]{GLC2}. 

\medskip

We have:
$$\Loc_{G,\crit,U}^\vee\simeq  \Loc_{G,\crit}^\vee \circ (j_*)^\vee = \Loc_{G,\crit}^\vee\circ j_{\on{co},*}.$$

Hence, in order to prove \eqref{e:adj and dual U crit} it remains to identify
$$\Loc_{G,\crit}^\vee\simeq \Gamma_{G,\crit}.$$

However, the latter is the statement of \cite[Proposition 10.5.7(b)]{GLC2}.

\end{proof}

\ssec{The spectral Poincar\'e functor}

In this section we will show that the right adjoint of the spectral Poincar\'e functor (functor $\on{Poinc}^{\on{spec}}_{\cG,*}$ below)
is \emph{essentially} isomorphic to its dual, once we restrict to the locus of irreducible local systems. 

\medskip

\noindent NB: it is in this subsection that the assumption that $G$ (rather $\cG$) is semi-simple is 
used\footnote{In fact, it is used twice, and one can show that the two usages cancel each other out. We just chose
not to go through this exercise.}.  

\sssec{}

Let 
$$\on{Poinc}^{\on{spec}}_{\cG,*}:\IndCoh^*(\Op^{\on{mon-free}}_\cG)_\Ran\to \IndCoh_\Nilp(\LS_\cG)$$
be the functor of \cite[Sect. 17.4.2]{GLC2}.

\medskip

Denote by $\on{Poinc}^{\on{spec}}_{\cG,*,\on{irred}}$ the composite functor
$$\IndCoh^*(\Op^{\on{mon-free}}_\cG)_\Ran \overset{\on{Poinc}^{\on{spec}}_{\cG,*}}\longrightarrow 
\QCoh(\LS_\cG) \overset{\jmath^*}\to \QCoh(\LS^{\on{irred}}_\cG).$$

\sssec{}

Recall now (see \cite[Sect. 3.2]{GLC2}) that in addition to $\IndCoh^*(\Op^{\on{mon-free}}_\cG)_\Ran$, we can consider the category 
$$\IndCoh^!(\Op^{\on{mon-free}}_\cG)_\Ran,$$
and we have a canonical identification
\begin{equation} \label{e:IndCoh mon-free duality}
(\IndCoh^*(\Op^{\on{mon-free}}_\cG)_\Ran)^\vee \overset{\bD^{\on{Serre}}}\simeq \IndCoh^!(\Op^{\on{mon-free}}_\cG)_\Ran.
\end{equation}. 

In addition, we have an identification
$$\Theta_{\Op^\mf(\cG)}:\IndCoh^!(\Op^{\on{mon-free}}_\cG)_\Ran\to \IndCoh^*(\Op^{\on{mon-free}}_\cG)_\Ran,$$
see \cite[Equation (3.21)]{GLC2}.

\medskip

Composing we obtain a datum of self-duality:

\begin{equation} \label{e:opers self-dual}
\IndCoh^*(\Op^{\on{mon-free}}_\cG)_\Ran^\vee \overset{\Theta_{\Op^\mf(\cG)}\circ \bD^{\on{Serre}}}\simeq 
\IndCoh^*(\Op^{\on{mon-free}}_\cG)_\Ran.
\end{equation}

\sssec{}

We are going to prove: 

\begin{prop} \label{p:dual of Poinc}
With respect to the self-dualities \eqref{e:opers self-dual} and \eqref{e:naive self-dual QCoh irred}, we have a canonical
identification between the functor \emph{dual} to $\on{Poinc}^{\on{spec}}_{\cG,*,\on{irred}}$ and
$$(\on{Poinc}^{\on{spec}}_{\cG,*,\on{irred}})^R\otimes \fl_{\on{Kost}}[-\delta_G],$$
where $\fl_{\on{Kost}}$ is the line of \cite[Sect. 17.2.2]{GLC2}. 
\end{prop}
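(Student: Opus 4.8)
The plan is to run the argument in exact parallel with \propref{p:Loc crit and dual}, using that $\on{Poinc}^{\on{spec}}_{\cG,*}$ plays the role of $\Loc_G$ in the lower portion of the Fundamental Commutative Diagram. By its construction in \cite{GLC2}, the functor $\on{Poinc}^{\on{spec}}_{\cG,*}$ is, up to a renormalization between the $*$- and $!$-flavours of $\IndCoh$ on $\Op^{\on{mon-free}}_\cG$, the $\IndCoh$-pushforward along $\pi_\Ran\colon \Op^{\on{mon-free}}_\cG(X)_\Ran\to \LS_\cG$; restricting to the irreducible locus, $\on{Poinc}^{\on{spec}}_{\cG,*,\on{irred}}$ is pushforward along $\pi^{\on{irred}}_\Ran$, which is pseudo-proper by \secref{sss:ind-proper}. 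For a pseudo-proper map the $\IndCoh$-pushforward has the explicit right adjoint $(\pi^{\on{irred}}_\Ran)^!$, and the basic input from the $\IndCoh$/Serre-duality formalism is that this same upper-shriek functor is canonically the \emph{dual} of pushforward with respect to the Serre self-dualities of source and target. So, before accounting for twists, ``dual $=$ right adjoint'' holds on the nose for the plain pushforward.

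What remains is to compare the self-dualities actually used in the statement with the Serre ones, and the discrepancy will produce the claimed twist $\fl_{\on{Kost}}[-\delta_G]$. On the target, the statement uses $\bD^{\on{naive}}$ on $\QCoh(\LS^{\on{irred}}_\cG)=\IndCoh_\Nilp(\LS^{\on{irred}}_\cG)$, which differs from $\bD^{\on{Serre}}$ by tensoring with the canonical line bundle and a shift by $\dim(\LS^{\on{irred}}_\cG)$ --- this step is legitimate precisely because $\cG$ is semi-simple, so that $\LS^{\on{irred}}_\cG$ is classical and smooth (the first use of the hypothesis). On the source, the statement uses $\Theta_{\Op^\mf(\cG)}\circ \bD^{\on{Serre}}$, so I must absorb the renormalization equivalence $\Theta_{\Op^\mf(\cG)}\colon\IndCoh^!(\Op^{\on{mon-free}}_\cG)_\Ran\to \IndCoh^*(\Op^{\on{mon-free}}_\cG)_\Ran$; by its definition in \cite{GLC2} this is governed, over each finite subset of the Ran space, by a relative dualizing datum (equivalently, by the geometry of the Kostant slice), which is where the line $\fl_{\on{Kost}}$ and a second dimension shift enter (the second use of semi-simplicity). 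Combining these with the relative dimension of $\pi_\Ran$, the canonical-bundle twist on $\LS^{\on{irred}}_\cG$ (pulled into $(\pi^{\on{irred}}_\Ran)^!$) recombines with the Kostant-slice twist, and the various shifts collapse to the single shift $[-\delta_G]$ --- indeed the two appearances of the semi-simplicity assumption partially cancel, as flagged in the footnote to this subsection --- leaving exactly $(\pi^{\on{irred}}_\Ran)^!\otimes\fl_{\on{Kost}}[-\delta_G] \simeq (\on{Poinc}^{\on{spec}}_{\cG,*,\on{irred}})^R\otimes \fl_{\on{Kost}}[-\delta_G]$.

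The hard part will be precisely this bookkeeping: threading the identification through the four pieces of structure --- $\IndCoh^*$ versus $\IndCoh^!$ on opers, the equivalence $\Theta_{\Op^\mf(\cG)}$, $\bD^{\on{Serre}}$ versus $\bD^{\on{naive}}$ on $\LS^{\on{irred}}_\cG$, and the relative dualizing complex of $\pi_\Ran$ --- and checking that the net correction is the \emph{constant} datum $\fl_{\on{Kost}}[-\delta_G]$ (a fixed line times a uniform shift) rather than some honest line bundle on $\LS^{\on{irred}}_\cG$ paired with a nonconstant shift. A convenient way to organize the computation, mirroring the passage from $\LS_\cG$ to quasi-compact opens in \propref{p:Loc crit and dual}, would be: first establish the identification over all of $\LS_\cG$ straight from the definitions in \cite{GLC2} (where the self-duality on the oper side is set up so that spectral Poincar\'e is tautologically dual to its right adjoint up to the Kostant twist), and then observe that restriction along the open embedding $\jmath$ is compatible with forming duals and right adjoints, using $\Nilp|_{\LS^{\on{irred}}_\cG}=\{0\}$ together with $\jmath^!\simeq\jmath^*$ and $(\jmath^*)^\vee\simeq\jmath_*$ for the open embedding.
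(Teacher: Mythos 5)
Your high-level strategy — ind-properness gives that the right adjoint of pushforward is upper-shriek, Serre duality gives that the dual of pushforward is also upper-shriek, and the stated twist is the aggregate of the discrepancies between the chosen self-dualities and the Serre ones — is the same as the paper's, and the key structural inputs you identify (ind-properness of $\pi^{\on{irred}}_{\ul{x}}$, smoothness of $\LS^{\on{irred}}_\cG$, the symplectic structure trivializing the canonical bundle, the $\Theta_{\Op^\mf(\cG)}$ renormalization) are all the right ones. You also correctly flag the place where the hypothesis on $G$ enters and approximately cancels.

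The main thing you underweight is the role of the local-to-global map $s_{\ul{x}}$. The functor $\on{Poinc}^{\on{spec}}_{\cG,*,\ul{x}}$ is not merely pushforward along $\pi_{\ul{x}}$ (up to a renormalization); it is the pull-push $(\pi_{\ul{x}})^{\IndCoh}_*\circ (s_{\ul{x}})^{\IndCoh,*}$, and the $*$-pullback $(s_{\ul{x}})^{\IndCoh,*}$ is \emph{not} Serre-self-dual, so the Serre dual of $\on{Poinc}^{\on{spec}}_{\cG,*,\ul{x}}$ is not directly computable. The paper's way around this is to first trade $\on{Poinc}^{\on{spec}}_{\cG,*}\circ\Theta_{\Op^\mf(\cG)}$ for $\on{Poinc}^{\on{spec}}_{\cG,!}=(\pi_{\ul{x}})^{\IndCoh}_*\circ (s_{\ul{x}})^!$ via \cite[Theorem 17.4.7]{GLC2}, which is a genuine theorem (not a definitional unwinding) and is the precise point where $\fl_{\on{Kost}}[-\delta_G]$ appears: it is the discrepancy between the two Poincar\'e functors, not a datum built into the self-duality on opers as your parenthetical suggests. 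Once one is holding $\on{Poinc}^{\on{spec}}_{\cG,!}$, its Serre dual is directly $\on{coeff}^{\on{spec}}_\cG=(s_{\ul{x}})^{\IndCoh}_*\circ(\pi_{\ul{x}})^!$ (both factors are Serre-self-dual), and then the only remaining step is the adjunction on the irreducible locus. Relatedly, your proposed reorganization "first establish the identification over all of $\LS_\cG$, then restrict along $\jmath$" works for the dual (which is $\on{coeff}^{\on{spec}}_\cG$ already over all of $\LS_\cG$) but not for the right adjoint: $(\pi_{\ul{x}})^!$ is not the right adjoint of $(\pi_{\ul{x}})^{\IndCoh}_*$ before restriction, since $\pi_{\ul{x}}$ is ind-proper only over the irreducible locus. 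Finally, the canonical bundle of $\LS^{\on{irred}}_\cG$ is not "recombined" with the Kostant line — it is simply trivialized by the symplectic form coming from the Killing form, which is also where the second use of the semi-simplicity hypothesis lives (both uses are on the target side, not one on each side).
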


The rest of this subsection is devoted to the proof of \propref{p:dual of Poinc}.

\sssec{} 

Recall (see \cite[Sect. 17.4.1]{GLC2}) 
that in addition to the functor $\on{Poinc}^{\on{spec}}_{\cG,*}$, there exists a functor
$$\on{Poinc}^{\on{spec}}_{\cG,!}:\IndCoh^!(\Op^{\on{mon-free}}_\cG)_\Ran\to \IndCoh_\Nilp(\LS_\cG).$$

\medskip

Denote
$$\on{Poinc}^{\on{spec}}_{\cG,!,\on{irred}}:=\jmath^*\circ \on{Poinc}^{\on{spec}}_{\cG,!}.$$

\medskip

According to \cite[Theorem 17.4.7]{GLC2}, we have:
$$\on{Poinc}^{\on{spec}}_{\cG,!}\otimes \fl_{\on{Kost}}[-\delta_G]\simeq 
\on{Poinc}^{\on{spec}}_{\cG,*}\circ \Theta_{\Op^\mf(\cG)}.$$

Hence, the assertion of the proposition can be reformulated as an isomorphism 
\begin{equation} \label{e:dual of Poinc spec}
(\on{Poinc}^{\on{spec}}_{\cG,!,\on{irred}})^\vee \simeq (\on{Poinc}^{\on{spec}}_{\cG,*,\on{irred}})^R[2\delta_G],
\end{equation}
as functors
$$\QCoh(\LS^{\on{irred}}_\cG)\rightrightarrows \IndCoh^*(\Op^{\on{mon-free}}_\cG)_\Ran,$$
where we regard $(\on{Poinc}^{\on{spec}}_{\cG,!,\on{irred}})^\vee$ as a functor
$$\QCoh(\LS^{\on{irred}}_\cG)\overset{\text{\eqref{e:naive self-dual QCoh irred}}}\longrightarrow \QCoh(\LS^{\on{irred}}_\cG)^\vee 
\to \IndCoh^!(\Op^{\on{mon-free}}_\cG)_\Ran^\vee \simeq \IndCoh^*(\Op^{\on{mon-free}}_\cG)_\Ran.$$

\medskip

To simplify the notation, we will prove a variant of \eqref{e:dual of Poinc spec}, where instead of the entire $\Ran$ we worked at a fixed
point $\ul{x}\in \Ran$. I.e., we will prove
\begin{equation} \label{e:dual of Poinc spec x}
(\on{Poinc}^{\on{spec}}_{\cG,!,\ul{x},\on{irred}})^\vee \simeq (\on{Poinc}^{\on{spec}}_{\cG,*,\ul{x},\on{irred}})^R[2\delta_G]
\end{equation}
as functors 
$$\QCoh(\LS^{\on{irred}}_\cG)\rightrightarrows \IndCoh^*(\Op^\mf_{\cG,\ul{x}}),$$

\sssec{}

Recall that 
$$\on{Poinc}^{\on{spec}}_{\cG,*,\ul{x}} \text{ and } \on{Poinc}^{\on{spec}}_{\cG,!,\ul{x}}$$ are given by
$$(\pi_{\ul{x}})^{\IndCoh}_*\circ (s_{\ul{x}})^{\IndCoh,*} \text{ and } (\pi_{\ul{x}})^{\IndCoh}_*\circ (s_{\ul{x}})^!,$$
respectively, for the morphisms 
$$\Op^\mf_{\cG,\ul{x}} \overset{s_{\ul{x}}}\leftarrow 
\Op^{\on{mon-free}}_\cG(X-\ul{x})\overset{\pi_{\ul{x}}}\to \LS_\cG,$$
see \cite[Sects. 17.4.1 and 17.4.2]{GLC2}. 


\medskip

Let 
$$\on{coeff}^{\on{spec}}_{\cG,\ul{x}}:\IndCoh(\LS_\cG)\to  \IndCoh^*(\Op^\mf_{\cG,\ul{x}})$$
denote the functor
$$(s_{\ul{x}})^{\IndCoh}_*\circ (\pi_{\ul{x}})^!.$$

For future use, we also introduce the notation for the Ran version of this functor
$$\on{coeff}^{\on{spec}}_\cG:\IndCoh(\LS_\cG)\to  \IndCoh^*(\Op^{\on{mon-free}}_\cG)_\Ran.$$

\sssec{}

Let
\begin{equation} \label{e:Serre duality}
\IndCoh(\LS_\cG)^\vee \overset{\bD^{\on{Serre}}}\to \IndCoh(\LS_\cG)
\end{equation} 
be the identification, given by \emph{Serre duality}.

\medskip

Note that with respect to identifications \eqref{e:Serre duality} and \eqref{e:IndCoh mon-free duality}, 
we have
\begin{equation} \label{e:Poinc spec dual}
(\on{Poinc}^{\on{spec}}_{\cG,!,\ul{x}})^\vee  \simeq \on{coeff}^{\on{spec}}_{\cG,\ul{x}}.
\end{equation} 

\sssec{}

Due to the assumption that $G$ (and hence $\cG$) is semi-simple, the stack $\LS^{\on{irred}}_\cG$ is smooth, so the natural embedding
$$\QCoh(\LS^{\on{irred}}_\cG)\hookrightarrow \IndCoh(\LS^{\on{irred}}_\cG)$$
is an equivalence. 

\medskip

In particular, the identification \eqref{e:Serre duality} induces an identification 
\begin{equation} \label{e:Serre irred duality}
\QCoh(\LS^{\on{irred}}_\cG)^\vee \overset{\bD^{\on{Serre}}}\to \QCoh(\LS^{\on{irred}}_\cG).
\end{equation} 

From \eqref{e:Poinc spec dual} we obtain 
$$(\jmath^*\circ \on{Poinc}^{\on{spec}}_{\cG,!,\ul{x}})^\vee \simeq \on{coeff}^{\on{spec}}_{\cG,\ul{x}}\circ \jmath_*=:
\on{coeff}^{\on{spec}}_{\cG,\ul{x},\on{irred}},$$
as functors 
$$\QCoh(\LS^{\on{irred}}_\cG)\rightrightarrows \IndCoh^*(\Op^\mf_{\cG,\ul{x}}),$$
where we use \eqref{e:Serre irred duality} to identify $\QCoh(\LS^{\on{irred}}_\cG)$ with its own dual.

\sssec{}

Note now that since $\cG$ is semi-simple, the Killing form on $\cg$ defines a canonical symplectic structure on 
$\LS^{\on{irred}}_\cG$. Hence, 
$$\bD^{\on{Serre}} \simeq \bD^{\on{naive}}[\dim(\LS_\cG^{\on{irred}}(X))]= \bD^{\on{naive}}[2\delta_G].$$

Hence, \eqref{e:dual of Poinc spec x} becomes equivalent to an isomorphism
\begin{equation} \label{e:Poinc spec adj}
\on{coeff}^{\on{spec}}_{\cG,\ul{x},\on{irred}}\simeq (\on{Poinc}^{\on{spec}}_{\cG,*,\ul{x},\on{irred}})^R.
\end{equation} 

\sssec{}

Thus, we have to establish an adjunction between 
$$\jmath^*\circ (\pi_{\ul{x}})^{\IndCoh}_*\circ (s_{\ul{x}})^* \text{ and } (s_{\ul{x}})^{\IndCoh}_*\circ (\pi_{\ul{x}})^!\circ \jmath_*.$$

Since the functors $((s_{\ul{x}})^{\IndCoh,*},(s_{\ul{x}})^{\IndCoh}_*)$ form an adjoint pair, it suffices to establish an adjunction between
$$\jmath^*\circ (\pi_{\ul{x}})^{\IndCoh}_* \text{ and } (\pi_{\ul{x}})^!\circ \jmath_*.$$

\sssec{} \label{sss:ind-proper}

Set
$$\Op^{\on{mon-free,irred}}_\cG(X-\ul{x}):=\Op^{\on{mon-free}}_\cG(X-\ul{x})\underset{\LS_\cG}\times
\LS^{\on{irred}}_\cG.$$

Let $\pi^{\on{irred}}_{\ul{x}}$ denote the resulting morphism 
$$\Op^{\on{mon-free,irred}}_\cG(X-\ul{x})\to \LS^{\on{irred}}_\cG.$$

By base change, the required adjunction is equivalent to an adjunction between
$$\IndCoh(\Op^{\on{mon-free,irred}}_\cG(X-\ul{x})) \overset{(\pi^{\on{irred}}_{\ul{x}})^{\IndCoh}_*}\longrightarrow \IndCoh(\LS^{\on{irred}}_\cG)$$
and
$$\IndCoh(\LS^{\on{irred}}_\cG)
\overset{(\pi^{\on{irred}}_{\ul{x}})^!}\longrightarrow \IndCoh(\Op^{\on{mon-free,irred}}_\cG(X-\ul{x})).$$

However, this follows from the fact that, under the assumption that $\cG$ is semi-simple, the morphism $\pi^{\on{irred}}_{\ul{x}}$
is ind-proper. Indeed, the generic non-degeneracy condition for opers 
is automatic, once the underlying local system is irreducible. 

\qed[\propref{p:dual of Poinc}]

\ssec{Proof of \thmref{t:right adj as dual}} 

This proof will amount to comparing the commutative diagrams obtained from the diagram expressing
the compatibility of $\BL_G$ with critical localization (diagram \eqref{e:L and Loc} below) 
by passage to right adjoint and dual functors, respectively. 

\sssec{}

By \propref{p:Loc crit is Loc}, in order to construct an isomorphism 
$$(\BL_{G,\on{cusp}})^R \simeq  \Phi_{G,\on{cusp}},$$
it suffices to establish an isomorphism 
\begin{equation} \label{e:right adjoints an coloc}
(\Loc_{G,\on{cusp}})^\vee \circ (\BL_{G,\on{cusp}})^R \simeq (\Loc_{G,\on{cusp}})^\vee \circ \Phi_{G,\on{cusp}}.
\end{equation} 

\sssec{} \label{sss:L and Loc}

Recall that according to \cite[Theorem 18.5.2]{GLC2}, we have the following commutative diagram:

\begin{equation} \label{e:L and Loc}
\CD
\Dmod_{\frac{1}{2}}(\Bun_G) @>{\BL_G}>> \IndCoh_\Nilp(\LS_\cG) \\
@A{\Loc_G\otimes \fl}AA @AA{\on{Poinc}^{\on{spec}}_{\cG,*}}A \\
\KL(G)_{\crit,\Ran} @>{\FLE_{G,\crit}}>> \IndCoh^*(\Op^{\on{mon-free}}_\cG)_\Ran,
\endCD
\end{equation}
where :

\begin{itemize}

\item $\FLE_{G,\crit}$ is the \emph{critical FLE functor} of \cite[Equation (6.7)]{GLC2};

\item $\fl$ is the comologically graded line $$\fl^{\otimes \frac{1}{2}}_{G,N_{\rho(\omega_X)}}\otimes 
\fl^{\otimes -1}_{N_{\rho(\omega_X)}}[-\delta_{N_{\rho(\omega_X)}}],$$
where:

\begin{itemize}

\item $\fl^{\otimes \frac{1}{2}}_{G,N_{\rho(\omega_X)}}$ is the (non-graded) line of \cite[Equation (9.7)]{GLC2};

\item $\fl_{N_{\rho(\omega_X)}}$ is the (non-graded) line of \cite[Equation (14.2)]{GLC2};

\item $\delta_{N_{\rho(\omega_X)}}=\on{dim}(\Bun_{N_{\rho(\omega_X)}})$. 

\end{itemize}

\end{itemize}

\sssec{}

Concatenating diagrams \eqref{e:L and Loc} and \eqref{e:L cusp diagram out}, we obtain a commutative diagram
\begin{equation} \label{e:L and Loc cusp}
\CD
\Dmod_{\frac{1}{2}}(\Bun_G)_{\on{cusp}} @>{\BL_{G,\on{cusp}}}>> \IndCoh_\Nilp(\LS^{\on{irred}}_\cG) \\
@A{\Loc_{G,\on{cusp}}\otimes \fl}AA @AA{\on{Poinc}^{\on{spec}}_{\cG,*,\on{irred}}}A \\
\KL(G)_{\crit,\Ran} @>{\FLE_{G,\crit}}>> \IndCoh^*(\Op^{\on{mon-free}}_\cG)_\Ran.
\endCD
\end{equation}

\medskip

Passing to the right adjoints in \eqref{e:L and Loc cusp} we obtain a diagram
\begin{equation} \label{e:L and Loc cusp right adj}
\CD
\Dmod_{\frac{1}{2}}(\Bun_G)_{\on{cusp}} @<{(\BL_{G,\on{cusp}})^R}<< \IndCoh_\Nilp(\LS^{\on{irred}}_\cG) \\
@V{\Loc^R_{G,\on{cusp}}}VV @VV{(\on{Poinc}^{\on{spec}}_{\cG,*,\on{irred}})^R\otimes \fl}V \\
\KL(G)_{\crit,\Ran} @<{\FLE_{G,\crit}^{-1}}<< \IndCoh^*(\Op^{\on{mon-free}}_\cG)_\Ran.
\endCD
\end{equation}

We will establish \eqref{e:right adjoints an coloc} by showing that the diagram 
\begin{equation} \label{e:L and Loc cusp dual}
\CD
\Dmod_{\frac{1}{2}}(\Bun_G)_{\on{cusp}} @<{\Phi_{G,\on{cusp}}}<< \IndCoh_\Nilp(\LS^{\on{irred}}_\cG) \\
@V{\Loc^R_{G,\on{cusp}}}VV @VV{(\on{Poinc}^{\on{spec}}_{\cG,*,\on{irred}})^R\otimes \fl}V \\
\KL(G)_{\crit,\Ran} @<{\FLE_{G,\crit}^{-1}}<< \IndCoh^*(\Op^{\on{mon-free}}_\cG)_\Ran
\endCD
\end{equation}
commutes as well.

\sssec{}

Consider the diagram obtained by passing to the duals in \eqref{e:L and Loc cusp}:
\begin{equation} \label{e:L and Loc cusp dual 0}
\CD
\Dmod_{\frac{1}{2}}(\Bun_G)_{\on{cusp}} @<{(\BL_{G,\on{cusp}})^\vee}<< \IndCoh_\Nilp(\LS^{\on{irred}}_\cG) \\
@V{\Loc_{G,\on{cusp}}^\vee\otimes \fl}VV @VV{(\on{Poinc}^{\on{spec}}_{\cG,*,\on{irred}})^\vee}V \\
\KL(G)_{\crit,\Ran} @<{\FLE_{G,\crit}^\vee}<< \IndCoh^!(\Op^{\on{mon-free}}_\cG)_\Ran
\endCD
\end{equation}

Recall now that according to \cite[Theorem 8.1.4]{GLC2}, we have a canonical identification
$$\FLE_{G,\crit}^\vee \simeq \tau_G\circ \on{FLE}_{G,\crit}^{-1}\circ \Theta_{\Op^\mf(\cG)}$$
as functors 
$$\IndCoh^!(\Op^{\on{mon-free}}_\cG)_\Ran\to \KL(G)_{\crit,\Ran}.$$

Combining with \propref{p:dual of Poinc}, we can rewrite \eqref{e:L and Loc cusp dual 0} as
\begin{equation} \label{e:L and Loc cusp dual 1}
\CD
\Dmod_{\frac{1}{2}}(\Bun_G)_{\on{cusp}} @<{\tau_G\circ (\BL_{G,\on{cusp}})^\vee}<< \IndCoh_\Nilp(\LS^{\on{irred}}_\cG) \\
@V{\Loc_{G,\on{cusp}}^\vee\otimes \fl}VV @VV{(\on{Poinc}^{\on{spec}}_{\cG,*,\on{irred}})^R\otimes \fl_{\on{Kost}}^{\otimes -1}[\delta_G]}V \\
\KL(G)_{\crit,\Ran} @<{\FLE_{G,\crit}^{-1}}<< \IndCoh^*(\Op^{\on{mon-free}}_\cG)_\Ran,
\endCD
\end{equation}
and further as
\begin{equation} \label{e:L and Loc cusp dual 2}
\CD
\Dmod_{\frac{1}{2}}(\Bun_G)_{\on{cusp}} @<{\Phi_{G,\on{cusp}}}<< \IndCoh_\Nilp(\LS^{\on{irred}}_\cG) \\ 
@V{\Loc_{G,\on{cusp}}^\vee[2\delta_{N_{\rho(\omega_X)}}]}VV 
@VV{(\on{Poinc}^{\on{spec}}_{\cG,*,\on{irred}})^R\otimes \fl^{\otimes -1}\otimes \fl_{\on{Kost}}^{\otimes -1}[\delta_G]}V \\
\KL(G)_{\crit,\Ran} @<{\FLE_{G,\crit}^{-1}}<< \IndCoh^*(\Op^{\on{mon-free}}_\cG)_\Ran.
\endCD
\end{equation}

Taking into account \propref{p:Loc crit and dual}, we can further rewrite \eqref{e:L and Loc cusp dual 2} as 
\begin{equation} \label{e:L and Loc cusp dual 3}
\CD
\Dmod_{\frac{1}{2}}(\Bun_G)_{\on{cusp}} @<{\Phi_{G,\on{cusp}}}<< \IndCoh_\Nilp(\LS^{\on{irred}}_\cG) \\ 
@V{\Loc_{G,\on{cusp}}^R[2\delta_{N_{\rho(\omega_X)}}]}VV 
@VV{(\on{Poinc}^{\on{spec}}_{\cG,*,\on{irred}})^R\otimes \fl^{\otimes -1}\otimes \fl_{\on{Kost}}^{\otimes -1}\otimes  \det(\Gamma(X,\CO_X)\otimes \fg)^{\otimes -1}}V \\
\KL(G)_{\crit,\Ran} @<{\FLE_{G,\crit}^{-1}}<< \IndCoh^*(\Op^{\on{mon-free}}_\cG)_\Ran.
\endCD
\end{equation}

\sssec{}

Comparing \eqref{e:L and Loc cusp dual 3} with the desired diagram \eqref{e:L and Loc cusp dual}, we 
conclude that it establish the isomorphism 
$$\fl_{\on{Kost}}\otimes  \det(\Gamma(X,\CO_X)\otimes \fg)^{\otimes -1}\simeq 
(\fl^{\otimes \frac{1}{2}}_{G,N_{\rho(\omega_X)}})^{\otimes 2} \otimes \fl_{N_{\rho(\omega_X)}}^{\otimes -2}.$$

However, the latter is given by \cite[Proposition 15.1.10]{GLC3}.

\qed[\thmref{t:right adj as dual}]

\ssec{An addendum: ambidexterity for eigensheaves}

The contents of this subsection will not be used elsewhere in the paper. Here we will explain another 
approach to ambidexterity, albeit so far working only for Hecke eigensheaves (or more generally
D-modules with nilpotent singular support, see Remark \ref{r:Nilp amb}). 

\sssec{}

Fix a point $\sigma\in \LS^{\on{irred}}_\cG$, and let
$$\Dmod_{\frac{1}{2}}(\Bun_G)_\sigma:=\Dmod_{\frac{1}{2}}(\Bun_G)\underset{\QCoh(\LS_\cG)}\otimes \Vect$$
be the corresponding category of Hecke eigensheaves, where
$$\QCoh(\LS_\cG)\to \Vect$$
is the functor of *-fiber of $\sigma$, to be denoted $(i_\sigma)^*$. 

\medskip

Note that since $\sigma$ was assumed irreducible, the forgetful functor
\begin{equation} \label{e:forget sigma}
\Dmod_{\frac{1}{2}}(\Bun_G)_\sigma\overset{\oblv_\sigma}\to \Dmod_{\frac{1}{2}}(\Bun_G)
\end{equation} 
lands in $ \Dmod_{\frac{1}{2}}(\Bun_G)_{\on{cusp}}$, see \corref{c:irrred is cusp}. 

\sssec{}

The functor $\BL_G$ induces a functor
$$\BL_{G,\sigma}:\Dmod_{\frac{1}{2}}(\Bun_G)_\sigma\to \Vect.$$

According to \thmref{t:ambidex}, the left and rights adjoints of $\BL_{G,\sigma}$ are (canonically) isomorphic. 
In this subsection we will exhibit another way of constructing such an isomorphism\footnote{However, it is is not
obvious that the isomorphism we will construct in this subsection is the same as one from \thmref{t:ambidex}.}.


\sssec{}

Note that by construction, the functor $\BL_{G,\sigma}$ is isomorphic to the composition
$$\Dmod_{\frac{1}{2}}(\Bun_G)_\sigma \overset{\oblv_\sigma}\to 
\Dmod_{\frac{1}{2}}(\Bun_G)\overset{\on{coeff}_G^{\on{Vac,glob}}}\longrightarrow \Vect,$$
where $\on{coeff}_G^{\on{Vac,glob}}$ is as in \cite[Sect. 9.6.3]{GLC2}.

\medskip

The left adjoint of $\BL_{G,\sigma}$, denoted $\BL^L_{G,\sigma}$ sends the generator $k\in \Vect$ to
the object
$$(i_\sigma)^*(\on{Poinc}^{\on{Vac,glob}}_{G,!}),$$
where:

\begin{itemize}

\item $(i_\sigma)^*$ denotes the functor
\begin{multline*} 
\Dmod_{\frac{1}{2}}(\Bun_G)\simeq \Dmod_{\frac{1}{2}}(\Bun_G)\underset{\QCoh(\LS_\cG)}\otimes 
\QCoh(\LS_\cG) \overset{\on{id}\otimes (i_\sigma)^*}\longrightarrow \\
\to \Dmod_{\frac{1}{2}}(\Bun_G)\underset{\QCoh(\LS_\cG)}\otimes \Vect=
\Dmod_{\frac{1}{2}}(\Bun_G)_\sigma,
\end{multline*}
left adjoint to the forgetful functor $\oblv_\sigma$;

\medskip

\item $\on{Poinc}^{\on{Vac,glob}}_{G,!}\in \Dmod_{\frac{1}{2}}(\Bun_G)$ is the object from \cite[Sect. 1.3]{GLC1}.

\end{itemize} 

\sssec{}

Thus, we wish to construct a canonical isomorphism
\begin{equation} \label{e:eigen ambidex}
\CHom_{\Dmod_{\frac{1}{2}}(\Bun_G)_\sigma}(\CF,\BL^L_{G,\sigma}(\sV))\simeq \CHom_{\Vect}(\on{coeff}_G^{\on{Vac,glob}}\circ
\oblv_\sigma(\CF),\sV)
\end{equation}
for $\CF\in \Dmod_{\frac{1}{2}}(\Bun_G)_\sigma$ and $\sV\in \Vect$. 

\medskip

We will rewrite both sides of \eqref{e:eigen ambidex} and show that they are canonically isomorphic. 

\sssec{}

Using \thmref{t:simple Kevin}, we rewrite $\BL^L_{G,\sigma}(\sV)$ as
\begin{equation} \label{e:sigma Poinc Vac}
(i_\sigma)^*(\on{Ps-Id}^{\on{nv}}(\on{Poinc}^{\on{Vac,glob}}_{G,*}))\otimes \sV,
\end{equation}
where 
$$\on{Poinc}^{\on{Vac,glob}}_{G,*}:=\BD^{\on{Verdier}}(\on{Poinc}^{\on{Vac,glob}}_{G,!})[-2\delta_{N_{\rho(\omega_X)}}]\in \Dmod_{\frac{1}{2}}(\Bun_G)_{\on{co}}.$$ 

\medskip 

Since $\sigma$ is a smooth point of $\LS^{\on{irred}}_\cG$, we have
$$(i_\sigma)^*\simeq (i_\sigma)^![\dim(\LS^{\on{irred}}_\cG)]\otimes \det(T^*_\sigma(\LS_\cG)),$$
where $i_\sigma:\QCoh(\LS_\cG^{\on{irred}})\to \Vect$ is the functor of !-pullback, which is defined 
for maps of finite Tor-dimension, and is the the \emph{right} adjoint of $(i_\sigma)_*$, since the map
$i_\sigma:\on{pt}\to \LS^{\on{irred}}_\cG$ is proper. 

\medskip

In particular, the functor
\begin{multline*} 
\Dmod_{\frac{1}{2}}(\Bun_G)\simeq \Dmod_{\frac{1}{2}}(\Bun_G)\underset{\QCoh(\LS_\cG)}\otimes 
\QCoh(\LS_\cG) \overset{\on{id}\otimes (i_\sigma)^!}\longrightarrow \\
\to \Dmod_{\frac{1}{2}}(\Bun_G)\underset{\QCoh(\LS_\cG)}\otimes \Vect=
\Dmod_{\frac{1}{2}}(\Bun_G)_\sigma
\end{multline*}
is the right adjoint of $\oblv_\sigma$.

\medskip

Note also that using the symplectic structure on $\LS^{\on{irred}}_\cG$, we can trivialize the line $\det(T^*_\sigma(\LS_\cG))$,
and we note that $\dim(\LS^{\on{irred}}_\cG)=2\dim(\Bun_G)$. 

\medskip

Combining, we obtain that the left-hand side in \eqref{e:eigen ambidex} identifies with
\begin{equation} \label{e:eigen ambidex 1}
\CHom_{\Dmod_{\frac{1}{2}}(\Bun_G)}(\oblv_\sigma(\CF),\on{Ps-Id}^{\on{nv}}(\on{Poinc}^{\on{Vac,glob}}_{G,*})\otimes \sV)[2\dim(\Bun_G)].
\end{equation}

\sssec{}

Denote 
$$\CF':=\oblv_\sigma(\CF).$$

We rewrite \eqref{e:eigen ambidex 1} using Verdier duality as
\begin{equation} \label{e:eigen ambidex 2}
\CHom_{\Vect}(\on{C}^\cdot_c(\Bun_G,\CF'\overset{*}\otimes \on{Poinc}^{\on{Vac,glob}}_{G,!}),\sV)[2\dim(\Bun_G)+2\delta_{N_{\rho(\omega_X)}}].
\end{equation}

And we rewrite the right-hand side of \eqref{e:eigen ambidex} as
\begin{equation} \label{e:eigen ambidex 3}
\CHom_{\Vect}(\on{C}^\cdot(\Bun_G,\CF'\sotimes \on{Poinc}^{\on{Vac,glob}}_{G,*}),\sV)[2\delta_{N_{\rho(\omega_X)}}].
\end{equation}

Hence, in order to establish \eqref{e:eigen ambidex}, we need to construct an isomorphism
\begin{equation} \label{e:eigen ambidex 4}
\on{C}^\cdot_c(\Bun_G,\CF'\overset{*}\otimes \on{Poinc}^{\on{Vac,glob}}_{G,!})[-2\dim(\Bun_G)] \simeq 
\on{C}^\cdot(\Bun_G,\CF'\sotimes \on{Poinc}^{\on{Vac,glob}}_{G,*}).
\end{equation}

\sssec{}

By the main theorem of \cite{Lin}, we have
$$\on{Poinc}^{\on{Vac,glob}}_{G,!}[-2\dim(\Bun_G)] \simeq \Mir_{\Bun_G}(\on{Poinc}^{\on{Vac,glob}}_{G,*}).$$

Now \eqref{e:eigen ambidex 4} follows from the fact that $\CF'$ has nilpotent singular support (see \cite[Corollary 14.4.10]{AGKRRV1})
combined with the next general assertion from \cite[Theorem 3.4.2]{AGKRRV2}:

\begin{thm}
For any $\CF'\in \Dmod_{\frac{1}{2},\Nilp}(\Bun_G)$ and any $\CF''\in \Dmod_{\frac{1}{2}}(\Bun_G)_{\on{co}}$,
there is a canonical isomorphism
$$\on{C}^\cdot_c(\Bun_G,\CF'\overset{*}\otimes \Mir_{\Bun_G}(\CF''))\simeq
\on{C}^\cdot(\Bun_G,\CF'\sotimes \CF'').$$
\end{thm}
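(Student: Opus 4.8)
The plan is to read both sides as bilinear pairings on $\Dmod_{\frac{1}{2},\Nilp}(\Bun_G)\times \Dmod_{\frac{1}{2}}(\Bun_G)_{\on{co}}$ and to compare them after reducing to quasi-compact opens. Since $\Dmod_{\frac{1}{2}}(\Bun_G)_{\on{co}}=\underset{U}{\on{colim}}\,\Dmod_{\frac{1}{2}}(U)$, with $U$ running over quasi-compact opens, and every functor in sight is continuous, it suffices to treat $\CF''=j_{\on{co},*}(\CG)$ for a quasi-compact open $j:U\hookrightarrow \Bun_G$ and $\CG\in\Dmod_{\frac{1}{2}}(U)$. By the characterizing property of $\Mir_{\Bun_G}$ on quasi-compact opens --- it agrees with $j_!$ up to a shift and a constant line, with the normalization as in \cite{Lin} --- together with the projection formula $\CF'\overset{*}\otimes j_!(\CG)\simeq j_!(j^*\CF'\overset{*}\otimes \CG)$ and continuity of $\on{C}^\cdot_c(\Bun_G,-)$, the left-hand side becomes $\on{C}^\cdot_c(U,\,j^*\CF'\overset{*}\otimes\CG)$ up to that shift. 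Dually, using $\on{C}^\cdot(\Bun_G,-)\circ j_{\on{co},*}\simeq\on{C}^\cdot(U,-)$ and the projection formula for $!$-tensoring against a $*$-pushforward, the right-hand side becomes $\on{C}^\cdot(U,\,j^*\CF'\overset{!}\otimes\CG)$. Thus everything reduces to the following local statement: for a quasi-compact open $U\subset\Bun_G$, an object $A:=j^*\CF'$ (which still has nilpotent singular support) and an arbitrary $B:=\CG$, there is an isomorphism
\[
\on{C}^\cdot_c(U,\,A\overset{*}\otimes B)\;\simeq\;\on{C}^\cdot(U,\,A\overset{!}\otimes B),
\]
natural in $U$ and compatible with all the accumulated shifts and constant-line twists.

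For the local statement I would first normalize using Verdier duality $\bD_U$ on $U$, which recasts it in self-dual form and legitimizes applying $\bD_U$ freely (it preserves nilpotence of singular support). The substance is then a d\'evissage along the Harder--Narasimhan stratification: a quasi-compact $U$ meets only finitely many strata, each a locally closed substack fibered over a $\Bun_M$ for a proper Levi $M$ and contracted onto $\Bun_M$ by the $\mathbb{G}_m$-action attached to the relevant dominant coweight. The nilpotent-singular-support hypothesis ensures that the pertinent restrictions of $A$ are adapted (monodromic and contracting) to these $\mathbb{G}_m$-actions in the sense required by Braden's hyperbolic-localization theorem --- equivalently, by the second-adjointness theorem for $\Bun_G$, or the Drinfeld--Gaitsgory contraction principle --- so that the $!$- and $*$-constant-term functors along a stratum (the two ways of forming the contribution of that stratum) get identified, up to a twist by the determinant of the contracting directions, which is exactly the mismatch between $\overset{*}\otimes$ and $\overset{!}\otimes$. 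Running this through the (finite, on $U$) stratification filtration reduces the comparison to the analogous statement for the various $\Bun_M$, which is the inductive hypothesis, the base case being $M=T$.

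For $G=T$ the global nilpotent cone is the zero section, so objects of nilpotent singular support on the smooth stack $\Bun_T$ are (ind-)local systems; for such $A$ the products $A\overset{*}\otimes B$ and $A\overset{!}\otimes B$ differ only by the determinant line already accounted for, and the remaining identification of $\on{C}^\cdot_c$ with $\on{C}^\cdot$ is Poincar\'e--Verdier duality on $\Bun_T$, read off on quasi-compact opens. It then remains to check that all shifts and constant lines accumulated along the way --- from the normalization of $\Mir_{\Bun_G}$, from $\overset{!}\otimes$ versus $\overset{*}\otimes$, from Braden's modulus twist, and from Verdier duality --- cancel, so that the final isomorphism is untwisted as stated; this is a bookkeeping computation of the same flavor as \cite[Proposition 15.1.10]{GLC3}. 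The step I expect to be the main obstacle is the d\'evissage itself: one must know that D-modules with nilpotent singular support on $\Bun_G$ are genuinely adapted to the Harder--Narasimhan $\mathbb{G}_m$-actions --- a microlocal input about how the global nilpotent cone sits inside the cotangent bundle relative to the strata --- and one must make the comparison functorial in the quasi-compact open $U$ so that passing to the colimit recovers the assertion for the non-quasi-compact $\Bun_G$; this is precisely where non-quasi-compactness would otherwise be fatal and where the nilpotence hypothesis earns its keep.
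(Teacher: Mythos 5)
The paper does not reprove this statement; it is imported verbatim from \cite[Theorem 3.4.2]{AGKRRV2}. Evaluating your argument on its own terms, there is a genuine gap at the first step: the claim that $\Mir_{\Bun_G}$ ``agrees with $j_!$ up to a shift and a constant line'' on quasi-compact opens is false. The functor with a characterization of that shape (and with $j_*$, not $j_!$) is $\on{Ps-Id}^{\on{nv}}$, not $\Mir_{\Bun_G}$; and the assignment $U\mapsto j_!$ is not even compatible with the transition functors $(j_{1,2})_*$ that define $\Dmod_{\frac12}(\Bun_G)_{\on{co}}$ as a colimit --- one does not have $(j_2)_!\circ(j_{1,2})_*\simeq (j_1)_!$ --- so it cannot agree with $\Mir_{\Bun_G}\circ j_{\on{co},*}$, which is a continuous functor out of that colimit. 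The miraculous functor is built from $\Delta_!$ of a constant object, and its content lies entirely in the fact that the diagonal of $\Bun_G$ --- and of any quasi-compact open $U$ meeting bundles with positive-dimensional automorphisms --- is affine but not proper, so $\Delta_!\neq\Delta_*$; this nontriviality does not disappear upon restriction to $U$.

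Consequently the ``local statement'' you reduce to, $\on{C}^\cdot_c(U,A\overset{*}\otimes B)\simeq\on{C}^\cdot(U,A\sotimes B)$ up to a constant shift for $A$ of nilpotent singular support, is false, not merely unproven: with $A=\omega_U=j^*\omega_{\Bun_G}$ (singular support the zero section, hence nilpotent), it reads $\on{C}^\cdot_c(U,B)[2\dim U]\simeq\on{C}^\cdot(U,B)$, i.e.\ Poincar\'e duality, which fails on any non-proper $U$; and the discrepancy depends on $B$ (compare a skyscraper to $B=\omega_U$), so it cannot be absorbed into a fixed normalization. Nilpotent singular support of the other tensor factor does not make $\on{C}^\cdot_c$ and $\on{C}^\cdot$ agree on a quasi-compact open. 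The tools you name afterwards --- the HN stratification, $\GG_m$-contraction to $\Bun_M$, Braden's theorem and second adjointness --- are the right ones, but they must act globally on $\Bun_G$ (equivalently on the diagonal in $\Bun_G\times\Bun_G$), where nilpotent singular support controls the $!$-vs-$*$ behavior along the HN strata at infinity; passing to a quasi-compact $U$ discards exactly those strata. The non-quasi-compactness of $\Bun_G$ is what makes the theorem possible, not an obstruction to be localized away.
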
 

\begin{rem} \label{r:Nilp amb}
The above argument can be generalized so that it proves ambidexterity for the functor induced by $\BL_G$
$$\Dmod_{\frac{1}{2},\Nilp}(\Bun_G)_{\on{cusp}}\to \IndCoh(\LS^{\on{irred,restr}}_\cG),$$
where 
$$\Dmod_{\frac{1}{2},\Nilp}(\Bun_G)_{\on{cusp}}:=\Dmod_{\frac{1}{2},\Nilp}(\Bun_G)\cap \Dmod_{\frac{1}{2}}(\Bun_G)_{\on{cusp}}$$ and 
$$\LS^{\on{irred,restr}}_\cG:=\LS^{\on{restr}}_\cG\cap \LS^{\on{irred}}_\cG.$$

Note that $\LS^{\on{irred,restr}}_\cG$ is a disjoint union of formal schemes, each of which is isomorphic to
the formal completion of a point in a smooth symplectic scheme of dimension $2\dim(\Bun_G)$. 
\end{rem}

\section{The expression for \texorpdfstring{$\CA_{G,\on{irred}}$}{AGirred} via opers} \label{s:opers}

In this section we will prove that the object $\CA_{G,\on{irred}}\in \QCoh(\LS^{\on{irred}}_\cG)$ can be expressed 
via opers.

\medskip

This will lead to a number of structural results concerning $\CA_{G,\on{irred}}$, as well as the space of generic oper
structures on irreducible local systems. 

\medskip

Furthermore, given the recent result of \cite{BKS}, we will deduce GLC for classical groups. 

\ssec{Statement of the result}

\sssec{}

Consider the space $\Op^{\on{mon-free}}_\cG(X^{\on{gen}})_\Ran$ fibered over $\Ran$, whose fiber over
$\ul{x}\in \Ran$ is $$\Op^{\on{mon-free}}_\cG(X-\ul{x}).$$ Let $\pi_\Ran$ denote the resulting map 
$$\Op^{\on{mon-free}}_\cG(X^{\on{gen}})_\Ran\to \LS_\cG.$$

\medskip

Set
$$\Op^{\on{mon-free,irred}}_\cG(X^{\on{gen}})_\Ran:=\Op^{\on{mon-free}}_\cG(X^{\on{gen}})_\Ran\underset{\LS_\cG}\times \LS^{\on{irred}}_\cG.$$

Let $\pi^{\on{irred}}_\Ran$ denote the resulting map
$$\Op^{\on{mon-free,irred}}_\cG(X^{\on{gen}})_\Ran\to \LS^{\on{irred}}_\cG.$$

Note that the morphism $\pi^{\on{irred}}_\Ran$ is pseudo-proper, i.e., a (not necessarily filtered) colimit of proper maps,
see \secref{sss:ind-proper}. 

\sssec{}

Consider the object\footnote{The superscript ``Op" in the notation below refers to opers and \emph{not} to the opposite algebra structure.}
$$\CB^{\Op}_{G,\on{irred}}:=\oblv^l\left((\pi^{\on{irred}}_\Ran)_!(\omega_{\Op^{\on{mon-free,irred}}_\cG(X^{\on{gen}})_\Ran})\right)\in \QCoh(\LS^{\on{irred}}_\cG),$$
where 
$$\oblv^l:\Dmod(\LS^{\on{irred}}_\cG)\to \QCoh(\LS^{\on{irred}}_\cG)$$
is the ``left" forgetful functor, see \cite[Equation (5.3)]{GaRo1}.

\medskip

By construction, $\CB^{\Op}_{G,\on{irred}}$ is a co-commutative coalgebra in $\QCoh(\LS^{\on{irred}}_\cG)$. 

\sssec{}

Denote by
$$\BL^R_{G,\on{cusp}}:\QCoh(\LS^{\on{irred}}_\cG)\to \Dmod_{\frac{1}{2}}(\Bun_G)_{\on{cusp}}$$
the functor \emph{right adjoint} to $\BL_{G,\on{cusp}}$. 

\medskip

Since the monoidal category $\QCoh(\LS^{\on{irred}}_\cG)$ is rigid and the functor $\BL_{G,\on{cusp}}$ is
$\QCoh(\LS^{\on{irred}}_\cG)$-linear, the functor $\BL^R_{G,\on{cusp}}$ is also naturally 
$\QCoh(\LS^{\on{irred}}_\cG)$-linear.

\medskip

Hence, the comonad
$$\BL_{G,\on{cusp}}\circ \BL^R_{G,\on{cusp}}$$
acting on $\QCoh(\LS^{\on{irred}}_\cG)$ is given by tensoring by a co-associative coalgebra object,
to be denoted $\CB_{G,\on{irred}}$. 

\sssec{}

The main result of this section reads: 

\begin{thm} \label{t:BG via Op}
There exists a canonical isomorphism between 
$$\CB_{G,\on{irred}}\simeq \CB^{\Op}_{G,\on{irred}}$$
as plain objects of $\QCoh(\LS^{\on{irred}}_\cG)$. 
\end{thm}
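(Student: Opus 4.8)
The plan is to deduce \thmref{t:BG via Op} from two inputs. Input (i): an identification of $\CB_{G,\on{irred}}$ with the object underlying the comonad $\on{Poinc}^{\on{spec}}_{\cG,*,\on{irred}}\circ(\on{Poinc}^{\on{spec}}_{\cG,*,\on{irred}})^R$ on $\QCoh(\LS^{\on{irred}}_\cG)$. Input (ii): the identification of that same comonad, as a $\QCoh(\LS^{\on{irred}}_\cG)$-linear functor, with $-\otimes\CB^{\Op}_{G,\on{irred}}$; this is \thmref{t:up and down opers}, which is itself reduced, in \secref{s:proof up and down} and Sects.~\ref{s:proof of ind oblv}, \ref{s:proof of Ran emb abs abs} and \ref{s:proof of Ran emb abs rel}, to contractibility assertions about the space of rational maps. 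Granting both, evaluating the two functors on $\CO_{\LS^{\on{irred}}_\cG}$ yields the asserted isomorphism of underlying objects; in accordance with the footnote to the theorem, I shall not track the coalgebra structures.

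For input (i), I would start from the commutative diagram \eqref{e:L and Loc cusp}, i.e. the isomorphism $\BL_{G,\on{cusp}}\circ(\Loc_{G,\on{cusp}}\otimes\fl)\simeq\on{Poinc}^{\on{spec}}_{\cG,*,\on{irred}}\circ\FLE_{G,\crit}$, and pass to right adjoints to get $(\Loc_{G,\on{cusp}}\otimes\fl)^R\circ\BL^R_{G,\on{cusp}}\simeq\FLE_{G,\crit}^{-1}\circ(\on{Poinc}^{\on{spec}}_{\cG,*,\on{irred}})^R$. Whiskering the first isomorphism with the second and simplifying: on the left $(\Loc_{G,\on{cusp}}\otimes\fl)\circ(\Loc_{G,\on{cusp}}\otimes\fl)^R\simeq\on{Id}$, because $\Loc_{G,\on{cusp}}$ is a Verdier quotient by \propref{p:Loc crit is Loc} (so its counit is invertible) and tensoring by the invertible graded line $\fl$ is an equivalence; on the right $\FLE_{G,\crit}$ cancels $\FLE_{G,\crit}^{-1}$. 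One thereby obtains $\BL_{G,\on{cusp}}\circ\BL^R_{G,\on{cusp}}\simeq\on{Poinc}^{\on{spec}}_{\cG,*,\on{irred}}\circ(\on{Poinc}^{\on{spec}}_{\cG,*,\on{irred}})^R$ as endofunctors of $\QCoh(\LS^{\on{irred}}_\cG)$. Since the left-hand side is $-\otimes\CB_{G,\on{irred}}$ by definition, evaluating at $\CO_{\LS^{\on{irred}}_\cG}$ gives $\CB_{G,\on{irred}}\simeq\bigl(\on{Poinc}^{\on{spec}}_{\cG,*,\on{irred}}\circ(\on{Poinc}^{\on{spec}}_{\cG,*,\on{irred}})^R\bigr)(\CO_{\LS^{\on{irred}}_\cG})$, which reduces the theorem to input (ii).

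For input (ii) one analyzes the right-hand comonad geometrically. Recall that $\on{Poinc}^{\on{spec}}_{\cG,*}$ is realized over the $\Ran$-family version of the correspondence $\Op^\mf_{\cG,\ul{x}}\xleftarrow{s_{\ul x}}\Op^{\mf}_\cG(X-\ul{x})\xrightarrow{\pi_{\ul x}}\LS_\cG$ of the proof of \propref{p:dual of Poinc}, as $(\pi_\Ran)^{\IndCoh}_*\circ(s_\Ran)^{\IndCoh,*}$, and that the $\Ran$-level version of \eqref{e:Poinc spec adj} — which follows, as in that proof, from the pseudo-properness of $\pi^{\on{irred}}_\Ran$ recorded in \secref{sss:ind-proper} — identifies $(\on{Poinc}^{\on{spec}}_{\cG,*,\on{irred}})^R$ with $(s_\Ran)^{\IndCoh}_*\circ(\pi_\Ran)^!\circ\jmath_*$. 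Base change along the open embedding $\jmath$ then rewrites the comonad as $(\pi^{\on{irred}}_\Ran)^{\IndCoh}_*\circ(s^{\on{irred}}_\Ran)^{\IndCoh,*}\circ(s^{\on{irred}}_\Ran)^{\IndCoh}_*\circ(\pi^{\on{irred}}_\Ran)^!$, where $s^{\on{irred}}_\Ran$ is the restriction of $s_\Ran$ to $\Op^{\on{mon-free,irred}}_\cG(X^{\on{gen}})_\Ran=\Op^{\mf}_\cG(X^{\on{gen}})_\Ran\underset{\LS_\cG}\times\LS^{\on{irred}}_\cG$. Evaluating on $\CO_{\LS^{\on{irred}}_\cG}$: because $\LS^{\on{irred}}_\cG$ is classical, smooth and symplectic, $\CO_{\LS^{\on{irred}}_\cG}\simeq\omega_{\LS^{\on{irred}}_\cG}[-2\delta_G]$ canonically, so $(\pi^{\on{irred}}_\Ran)^!\CO_{\LS^{\on{irred}}_\cG}\simeq\omega_{\Op^{\on{mon-free,irred}}_\cG(X^{\on{gen}})_\Ran}[-2\delta_G]$. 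The heart of the matter is to show that $(s^{\on{irred}}_\Ran)^{\IndCoh,*}\circ(s^{\on{irred}}_\Ran)^{\IndCoh}_*$ acts on that $\omega$ by a shift cancelling the one above — equivalently, computing via base change through the self-fiber product of $\Op^{\on{mon-free,irred}}_\cG(X^{\on{gen}})_\Ran$ over $\Op^\mf_{\cG,\Ran}$, that the diagonal is a ``homological equivalence'', i.e. the space of a second generic oper structure on a fixed irreducible local system with prescribed restriction to the formal disc is contractible. Granting this, the composite returns $(\pi^{\on{irred}}_\Ran)^{\IndCoh}_*(\omega_{\Op^{\on{mon-free,irred}}_\cG(X^{\on{gen}})_\Ran})$ up to shifts and constant lines that cancel; and since $\pi^{\on{irred}}_\Ran$ is pseudo-proper, this agrees, after applying $\oblv^l$, with $(\pi^{\on{irred}}_\Ran)_!$ of the $\Dmod$ dualizing sheaf, namely with $\CB^{\Op}_{G,\on{irred}}$. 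Combined with input (i), this proves the theorem.

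The main obstacle is precisely this last contractibility — the triviality of $(s^{\on{irred}}_\Ran)^{\IndCoh,*}\circ(s^{\on{irred}}_\Ran)^{\IndCoh}_*$ on $\omega$. The map $s^{\on{irred}}_\Ran$ from generic-disc opers to formal-disc opers over the $\Ran$ space is neither proper nor smooth, and the required collapse is a genuine contractibility theorem about spaces of rational maps, not a formal consequence of the six-functor formalism; isolating and proving it — the ``two general assertions'' — is what the remaining sections of the paper are devoted to.
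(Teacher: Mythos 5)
Your proof is correct and follows the paper's own argument: you establish the isomorphism of comonads $\BL_{G,\on{cusp}}\circ\BL^R_{G,\on{cusp}}\simeq\on{Poinc}^{\on{spec}}_{\cG,*,\on{irred}}\circ(\on{Poinc}^{\on{spec}}_{\cG,*,\on{irred}})^R$ from the commutativity of \eqref{e:L and Loc cusp}, the invertibility of $\FLE_{G,\crit}$, and the fact that $\Loc_{G,\on{cusp}}$ is a Verdier quotient, then apply \thmref{t:up and down opers} by evaluating both sides on $\CO_{\LS^{\on{irred}}_\cG}$. The sketch you append of \thmref{t:up and down opers} is not needed for \thmref{t:BG via Op} itself; note, though, that the paper proves that theorem not via the pointwise diagonal-contractibility picture you describe but via Propositions~\ref{p:dr to plain} and~\ref{p:Ran emb}, which are unitality and factorization-homology statements over the Ran space.
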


\begin{rem}
One can show that the isomorphism of \thmref{t:BG via Op} respects the co-associative coalgebra
structures on the two sides. However, we will neither prove\footnote{See, however, Remark \ref{r:coalg}.} nor use this.
\end{rem} 

\ssec{Combining with ambidexterity}

Prior to proving \thmref{t:BG via Op} we will derive some consequences.

\sssec{}

Note that \emph{a priori}, the comonad $$\BL_{G,\on{cusp}}\circ \BL^R_{G,\on{cusp}}$$ is the right adjoint 
of the monad $$\BL_{G,\on{cusp}}\circ \BL^L_{G,\on{cusp}}.$$

\medskip

Hence, the \emph{coalgebra} $\CB_{G,\on{irred}}$ identifies a priori with the monoidal dual of the \emph{algebra}
$\CA_{G,\on{irred}}$.

\sssec{}

Combining with \corref{c:A G irred} we obtain:

\begin{cor} \label{c:A=B}
There is a canonical isomorphism 
$$\CA_{G,\on{irred}}\simeq \CB_{G,\on{irred}}$$
as objects of $\QCoh(\LS^{\on{irred}}_\cG)$. 
\end{cor}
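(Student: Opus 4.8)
The plan is to combine the two remarks immediately preceding the statement with \corref{c:A G irred}; no new input is needed. Recall that $\CA_{G,\on{irred}}$ is the associative algebra representing the $\QCoh(\LS^{\on{irred}}_\cG)$-linear monad $\BL_{G,\on{cusp}}\circ \BL^L_{G,\on{cusp}}\simeq \CA_{G,\on{irred}}\otimes(-)$, while $\CB_{G,\on{irred}}$ is the co-associative coalgebra representing the $\QCoh(\LS^{\on{irred}}_\cG)$-linear comonad $\BL_{G,\on{cusp}}\circ \BL^R_{G,\on{cusp}}\simeq \CB_{G,\on{irred}}\otimes(-)$.

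First I would record the purely formal fact that the comonad is the right adjoint of the monad: passing to right adjoints reverses the order of composition, and $(\BL^L_{G,\on{cusp}})^R\simeq \BL_{G,\on{cusp}}$, so the right adjoint of $\BL_{G,\on{cusp}}\circ \BL^L_{G,\on{cusp}}$ is $\BL_{G,\on{cusp}}\circ \BL^R_{G,\on{cusp}}$. Next I would use \corref{c:A G irred}: the object $\CA_{G,\on{irred}}$ is compact, hence dualizable in the rigid symmetric monoidal category $\QCoh(\LS^{\on{irred}}_\cG)$, so the right adjoint of $\CA_{G,\on{irred}}\otimes(-)$ is $\CA_{G,\on{irred}}^\vee\otimes(-)$, where $\CA_{G,\on{irred}}^\vee=\bD^{\on{naive}}(\CA_{G,\on{irred}})$ is the monoidal dual. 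Comparing the two descriptions of this endofunctor and evaluating on $\CO_{\LS^{\on{irred}}_\cG}$ yields a canonical isomorphism $\CB_{G,\on{irred}}\simeq \CA_{G,\on{irred}}^\vee$. Finally, the self-duality assertion of \corref{c:A G irred} provides a canonical isomorphism $\CA_{G,\on{irred}}^\vee\simeq \CA_{G,\on{irred}}$; composing gives $\CB_{G,\on{irred}}\simeq \CA_{G,\on{irred}}$, as desired.

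There is essentially no obstacle here: all the substance sits in the Ambidexterity Theorem (\thmref{t:ambidex}), equivalently in \corref{c:A G irred}, which is taken as given. The only points deserving a line of care are that the claim concerns plain objects — so we neither need nor assert a match of the algebra and coalgebra structures on the two sides — and that the self-duality of \corref{c:A G irred} and the monoidal duality implicit in ``right adjoint $=$ monoidal dual'' refer to the same identification $\bD^{\on{naive}}\colon \QCoh(\LS^{\on{irred}}_\cG)^\vee\simeq \QCoh(\LS^{\on{irred}}_\cG)$ fixed in \secref{s:left}, which is clear from the conventions there.
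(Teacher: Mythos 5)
Your proof is correct and matches the paper's argument: the paper records that the comonad $\BL_{G,\on{cusp}}\circ\BL^R_{G,\on{cusp}}$ is a priori the right adjoint of the monad $\BL_{G,\on{cusp}}\circ\BL^L_{G,\on{cusp}}$, deduces that $\CB_{G,\on{irred}}$ is the monoidal dual of $\CA_{G,\on{irred}}$, and then invokes the self-duality of \corref{c:A G irred}, exactly as you do. The only cosmetic difference is that you route through compactness to justify dualizability, whereas the paper can read off $\CB_{G,\on{irred}}\simeq\CA_{G,\on{irred}}^\vee$ directly from the adjunction between the two $\QCoh(\LS^{\on{irred}}_\cG)$-linear tensoring endofunctors; both are fine.
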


\sssec{}

Combing further with \thmref{t:BG via Op}, we obtain:

\begin{cor} \label{c:A=B Op}
There is a canonical isomorphism 
$$\CA_{G,\on{irred}}\simeq \CB^{\Op}_{G,\on{irred}}$$
as objects of $\QCoh(\LS^{\on{irred}}_\cG)$. 
\end{cor}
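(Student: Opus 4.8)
The plan is essentially immediate: the corollary is the concatenation of the two identifications recorded just above it, so no genuinely new argument is needed.

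First I would invoke \corref{c:A=B}, which gives a canonical isomorphism $\CA_{G,\on{irred}}\simeq \CB_{G,\on{irred}}$ in $\QCoh(\LS^{\on{irred}}_\cG)$. The mechanism there is purely formal: the comonad $\BL_{G,\on{cusp}}\circ \BL^R_{G,\on{cusp}}$ is the right adjoint of the monad $\BL_{G,\on{cusp}}\circ \BL^L_{G,\on{cusp}}$, so in the rigid symmetric monoidal category $\QCoh(\LS^{\on{irred}}_\cG)$ the coalgebra $\CB_{G,\on{irred}}$ is a priori the monoidal dual of the algebra $\CA_{G,\on{irred}}$; and the Ambidexterity Theorem, via \corref{c:A G irred}, says that $\CA_{G,\on{irred}}$ is self-dual, so the two objects coincide.

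Then I would apply \thmref{t:BG via Op}, which provides a canonical isomorphism $\CB_{G,\on{irred}}\simeq \CB^{\Op}_{G,\on{irred}}$ of plain objects of $\QCoh(\LS^{\on{irred}}_\cG)$. Composing the two isomorphisms yields the asserted $\CA_{G,\on{irred}}\simeq \CB^{\Op}_{G,\on{irred}}$. Since the argument is formal, there is no obstacle specific to this corollary: all the content sits in its two inputs (the Ambidexterity Theorem on the one hand, and \thmref{t:BG via Op}, whose proof is deferred to the later sections and ultimately reduces to statements about the space of rational maps, on the other). The one caveat to keep in mind is that neither input is claimed to respect the (co)algebra structures, so the conclusion is only an isomorphism of plain quasi-coherent sheaves — which is exactly what the subsequent applications require, namely computing the fiber of $\CA_{G,\on{irred}}$ at an irreducible $\sigma$ as the homology of $\Op^{\on{gen}}_{\cG,\sigma}$, and thence deducing GLC for classical groups.
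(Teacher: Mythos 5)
Your proposal matches the paper's proof exactly: the corollary is obtained by composing \corref{c:A=B} (which in turn follows from \corref{c:A G irred} and the a priori duality between the monad and comonad) with \thmref{t:BG via Op}. Your caveat that the isomorphism is only of plain objects, not of (co)algebras, is also the paper's own stance.
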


And as a result:

\begin{cor} \label{c:BG is compact}
The object $\CB^{\Op}_{G,\on{irred}}\in \QCoh(\LS^{\on{irred}}_\cG)$ is compact. 
\end{cor}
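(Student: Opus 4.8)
The plan is to deduce compactness of $\CB^{\Op}_{G,\on{irred}}$ purely formally from the two facts already assembled just above the statement. First I would invoke \corref{c:A=B Op}, which provides a canonical isomorphism $\CA_{G,\on{irred}}\simeq \CB^{\Op}_{G,\on{irred}}$ of plain objects of $\QCoh(\LS^{\on{irred}}_\cG)$. Then I would invoke \corref{c:A G irred}, which — as a consequence of the Ambidexterity Theorem (\thmref{t:ambidex}) via \corref{c:comp self-adj} — asserts that $\CA_{G,\on{irred}}$ is self-dual and in particular belongs to $\QCoh(\LS^{\on{irred}}_\cG)^{\on{perf}}$, hence is compact.

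Since being a perfect (equivalently, compact) object of $\QCoh$ is a property stable under isomorphism, transporting along the isomorphism of \corref{c:A=B Op} immediately yields that $\CB^{\Op}_{G,\on{irred}}$ is compact. No genuine computation is required: the content has all been done in establishing the chain $\CB^{\Op}_{G,\on{irred}}\simeq \CB_{G,\on{irred}}\simeq \CA_{G,\on{irred}}$ (via \thmref{t:BG via Op} and \corref{c:A=B}) together with the self-duality of $\CA_{G,\on{irred}}$.

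The only thing resembling an obstacle is purely bookkeeping: one should note that the isomorphism in \corref{c:A=B Op} is asserted merely at the level of underlying objects (not as coalgebras), but this is exactly what is needed here, since compactness is a statement about the underlying object of $\QCoh(\LS^{\on{irred}}_\cG)$. Thus the proof is a one-line citation of \corref{c:A=B Op} and \corref{c:A G irred}.
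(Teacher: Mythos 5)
Your proposal is correct and is exactly the argument the paper intends: \corref{c:BG is compact} is stated immediately after \corref{c:A=B Op} with the phrase ``And as a result,'' so it follows by transporting the compactness (from self-duality) of $\CA_{G,\on{irred}}$ in \corref{c:A G irred} across the isomorphism of underlying objects provided by \corref{c:A=B Op}. Nothing further is needed.
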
 

\sssec{}

Consider the object
\begin{equation} \label{e:Dmod BG}
(\pi^{\on{irred}}_\Ran)_!(\omega_{\Op^{\on{mon-free,irred}}_\cG(X^{\on{gen}})_\Ran})\in \Dmod(\LS^{\on{irred}}_\cG).
\end{equation} 

Assuming \thmref{t:A G irred} for a moment, we obtain that the object \eqref{e:Dmod BG} has the form
$$\omega_{\LS^{\on{irred}}_\cG}\otimes \ul\CB^{\Op}_{G,\on{irred}},$$
where
$\ul\CB^{\Op}_{G,\on{irred}}$ is a \emph{classical local system of finite rank} on $\LS^{\on{irred}}_\cG$.

\medskip

We will prove the following assertion, which would be needed for the final step in the proof of GLC:

\begin{prop} \label{p:finite monodromy}
The local system $\ul\CB^{\Op}_{G,\on{irred}}$ has a finite monodromy, i.e., it trivializes over a finite \'etale
cover of $\LS^{\on{irred}}_\cG$.
\end{prop}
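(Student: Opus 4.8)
The plan is to equip the finite-rank local system $\ul\CB^{\Op}_{G,\on{irred}}$ with a cocommutative coalgebra structure over $\O_{\LS^{\on{irred}}_\cG}$ that is \emph{horizontal} (i.e. compatible with the flat connection), and whose fiberwise set of grouplike vectors is the finite set $\pi_0(\Op^{\on{gen}}_{\cG,\sigma})$. Finiteness of the monodromy then follows at once, since parallel transport must permute these grouplike vectors. Concretely, set $S:=\LS^{\on{irred}}_\cG$ and $\pi:=\pi^{\on{irred}}_\Ran:Y\to S$ with $Y:=\Op^{\on{mon-free,irred}}_\cG(X^{\on{gen}})_\Ran$. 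The object $\pi_!(\omega_Y)$ of \eqref{e:Dmod BG} is naturally a cocommutative coalgebra in $(\Dmod(S),\sotimes)$ with unit $\omega_S$: its comultiplication is assembled from the diagonal $\delta:Y\to Y\underset{S}\times Y$ using the counit of the $(\delta_!,\delta^!)$-adjunction together with $\delta^!\omega_{Y\underset{S}\times Y}\simeq\omega_Y$ and the K\"unneth isomorphism $(\pi\underset{S}\times\pi)_!\,\omega_{Y\underset{S}\times Y}\simeq \pi_!\omega_Y\sotimes\pi_!\omega_Y$, and its counit is the trace map $\pi_!\pi^!\omega_S\to\omega_S$; coassociativity and cocommutativity are formal. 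This is nothing but the standard diagonal coalgebra structure on the chains of a space, rewritten sheaf-theoretically. By \thmref{t:A G irred} and \corref{c:A=B Op}, \eqref{e:Dmod BG} has the form $\omega_S\otimes\ul\CB^{\Op}_{G,\on{irred}}$ with $\ul\CB^{\Op}_{G,\on{irred}}$ a classical local system of finite rank, and transporting the structure above endows $\ul\CB^{\Op}_{G,\on{irred}}$ with a cocommutative comultiplication $\Delta$ and a counit $\epsilon$ that are morphisms of flat $\O_S$-modules.

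Next I would analyze the fiber at a point $\sigma\in S$. Since $\pi^{\on{irred}}_\Ran$ is pseudo-proper, this fiber is $H_\bullet(\Op^{\on{gen}}_{\cG,\sigma})$ equipped with the diagonal comultiplication. By \thmref{t:A G irred} it is concentrated in cohomological degree $0$; equivalently, as recorded in \secref{ss:opers Intro}, every connected component $c$ of $\Op^{\on{gen}}_{\cG,\sigma}$ is homologically contractible. Hence the fiber is the coalgebra spanned by classes $[c]$ indexed by $\pi_0(\Op^{\on{gen}}_{\cG,\sigma})$, with $\Delta[c]=[c]\otimes[c]$ and $\epsilon([c])=1$; and $\pi_0(\Op^{\on{gen}}_{\cG,\sigma})$ is finite because the rank of the local system is finite. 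An elementary computation shows that the set of grouplike vectors of such a coalgebra (the vectors $v$ satisfying $\Delta v=v\otimes v$ and $\epsilon(v)=1$) is exactly $\{[c]\}_c$, canonically in bijection with $\pi_0(\Op^{\on{gen}}_{\cG,\sigma})$.

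To conclude, for every loop $\gamma$ at $\sigma$ the parallel transport $T_\gamma$ on $\ul\CB^{\Op}_{G,\on{irred}}$ satisfies $\Delta\circ T_\gamma=(T_\gamma\otimes T_\gamma)\circ\Delta$ and $\epsilon\circ T_\gamma=\epsilon$ because $\Delta$ and $\epsilon$ are horizontal; hence $T_\gamma$ is an automorphism of the coalgebra, and therefore permutes its grouplike vectors, i.e. acts on the finite set $\pi_0(\Op^{\on{gen}}_{\cG,\sigma})$. Thus the monodromy representation of $\ul\CB^{\Op}_{G,\on{irred}}$ factors through the finite group $\on{Aut}(\pi_0(\Op^{\on{gen}}_{\cG,\sigma}))$, so its image is finite, and a finite-rank local system whose monodromy group is finite trivializes over the finite \'etale cover of $S$ cut out by the kernel of its monodromy representation. (If $\LS^{\on{irred}}_\cG$ is disconnected one runs the argument on each connected component; the finite inertia $Z(\cG)$ causes no trouble.)

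I expect the only genuine work to lie in the first two paragraphs: matching the $\Dmod(S)$-theoretic diagonal coproduct on \eqref{e:Dmod BG} with the classical coalgebra structure on $H_\bullet(\Op^{\on{gen}}_{\cG,\sigma})$, and checking that, after the identification of \eqref{e:Dmod BG} with $\omega_S\otimes\ul\CB^{\Op}_{G,\on{irred}}$ furnished by \thmref{t:A G irred}, this coproduct is horizontal for the resulting flat connection. Once these compatibilities are in place, the computation of grouplikes and the deduction of finite monodromy are formal.
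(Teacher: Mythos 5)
Your proof is correct (modulo the routine verifications you flag in the last paragraph, all of which are genuinely routine), and it takes a route different from the one in the paper. Both arguments rely on the same two inputs — pseudo-properness of $\pi^{\on{irred}}_\Ran$, and the fact (\thmref{t:A G irred} via \corref{c:A=B Op}) that the fibers of $\ul\CB^{\Op}_{G,\on{irred}}$ are concentrated in degree $0$ and finite-dimensional — but they exploit them in different ways.

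The paper's argument is geometric: it writes the object \eqref{e:Dmod BG} (shifted) as a colimit $\on{colim}_i\,\CF_i$ with $\CF_i=(f_i)_{*,\dr}(\omega_{\CY_i})[-n]$ for \emph{proper} $f_i$, Stein-factorizes each $f_i$ through a \emph{finite} map $f^0_i$, observes that at the generic point $\eta$ of a component the natural map $\CF_i\to\CF^0_i$ induces an isomorphism on $H^0$, and then uses the degree-$0$ concentration and finite-dimensionality of $\CF|_\eta$ to extract a finite sub-diagram mapping onto $\CF|_\eta$; finite monodromy of the $\CF^0_i|_\eta$ (being pushforwards along finite maps) then propagates to $\CF|_\eta$. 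Your argument is algebraic: you promote the canonical cocommutative coalgebra structure on $\CB^\Op_{G,\on{irred}}$ — assembled, exactly as you say, from the relative diagonal $\delta:Y\to Y\times_S Y$, the relative K\"unneth isomorphism (valid here since $\pi^{\on{irred}}_\Ran$ is pseudo-proper, so $!$-pushforward satisfies base change along $\Delta_S$), and the trace map $\pi_!\pi^!\omega_S\to\omega_S$ — to a \emph{flat} coalgebra structure on $\ul\CB^{\Op}_{G,\on{irred}}$, because all the structure maps are morphisms of D-modules. The degree-$0$ concentration then pins down each fiber as the set-coalgebra $k[\pi_0(\Op^{\on{gen}}_{\cG,\sigma})]$, whose grouplikes form a basis indexed by $\pi_0$, and horizontality forces parallel transport to act by permutation matrices in this basis. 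Your version actually proves something slightly sharper than what \propref{p:finite monodromy} asserts, and what the paper's proof delivers: the monodromy representation is canonically a \emph{permutation} representation on $\pi_0(\Op^{\on{gen}}_{\cG,\sigma})$, not merely a representation with finite image. On the other hand, the paper's route avoids the (mild but nontrivial) bookkeeping needed to check that the K\"unneth/diagonal package interacts correctly with the identification of \eqref{e:Dmod BG} with $\omega_S\otimes\ul\CB^{\Op}_{G,\on{irred}}$, relying instead only on base change and the elementary behavior of Stein factorization.
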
 

\ssec{Proof of \propref{p:finite monodromy}} \label{ss:proof finite momod}

We will deduce \propref{p:finite monodromy} from \thmref{t:BG via Op}. 

\sssec{}

Denote
$$\CF:=(\pi^{\on{irred}}_\Ran)_!(\omega_{\Op^{\on{mon-free,irred}}_\cG(X^{\on{gen}})_\Ran})[-n],$$
where\footnote{The cohomological shift is introduced for the sake of perverse normalization.} $n=\dim(\LS_\cG)$.

\medskip

Since the map $\pi^{\on{irred}}_\Ran$ is pseudo-proper, this object can be written as
$$\underset{i\in I}{\on{colim}}\, \CF_i\, \quad \CF_i\in \Dmod(\LS_\cG^{\on{irred}}),$$
over some diagram $I$, where each $\CF_i$ is of the form
$$(f_i)_{*,\dr}(\omega_{\CY_i})[-n],$$
where $f_i:\CY_i\to \LS_\cG^{\on{irred}}$ is a proper map of algebraic stacks. 

\medskip

For each index $i$, consider the Stein factorization of the map $f_i$
$$\CY_i\to \CY^0_i \overset{f^0_i}\to \LS_\cG^{\on{irred}},$$
so that $f^0_i$ is a finite map.

\medskip

Denote 
$$\CF_i^0:=(f^0_i)_{*,\dr}(\omega_{\CY^0_i})[-n].$$

\sssec{}

Let $\eta$ be the generic point of a connected component of $\LS_\cG^{\on{irred}}$. It is enough to show that
$\CF|_\eta$ has a finite monodromy.

\medskip

We have
$$\CF_i|_{\eta}\in \Dmod(\eta)^{\leq 0},\,\, \CF_i^0|_{\eta}\in \Dmod(\eta)^\heartsuit$$
and the map
$$\CF_i\to \CF_i^0$$ 
induces an isomorphism 
$$H^0(\CF_i|_\eta)\to \CF_i^0|_\eta.$$

By \thmref{t:A G irred} combined with \corref{c:A=B Op}, the object 
$\CF|_{\eta}$ is concentrated in cohomological degree $0$. Hence, we obtain that 
$$\CF|_{\eta} \simeq \underset{i\in I}{\on{colim}}\, \CF^0_i|_{\eta}.$$

\medskip

Since $\CF|_{\eta}$ is finite-dimensional, we obtain that $I$ contains a finite sub-diagram $I^f\subset I$ such that the map
$$\underset{i\in I^f}\oplus\, \CF^0_i|_{\eta}\to \CF|_\eta$$
is surjective.

\medskip

Since each $\CF^0_i|_\eta$ has a finite monodromy, we obtain that so does $\CF|_\eta$. 

\qed[\propref{p:finite monodromy}]

\ssec{Proof of \thmref{t:A G irred}} \label{ss:proof A G irred}

In this subsection we will continue to assume \thmref{t:BG via Op}, and deduce
\thmref{t:A G irred}. 

\sssec{}

Since $\CA_{G,\on{irred}}$ is perfect, in order to prove that it is a classical vector bundle, 
it suffices to show that the *-fibers of
$\CA_{G,\on{irred}}$ at $k$-points of $\LS^{\on{irred}}_\cG$ are concentrated in cohomological
degree $0$.

\medskip

By the self-duality assertion in \corref{c:A G irred}, it suffices to show that the *-fibers of 
$\CA_{G,\on{irred}}$ are concentrated in \emph{non-positive} cohomological degrees.

\sssec{}

By \corref{c:A=B Op}, it suffices to show that the *-fibers of 
$\CB^{\Op}_{G,\on{irred}}$ are concentrated in \emph{non-positive} cohomological degrees.

\medskip

However, the *-fiber of $\CB^{\Op}_{G,\on{irred}}$ at a $k$-point $\sigma\in \LS^{\on{irred}}_\cG$
is isomorphic to $\on{C}_\cdot(\Op^{\on{gen}}_{\cG,\sigma})$, 
where
$$\Op^{\on{gen}}_{\cG,\sigma}:=\{\sigma\}\underset{\LS^{\on{irred}}_\cG}\times \Op^{\on{mon-free,irred}}_\cG(X^{\on{gen}})_\Ran.$$

It is automatically concentrated in \emph{non-positive} cohomological degrees, being the \emph{homology} of a prestack.

\sssec{}

The D-module structure on $\CA_{G,\on{irred}}$ comes from the isomorphism of \corref{c:A=B Op}. 

\qed[\thmref{t:A G irred}]

\ssec{Contractibility of opers}

In this subsection we will continue to assume \thmref{t:BG via Op}. We will show that the
validity of GLC is equivalent to the contractibility (and, in fact, just connectedness) of the space
of generic oper structures on irreducible local systems. 

\sssec{}

Note that in the course of the proof of \thmref{t:A G irred} above we have established:

\begin{cor}
The homology of the fibers
of the map $\pi^{\on{irred}}_\Ran$ is acyclic off degree $0$.
\end{cor}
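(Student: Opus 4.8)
The plan is to read this corollary off from the computation already performed in the proof of \thmref{t:A G irred}. The first step is to identify the objects: for a field-valued point $\sigma\in\LS^{\on{irred}}_\cG$, the fiber of the map $\pi^{\on{irred}}_\Ran$ over $\sigma$ is, by the very definition of base change, the space
$$\Op^{\on{gen}}_{\cG,\sigma}=\{\sigma\}\underset{\LS^{\on{irred}}_\cG}\times \Op^{\on{mon-free,irred}}_\cG(X^{\on{gen}})_\Ran$$
of generic oper structures on $\sigma$; here ``fiber'' is understood at the level of the $\Ran$-fibered space. Hence the homology of this fiber is $\on{C}_\cdot(\Op^{\on{gen}}_{\cG,\sigma})$, which \emph{a priori} is concentrated in non-positive cohomological degrees, being the homology of a prestack. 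The content of the corollary is that it vanishes in all negative degrees.

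Next I would invoke the chain of identifications used for \thmref{t:A G irred}. Since $\pi^{\on{irred}}_\Ran$ is pseudo-proper (see \secref{sss:ind-proper}), the formation of $(\pi^{\on{irred}}_\Ran)_!$ is compatible with base change to the point $\sigma$, so the *-fiber at $\sigma$ of
$$\CB^{\Op}_{G,\on{irred}}=\oblv^l\bigl((\pi^{\on{irred}}_\Ran)_!(\omega_{\Op^{\on{mon-free,irred}}_\cG(X^{\on{gen}})_\Ran})\bigr)$$
is identified with $\on{C}_\cdot(\Op^{\on{gen}}_{\cG,\sigma})$ — this is exactly the identification recorded inside the proof of \thmref{t:A G irred} (stated there for a $k$-point $\sigma$; a general field-valued point reduces to this case by the base change $k\rightsquigarrow K$ as in \secref{ss:GLn}, which leaves the cohomological amplitude unchanged). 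By \corref{c:A=B Op} this *-fiber coincides with the *-fiber of $\CA_{G,\on{irred}}$ at $\sigma$, and by \thmref{t:A G irred} the object $\CA_{G,\on{irred}}$ is a classical vector bundle, so that *-fiber lies in cohomological degree $0$. Combined with the non-positivity observation above, this pins $\on{C}_\cdot(\Op^{\on{gen}}_{\cG,\sigma})$ to degree $0$, which is the assertion.

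There is no genuine obstacle: all the substantive work lies upstream — in \thmref{t:BG via Op}, which gives \corref{c:A=B Op}, and in the deduction of \thmref{t:A G irred} from the Ambidexterity Theorem. The only thing needing minor care is the bookkeeping in the first step, namely that ``the fiber of $\pi^{\on{irred}}_\Ran$'' matches the space $\Op^{\on{gen}}_{\cG,\sigma}$ used in the proof of \thmref{t:A G irred} on the nose, so that no further argument is required beyond citing that proof.
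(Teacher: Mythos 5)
Your argument is correct and coincides with the paper's: the corollary is read off from the proof of \thmref{t:A G irred}, where the *-fiber of $\CB^{\Op}_{G,\on{irred}}$ at $\sigma$ is identified with $\on{C}_\cdot(\Op^{\on{gen}}_{\cG,\sigma})$ (hence in non-positive degrees), and \corref{c:A=B Op} together with the vector-bundle conclusion of \thmref{t:A G irred} pins it to degree $0$.
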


This can be equivalently reformulated as follows: 

\begin{cor}
The connected components of the fibers of the map $\pi^{\on{irred}}_\Ran$ are homologically contractible. 
\end{cor}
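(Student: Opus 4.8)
The plan is to read off both the displayed corollary and its reformulation from \thmref{t:A G irred}, together with the identification — already used in the proof of \thmref{t:A G irred} — of the $*$-fiber of $\CB^{\Op}_{G,\on{irred}}$ with the homology of the corresponding fiber of $\pi^{\on{irred}}_\Ran$. We assume \thmref{t:BG via Op} (hence also \thmref{t:A G irred}) throughout.

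First I would establish the acyclicity statement. By \corref{c:A=B Op} we have $\CA_{G,\on{irred}}\simeq \CB^{\Op}_{G,\on{irred}}$ in $\QCoh(\LS^{\on{irred}}_\cG)$, and by \thmref{t:A G irred} this object is a classical vector bundle, so its $*$-fiber at any $k$-point $\sigma\in\LS^{\on{irred}}_\cG$ is a vector space placed in cohomological degree $0$. On the other hand, since $\pi^{\on{irred}}_\Ran$ is pseudo-proper, base change (compatibly with $\oblv^l$) identifies the $*$-fiber of $\CB^{\Op}_{G,\on{irred}}$ at $\sigma$ with $\on{C}_\cdot(\Op^{\on{gen}}_{\cG,\sigma})$, exactly as in \secref{ss:opers Intro}. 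Combining, $\on{C}_\cdot(\Op^{\on{gen}}_{\cG,\sigma})$ is concentrated in cohomological degree $0$, which is precisely the assertion that the homology of the fibers of $\pi^{\on{irred}}_\Ran$ is acyclic off degree $0$.

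It then remains to pass to connected components. Decompose the fiber as the coproduct $\Op^{\on{gen}}_{\cG,\sigma}=\coprod_\alpha Y_\alpha$ of its connected components. Since $\on{C}_\cdot(-)$ commutes with coproducts, $\on{C}_\cdot(\Op^{\on{gen}}_{\cG,\sigma})\simeq\bigoplus_\alpha \on{C}_\cdot(Y_\alpha)$. Each summand $\on{C}_\cdot(Y_\alpha)$ lies in non-positive cohomological degrees, being the homology of a prestack, and $H_0(Y_\alpha)\simeq k$ since $Y_\alpha$ is connected. Hence, once the direct sum is known to be concentrated in degree $0$, each summand is concentrated in degree $0$, so $\on{C}_\cdot(Y_\alpha)\simeq H_0(Y_\alpha)\simeq k$; that is, every connected component $Y_\alpha$ is homologically contractible, as claimed.

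The only non-formal input is the base-change identification of the $*$-fiber of $(\pi^{\on{irred}}_\Ran)_!(\omega_{\Op^{\on{mon-free,irred}}_\cG(X^{\on{gen}})_\Ran})$ with $\on{C}_\cdot(\Op^{\on{gen}}_{\cG,\sigma})$, compatibly with $\oblv^l$; this relies on pseudo-properness of $\pi^{\on{irred}}_\Ran$ (see \secref{sss:ind-proper}) and is the same computation already performed in \secref{ss:opers Intro}. Given that, the argument is purely formal and I do not expect any genuine obstacle.
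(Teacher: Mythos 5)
Your proof is correct and follows essentially the same route as the paper: the paper derives the degree-$0$ concentration of $\on{C}_\cdot(\Op^{\on{gen}}_{\cG,\sigma})$ as a by-product of the proof of \thmref{t:A G irred} (via $\CA_{G,\on{irred}}\simeq\CB^{\Op}_{G,\on{irred}}$ and the vector-bundle statement), records it as the preceding corollary, and then simply states that the present corollary is an equivalent reformulation. Your write-up makes that reformulation explicit — decomposing into connected components, using that homology commutes with coproducts and that $H_0$ of a non-empty connected prestack is $k$ — which is the intended content.
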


\sssec{}

Applying \corref{c:reduce to A irred}, we obtain:

\begin{cor} \label{c:conn of opers}
The following assertions are equivalent:

\smallskip

\noindent{\em(i)} The functor $\BL_G$ is equivalence.

\smallskip

\noindent{\em(ii)} The fibers of the map $\pi^{\on{irred}}_\Ran$ are connected. 

\smallskip

\noindent{\em(iii)} The fibers of the map $\pi^{\on{irred}}_\Ran$ are \emph{homologically contractible}. 

\end{cor}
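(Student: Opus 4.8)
The plan is to combine \corref{c:reduce to A irred} with \corref{c:A=B Op} and the degree-zero concentration recorded in the two Corollaries just above, and then to argue fiberwise over $\LS^{\on{irred}}_\cG$.

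By \corref{c:reduce to A irred}, assertion (i) is equivalent to the statement that the unit map \eqref{e:unit map A}, i.e.\ $\CO_{\LS^{\on{irred}}_\cG}\to \CA_{G,\on{irred}}$, is an isomorphism. As in \secref{ss:GLn}, since $\LS^{\on{irred}}_\cG$ is eventually coconnective this can be tested after passing to $*$-fibers at all field-valued points; thus (i) holds if and only if for every $\sigma:\Spec(K)\to \LS^{\on{irred}}_\cG$ the resulting map $K\to \CA_{G,\sigma}$ is an isomorphism. Now I would use \corref{c:A=B Op} to identify $\CA_{G,\sigma}$ with the $*$-fiber of $\CB^{\Op}_{G,\on{irred}}$, which — exactly as in the proof of \thmref{t:A G irred} — is canonically $\on{C}_\cdot(\Op^{\on{gen}}_{\cG,\sigma})$, the homology of the fiber of $\pi^{\on{irred}}_\Ran$ over $\sigma$. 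By the Corollary stated just above (acyclicity of these fibers off degree $0$) this complex is concentrated in cohomological degree $0$, and by \thmref{t:A G irred} it is finite-dimensional; so $\CA_{G,\sigma}\simeq H_0(\Op^{\on{gen}}_{\cG,\sigma})$ is a finite-dimensional $K$-vector space placed in degree $0$, and moreover a unital associative $K$-algebra, being the $*$-fiber of the algebra $\CA_{G,\on{irred}}$.

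The remaining point is purely algebraic: the unit $K\to \CA_{G,\sigma}$ of a finite-dimensional unital $K$-algebra has kernel an ideal of the field $K$, hence is injective unless $\CA_{G,\sigma}=0$; and $\CA_{G,\sigma}\neq 0$ because, by \cite{Ari}, every irreducible local system carries a generic oper structure, so $\Op^{\on{gen}}_{\cG,\sigma}$ is non-empty and $H_0(\Op^{\on{gen}}_{\cG,\sigma})\neq 0$. Hence $K\to \CA_{G,\sigma}$ is an isomorphism precisely when $\dim_K\CA_{G,\sigma}=1$, i.e.\ precisely when $\dim_K H_0(\Op^{\on{gen}}_{\cG,\sigma})=1$, i.e.\ precisely when the fiber $\Op^{\on{gen}}_{\cG,\sigma}$ of $\pi^{\on{irred}}_\Ran$ is connected. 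This gives (i) $\Leftrightarrow$ (ii). Finally (ii) $\Leftrightarrow$ (iii): since $\on{C}_\cdot(\Op^{\on{gen}}_{\cG,\sigma})$ is concentrated in degree $0$, the space $\Op^{\on{gen}}_{\cG,\sigma}$ is homologically contractible if and only if $H_0(\Op^{\on{gen}}_{\cG,\sigma})=K$, which is the same as connectedness.

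I expect the only delicate point to be the fiberwise reduction in the second paragraph: making precise that a map of objects of $\QCoh(\LS^{\on{irred}}_\cG)$ which are classical vector bundles is an isomorphism iff it is so on all field-valued fibers (the principle already invoked in \secref{ss:GLn}), and that the $*$-fiber of the algebra unit of $\CA_{G,\on{irred}}$ is the algebra unit of the fiber algebra $\CA_{G,\sigma}$, so that the elementary criterion above indeed applies. Everything else is bookkeeping with the identifications from \corref{c:A=B Op}, \thmref{t:A G irred}, and the non-emptiness input \cite{Ari}.
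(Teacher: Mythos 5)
Your argument is correct and fills in exactly the reasoning the paper leaves implicit when it deduces this corollary from \corref{c:reduce to A irred}: pass to $*$-fibers (using that $\CA_{G,\on{irred}}$ is a classical vector bundle over the smooth classical stack $\LS^{\on{irred}}_\cG$), identify the fiber with the degree-$0$-concentrated $H_0(\Op^{\on{gen}}_{\cG,\sigma})$ via \corref{c:A=B Op} and the preceding acyclicity corollary, and then apply the algebra-unit criterion. One small remark: the appeal to \cite{Ari} is not actually needed here, since the criterion ``the unit is an isomorphism iff $\dim_K\CA_{G,\sigma}=1$'' holds for any finite-dimensional unital $K$-algebra ($\dim=1$ already forces the algebra to be nonzero), and the empty-fiber case ($\dim H_0=0$, hence not connected, not contractible) is covered automatically --- which is presumably why the paper records the non-emptiness input from \cite{Ari} only as a remark following the corollary rather than as an ingredient in it.
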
 

\sssec{}

In particular, we obtain that GLC is equivalent to the following conjecture:

\begin{conj} \label{c:oper contr}
The space of \emph{generic oper structures} on a given irreducible local system is homologically contractible. 
\end{conj}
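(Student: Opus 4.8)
The plan is to reduce \conjref{c:oper contr} to the geometric Langlands conjecture itself, and then to dispatch the known cases. By \corref{c:conn of opers}, GLC, the connectedness of the fibers of $\pi^{\on{irred}}_\Ran$, and their homological contractibility are mutually equivalent. Moreover, granting \thmref{t:BG via Op} --- whence \corref{c:A=B Op} --- together with \thmref{t:A G irred}, the homology $\on{C}_\cdot(\Op^{\on{gen}}_{\cG,\sigma})$ is already known to be a finite-dimensional vector space concentrated in cohomological degree $0$, varying locally constantly in $\sigma\in\LS^{\on{irred}}_\cG$; the degree-$0$ concentration is precisely the homological contractibility assertion for connected components, so the only remaining content of the conjecture is that the fibers are connected, i.e. that this locally constant rank equals $1$. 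Thus I would first replace the topological statement by the numerical one: $\dim H_0(\Op^{\on{gen}}_{\cG,\sigma})=1$ for every irreducible $\sigma$.

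For $G$ (hence $\cG$) classical, I would simply invoke \cite{BKS}, which proves the homological contractibility of $\Op^{\on{gen}}_{\cG,\sigma}$ directly by geometric means; combined with the equivalence above this settles the conjecture, and GLC, for classical groups, as already recorded in the introduction.

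For general $G$ the plan is to exploit the additional structure on $\CA_{G,\on{irred}}\simeq \CB^{\Op}_{G,\on{irred}}$. By \corref{c:A G irred} this object is self-dual, by \thmref{t:A G irred} it is a classical vector bundle with flat connection, and by \propref{p:finite monodromy} the underlying local system trivializes over a finite \'etale cover of $\LS^{\on{irred}}_\cG$. Hence its rank is constant on each connected component of $\LS^{\on{irred}}_\cG$ and is computed by its value at any single point; I would pick, on each component, a conveniently generic $\sigma$ --- e.g. one admitting a \emph{unique} generic oper structure, whose existence on every irreducible local system is \cite{Ari} --- and compute $\dim H_0(\Op^{\on{gen}}_{\cG,\sigma})$ there. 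Equivalently, via Barr-Beck on the cuspidal category $\Dmod_{\frac{1}{2}}(\Bun_G)_{\on{cusp}}$, one must show that $\CA_{G,\sigma}$, which a priori is the algebra whose modules are the Hecke eigensheaves at $\sigma$, is one-dimensional; ambidexterity has already made this algebra self-dual and finite, so the remaining task is genuinely \emph{numerical}.

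The hard part will be precisely this last numerical input for general $G$: identifying the rank of $\CA_{G,\on{irred}}$ with a quantity one can actually evaluate --- a trace of a Hecke operator, an Euler characteristic of a complex on $\Bun_G$, or a count of Hecke eigensheaves --- and showing it equals $1$ uniformly in $\sigma$. The finite-monodromy statement \propref{p:finite monodromy} reduces this to finitely many local systems, but carrying out the computation at those points (modulo a matching computation on the automorphic side) is the real obstruction, and is what is deferred to the final paper of the series.
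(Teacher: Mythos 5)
You have correctly identified the logical structure of what the paper actually does: by \corref{c:conn of opers} the conjecture is equivalent to GLC, and by \cite{BKS} it (hence GLC) holds for classical $G$, which is \thmref{t:GLC for class}. The paper goes no further; it does \emph{not} prove \conjref{c:oper contr} for general $G$ in this installment, and explicitly defers the remaining reduction to the sequel. So your ``proof'' is, just as the paper's treatment is, a reduction plus a program rather than a proof, and in that sense you have matched the paper's approach. Your further observation --- that granting \thmref{t:BG via Op}, \thmref{t:A G irred} and \propref{p:finite monodromy}, the only remaining content is a rank-one statement for a finite-monodromy local system --- is also a faithful reading of what the paper sets up and defers.

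One step in your plan, however, does not hold up. You propose to choose, on each component of $\LS^{\on{irred}}_\cG$, a point $\sigma$ ``admitting a \emph{unique} generic oper structure'' and cite \cite{Ari}. Arinkin's theorem establishes \emph{existence} of a generic oper structure on every irreducible local system; it says nothing about uniqueness, and there is no reason to expect any $\sigma$ to carry a unique one. Indeed if the fiber $\Op^{\on{gen}}_{\cG,\sigma}$ were a single point at even one $\sigma$ per component, the conjecture would be trivial there and hence everywhere by the finite-monodromy argument; the entire difficulty (and the substance of \cite{BKS} in the classical case) is that these fibers are large ind-schemes whose connectedness is genuinely nonobvious. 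The correct takeaway from \propref{p:finite monodromy} is that a rank computation at finitely many well-chosen $\sigma$ suffices, not that one can choose $\sigma$ to make the fiber a point; the actual execution of that computation is what the paper defers to the final paper of the series.
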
 

\begin{rem}
Note that the ``bottom" layer of \conjref{c:oper contr} says that the 
space of generic oper structures on a given irreducible local system is \emph{non-empty}.
This statement is actually a theorem, thanks to \cite{Ari}.
\end{rem} 
 
\begin{rem}
The assertion of \conjref{c:oper contr} is easy for $G=GL_n$. In particular, in this way we obtain another proof
of GLC in this case (i.e., one that is logically different from that in \secref{ss:GLn})\footnote{The difference between
the two arguments is that one uses a fully faithfulness assertion on the automorphic side, another on the spectral side.}.

\end{rem} 

\sssec{}

Recall now that thanks to \cite{BKS}, \conjref{c:oper contr} is actually a theorem whenever $G$ is a 
classical group\footnote{Here by a classical group we mean a reductive group whose root datum is of
type A, B, C or D.}.  Hence, we obtain:

\begin{mainthm}  \label{t:GLC for class}
The geometric Langlands conjecture holds when $G$ is a classical group. 
\end{mainthm}

\begin{rem}
Formally speaking, the main theorem of \cite{BKS} establishes \conjref{c:oper contr} for a slightly different notion
of oper, namely, for $\cg$-opers, rather than $\cG$-opers (and it is the latter that appears in \conjref{c:oper contr}).
In other words, \cite{BKS} implies \conjref{c:oper contr} not for $\cG$ itself but rather for its adjoint quotient. 

\medskip

However, as we shall see in the sequel to this paper, the statement of GLC for a given pair $(G,\cG)$ formally follows
from the case when $\cG$ is replaced by its adjoint quotient (resp., $G$ is replaced by the simply-connected cover
of its derived group).  

\end{rem}

\ssec{Proof of \thmref{t:BG via Op}}

As we shall presently see, the proof of the theorem follows almost immediately from diagram \eqref{e:L and Loc cusp},
once we combine the following pieces of information:

\begin{itemize}

\item The functor $\FLE_{G,\crit}$ is an equivalence;

\item The functor $\BL_{G,\on{cusp}}$ is a Verdier quotient.

\end{itemize}

\sssec{}

Consider the adjoint pair
\begin{equation} \label{e:Poinc spec adj again}
\on{Poinc}^{\on{spec}}_{\cG,*,\on{irred}}:
\IndCoh^*(\Op^{\on{mon-free}}_\cG)_\Ran  \rightleftarrows \QCoh(\LS^{\on{irred}}_\cG):
\on{coeff}^{\on{spec}}_{\cG,\on{irred}},
\end{equation}
see \eqref{e:Poinc spec adj}, where $\on{coeff}^{\on{spec}}_{\cG,\on{irred}}$ is the version of the functor
$\on{coeff}^{\on{spec}}_{\cG,\ul{x},\on{irred}}$ when $\ul{x}$ varies in families along $\Ran$. 
 
\sssec{}

We will deduce \thmref{t:BG via Op} from the following assertion, which takes place
purely on the spectral side.

\medskip

\begin{thm} \label{t:up and down opers}
The comonad on $\QCoh(\LS^{\on{irred}}_\cG)$ corresponding to \eqref{e:Poinc spec adj again}
is given by tensor product with $\CB^{\Op}_{G,\on{irred}}$.
\end{thm}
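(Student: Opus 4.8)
The statement is entirely on the spectral side, so the plan is to unwind the adjoint pair \eqref{e:Poinc spec adj again} into geometric terms and recognize the resulting comonad as ``tensoring by the pushforward of $\omega$ along $\pi^{\on{irred}}_\Ran$''. Recall that $\on{Poinc}^{\on{spec}}_{\cG,*,\on{irred}}=\jmath^*\circ(\pi_\Ran)^{\IndCoh}_*\circ (s_\Ran)^{\IndCoh,*}$ and, by \propref{p:dual of Poinc} (more precisely the identification \eqref{e:Poinc spec adj}), its right adjoint is $\on{coeff}^{\on{spec}}_{\cG,\on{irred}}=(s_\Ran)^{\IndCoh}_*\circ (\pi_\Ran)^!\circ\jmath_*$. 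Since $\bigl((s_\Ran)^{\IndCoh,*},(s_\Ran)^{\IndCoh}_*\bigr)$ is an adjoint pair, the composite $\on{coeff}^{\on{spec}}_{\cG,\on{irred}}\circ \on{Poinc}^{\on{spec}}_{\cG,*,\on{irred}}$ simplifies: the inner $(s_\Ran)^{\IndCoh}_*\circ(s_\Ran)^{\IndCoh,*}$ is absorbed, and the comonad reduces to
$$
(\pi^{\on{irred}}_\Ran)^!\circ (\pi^{\on{irred}}_\Ran)^{\IndCoh}_*
$$
acting on $\IndCoh(\LS^{\on{irred}}_\cG)\simeq \QCoh(\LS^{\on{irred}}_\cG)$, after base change along $\jmath$ (which replaces $\pi_\Ran$ by $\pi^{\on{irred}}_\Ran$, using that the latter is ind-proper as noted in \secref{sss:ind-proper}, so $(\pi^{\on{irred}}_\Ran)^!$ really is the right adjoint of $(\pi^{\on{irred}}_\Ran)^{\IndCoh}_*$ and base change holds).

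Next I would identify this comonad with tensoring by $\CB^{\Op}_{G,\on{irred}}$. The key point is that $\pi^{\on{irred}}_\Ran$ is pseudo-proper over the smooth classical stack $\LS^{\on{irred}}_\cG$, so $(\pi^{\on{irred}}_\Ran)^{\IndCoh}_*\simeq (\pi^{\on{irred}}_\Ran)^{\IndCoh}_!$ and the functor $(\pi^{\on{irred}}_\Ran)^!\circ (\pi^{\on{irred}}_\Ran)^{\IndCoh}_*$ is $\QCoh(\LS^{\on{irred}}_\cG)$-linear (both $(\pi^{\on{irred}}_\Ran)^!$ and $(\pi^{\on{irred}}_\Ran)^{\IndCoh}_*$ are maps of $\QCoh(\LS^{\on{irred}}_\cG)$-module categories by the projection formula). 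A $\QCoh$-linear comonad on $\QCoh(\LS^{\on{irred}}_\cG)$ is tensoring by its value on the unit $\CO_{\LS^{\on{irred}}_\cG}$, which is
$$
(\pi^{\on{irred}}_\Ran)^!\,(\pi^{\on{irred}}_\Ran)^{\IndCoh}_*(\CO_{\LS^{\on{irred}}_\cG}).
$$
Now $(\pi^{\on{irred}}_\Ran)^{\IndCoh}_*\simeq (\pi^{\on{irred}}_\Ran)_!$ on $\omega$, and $(\pi^{\on{irred}}_\Ran)^!(\CO)=\omega_{\Op^{\on{mon-free,irred}}_\cG(X^{\on{gen}})_\Ran}$ up to the shift/twist by $\omega_{\LS^{\on{irred}}_\cG}^{-1}$ coming from the smoothness of $\LS^{\on{irred}}_\cG$; pushing forward and applying $\oblv^l$ produces exactly $\CB^{\Op}_{G,\on{irred}}$ by its very definition. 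I would carry out this bookkeeping carefully — passing between $\IndCoh$ and $\QCoh$ via the equivalence on the smooth stack, and between $\oblv^l$ of the $!$-pushforward as a D-module and the $\IndCoh$-pushforward — since the definition of $\CB^{\Op}_{G,\on{irred}}$ is phrased via $\Dmod(\LS^{\on{irred}}_\cG)$ and $\oblv^l$, whereas the comonad computation lives in $\IndCoh$.

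The main obstacle I anticipate is not any single deep input but the compatibility of the various self-dualities and the $\oblv^l$ formalism: one must check that the $\QCoh$-linear structure on the comonad produced from \eqref{e:Poinc spec adj again} is the standard one, and that the identification of $(\pi^{\on{irred}}_\Ran)^!(\pi^{\on{irred}}_\Ran)_!(\omega)$ as a complex of D-modules, followed by $\oblv^l$, matches the $\IndCoh$-theoretic computation on the nose (including shifts). Concretely, the delicate step is relating $(\pi^{\on{irred}}_\Ran)^{\IndCoh}_*\circ (\pi^{\on{irred}}_\Ran)^!$ on $\IndCoh(\LS^{\on{irred}}_\cG)$ to the D-module pushforward $(\pi^{\on{irred}}_\Ran)_{*,\dr}(\omega_{\Op^{\on{mon-free,irred}}_\cG(X^{\on{gen}})_\Ran})$ appearing in the definition of $\CB^{\Op}_{G,\on{irred}}$; this uses the pseudo-properness of $\pi^{\on{irred}}_\Ran$ to commute $!$-pushforward with colimits and the base-change/projection-formula package for ind-proper maps. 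Once these identifications are in place, \thmref{t:up and down opers} follows, and then — as indicated in the excerpt — \thmref{t:BG via Op} is obtained by combining it with the fact that $\FLE_{G,\crit}$ is an equivalence and $\BL_{G,\on{cusp}}$ (equivalently $\Loc_{G,\on{cusp}}$) is a Verdier quotient, so that the comonad $\BL_{G,\on{cusp}}\circ\BL^R_{G,\on{cusp}}$ is identified with the comonad in \thmref{t:up and down opers} transported through diagram \eqref{e:L and Loc cusp}.
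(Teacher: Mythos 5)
Your outline has the right skeleton — unwind $\on{coeff}^{\on{spec}}_{\cG,\on{irred}}$ and $\on{Poinc}^{\on{spec}}_{\cG,*,\on{irred}}$ geometrically, simplify, and recognize the resulting $\QCoh$-linear comonad as tensoring by a single object — and this is indeed how the paper begins. But both of the claimed simplifications, which you dispatch in a sentence each, are the entire content of the theorem, and neither one is true for the reasons you give.

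First, the middle piece $(s_\Ran)^{\IndCoh,*}\circ (s_\Ran)^{\IndCoh}_*$ is \emph{not} ``absorbed'' by the adjunction. The counit of the $((s_\Ran)^{\IndCoh,*},(s_\Ran)^{\IndCoh}_*)$-adjunction being an isomorphism would say $(s_\Ran)^{\IndCoh}_*$ is fully faithful, which is false: $s_\Ran$ is the restriction map from generic opers on the curve to opers on the punctured disk, nothing like a monomorphism. The paper isolates this as \propref{p:Ran emb}, where the statement is carefully weakened in two ways (one composes with the coarsening $\Psi_{\LS_\cG}$, and one restricts to the essential image of $\pi^!_\Ran$), and its proof occupies Appendices B and C, exploiting the unital structure on $\Ran$ (the abstract Propositions~\ref{p:Ran emb abs abs} and \ref{p:Ran emb abs rel}). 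Second, the identification of $(\pi^{\on{irred}}_\Ran)^{\IndCoh}_*(\omega)$ with the D-module pushforward $(\pi^{\on{irred}}_\Ran)_{*,\dr}(\omega)$ — which you need to match the definition of $\CB^{\Op}_{G,\on{irred}}$, and which you attribute to pseudo-properness plus bookkeeping — does not follow from pseudo-properness. For a single proper map, the $\IndCoh$ pushforward of $\omega$ and the de Rham pushforward of $\omega$ are genuinely different (already for $\BP^1\to\on{pt}$: one gives $k$, the other $k\oplus k[2]$). Their agreement here is \propref{p:dr to plain}, and the paper explicitly remarks that this only holds because one integrates over all of $\Ran$; its proof (via Proposition~\ref{p:ind oblv YZ}) is Appendix A and rests on the theorem of \cite{Ro} that unital spaces of rational horizontal sections map isomorphically to their own de Rham prestacks. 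So your plan correctly locates where the difficulties are but treats both of them as routine, whereas in fact each is a separate, substantial input with its own appendix.
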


This theorem will be proved in \secref{s:proof up and down}. Let us assume it, and proceed with the proof
of \thmref{t:BG via Op}.

\sssec{}

Since we only want to identify 
\begin{equation} \label{e:two coalgs}
\CB_{G,\on{irred}}\simeq \CB^{\on{Op}}_{G,\on{irred}}
\end{equation}
as objects of $\QCoh(\LS^{\on{irred}}_\cG)$ 
(and not as co-algebras), it suffices to construct an isomorphism of comonads
\begin{equation} \label{e:two comonads}
\on{Poinc}^{\on{spec}}_{\cG,*,\on{irred}} \circ (\on{Poinc}^{\on{spec}}_{\cG,*,\on{irred}})^R \simeq
\BL_{G,\on{cusp}}\circ (\BL_{G,\on{cusp}})^R
\end{equation} 
acting on $\QCoh(\LS^{\on{irred}}_\cG)$.  Indeed, each side of \eqref{e:two coalgs} is obtained by applying
the corresponding side of \eqref{e:two comonads} to $\CO_{\LS^{\on{irred}}_\cG}$.

\sssec{}

Since $\FLE_{G,\crit}$ is an equivalence, the left-hand side in \eqref{e:two comonads}
identifies with 
$$\on{Poinc}^{\on{spec}}_{\cG,*,\on{irred}} \circ \FLE_{G,\crit} \circ \FLE_{G,\crit}^R \circ (\on{Poinc}^{\on{spec}}_{\cG,*,\on{irred}})^R.$$

Since $\Loc_{G,\on{cusp}}$ is a Verdier quotient, the right-hand side in \eqref{e:two comonads} identifies with
$$\BL_{G,\on{irred}}\circ (\Loc_{G,\on{cusp}}\otimes \fl) \circ ((\Loc_{G,\on{cusp}} \otimes \fl)^R \circ (\BL_{G,\on{cusp}})^R.$$

\sssec{}

Hence, it suffices to establish an isomorphism between the comonads 
\begin{multline*}
\on{Poinc}^{\on{spec}}_{\cG,*,\on{irred}} \circ \FLE_{G,\crit} \circ \FLE_{G,\crit}^R \circ (\on{Poinc}^{\on{spec}}_{\cG,*,\on{irred}})^R\simeq \\
\simeq \BL_{G,\on{irred}}\circ (\Loc_{G,\on{cusp}}\otimes \fl) \circ ((\Loc_{G,\on{cusp}} \otimes \fl)^R \circ (\BL_{G,\on{cusp}})^R,
\end{multline*}
which is the same as
\begin{multline*}
(\on{Poinc}^{\on{spec}}_{\cG,*,\on{irred}} \circ \FLE_{G,\crit})\circ 
(\on{Poinc}^{\on{spec}}_{\cG,*,\on{irred}} \circ \FLE_{G,\crit})^R \simeq \\
\simeq (\BL_{G,\on{irred}}\circ (\Loc_{G,\on{cusp}}\otimes \fl))\circ (\BL_{G,\on{irred}}\circ (\Loc_{G,\on{cusp}}\otimes \fl))^R.
\end{multline*}

However, this follows formally from the commutativity of \eqref{e:L and Loc cusp}. 

\qed[\thmref{t:BG via Op}]

\begin{rem} \label{r:coalg}

Note that \thmref{t:BG via Op} only says that $\CB_{G,\on{irred}}$ and $\CB^{\Op}_{G,\on{irred}}$ are isomorphic 
as objects of $\QCoh(\LS^{\on{irred}}_\cG)$, but not as co-associative co-algebras. One can upgrade the proof
given above to an isomorphism of coalgebras along the following lines:

\medskip

It follows from the construction that both comonads in \eqref{e:two comonads} are linear with respect to
the action of $\Rep(\cG)_\Ran$ on $\QCoh(\LS^{\on{irred}}_\cG)$ via the functor $\jmath^*\circ \Loc^{\on{spec}}_\cG$, and the
isomorphism between them constructed above respects this structure.

\medskip

It also follows from the construction that the $\QCoh(\LS^{\on{irred}}_\cG)$-linear structure on 
$\BL_{G,\on{cusp}}\circ (\BL_{G,\on{cusp}})^R$ agrees with the above $\Rep(\cG)_\Ran$-linear structure. 

\medskip

The comonad given by tensor product with $\CB^{\Op}_{G,\on{irred}}$ has a tautological linear structure with respect to
$\QCoh(\LS^{\on{irred}}_\cG)$, and hence also with respect to $\Rep(\cG)_\Ran$. It follows from the proof 
of \thmref{t:up and down opers} given in the next section that the above $\Rep(\cG)_\Ran$-linear structure on $-\otimes \CB^{\Op}_{G,\on{irred}}$ 
agrees with the $\Rep(\cG)_\Ran$-linear structure on $\on{Poinc}^{\on{spec}}_{\cG,*,\on{irred}} \circ (\on{Poinc}^{\on{spec}}_{\cG,*,\on{irred}})^R$.

\medskip

Thus, we obtain that the two comonads
$$-\otimes \CB_{G,\on{irred}} \text{ and } -\otimes \CB^{\Op}_{G,\on{irred}}$$
are identified as $\Rep(\cG)_\Ran$-linear comonads. Since the functor $\jmath^*\circ \Loc^{\on{spec}}_\cG$ is a Verdier quotient,
this implies that the above identification is automatically $\QCoh(\LS^{\on{irred}}_\cG)$-linear. The latter is equivalent to the 
identification of $\CB_{G,\on{irred}}$ and $\CB^{\Op}_{G,\on{irred}}$ as co-associative coalgebras in $\QCoh(\LS^{\on{irred}}_\cG)$.

\end{rem} 

\section{Proof of \thmref{t:up and down opers}} \label{s:proof up and down}

The rest of the paper is devoted to the proof of \thmref{t:up and down opers}. In particular, 
it takes place purely on the spectral side. 

\medskip

We will break up \thmref{t:up and down opers} into two assertions: Propositions \ref{p:dr to plain}
and \ref{p:Ran emb}. The former can be informally phrased as ``the Ran integral equates the quasi-coherent
and de Rham direct images". The latter can be informally phrased as ``the Ran integral erases the 
difference between the local and the global". 

\medskip

It turns out that both these assertions are quite general, i.e., have nothing to do with the specifics of
opers or local systems. 

\ssec{Strategy of the proof}

\sssec{} \label{sss:dr to plain}

Consider the tautological natural transformation
\begin{equation} \label{e:dr to plain}
(\pi_\Ran)^{\IndCoh}_*\circ \oblv_{\Op^{\on{mon-free}}_\cG(X^{\on{gen}})_\Ran}^r\to 
\oblv^r_{\LS_\cG}\circ (\pi_\Ran)_{*,\dr},
\end{equation}
as functors
$$\Dmod(\Op^{\on{mon-free}}_\cG(X^{\on{gen}})_\Ran)\rightrightarrows \IndCoh(\LS_\cG).$$

We will prove
\begin{prop} \label{p:dr to plain}
The natural transformation \eqref{e:dr to plain} is an isomorphism when evaluated on objects
in the essential image of the functor
$$\pi^!_\Ran:\Dmod(\LS_\cG)\to \Dmod(\Op^{\on{mon-free}}_\cG(X^{\on{gen}})_\Ran).$$
\end{prop}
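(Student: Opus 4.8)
The natural transformation \eqref{e:dr to plain} is the canonical comparison, for a pseudo-proper map, between the underlying $\O$-module of a de Rham direct image and the $\IndCoh$-direct image of the underlying $\O$-module; it is the edge map attached to the filtration of $(\pi_\Ran)_{*,\dr}(-)$ by de Rham degree, whose bottom term is $(\pi_\Ran)^{\IndCoh}_*\circ\oblv^r$. So the content of \propref{p:dr to plain} is that, on objects of the form $\pi^!_\Ran\F$, the positive-de-Rham-degree contributions die after applying $(\pi_\Ran)^{\IndCoh}_*$ — informally, ``the Ran integral kills relative differential forms.'' As emphasized above, this should be a \emph{general} statement about pseudo-proper maps whose source carries a unital Ran-space structure over the target, and I would set things up that way, the oper space entering only through the fact that $\Op^{\on{mon-free}}_\cG(X^{\on{gen}})_\Ran$ is fibered over the unital Ran space $\Ranu_X$.

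The first step is a reduction to the universal object. Both functors $\F\mapsto (\pi_\Ran)^{\IndCoh}_*\oblv^r\pi^!_\Ran\F$ and $\F\mapsto \oblv^r(\pi_\Ran)_{*,\dr}\pi^!_\Ran\F$, from $\Dmod(\LS_\cG)$ to $\IndCoh(\LS_\cG)$, are $\IndCoh(\LS_\cG)$-linear for the $\sotimes$-module structures, and \eqref{e:dr to plain} is a map of such module functors: on the de Rham side this is the projection formula for $(\pi_\Ran)_{*,\dr}$ together with $\oblv^r\circ\pi^!_\Ran\simeq (\pi_\Ran)^{!,\IndCoh}\circ\oblv^r$ (up to a twist by a line, which one tracks but which is irrelevant for the isomorphism question), and on the $\IndCoh$ side it is the projection formula for $(\pi_\Ran)^{\IndCoh}_*$; both projection formulas are available because $\pi_\Ran$ is pseudo-proper. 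Hence it suffices to prove that
$$(\pi_\Ran)^{\IndCoh}_*\big(\oblv^r\,\omega_{\Op^{\on{mon-free}}_\cG(X^{\on{gen}})_\Ran}\big)\longrightarrow \oblv^r\big((\pi_\Ran)_{*,\dr}\,\omega_{\Op^{\on{mon-free}}_\cG(X^{\on{gen}})_\Ran}\big)$$
is an isomorphism in $\IndCoh(\LS_\cG)$; equivalently, that the higher terms of the relative de Rham complex of $\pi_\Ran$ contribute nothing after direct image.

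The heart of the matter is then the ``Ran integral'' input. Writing $\pi_\Ran$ as a (non-filtered) colimit of proper maps $f_i:\CY_i\to\LS_\cG$ — and commuting $(\pi_\Ran)_{*,\dr}$ and $(\pi_\Ran)^{\IndCoh}_*$ past this colimit — one is reduced to an abstract assertion: for a pseudo-proper map whose source is unital over $\Ranu_X$, the positive-degree part of the relative de Rham complex becomes contractible after direct image, because the unital ``insert-a-point'' maps provide a contracting homotopy (this is the same mechanism by which $\Ranu_X$ is de Rham-contractible). This abstract statement is what \secref{s:proof of ind oblv} is devoted to, and I would extract \propref{p:dr to plain} from it by checking that $\Op^{\on{mon-free}}_\cG(X^{\on{gen}})_\Ran\to\LS_\cG$ satisfies its hypotheses. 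The main obstacle is precisely that \eqref{e:dr to plain} is \emph{not} an isomorphism for a single proper map $f_i$ — the underlying $\O$-module of a de Rham pushforward genuinely differs from the $\O$-module pushforward — so the equality only emerges in the colimit; the work is therefore in organizing the colimit over $\Ranu_X$ and producing the contracting homotopy uniformly, rather than in any map-by-map computation. A secondary technical point is the compatibility of $\oblv^r$ with the $!$-pullbacks and with the colimit defining $(\pi_\Ran)_{*,\dr}$, which has to be set up carefully since $\oblv^r$ does not in general commute with $*$-direct images.
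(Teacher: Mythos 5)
Your reading of what the proposition is about is accurate: \eqref{e:dr to plain} is the edge map comparing the $\IndCoh$-direct image to the de Rham direct image, it is genuinely \emph{not} an isomorphism for an individual proper map in the colimit, and the whole game is that the unital Ran-space structure makes the relative de Rham directions disappear in the aggregate. You also correctly sense that the hard content is an abstract statement about rational sections over the Ran space, proved in \secref{s:proof of ind oblv}. On this conceptual level you and the paper agree.

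Where you diverge from the paper is in the two reduction steps. First, the paper does not reduce to the object $\omega_{\LS_\cG}$ by a linearity argument. Instead, in \secref{s:proof up and down} it passes to the \emph{relative de Rham prestack} $(\Op^{\on{mon-free}}_\cG(X^{\on{gen}})_\Ran)_{\dR^{\on{rel}}}$ and reformulates everything in terms of the $(\ind^{\on{rel}},\oblv^{\on{rel}})$-adjunction; \propref{p:dr to plain} is then deduced from \propref{p:ind oblv Op}, which asserts that the \emph{counit} $\ind^{\on{rel}}\circ\oblv^{\on{rel}}\to\on{Id}$ is an isomorphism on objects pulled back from the base (using $(\pi_\Ran)_{*,\dr^{\on{rel}}}\circ\ind^{\on{rel}}\simeq(\pi_\Ran)^{\IndCoh}_*$). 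Note that this is a strictly stronger statement than what you propose to prove: it holds at the level of the relative D-module, before applying any direct image. Your reduction to $\omega$ (really $\Dmod(\LS_\cG)$-linearity, not $\IndCoh(\LS_\cG)$-linearity) is not wrong, but it buys you nothing because you would still need the same abstract input, and the relative-de-Rham reformulation sidesteps exactly the ``$\oblv^r$ does not commute with $*$-pushforward'' bookkeeping you flag as a concern.

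Second, the mechanism you describe — a filtration by de Rham degree whose positive part is killed by a contracting homotopy coming from the ``insert-a-point'' maps — is a heuristic, not what the paper does. The paper's proof of \propref{p:ind oblv YZ} (the abstract version of \propref{p:ind oblv Op}) passes to unital \emph{categorical} prestacks, observes that pulled-back objects become \emph{strict} there, and proves (\propref{p:conn autom}) that on strict objects the functor $\oblv^{\on{rel}}_{\on{untl,str}}$ is an \emph{equivalence}. The actual technical inputs are Rozenblyum's description of relative D-modules on Ran-rational sections as a fiber product involving formal completions (\thmref{t:Nick fam}, a families version of \cite[Corollary 4.6.10]{Ro}) together with a value-wise cofinality argument for the diagonal of $\Ranu$ (\lemref{l:cofinal}). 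There is no filtration by form degree and no explicit homotopy. Your intuition that ``this is the same mechanism by which $\Ranu$ is de Rham-contractible'' points in the right direction, but turning it into a proof requires precisely the machinery you defer to, and that machinery proves a categorical equivalence rather than the weaker acyclicity-after-pushforward statement you would still then have to assemble into the proposition.
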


\begin{rem}
For the validity of \propref{p:dr to plain}, it is essential that we work with the entire $\Ran$
and not a fixed $\ul{x}\in \Ran$.
\end{rem}

\sssec{}

Let $(s_\Ran)^{\IndCoh}_*$ denote the functor
$$\IndCoh(\Op^{\on{mon-free}}_\cG(X^{\on{gen}})_\Ran)\to \IndCoh^*(\Op^{\on{mon-free}}_\cG)_\Ran,$$
and let  $(s_\Ran)^{\IndCoh,*}$ denote its left adjoint.

\medskip

The counit of the $((s_\Ran)^{\IndCoh,*},(s_\Ran)^{\IndCoh}_*)$-adjunction defines a natural transformation 
\begin{equation} \label{e:Ran emb}
(\pi_\Ran)^{\IndCoh}_*\circ (s_\Ran)^{\IndCoh,*}\circ (s_\Ran)^{\IndCoh}_*\to (\pi_\Ran)^{\IndCoh}_*.
\end{equation}

We will prove:

\begin{prop} \label{p:Ran emb}
The natural transformation \eqref{e:Ran emb}, composed with the coarsening functor
$$\Psi_{\LS_\cG}:\IndCoh(\LS_\cG)\to \QCoh(\LS_\cG),$$
is an isomorphism, when evaluated on objects
in the essential image of the functor
$$\pi^!_\Ran:\IndCoh(\LS_\cG)\to \IndCoh(\Op^{\on{mon-free}}_\cG(X^{\on{gen}})_\Ran).$$
\end{prop}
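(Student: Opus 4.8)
The plan is to strip away opers and local systems entirely and prove \propref{p:Ran emb} as a statement about an abstract correspondence over the Ran space. The only features of $\pi_\Ran$ and $s_\Ran$ that will be used are: $\pi_\Ran$ is pseudo-proper (\secref{sss:ind-proper}); $s_\Ran$ is a map \emph{over} $\Ran$ whose fiber over $\ul x\in\Ran$ is the restriction map from the ``global'' model $\Op^{\on{mon-free}}_\cG(X-\ul x)$ to the ``local'' model $\Op^\mf_{\cG,\ul x}$; and the whole diagram is compatible with the unital/factorization structure of $\Ran$. Following the split into \secref{s:proof of Ran emb abs abs} and \secref{s:proof of Ran emb abs rel}, first I would isolate a stripped-down \emph{absolute} version of the assertion --- with $\LS_\cG$ removed (equivalently, evaluate \eqref{e:Ran emb} on $\pi^!_\Ran(\CO_{\LS_\cG})$, or base change to a point) --- and then bootstrap to the \emph{relative} statement (an arbitrary object in the essential image of $\pi^!_\Ran$, with $\pi_\Ran$ still present) by a formal argument using $\QCoh(\LS_\cG)$-linearity and base change along field-valued points $\sigma\colon\Spec(K)\to\LS_\cG$, reducing everything to the behaviour of the relevant spaces over a single geometric point of $\LS_\cG$.

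The mechanism is the following. Rewrite the comonad $(s_\Ran)^{\IndCoh,*}\circ(s_\Ran)^{\IndCoh}_*$ by base change along the fiber product of $\Op^{\on{mon-free}}_\cG(X^{\on{gen}})_\Ran$ with itself over $(\Op^{\on{mon-free}}_\cG)_\Ran$, so that the left-hand side of \eqref{e:Ran emb} becomes the pushforward to $\LS_\cG$ of a pull-push through the two projections of this self-fiber-product, and \eqref{e:Ran emb} itself is induced by the diagonal embedding (restriction to the diagonal copy). The assertion then becomes: after (i) $(\pi_\Ran)^{\IndCoh}_*$, (ii) the integration over $\Ran$ implicit in the subscript $(-)_\Ran$, and (iii) the coarsening $\Psi_{\LS_\cG}$, the ``excess intersection'' --- the failure of the self-fiber-product to be the diagonal --- washes out. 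Step (ii) is the heart, and it is precisely the slogan ``\emph{the Ran integral erases the difference between the local and the global}'': although at a fixed $\ul x$ a global oper on $X-\ul x$ differs from its germ at $\ul x$, integrating that discrepancy over the poset of finite subsets $\ul x\subset X$ (one may always enlarge $\ul x$) makes it collapse, and concretely this reduces, via the factorization structure, to a twisted, $\LS_\cG$-relative form of the homological contractibility of the unital Ran space. The coarsening $\Psi_{\LS_\cG}$ in (iii) only absorbs the difference between $\IndCoh$ and $\QCoh$ coming from the singularities of $\LS_\cG$; it is the identity over the irreducible locus, which is where the proposition is ultimately applied in \thmref{t:up and down opers}, so the genuine content is a statement about a pseudo-proper map and a restriction map over $\Ran$, with no reference to the geometry of $\LS_\cG$.

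The hard part will be the colimit-over-$\Ran$ computation in step (ii): the oper spaces are only pro-smooth / ind-schematic, so the pro- and ind-structures must be tracked carefully through the base change and through the analysis of the self-fiber-product, and one must produce the contractibility input in exactly the form that kills the excess term (not merely on the level of Euler characteristics) --- this is the point of the ``absolute'' sections. A secondary subtlety to keep in view is that \propref{p:Ran emb} genuinely uses both the restriction to the essential image of $\pi^!_\Ran$ and the extra $\Psi_{\LS_\cG}$: without them \eqref{e:Ran emb} is not an isomorphism, reflecting that $(s_\Ran)^{\IndCoh}_*$ is far from comonadic on all of $\IndCoh(\Op^{\on{mon-free}}_\cG(X^{\on{gen}})_\Ran)$, so the argument must really exploit that the coefficients are pulled back from $\LS_\cG$, which is what makes the fiberwise, $\Ran$-local analysis above applicable.
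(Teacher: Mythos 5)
There is a real gap in the proposed reduction from the relative statement (general objects of the form $\pi^!_\Ran(\CF)$) to the absolute one. Your plan is to bootstrap by ``$\QCoh(\LS_\cG)$-linearity and base change along field-valued points $\sigma\colon\Spec K\to\LS_\cG$,'' but this cannot work as stated: the target $(\Op^{\on{mon-free}}_\cG)_\Ran$ of $s_\Ran$ does \emph{not} fiber over $\LS_\cG$ (a germ of an oper at $\ul{x}$ does not determine a global local system), so the map $s_\Ran$, and hence the natural transformation \eqref{e:Ran emb}, cannot be base-changed along $\sigma$. The paper's proof of the abstract relative statement (\propref{p:Ran emb abs rel}) instead factors the restriction map $s_{\CT,\Ran}$ through the intermediate space $\CT_{\on{Sect}_\nabla(X,\CY),\Ran}$ of local opers compatible with a \emph{global} local system, writing $s_{\CT,\Ran}=s'_{\CY,\Ran}\circ s'_{\CT,\Ran}$; correspondingly \eqref{e:Ran emb} splits into two natural transformations. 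One of them is the parameterized-over-$\LS_\cG$ version of the absolute statement, as you anticipate. But the other reduces to \propref{p:loc unital}, a statement whose concrete instance is exactly the fully faithfulness of $\Gamma^{\on{spec}}_\cG$ (\propref{p:Gamma ff}). This is a genuine second input---not a formality following from $\QCoh(\LS_\cG)$-linearity---and your proposal has no counterpart to it.

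On the absolute side, your ``self-fiber-product / excess intersection'' picture is also not quite right. Writing $(s_\Ran)^{\IndCoh,*}(s_\Ran)^{\IndCoh}_*$ as pull-push through the self-fiber-product requires base change for $s_\Ran$, which is a pro-affine restriction map, not flat or proper---and in any case the resulting space (pairs of global opers agreeing formally near $\ul{x}$) is geometrically intractable. The paper's mechanism is different: it upgrades to the unital Ran space and uses the factorization-homology description of $\Gamma^{\IndCoh_\Ran}\circ(s_\Ran)^*$ via ``insertion of unit'' and $(\on{pr}_{\on{small}})_*$ (\propref{p:fact hom}, following \cite[Prop.\ 4.6.5]{BD2}); the final identification is a cofinality statement, \propref{p:diag cofinal}, about $\on{diag}\colon\Ran\to\Ran^{\subseteq}$ and $\sft^{\subseteq}\colon\Ran^{\subseteq}\to\Ran^{\subseteq,\on{untl}}$. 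This is indeed a refinement of the contractibility of the unital Ran space, so your intuition at this point is in the right direction, but the precise cofinality form is what is needed (not merely homological contractibility), and the route through the self-fiber-product would not reach it.
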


\sssec{}

We claim that the combination of Propositions \ref{p:dr to plain} and \ref{p:Ran emb} implies 
the assertion of \thmref{t:up and down opers}. 

\medskip

Recall that the morphism $\pi^{\on{irred}}_\Ran$ is pseudo-proper, so we can identify 
$(\pi^{\on{irred}}_\Ran)^!\simeq ((\pi^{\on{irred}}_\Ran)_*)^R$. Hence, the comonad 
$$\on{Poinc}^{\on{spec}}_{\cG,*,\on{irred}}\circ (\on{Poinc}^{\on{spec}}_{\cG,*,\on{irred}})^R$$
identifies with
$$\jmath^*\circ (\pi_\Ran)^{\IndCoh}_*\circ (s_\Ran)^{\IndCoh,*}\circ (s_\Ran)^{\IndCoh}_*\circ (\pi_\Ran)^!\circ \jmath_*.$$

According to \propref{p:Ran emb}, this comonad maps isomorphically to the comonad
$$\jmath^*\circ (\pi_\Ran)^{\IndCoh}_*\circ  (\pi_\Ran)^!\circ \jmath_*
\simeq (\pi^{\on{irred}}_\Ran)^{\IndCoh}_*\circ (\pi^{\on{irred}}_\Ran)^!.$$

In particular, we obtain that this comonad is obtained by the !-tensor product with the
coalgebra object
\begin{equation} \label{e:IndCoh coalg}
(\pi^{\on{irred}}_\Ran)^{\IndCoh}_*\circ (\pi^{\on{irred}}_\Ran)^!(\omega_{\LS^{\on{irred}}_\cG})\simeq 
(\pi^{\on{irred}}_\Ran)^{\IndCoh}_*(\omega_{\Op^{\on{mon-free,irred}}_\cG(X^{\on{gen}})_\Ran }).
\end{equation}

\sssec{}

Restricting along the horizontal arrows in the Cartesian diagram
$$
\CD
\Op^{\on{mon-free,irred}}_\cG(X^{\on{gen}})_\Ran @>{\jmath}>> \Op^{\on{mon-free}}_\cG(X^{\on{gen}})_\Ran \\
@V{\pi^{\on{irred}}_\Ran}VV @VV{\pi_\Ran}V \\
\LS^{\on{irred}}_\cG @>{\jmath}>> \LS_\cG,
\endCD
$$
from \propref{p:dr to plain} we obtain that the natural transformation 
\begin{equation} \label{e:dr to plain irred}
(\pi^{\on{irred}}_\Ran)^{\IndCoh}_*\circ \oblv_{\Op^{\on{mon-free,irred}}_\cG(X^{\on{gen}})_\Ran}^r\to 
\oblv^r_{\LS^{\on{irred}}_\cG}\circ (\pi^{\on{irred}}_\Ran)_{*,\dr}
\end{equation}
is an isomorphism, when evaluated on objects lying in the essential image of 
$$(\pi^{\on{irred}}_\Ran)^!:\Dmod(\LS^{\on{irred}}_\cG)\to \Dmod(\Op^{\on{mon-free,irred}}_\cG(X^{\on{gen}})_\Ran).$$

\medskip

Hence, we obtain that the coalgebra \eqref{e:IndCoh coalg} maps isomorphically to
$$
\oblv^r_{\LS^{\on{irred}}_\cG}\left((\pi^{\on{irred}}_\Ran)_{*,\dr}\circ (\pi^{\on{irred}}_\Ran)^!(\omega_{\LS^{\on{irred}}_\cG})\right)
\simeq \oblv^r_{\LS^{\on{irred}}_\cG}\circ (\pi^{\on{irred}}_\Ran)_{*,\dr}(\omega_{\Op^{\on{mon-free,irred}}_\cG(X^{\on{gen}})_\Ran}).
$$

Finally, we note that 
$$\oblv^r_{\LS^{\on{irred}}_\cG}(-)\simeq \oblv^l_{\LS^{\on{irred}}_\cG}(-)\otimes \omega_{\LS^{\on{irred}}_\cG}.$$

\qed[\thmref{t:up and down opers}]

\ssec{Framework for the proof of \propref{p:dr to plain}}

In this subsection we will explain a general framework for the proof of \propref{p:dr to plain}: it has to do with 
a morphism between D-prestacks over $X$. 



\sssec{}

Consider the prestack
$$(\Op^{\on{mon-free}}_\cG(X^{\on{gen}})_\Ran)_{\dR^{\on{rel}}}:=(\Op^{\on{mon-free}}_\cG(X^{\on{gen}})_\Ran)_\dR\underset{(\LS_\cG)_\dR}\times \LS_\cG,$$
so that 
$$\IndCoh((\Op^{\on{mon-free}}_\cG(X^{\on{gen}})_\Ran)_{\dR^{\on{rel}}}),$$
is the category of \emph{relative} D-modules on $\Op^{\on{mon-free}}_\cG(X^{\on{gen}})_\Ran$ with respect to the projection
$\pi_\Ran$.

\medskip

Denote by 
$$\ind^{\on{rel}}:\IndCoh(\Op^{\on{mon-free}}_\cG(X^{\on{gen}})_\Ran)\rightleftarrows 
\IndCoh((\Op^{\on{mon-free}}_\cG(X^{\on{gen}})_\Ran)_{\dR^{\on{rel}}}):\oblv^{\on{rel}}$$
the resulting pair of adjoint functors. 

\medskip

Consider also the functors
\begin{equation} \label{e:pushforward from rel}
(\pi_\Ran)^{\IndCoh}_{*,\dr^{\on{rel}}}:\IndCoh((\Op^{\on{mon-free}}_\cG(X^{\on{gen}})_\Ran)_{\dR^{\on{rel}}})\to
\IndCoh(\LS_\cG)
\end{equation}
and
\begin{equation} \label{e:pullback to rel}
(\pi_\Ran)^!_{\dr^{\on{rel}}}:\IndCoh(\LS_\cG)\to \IndCoh((\Op^{\on{mon-free}}_\cG(X^{\on{gen}})_\Ran)_{\dR^{\on{rel}}}).
\end{equation} 

\sssec{}

As in \secref{sss:dr to plain} we have a natural transformation
\begin{equation} \label{e:dr to plain rel}
(\pi_\Ran)^{\IndCoh}_*\circ \oblv^{\on{rel}} \to 
(\pi_\Ran)_{*,\dr^{\on{rel}}},
\end{equation}
as functors
$$\IndCoh((\Op^{\on{mon-free}}_\cG(X^{\on{gen}})_\Ran)_{\dR^{\on{rel}}})\rightrightarrows \IndCoh(\LS_\cG).$$

\medskip

The assertion of \propref{p:dr to plain} follows immediately from the one: 

\begin{prop} \label{p:dr to plain rel}
The natural transformation \eqref{e:dr to plain rel} is an isomorphism when evaluated on objects
in the essential image of the functor \eqref{e:pullback to rel}. 
\end{prop}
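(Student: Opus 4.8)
The plan is to establish \propref{p:dr to plain rel}, from which \propref{p:dr to plain} follows by the reduction already indicated; write $\mathcal{Z}:=\Op^{\on{mon-free}}_\cG(X^{\on{gen}})_\Ran$ and $\omega_{\mathcal Z}:=(\pi_\Ran)^!(\omega_{\LS_\cG})$. \textbf{Reduction to the relative dualizing complex.} The two functors entering \eqref{e:dr to plain rel}, namely $(\pi_\Ran)^{\IndCoh}_*\circ\oblv^{\on{rel}}$ and $(\pi_\Ran)_{*,\dr^{\on{rel}}}$, are both $\IndCoh(\LS_\cG)$-linear for the $!$-tensoring action — this is the projection formula along $\pi_\Ran$, respectively along its relative de Rham variant, combined with the tautological linearity of $\oblv^{\on{rel}}$ — and \eqref{e:dr to plain rel} is a morphism of such linear functors. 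Since every object in the essential image of \eqref{e:pullback to rel} is obtained by $!$-tensoring $\omega^{\on{rel}}:=(\pi_\Ran)^!_{\dr^{\on{rel}}}(\omega_{\LS_\cG})$ with some $\CF\in\IndCoh(\LS_\cG)$, it suffices to check that \eqref{e:dr to plain rel} is an isomorphism on the single object $\omega^{\on{rel}}$, i.e.\ that the canonical map
\[
(\pi_\Ran)^{\IndCoh}_*(\omega_{\mathcal Z})\longrightarrow (\pi_\Ran)_{*,\dr^{\on{rel}}}(\omega^{\on{rel}})
\]
is an isomorphism in $\IndCoh(\LS_\cG)$ (here we used $\oblv^{\on{rel}}(\omega^{\on{rel}})\simeq\omega_{\mathcal Z}$).

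\textbf{The framework.} As announced in this subsection, one presents $\pi_\Ran$ as coming from a morphism of affine D-prestacks over $X$: an oper, together with its underlying connection. Concretely, $\mathcal Z$ is obtained by taking rational sections over $X^{\on{gen}}$, and integrating over $\Ran$, of a D-scheme over $X\times\LS_\cG$ whose $X$-fibers are torsors under a vector D-bundle (the graded pieces of the oper filtration). Thus, $!$-locally over $\LS_\cG$ and after the $\Ran$-integral, $\pi_\Ran$ is modelled on the space of rational sections over the curve of a vector bundle: $(\pi_\Ran)_{*,\dr^{\on{rel}}}(\omega^{\on{rel}})$ becomes the $\Ran$-integrated relative de Rham cohomology of $X^{\on{gen}}$ with coefficients in a flat bundle, while $(\pi_\Ran)^{\IndCoh}_*(\omega_{\mathcal Z})$ becomes the corresponding coherent cohomology. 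It is crucial here that we work over all of $\Ran$ rather than over a fixed $\ul x$ (cf.\ the Remark after \propref{p:dr to plain}): for fixed $\ul x$ the monodromy-freeness condition prevents $\Op^{\on{mon-free}}_\cG(X-\ul x)\to\LS_\cG$ from being an affine-space bundle and the two pushforwards genuinely differ; only after passing to the colimit over $\ul x\in\Ran$ — using that generic oper structures exist over all of $\LS_\cG$ by \cite{Ari} — does the relative geometry become that of a pseudo-ind-pro-affine bundle, for which the comparison is reasonable to expect.

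\textbf{The heart, and the main obstacle.} What is left is the slogan \emph{``the $\Ran$ integral equates the quasi-coherent and de Rham direct images''}: for a D-module $M$ on the curve, the de Rham pushforward of $M$ along $X-\ul x$ differs from the pushforward of its underlying $\mathcal O$-module by the (two-term) de Rham complex of the curve, and $\Ran$-integration — exploiting the unital structure on $\Ran$ together with the $\dr$-contractibility of $\Ran$ — is precisely what annihilates this difference after the colimit over $\ul x$. Turning this into a proof is the main difficulty: one must show that $(\pi_\Ran)^{\IndCoh}_*$ commutes with the (filtered and non-filtered) colimits defining the ind-pro-structure, that the ``pro''-directions contribute only the expected cohomological shift, and — most delicately — that the resulting identification is the one induced by the \emph{a priori} transformation \eqref{e:dr to plain rel} rather than an extraneous isomorphism. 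This is exactly what the general statements of \secref{s:proof of ind oblv}, formulated entirely in terms of a morphism of D-prestacks over $X$, are engineered to supply; \propref{p:dr to plain rel} is then their instance for the morphism ``opers $\to$ connections'' base-changed to $\LS_\cG$.
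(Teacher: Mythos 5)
The shape of your argument is right, but the key derivation is left vague and the characterization of what the general framework supplies is off the mark. The paper's proof of \propref{p:dr to plain rel} is a one-line reduction: the factorization $(\pi_\Ran)^{\IndCoh}_* \simeq (\pi_\Ran)_{*,\dr^{\on{rel}}}\circ\ind^{\on{rel}}$ (coming from the factorization of $\pi_\Ran$ through $(\Op^{\on{mon-free}}_\cG(X^{\on{gen}})_\Ran)_{\dR^{\on{rel}}}$) exhibits the transformation \eqref{e:dr to plain rel} as $(\pi_\Ran)_{*,\dr^{\on{rel}}}$ applied to the counit of the $(\ind^{\on{rel}},\oblv^{\on{rel}})$-adjunction. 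Hence \propref{p:dr to plain rel} follows directly from \propref{p:ind oblv Op}, which asserts that this counit is an isomorphism on the essential image of $(\pi_\Ran)^!_{\dr^{\on{rel}}}$; this in turn is an instance of the abstract \propref{p:ind oblv YZ}.

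Your reduction to the single object $\omega^{\on{rel}}$ via $\IndCoh(\LS_\cG)$-linearity is valid but unnecessary: the abstract input \propref{p:ind oblv Op} is formulated for the full essential image, so no linearity argument is needed to restrict further. More significantly, your description of what the machinery of \secref{s:proof of ind oblv} establishes does not match the paper. You speak of $(\pi_\Ran)^{\IndCoh}_*$ commuting with colimits defining an ind-pro-structure and of ``pro''-directions contributing cohomological shifts, but none of that is what is at stake. The actual mechanism, as the paper's slogan in \secref{s:proof of ind oblv} says, is that \emph{the unital version of the space of rational horizontal sections maps isomorphically to its own (relative) de Rham prestack}: one passes to the unital Ran space, where every object pulled back from $\on{Sect}_\nabla(X,\CY)$ becomes strict, and then the relevant analogue of \thmref{t:Nick fam} combined with a cofinality statement (\lemref{l:cofinal}) shows the counit $\ind^{\on{rel}}\circ\oblv^{\on{rel}}\to\on{Id}$ is an equivalence on strict objects (\propref{p:conn autom}). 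No ind-pro or cohomological-shift analysis appears, and the citation of \cite{Ari} is extraneous here. In short: you correctly locate where the real work lives, but you should identify \propref{p:ind oblv Op} as the precise assertion being invoked, and you should replace the ind-pro/shift narrative with the unital-Ran/strictification mechanism.
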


In its turn, \propref{p:dr to plain rel} follows from the next assertion: 

\begin{prop} \label{p:ind oblv Op}
The counit of the adjunction
$$\ind^{\on{rel}}\circ \oblv^{\on{rel}}\to \on{Id}$$
is an isomorphism, when evaluated on objects in the essential image of the functor \eqref{e:pullback to rel}.
\end{prop}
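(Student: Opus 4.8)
The plan is to reduce the statement to a purely local assertion about the space of opers on a formal punctured disk, using the factorization structure of $\Op^{\on{mon-free}}_\cG(X^{\on{gen}})_\Ran$ over $\Ran$. First I would recall that $\Op^{\on{mon-free}}_\cG(X-\ul x)$ is, fiberwise over $\ul x\in\Ran$, an affine scheme that fibers over $\LS_\cG$ via $\pi_{\ul x}$; the fiber of $\pi_{\ul x}$ over a local system $\sigma$ is the space of generic oper structures, which after choosing a formal coordinate at the points of $\ul x$ is a torsor (in an appropriate ind-scheme sense) over the scheme $\Op^{\on{mon-free}}_\cG(\cD^\times_{\ul x})$ of opers on the formal punctured disk that are monodromy-free, i.e., that come from the fixed local system $\sigma|_{\cD^\times_{\ul x}}$. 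The key point is that this relative oper space, as a D-scheme over $X$, is \emph{pro-smooth} (a limit of smooth affine schemes of bounded dimension along affine smooth projections) over the base $\LS_\cG$. The counit $\ind^{\on{rel}}\circ\oblv^{\on{rel}}\to\on{Id}$ evaluated on an object of the form $(\pi_\Ran)^!_{\dr^{\on{rel}}}(\CE)$ is, by base change along the Cartesian squares defining the relative de Rham prestack, controlled by the behavior of $\ind\circ\oblv$ on the fibers; so it suffices to prove the analogous statement for each fiber, i.e., for the D-scheme $\Op^{\on{mon-free}}_\cG(X^{\on{gen}})_\Ran \to \LS_\cG$ itself.

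Next I would invoke the general mechanism (this is the content of the promised Sections on the space of rational maps, \secref{s:proof of ind oblv}): for a D-prestack $\CY$ over $X$ which is \emph{"pseudo-smooth"} or, more precisely, for which the relative de Rham prestack $\CY_{\dR^{\on{rel}}}$ is obtained by a suitable colimit procedure, the functor $\oblv^{\on{rel}}$ is \emph{conservative} on objects pulled back from the base, and moreover the adjunction $(\ind^{\on{rel}},\oblv^{\on{rel}})$ is \emph{comonadic} on that subcategory. The reason the counit becomes an isomorphism is that the relative D-module structure on $(\pi_\Ran)^!_{\dr^{\on{rel}}}(\CE)$ is "trivial along the fibers": the fibers of $\pi_\Ran$, being spaces of rational maps / jets, are \emph{de Rham-contractible} relative to the base in the sense that the relative de Rham prestack of each fiber is the point. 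Concretely, $\Op^{\on{mon-free}}_\cG(X-\ul x)$ as a prestack over $X^{\ul x}$ has the property that its de Rham prestack relative to $\LS_\cG$ absorbs the $X$-directions, because opers are defined by a \emph{differential} condition; this is exactly the statement that, over $(\LS_\cG)_\dR$, the space $\Op^{\on{mon-free}}_\cG(X^{\on{gen}})_\Ran$ becomes isomorphic to $(\LS_\cG)_\dR$ itself — which is the geometric incarnation of the fact that a monodromy-free oper on the punctured disk, together with a global extension of the underlying local system, is rigid. Thus $\ind^{\on{rel}}\circ\oblv^{\on{rel}}$ acts as the identity on such objects.

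I would therefore organize the argument in three steps: (1) reduce from $\Op^{\on{mon-free}}_\cG(X^{\on{gen}})_\Ran$ over $\LS_\cG$ to the local model over the disk via the factorization/localization structure, using that $\ind^{\on{rel}}$, $\oblv^{\on{rel}}$, and $(\pi_\Ran)^!_{\dr^{\on{rel}}}$ all commute with the relevant colimits over $\Ran$; (2) identify, for the local model, the relative de Rham prestack $(\Op^{\on{mon-free}}_\cG(\cD^\times))_{\dR^{\on{rel}}}$ with (a suitable completion of) the base, i.e., show the $X$-directions are "D-contracted"; (3) conclude that on the essential image of $(\pi_\Ran)^!_{\dr^{\on{rel}}}$ the functor $\oblv^{\on{rel}}$ is fully faithful with comonadic adjoint, so that the counit is an isomorphism. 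The main obstacle I expect is step (2): making precise and proving the "D-contractibility" of the oper space in the $X$-direction — that is, showing that passing to relative D-modules kills the formal/ind-scheme structure coming from the positions of the points — which is presumably where the general lemma about rational maps in \secref{s:proof of ind oblv} does the real work, and where one must be careful that the non-quasi-compactness of $\Op^{\on{mon-free}}_\cG(X^{\on{gen}})_\Ran$ and the ind-scheme nature of the $s_\Ran$ maps do not obstruct the argument. The reliance on working over the \emph{entire} $\Ran$, not a fixed $\ul x$, enters precisely here, as already flagged in the Remark after \propref{p:dr to plain}.
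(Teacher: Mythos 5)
Your proposal captures a correct intuition---that the $X$-direction (i.e., the Ran direction) is what "absorbs" the relative de Rham structure, and that working over all of $\Ran$ rather than a fixed $\ul{x}$ is essential---but the route you propose has genuine gaps, and the paper's argument goes differently.

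\smallskip
\textbf{The key missing ingredient: the unital categorical Ran space.} The paper's proof of \propref{p:ind oblv YZ} (hence of \propref{p:ind oblv Op}) does not reduce to a local model over the punctured disk via factorization. Instead it passes to the \emph{unital} version $\Ranu$, which is a \emph{categorical} prestack, and to the "strictified" $\IndCoh$ categories (\lemref{l:str reflects}, \propref{p:conn autom}). The payoff of unitality is a very specific cofinality statement: the diagonal $\Delta:\Ranu\to\Ranu\times\Ranu$ is value-wise cofinal (\lemref{l:cofinal}, via \lemref{l:cofinal initial}), which is what forces the forgetful functor $\oblv^{\on{rel}}_{\on{untl,str}}$ to be an equivalence. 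This is an assertion about the categorical Ran space, not about opers; nothing like it appears in your sketch. Your concluding remark correctly senses that "D-contractibility in the $X$-direction" is the real content of \secref{s:proof of ind oblv}, but your steps (1)--(3) don't actually produce it: "reduce to the local model via factorization/localization" is not the mechanism, and "comonadicity" and "conservativity" are not invoked.

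\smallskip
\textbf{Rozenblyum's theorem is the second missing input.} The identification you want in your step (2)---describing the relative de Rham prestack---is supplied in the paper by Theorems \ref{t:Nick x} and \ref{t:Nick fam}, which are variants of \cite[Corollary 4.6.10]{Ro}: $\IndCoh$ of the relative de Rham prestack is a fiber product of $\IndCoh$ of the original prestack and $\IndCoh$ of a formal completion along jets, the latter moving over $\Ranu$. Your proposal references neither this theorem nor any substitute for it; without it the crucial full-faithfulness and essential-surjectivity steps don't get off the ground.

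\smallskip
\textbf{Concrete errors in the setup.} The fiber of $\pi_{\ul{x}}$ over $\sigma$ (the space of generic oper structures on $\sigma$ with poles bounded by $\ul{x}$) is \emph{not} a torsor over $\Op^{\on{mon-free}}_\cG(\cD^\times_{\ul{x}})$---there is no group acting; it is just a closed sub-ind-scheme of the affine flag ind-scheme cut out by the oper transversality and monodromy-free conditions. Nor is $\Op^{\on{mon-free}}_\cG(X-\ul{x})$ an affine scheme: it is a relative ind-scheme over $\LS_\cG\times\Ran$ (and indeed its fiber over $\sigma$ is the object whose homological contractibility is equivalent to GLC by \corref{c:conn of opers}, so it can hardly be assumed well-behaved). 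Your claim that "over $(\LS_\cG)_\dR$, the space $\Op^{\on{mon-free}}_\cG(X^{\on{gen}})_\Ran$ becomes isomorphic to $(\LS_\cG)_\dR$ itself" is much too strong and is not what the paper establishes: the assertion proved is that $\oblv^{\on{rel}}$ is an equivalence on the \emph{strict-unital} full subcategories, which is a statement about a carefully carved-out piece of $\IndCoh$, not an isomorphism of prestacks. The pro-smoothness of the oper space plays no role in the argument and would not by itself give the counit isomorphism.

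\smallskip
In short: your intuition about the Ran direction absorbing the infinitesimal structure is right, but the mechanism is the cofinality of the diagonal in the \emph{unital} Ran space combined with Rozenblyum's description of relative D-modules via formal completions along jets, and neither of these appears in your proposal in usable form.
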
 

\ssec{An abstract version of \propref{p:ind oblv Op}}

In this subsection we will show that \propref{p:ind oblv Op} is a particular case of a general assertion that has
to do with a morphism $f:\CZ\to \CY$ of D-prestacks over $X$.

\sssec{} \label{sss:D-stacks}

Consider the prestack $\on{Sect}_\nabla(X,\CY)$ of horizontal sections of $\CY$, as well as
$$\on{Sect}_\nabla(X^{\on{gen}},\CY)_\Ran \text{ and } \on{Sect}_\nabla(X^{\on{gen}},\CZ)_\Ran$$
that associate to a point $\ul{x}\in \Ran$ the spaces of horizontal sections of $\CY$ and $\CZ$ 
over $X-\ul{x}$, respectively. 

\medskip

Note that we have a tautological map
$$\on{Sect}_\nabla(X,\CY)\times \Ran\to \on{Sect}_\nabla(X^{\on{gen}},\CY)_\Ran$$
(and similarly for $\CZ$). 

\medskip

Denote
$$\on{Sect}_\nabla(X^{\on{gen}},\CZ/\CY)_\Ran:=\on{Sect}_\nabla(X^{\on{gen}},\CZ)_\Ran\underset{\on{Sect}_\nabla(X^{\on{gen}},\CY)_\Ran}
\times (\on{Sect}_\nabla(X,\CY)\times \Ran).$$

\medskip

Denote by $\pi_\Ran$ the natural projection 
$$\on{Sect}_\nabla(X^{\on{gen}},\CZ/\CY)_\Ran \to \on{Sect}_\nabla(X,\CY)$$
and by $\pi_{\Ran,\dr^{\on{rel}}}$ the map
$$\left(\on{Sect}_\nabla(X^{\on{gen}},\CZ/\CY)_\Ran\right)_{\dr^{\on{rel}}} \to \on{Sect}_\nabla(X,\CY)$$

\sssec{} \label{sss:fin cond D stacks}

We will impose the following finiteness conditions on $\CY$ and $\CZ$:

\begin{itemize}

\item $\CY$ is \emph{sectionally laft} in the sense of \cite[Sect. 3.1.3(ii)]{Ro}, i.e.,

\begin{itemize}

\item The prestack $\on{Sect}_\nabla(X,\CY)$ is locally almost of finite type;

\item The condition of \cite[Sect. 3.1.3(ii)]{Ro} is satisfied for points of  $\on{Sect}_\nabla(X,\CY)$;

\end{itemize}

\item $\CZ$ is \emph{meromorphically sectionally laft} relative to $\CY$, i.e., 

\begin{itemize}

\item The prestack $\on{Sect}_\nabla(X^{\on{gen}},\CZ/\CY)_\Ran$ is locally almost of finite type;

\item The condition of \cite[Sect. 3.1.3(ii)]{Ro} is satisfied for points of $\on{Sect}_\nabla(X^{\on{gen}},\CZ/\CY)_\Ran$. 

\end{itemize}

\end{itemize} 

\sssec{}

Denote
$$\left(\on{Sect}_\nabla(X^{\on{gen}},\CZ/\CY)_\Ran\right)_{\dr^{\on{rel}}}:=
\left(\on{Sect}_\nabla(X^{\on{gen}},\CZ/\CY)_\Ran\right)_\dr\underset{\on{Sect}_\nabla(X,\CY)_\dr}\times \on{Sect}_\nabla(X,\CY).$$

\medskip

Let
\begin{equation} \label{e:ind oblv pair}
\ind^{\on{rel}}:\IndCoh(\on{Sect}_\nabla(X^{\on{gen}},\CZ/\CY)_\Ran)\rightleftarrows 
\IndCoh\left(\left(\on{Sect}_\nabla(X^{\on{gen}},\CZ/\CY)_\Ran\right)_{\dr^{\on{rel}}}\right):\oblv^{\on{rel}}
\end{equation}
the resulting pair of adjoint functors. 

\medskip

We have:

\begin{prop} \label{p:ind oblv YZ}
The counit of the adjunction
$$\ind^{\on{rel}}\circ \oblv^{\on{rel}}\to \on{Id}$$
is an isomorphism, when evaluated on objects in the essential image of the pullback functor
$$(\pi_{\Ran,\dR^{\on{rel}}})^!:\IndCoh(\on{Sect}_\nabla(X,\CY))\to \IndCoh\left(\left(\on{Sect}_\nabla(X^{\on{gen}},\CZ/\CY)_\Ran\right)_{\dr^{\on{rel}}}\right).$$
\end{prop}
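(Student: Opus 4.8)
The plan is to prove \propref{p:ind oblv YZ} by reducing it, in two formal steps, to a ``de~Rham contractibility'' property of the prestack of meromorphic horizontal sections, and then establishing that property by the colimit-over-$\Ran$ technique, in the spirit of the contractibility of the space of rational maps.

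First I would reduce to the case where the base $\Sectna(X,\CY)$ is a point. By construction, the functors $\ind^{\on{rel}}$ and $\oblv^{\on{rel}}$, together with the natural transformation $\ind^{\on{rel}}\circ\oblv^{\on{rel}}\to\on{Id}$, are taken relative to $\Sectna(X,\CY)$, and are therefore compatible with $!$-base change along maps $S\to\Sectna(X,\CY)$ from affine schemes; the finiteness hypotheses of \secref{sss:fin cond D stacks} guarantee that every prestack in sight stays locally almost of finite type under such base change, so that the $\IndCoh$/$\Dmod$ formalism applies and the relevant base-change isomorphisms hold. Since the property of the counit being an isomorphism on the essential image of $(\pi_{\Ran,\dR^{\on{rel}}})^!$ can be tested after the (jointly conservative) family of all such pullbacks, we may take $S=\Spec(K)$ a geometric point, i.e.\ fix a single horizontal section $\sigma$ of $\CY$. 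Then $\CZ/\CY$ specializes to one D-prestack $\CZ_\sigma:=\sigma^*\CZ$ over $X_K$ (now meromorphically sectionally laft in the absolute sense), the relative de~Rham prestack becomes the absolute one, and, writing $\CW:=\Sectna(X^{\on{gen}},\CZ_\sigma)_\Ran$, the essential image of $!$-pullback from the point is generated under colimits and shifts by $\omega_{\CW_\dR}$. Thus the assertion reduces to: the counit $\ind\circ\oblv(\omega_{\CW_\dR})\to\omega_{\CW_\dR}$ is an isomorphism; equivalently, via the canonical map $\nu\colon\CW\to\CW_\dR$, that $\nu$ is ``as good as an equivalence on $\omega$'', which up to an irrelevant cohomological shift amounts to an identification $\ind(\omega_\CW)\simeq\omega_{\CW_\dR}$.

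The hard part will be precisely this last statement, and it is what \secref{s:proof of ind oblv} is devoted to. Observe that it \emph{fails} already for $\CW=X$ — there $\ind(\omega_X)$ is an induced $\CD$-module, far from $\omega_{X_\dR}$ — so what must save the day is the Ran-ification together with the \emph{meromorphic} nature of the sections, i.e.\ the fact that the punctures $\ul{x}$ are allowed both to move and to be enlarged. The plan is to present $\CW$ as a colimit over the unital $\Ran$ space, with transition maps corresponding to enlarging the puncture divisor $\ul{x}\rightsquigarrow\ul{x}\cup\ul{y}$ and restricting horizontal sections along $X-\ul{x}\hookleftarrow X-(\ul{x}\cup\ul{y})$, and to show that each such transition map — together with the unital/factorization structure of $\Ran$ — is ``de~Rham-contractible'' on $\omega$; here the affineness of $X-\ul{x}$ and the factorization structure on $\Ran$ are used to control the relevant $\IndCoh$-pushforwards, the finiteness needed at each stage of the colimit being supplied by \secref{sss:fin cond D stacks} together with \cite{Ro}. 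The main obstacle is to implement this rational-maps-style contractibility argument in the sheaf-theoretically enhanced D-prestack setting: making the colimit presentation of $\Sectna(X^{\on{gen}},\CZ_\sigma)_\Ran$ precise, verifying the laft conditions at each step so that $\ind$ and $\oblv$ interact correctly with the pushforwards, and checking the contractibility of the transition maps on $\omega$. By contrast, the two reduction steps above are routine.
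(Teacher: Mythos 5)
Your reduction to a fixed horizontal section $\sigma$ of $\CY$ by $!$-base change along $K$-points of $\on{Sect}_\nabla(X,\CY)$ is a step the paper does not take; it is plausibly correct (the relative de Rham prestack does base change along $S\to\on{Sect}_\nabla(X,\CY)$ as you want, and $\oblv^{\on{rel}}$ is a $!$-pullback), but it buys you nothing, since the paper's argument works directly in the relative setting, and it adds its own verifications — notably that $\ind^{\on{rel}}$ commutes with $!$-base change along $S\to\on{Sect}_\nabla(X,\CY)$.

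The genuine gap is in what you call the hard part. You propose to present $\CW=\on{Sect}_\nabla(X^{\on{gen}},\CZ_\sigma)_\Ran$ as a colimit over the unital Ran space and to show that each transition map is ``de Rham-contractible on $\omega$.'' That is not the mechanism, and I do not see how to make it precise. The transition maps $\on{Sect}_\nabla(X-\ul{x},\CZ_\sigma)\to\on{Sect}_\nabla(X-\ul{x}',\CZ_\sigma)$ for $\ul{x}\subseteq\ul{x}'$ are mere restrictions of sections; no single one of them collapses a de Rham direction, and there is no apparent sense in which $\ind(\omega_\CW)\simeq\omega_{\CW_\dR}$ can be deduced from properties of the transition maps taken one at a time.

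What actually drives the paper's proof is Theorem~\ref{t:Nick fam}, imported from \cite{Ro}: a pullback-square description of $\IndCoh$ of the relative de Rham prestack $\left(\on{Sect}_\nabla(X^{\on{gen}},\CZ/\CY)_\Ranu\right)_{\dr^{\on{rel}}}$ in terms of the non-de-Rham $\on{Sect}_\nabla(X^{\on{gen}},\CZ/\CY)_\Ranu$ and the formal completion along the ``add a puncture'' map. One then passes to the \emph{strict} parts of the $\IndCoh$-categories of these unital categorical prestacks — where the essential image of $\pi_{\Ranu,\dR^{\on{rel}}}^!$ lands — and Lemma~\ref{l:cofinal} shows, using that the diagonal $\Ranu\to\Ranu\times\Ranu$ is value-wise cofinal, that the formal-completion leg of that pullback square becomes an equivalence after strictification. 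This gives Proposition~\ref{p:conn autom} (the relative forgetful functor $\oblv^{\on{rel}}_{\on{untl,str}}$ on strict categories is an equivalence), from which \propref{p:ind oblv YZ} follows formally. So the correct slogan is the one the paper gives — the unital space of rational horizontal sections maps isomorphically to its own de Rham prestack — and the Rozenblyum theorem together with cofinality of the diagonal in $\Ranu\times\Ranu$ are the nontrivial inputs your sketch does not supply; you invoke \cite{Ro} only for laft bookkeeping and miss its central role here.
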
 

The proof will be given in \secref{s:proof of ind oblv}. 

\sssec{}

Note that \propref{p:ind oblv Op} is indeed a particular case of \propref{p:ind oblv YZ}: we take $\CY$ to be the constant D-stack
with fiber $\on{pt}/\cG$ and $\CZ:=\Op_\cG$. 

\qed[\propref{p:ind oblv Op}]

\ssec{A digression: the category \texorpdfstring{$\QCoh_{\on{co}}$}{QCohco}}

In order to formulate (an abstract version of) \propref{p:Ran emb}, it will be convenient to introduce
a general construction of a certain variant of the category of quasi-coherent sheaves on a prestack,
denoted $\QCoh_{\on{co}}(-)$ (see \cite[Sect. A.1]{GLC2} for a more detailed discussion).

\sssec{} \label{sss:iQCoh prestack}

Let $\CW$ be a prestack. We define the category $\QCoh_{\on{co}}(\CW)$ by
$$\QCoh_{\on{co}}(\CW):= \underset{S\to \CW,\,S\in \affSch}{\on{colim}}\, \QCoh(S),$$
where the colimit is taken with respect to the \emph{direct image functors}\footnote{The reason for the notation 
``$\QCoh_{\on{co}}$" is that it is a version of the $\QCoh$ category, i.e., we take
the colimit with respect to the *-direct image maps, instead of the limit with respect to the *-pullback maps.}.

\sssec{}

If $\CW$ has an affine diagonal, we have a functor
$$\Omega_\CW: \QCoh_{\on{co}}(\CW)\to \QCoh(\CW).$$

Namely, it corresponds to the compatible family of direct image functors
$$\QCoh(S)\to \QCoh(W), \quad S\in \affSch_{/\CW},$$
which are well-defined since the morphisms $S\to \CW$ are affine. 

\sssec{Example}

Suppose that $\CW$ is a \emph{scheme}. Then it is easy to see that in this case the functor $\Omega_\CW$
is an equivalence. 

\medskip

In fact, according to \cite[Proposition 6.2.7 and Theorem 2.2.6]{Ga4}, the same is true when $\CW$ is an eventually coconnective 
quasi-compact algebraic stack of finite type with an affine diagonal.

\medskip

Note that $\CW=\LS_\cG$ is an example of such an algebraic stack.

\begin{rem}
We do not know whether $\QCoh_{\on{co}}(\CW)$ is dualizable. However,
$\QCoh_{\on{co}}(\CW)$ is tautologically the pre-dual of $\on{QCoh}(\CW)$, i.e.,
$$\on{QCoh}(\CW)\simeq \on{Funct}(\QCoh_{\on{co}}(\CW),\Vect),$$
where $\on{Funct}(-,-)$ is the category of colimit-preserving functors.

\medskip

In particular, if $\CW$ is such that $\QCoh_{\on{co}}(\CW)$ is dualizable, then so is $\on{QCoh}(\CW)$. 

\end{rem} 

\sssec{Example} \label{sss:iQCoh Ind}

Let $\CW$ be an ind-scheme, written as 
$$\CW =\underset{i}{\on{colim}}\, W^i, \quad W^i\in \on{Sch},$$
where the transition maps $W^i\to W^j$ are closed embeddings. 

\medskip

In this case, 
$$\QCoh_{\on{co}}(\CW)\simeq \underset{i}{\on{colim}}\, \QCoh(W^i)$$
where the colimit is taken with respect to the direct image functors.

\medskip

Note that if $\CW$ is of ind-finite type, we
have a naturally defined functor
\begin{equation} \label{e:IndCoh to iQCoh}
\Psi_\CW:\IndCoh(\CW)\to \QCoh_{\on{co}}(\CW).
\end{equation}

Indeed, we can write 
$$\IndCoh(\CW)\simeq \underset{i}{\on{colim}}\, \IndCoh(W^i)$$
(under direct image functors) and \eqref{e:IndCoh to iQCoh} is given by the compatible family of functors 
$$\Psi_{W^i}:\IndCoh(W^i)\to \QCoh(W^i).$$

\medskip

One can show that \eqref{e:IndCoh to iQCoh}
is an equivalence if $\CW$ is formally smooth. 

\sssec{} \label{sss:iQCoh funct}

The assignment
$$\CW\rightsquigarrow \QCoh_{\on{co}}(\CW)$$
has the following functoriality properties for maps $f:\CW_1\to \CW_2$: 

\begin{itemize}

\item We have the direct image functor 
$$f_*:\QCoh_{\on{co}}(\CW_1)\to \QCoh_{\on{co}}(\CW_2).$$

\item If $f$ is \emph{schematic}, we also have the pullback functor
$$f^*:\QCoh_{\on{co}}(\CW_2)\to \QCoh_{\on{co}}(\CW_1),$$
which is a left adjoint of $f_*$. 

\smallskip

\item For a pullback square 
$$
\CD
\CW_1 @>{f}>> \CW_2 \\
@VVV @VVV \\
S_1 @>{f}>> S_2,
\endCD
$$
where $S_1$ and $S_2$ are affine schemes, the functor
$$\QCoh(S_1)\underset{\QCoh(S_2)}\otimes \QCoh_{\on{co}}(\CW_2)\to \QCoh_{\on{co}}(\CW_1),$$
defined by $f^*$, is an equivalence;

\smallskip

\item If $f$ is schematic \emph{and of finite Tor dimension}, we also have the !-pullback functor
$$f^!:\QCoh_{\on{co}}(\CW_2)\to \QCoh_{\on{co}}(\CW_1).$$
Note that if $f$ is also proper, then the functors $(f_*,f^!)$ are mutually adjoint. 

\end{itemize}

\sssec{} \label{sss:iQCoh Ran}

Let $\CW_\Ran$ be a prestack over $\Ran$. Set
\begin{equation} \label{e:iQCoh rel}
\QCoh_{\on{co}}(\CW)_\Ran:=\underset{S\in \affSch_{\on{aft}},\,S\to \Ran}{\on{lim}}\, \QCoh_{\on{co}}(\CW_\Ran\underset{\Ran}\times S)\underset{\QCoh(S)}\otimes \IndCoh(S),
\end{equation} 
where the limit is formed using the *-pullback functors along the $\QCoh_{\on{co}}(\CW_\Ran\underset{\Ran}\times S)$-factors and 
!-pullback functors along the $\IndCoh(S)$-factors.

\medskip

Thus, an object $\CF\in \QCoh_{\on{co}}(\CW)_\Ran$ gives rise to an object
$$\CF_{S,\ul{x}}\in \QCoh_{\on{co}}(\CW_\Ran\underset{\Ran}\times S)\underset{\QCoh(S)}\otimes \IndCoh(S)$$
for every $\ul{x}\in \Ran(S)$. 

\medskip

In the case when $\CW_\Ran\to \Ran$ is schematic, so that for every $(S,\ul{x})$ as above we have 
$$\QCoh_{\on{co}}(\CW_\Ran\underset{\Ran}\times S)\simeq \QCoh(\CW_\Ran\underset{\Ran}\times S),$$ we will simply write 
$\QCoh(\CW)_\Ran$ instead of $\QCoh_{\on{co}}(\CW)_\Ran$. 

\begin{rem} \label{r:QCoh* as a sheaf of cats}

The assignment 
$$S\rightsquigarrow \QCoh_{\on{co}}(\CW_\Ran\underset{\Ran}\times S)$$
naturally forms a sheaf of categories over $\Ran$, to be denoted
$$\ul{\QCoh_{\on{co}}}(\CW)_\Ran.$$

The above definition of $\QCoh_{\on{co}}(\CW)_\Ran$ is a particular case of the following construction: 
for any sheaf of categories $\ul\bC_\Ran$ over $\Ran$, we can assign the category
$$\bC_\Ran:=\underset{S\in \affSch_{\on{aft}},\,S\to \Ran}{\on{lim}}\, \bC(S)\underset{\QCoh(S)}\otimes \IndCoh(S).$$

Note that since $\Ran$ is 1-affine, we have
$$\bC_\Ran\simeq \Gamma(\Ran,\ul\bC_\Ran)\underset{\QCoh(\Ran)}\otimes \IndCoh(\Ran).$$

In particular, since the functor
$$\QCoh(\Ran)\to \IndCoh(\Ran), \quad \CF\mapsto \CF\otimes \omega_\Ran$$
is an equivalence, we have an equivalence
$$\bC_\Ran\simeq \Gamma(\Ran,\ul\bC_\Ran).$$

\end{rem}

\sssec{}

Let $p_{\CW_\Ran}$ denote the projection
$$\CW_\Ran\to \Ran.$$

Note that we have a well-defined functor
$$(p_{\CW_\Ran})_*:\QCoh_{\on{co}}(\CW)_\Ran \to \IndCoh(\Ran)\simeq \Dmod(\Ran).$$

Let us denote by 
$$\Gamma^{\IndCoh_\Ran}(\CW_\Ran,-):\QCoh_{\on{co}}(\CW)_\Ran\to \Vect$$
the functor equal to the composition
$$\QCoh_{\on{co}}(\CW)_\Ran \overset{(p_{\CW_\Ran})_*}\longrightarrow \Dmod(\Ran) \overset{\Gamma^{\IndCoh}(\Ran,-)}\longrightarrow \Vect,$$
where we can alternatively think of $\Gamma^{\IndCoh}(\Ran,-)$ as the functor
$$\on{C}^\cdot_c(\Ran,-):\Dmod(\Ran)\to \Vect,$$
left adjoint to $k\mapsto \omega_{\Ran}$. 

%

\sssec{} \label{sss:IndCoh to iQCoh/Ran}

Assume now that $\CW_\Ran$ is locally almost of finite type (so that $\IndCoh(\CW_\Ran)$ is defined) 
and assume that $\CW_\Ran\to \Ran$ is a relative ind-scheme. 

\medskip

We claim that in this case, there exists a well-defined functor 
\begin{equation} \label{e:IndCoh to iQCoh/Ran}
\Psi_{\CW_\Ran}:\IndCoh(\CW_\Ran)\to \QCoh_{\on{co}}(\CW)_\Ran,
\end{equation}
which is a variant of \eqref{e:IndCoh to iQCoh}.

\medskip

Indeed, we can write 
$$\IndCoh(\CW_\Ran)\simeq \underset{S\in \affSch_{\on{aft}},\,S\to \Ran}{\on{lim}}\, \IndCoh(\CW_\Ran\underset{\Ran}\times S),$$
so it is enough to define a compatible family of functors
\begin{equation} \label{e:IndCoh to iQCoh/Ran S}
\IndCoh(\CW_\Ran\underset{\Ran}\times S)\to \QCoh_{\on{co}}(\CW_\Ran\underset{\Ran}\times S).
\end{equation}

Write 
$$\CW_\Ran\underset{\Ran}\times S\simeq \underset{i}{\on{colim}}\, W^i_S,$$
where $W^i_S$ are schemes, and the transition maps $W^i_S\to W^j_S$ are closed embeddings. 

\medskip

The functors \eqref{e:IndCoh to iQCoh/Ran S} are given by the compatible family of functors
$$\IndCoh(W^i_S)\to \QCoh(W^i_S)\underset{\QCoh(S)}\otimes \IndCoh(S),$$
\emph{Serre-dual} to the tautological functors
$$\QCoh(W^i_S)\underset{\QCoh(S)}\otimes \IndCoh(S)\to \IndCoh(W^i_S),$$
given by !-pullback along $W^i_S\to S$.

\ssec{Abstract version of \propref{p:Ran emb}: the absolute case} \label{ss:Ran emb}

As with \propref{p:dr to plain}, 
we will prove an abstract statement, of which \propref{p:Ran emb} is a particular case. The general set-up involves a morphism
$$\CZ\to \CY$$ of D-prestacks as in \secref{sss:D-stacks}. For expository purposes, we will first consider the absolute situation,
i.e., one when $\CY=\on{pt}$.

\sssec{} \label{sss:aff D-sch}

Let $\CZ$ be an affine D-scheme over $X$. For $\ul{x}\in \Ran$ we will denote by $\fL^+_\nabla(\CZ)_{\ul{x}}$ (resp., $\fL_\nabla(\CZ)_{\ul{x}}$) the
scheme (resp., ind-scheme) $\on{Sect}_\nabla(\cD_{\ul{x}},\CZ)$ (resp., $\on{Sect}_\nabla(\cD_{\ul{x}}-\ul{x},\CZ)$). 

\medskip

Consider the corresponding categories
$$\QCoh(\fL^+_\nabla(\CZ)_{\ul{x}}) \text{ and } \QCoh_{\on{co}}(\fL_\nabla(\CZ)_{\ul{x}}).$$

\medskip

In addition, we can consider the ind-scheme $\Sectna(X-\ul{x},\CZ)$, and the categories
$$\IndCoh(\Sectna(X-\ul{x},\CZ)) \text{ and } \QCoh_{\on{co}}(\Sectna(X-\ul{x},\CZ)).$$

\sssec{} 

Letting $\ul{x}\in \Ran$ move in a family over $\Ran$, we obtain the spaces $\fL^+_\nabla(\CZ)_\Ran$ and $\fL_\nabla(\CZ)_\Ran$, where 
$$\fL^+_\nabla(\CZ)_\Ran\to \Ran$$
is a relative scheme, and 
$$\fL_\nabla(\CZ)_\Ran\to \Ran$$
is a relative ind-scheme. Consider also the relative ind-scheme 
$\Sectna(X^{\on{gen}},\CZ)_\Ran$. 

\medskip

We define the categories $\QCoh(\fL^+_\nabla(\CZ))_\Ran$, $\QCoh_{\on{co}}(\fL_\nabla(\CZ))_\Ran$ and $\QCoh_{\on{co}}(\Sectna(X^{\on{gen}},\CZ))_\Ran$
by the recipe of \secref{sss:iQCoh Ran}. 

\sssec{}

Consider the map
$$s_{\CZ,\Ran}:\Sectna(X^{\on{gen}},\CZ)_\Ran\to \fL_\nabla(\CZ)_\Ran,$$
obtained by restricting horizontal sections along
$$\cD_{\ul{x}}-\ul{x}\to X-\ul{x}.$$

\medskip

When $\CZ$ is unambiguous, we will simply write $s_\Ran$ instead of $s_{\CZ,\Ran}$. 

\medskip

We have an adjoint pair of functors
$$(s_{\CZ,\Ran})^*:\QCoh_{\on{co}}(\fL_\nabla(\CZ))_\Ran\rightleftarrows {}\QCoh_{\on{co}}(\Sectna(X^{\on{gen}},\CZ))_\Ran:(s_{\CZ,\Ran})_*.$$

%
%
%

\sssec{}

An abstract version of \propref{p:Ran emb} (in the absolute case) case reads:

\begin{prop} \label{p:Ran emb abs abs}
The natural transformation
$$\Gamma^{\IndCoh_\Ran}(\Sectna(X^{\on{gen}},\CZ)_\Ran,-)\circ 
(s_{\CZ,\Ran})^*\circ (s_{\CZ,\Ran})_*\to \Gamma^{\IndCoh_\Ran}(\Sectna(X^{\on{gen}},\CZ)_\Ran,-)$$
is an isomorphism, when evaluated on the image of $\omega_{\Sectna(X^{\on{gen}},\CZ))_\Ran}$ along
$$\IndCoh(\Sectna(X^{\on{gen}},\CZ)_\Ran)\overset{\Psi_{\Sectna(X^{\on{gen}},\CZ)_\Ran}}\longrightarrow \QCoh_{\on{co}}(\Sectna(X^{\on{gen}},\CZ))_\Ran.$$
\end{prop}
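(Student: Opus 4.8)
The plan is to reduce \propref{p:Ran emb abs abs} to a purely local statement about the fibers over points of $\Ran$, and then to exploit the fact that the "Ran integral" $\Gamma^{\IndCoh}(\Ran,-)$ is computed against $\omega_\Ran$, which is precisely what converts a pointwise-fiberwise surjection of the relevant comonads into an honest isomorphism. Concretely, for a fixed $\ul{x}\in\Ran$ the map $s_{\CZ,\ul{x}}:\Sectna(X-\ul{x},\CZ)\to \fL_\nabla(\CZ)_{\ul{x}}$ is the restriction of a horizontal section from $X-\ul{x}$ to the punctured disc $\cD_{\ul{x}}-\ul{x}$. The source is a relative ind-scheme over the base (here $\on{pt}$), and the key geometric input is that this restriction map exhibits $\Sectna(X-\ul{x},\CZ)$ as an ind-pro-limit of copies of $\fL_\nabla(\CZ)_{\ul{x}}$ fibered over the (pro-smooth, affine) space of sections on $X-\ul{x}$ with no condition at the disc; in other words, $s_{\CZ,\ul{x}}$ is "ind-affine over a base", and $(s_{\CZ,\ul{x}})^*(s_{\CZ,\ul{x}})_*$ applied to $\omega$ produces an object whose global sections differ from $\Gamma(\omega)$ by a tensor factor of $\on{C}^\cdot$ of that auxiliary base.

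The first step is therefore to pin down the geometry of the map $s_{\CZ,\Ran}$: write $\Sectna(X^{\on{gen}},\CZ)_\Ran$ as $\fL_\nabla(\CZ)_\Ran \times_{??}(\text{something})$, using the fact that a horizontal section on $X-\ul{x}$ is determined by its germ at $\ul{x}$ together with the condition that this germ extends, and that the "extra data" is parametrized by the space of horizontal sections over $X$ itself (when $\CZ$ comes from a group as in the oper application) or more generally by an affine scheme with contractible $\on{C}^\cdot$. Here the $\Ran$-variation is essential: the statement is \emph{false} for fixed $\ul{x}$, precisely because the auxiliary base is not contractible pointwise but becomes so after integrating over $\Ran$ — this is the "Ran integral erases the difference between local and global" slogan. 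I would make this precise by invoking the known contractibility results for $\Ran$ and for spaces of the form $\on{Sect}_\nabla(X,\CZ)\times\Ran \to \on{Sect}_\nabla(X^{\on{gen}},\CZ)_\Ran$, i.e. that restriction to a generic point is a "co-localization" after $\Ran$-integration.

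The second step is homological bookkeeping: once $(s_{\CZ,\Ran})^*(s_{\CZ,\Ran})_*\omega$ is identified, via base change in the $\QCoh_{\on{co}}$ formalism of \secref{sss:iQCoh Ran} (using the functoriality in \secref{sss:iQCoh funct}, in particular that $s^*$ is left adjoint to $s_*$ and that for the relevant Cartesian squares $s^*$ induces an equivalence on the relative tensor products), with $\omega$ tensored by the relative dualizing complex of the auxiliary fibration, one computes $\Gamma^{\IndCoh_\Ran}(\Sectna(X^{\on{gen}},\CZ)_\Ran,-)$ of both sides and checks that the comparison map becomes $\on{C}^\cdot$ applied to a contractibility statement, hence an isomorphism. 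The functor $\Psi_{\Sectna(X^{\on{gen}},\CZ)_\Ran}$ of \secref{sss:IndCoh to iQCoh/Ran} enters only to make sense of the object $\omega_{\Sectna(X^{\on{gen}},\CZ)_\Ran}$ on the $\QCoh_{\on{co}}$ side; I would check compatibility of $\Psi$ with $s^*$ and with pushforward to $\Ran$ directly from its definition as a Serre-dual of $!$-pullback.

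\textbf{The main obstacle} I anticipate is the geometric identification in the first step: showing rigorously that $s_{\CZ,\Ran}$ is, after $\Ran$-integration, "as good as" a base change of $\fL_\nabla(\CZ)_\Ran$ along a map with contractible fibers — i.e. controlling the interaction between the ind-scheme structure (coming from meromorphy along $\ul{x}$), the pro-scheme structure (coming from the formal disc), and the $\Ran$-colimit. This is where the finiteness hypotheses of \secref{sss:fin cond D stacks} and the 1-affineness of $\Ran$ do the real work, and where one must be careful that $\QCoh_{\on{co}}$ rather than $\QCoh$ is the correct category so that pushforwards are defined and base change holds. The separate abstract statement for the relative case (the companion proposition referenced as being proved in Sects.~\ref{s:proof of Ran emb abs abs}+\ref{s:proof of Ran emb abs rel}) will then follow by running the same argument fiberwise over $\on{Sect}_\nabla(X,\CY)$, replacing "global sections" by "pushforward to $\on{Sect}_\nabla(X,\CY)$" throughout.
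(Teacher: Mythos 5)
Your high-level slogan is right — the statement only holds after integrating over $\Ran$, unitality is what powers it, and $\QCoh_{\on{co}}$ is the correct home for pushforwards — but the geometric mechanism you propose does not match the actual situation, and this is not a cosmetic mismatch. You describe $s_{\CZ,\Ran}\colon \Sectna(X^{\on{gen}},\CZ)_\Ran\to \fL_\nabla(\CZ)_\Ran$ as an ``ind-affine fibration over a base,'' i.e.\ something like a base change, so that $(s_{\CZ,\Ran})^*(s_{\CZ,\Ran})_*\omega$ would be computed by pulling $\omega$ along a Cartesian square and multiplying by the chains of an auxiliary contractible fiber. That picture is backwards. For affine $\CZ$, a horizontal section on $X-\ul{x}$ is determined by its germ on the punctured disc, so $s_{\CZ,\Ran}$ is closer to a (pro-)closed monomorphism than to a fibration; there is no auxiliary base and no Cartesian square along which to base-change, and $s^*s_*$ on the source produces a genuinely derived object (reflecting the pro-infinitely-many equations cutting out the image), not ``$\omega$ tensored by a contractible complex.'' The failure of $s^*s_*\to\on{Id}$ to be an isomorphism at a fixed $\ul{x}$ is exactly because of this pro-closedness, not because of an honest affine fibration.

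What the paper does instead is bypass any direct identification of $(s_{\CZ,\Ran})^*(s_{\CZ,\Ran})_*$ and work entirely through unitality and a factorization-homology formula. First one reduces to the statement for objects coming from the unital category (your $\omega$ qualifies because the unital transition maps $\alpha_\CW$ are proper). Then, and this is the step missing from your plan, one invokes the chiral-homology identity \`a la [BD2, Prop.~4.6.5] (Proposition~\ref{p:fact hom}(b)): the functor $\Gamma^{\IndCoh_\Ran}\circ(s_{\CZ,\Ran})^*$ is re-expressed as an integral over the space $\Ran^{\subseteq}$ of pairs $\ul{x}_1\subseteq\ul{x}_2$, via the ``insertion of the unit'' map that is only available in the unital setting. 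At that point the comparison with the right-hand side becomes a cofinality assertion (Proposition~\ref{p:diag cofinal}): the diagonal $\Ran\hookrightarrow\Ran^\subseteq$ is, after strictifying through $\Ran^{\subseteq,\on{untl}}$, value-wise cofinal, and $\sft^{\subseteq}$ is universally homologically cofinal. No geometry of the fibers of $s$ is ever used; the entire content is in the commutation of the Ran-integral with the insertion of vacuum, plus $\infty$-categorical cofinality for $\Ranu$. Your proposal has the correct destination but no working engine: you would need to discover the factorization-homology rewriting, which is a genuine idea, before any of the contractibility or base-change bookkeeping can start.
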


The proof will be given in \secref{s:proof of Ran emb abs abs}. 

\ssec{Abstract version of \propref{p:Ran emb}: the relative case}

In this section we will introduce a relative version of the set-up of \secref{ss:Ran emb}.

\sssec{}

Let $\on{Sect}_\nabla(X^{\on{gen}},\CZ/\CY)_\Ran$ have the same meaning as in \secref{sss:D-stacks}. We will view it as a relative ind-scheme
over
$$\on{Sect}_\nabla(X,\CY)\times \Ran.$$

Consider the corresponding category
$$\QCoh_{\on{co}}(\on{Sect}_\nabla(X^{\on{gen}},\CZ/\CY))_\Ran.$$

\medskip

Let $\pi_\Ran$ denote the projection
$$\on{Sect}_\nabla(X^{\on{gen}},\CZ/\CY)_\Ran\to \on{Sect}_\nabla(X,\CY).$$

Combined with the projection
$$p_{\Sectna(X^{\on{gen}},\CZ/\CY)_\Ran}:\on{Sect}_\nabla(X^{\on{gen}},\CZ/\CY)_\Ran\to \Ran,$$
we obtain a map
$$\pi_\Ran\times p_{\Sectna(X^{\on{gen}},\CZ/\CY)_\Ran}:\on{Sect}_\nabla(X^{\on{gen}},\CZ/\CY)_\Ran\to 
\on{Sect}_\nabla(X,\CY)\times \Ran.$$

\sssec{}

Let us denote by 
$$(\pi_\Ran)^{\IndCoh_\Ran}_*:\QCoh_{\on{co}}(\on{Sect}_\nabla(X^{\on{gen}},\CZ/\CY))_\Ran \to \QCoh(\on{Sect}_\nabla(X,\CY))$$
the composite functor
\begin{multline*} 
\QCoh_{\on{co}}(\on{Sect}_\nabla(X^{\on{gen}},\CZ/\CY))_\Ran 
\overset{(\pi_\Ran\times p_{\Sectna(X^{\on{gen}},\CZ/\CY)_\Ran})_*}\longrightarrow 
\QCoh_{\on{co}}(\on{Sect}_\nabla(X,\CY))\otimes \IndCoh(\Ran) \to \\ \overset{\Omega_{\on{Sect}_\nabla(X,\CY)}\otimes \on{Id}}\longrightarrow  
\QCoh(\on{Sect}_\nabla(X,\CY))\otimes \IndCoh(\Ran) 
\overset{\on{Id}\otimes \Gamma^{\IndCoh}(\Ran,-)}\longrightarrow \QCoh(\on{Sect}_\nabla(X,\CY)).
\end{multline*}

\medskip

Note that we have a commutative diagram
\begin{equation}  \label{e:IndCoh to QCoh and Psi}
\CD
\IndCoh(\Sectna(X^{\on{gen}},\CZ/\CY)_\Ran) @>{\Psi_{\Sectna(X^{\on{gen}},\CZ/\CY)_\Ran}}>> 
\QCoh_{\on{co}}(\Sectna(X^{\on{gen}},\CZ/\CY))_\Ran  \\
@V{(\pi_\Ran)^{\IndCoh}_*}VV @VV{(\pi_\Ran)^{\IndCoh_\Ran}_*}V \\
\IndCoh(\on{Sect}_\nabla(X,\CY)) @>{\Psi_{\on{Sect}_\nabla(X,\CY)}}>>  \QCoh(\on{Sect}_\nabla(X,\CY)),
\endCD
\end{equation} 
where the top horizontal arrow is the functor 
of \eqref{e:IndCoh to iQCoh/Ran}. 

\sssec{}

Let us be in the situation described in \cite[Sects. 4.5.1-4.5.2.]{GLC2}, with the following change of notations:
what was denoted by $\CY$ (resp., $\CY_0$) in {\it loc. cit.} we denote by $\CZ$ (resp., $\CY$). 

\medskip

Let us be given an affine map
$$s_{\CT,\Ran}:\on{Sect}_\nabla(X^{\on{gen}},\CZ/\CY)_\Ran\to \CT_\Ran$$
that fits into a commutative (but not necessarily Cartesian) diagram
\begin{equation} \label{e:horizontal sections and T}
\CD
\on{Sect}_\nabla(X^{\on{gen}},\CZ/\CY)_\Ran @>{s_{\CT,\Ran}}>>   \CT_\Ran \\
@V{\pi_\Ran\times p_{\Sectna(X^{\on{gen}},\CZ/\CY)_\Ran}}VV @VV{\fL(f)}V  \\
\on{Sect}_\nabla(X,\CY)\times \Ran @>{s_{\CY,\Ran}}>> \fL^+(\CY)_\Ran. 
\endCD
\end{equation} 

\sssec{} \label{sss:param vers}

We will now make the following additional structural assumption:

\medskip

Consider the fiber product 
$$\CT_{\on{Sect}_\nabla(X,\CY),\Ran}:=(\on{Sect}_\nabla(X,\CY)\times \Ran)\underset{\fL^+(\CY)_\Ran}\times \CT_\Ran.$$

We require that there exist a $\on{Sect}_\nabla(X,\CY)$-family of affine D-schemes 
$$\CZ_{\on{Sect}_\nabla(X,\CY)}\to \on{Sect}_\nabla(X,\CY)\times X_\dr,$$
such that:

\begin{itemize}

\item $\CT_{\on{Sect}_\nabla(X,\CY),\Ran}$ identifies with the (relative over $\on{Sect}_\nabla(X,\CY)$) factorization space of 
horizontal loops 
$\fL_\nabla(\CZ_{\on{Sect}_\nabla(X,\CY)})_\Ran/\on{Sect}_\nabla(X,\CY)$;

\medskip

\item The map 
$$s'_{\CT,\Ran}: \on{Sect}_\nabla(X^{\on{gen}},\CZ/\CY)_\Ran\to \CT_{\on{Sect}_\nabla(X,\CY),\Ran},$$
arising from \eqref{e:horizontal sections and T}, identifies with the evaluation map
$$s_{\CZ_{\on{Sect}_\nabla(X,\CY)}}:
\Sectna(X^{\on{gen}},\CZ_{\on{Sect}_\nabla(X,\CY)}/\on{Sect}_\nabla(X,\CY))_\Ran\to \fL_\nabla(\CZ_{\on{Sect}_\nabla(X,\CY)})_\Ran/\on{Sect}_\nabla(X,\CY).$$

\end{itemize} 

Finally, we require that the above data be compatible with the unital structures in the natural sense. 

\begin{rem} \label{r:apply paradigm}

For our applications, we will take $\CZ=\Op_\cG$ and $\CY=\on{pt}/\cG$ and $\CT:=\Op^{\on{mon-free}}_\cG$.
In this case $\on{Sect}_\nabla(X,\CY)=\LS_\cG$, and $\CZ_{\on{Sect}_\nabla(X,\CY)}$ is the D-scheme
parameterized by $\LS_\cG$ that classifies oper structures on a given local system. 

\end{rem} 

\sssec{}

We have an adjoint pair of functors
$$(s_{\CT,\Ran})^*:{}\QCoh_{\on{co}}(\CT)_\Ran\rightleftarrows 
\QCoh_{\on{co}}(\Sectna(X^{\on{gen}},\CZ/\CY))_\Ran:(s_{\CT,\Ran})_*.$$

\sssec{}

We claim:

\begin{prop} \label{p:Ran emb abs rel}
The natural transformation
$$(\pi_\Ran)^{\IndCoh_\Ran}_*\circ (s_{\CT,\Ran})^*\circ (s_{\CT,\Ran})_*\to (\pi_\Ran)^{\IndCoh_\Ran}_*$$
is an isomorphism, when evaluated on objects that lie in the essential image of the functor
\begin{multline} \label{e:Ran emb abs rel}
\IndCoh(\on{Sect}_\nabla(X,\CY)) \overset{\pi_\Ran^!}\longrightarrow 
\IndCoh(\on{Sect}_\nabla(X^{\on{gen}},\CZ/\CY)_\Ran) \to \\
\overset{\Psi_{\Sectna(X^{\on{gen}},\CZ/\CY)_\Ran}}\longrightarrow
\QCoh_{\on{co}}(\on{Sect}_\nabla(X^{\on{gen}},\CZ/\CY))_\Ran.
\end{multline}
\end{prop}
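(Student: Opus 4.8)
The statement to prove is \propref{p:Ran emb abs rel}, a relative version of the ``Ran integral erases the difference between the local and the global'' principle. The key structural input is the additional assumption of \secref{sss:param vers}: over $\on{Sect}_\nabla(X,\CY)$, the target $\CT_\Ran$ becomes (after base change to $\on{Sect}_\nabla(X,\CY)\times\Ran$) the factorization space of horizontal loops $\fL_\nabla(\CZ_{\on{Sect}_\nabla(X,\CY)})_\Ran/\on{Sect}_\nabla(X,\CY)$, and the map $s_{\CT,\Ran}$ becomes the evaluation map $s_{\CZ_{\on{Sect}_\nabla(X,\CY)}}$ relative to $\on{Sect}_\nabla(X,\CY)$. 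So the plan is to reduce \propref{p:Ran emb abs rel} to the absolute statement \propref{p:Ran emb abs abs}, applied not over $\on{pt}$ but in families over the base $\on{Sect}_\nabla(X,\CY)$.

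\textbf{First step: reduce to the parametrized version.} I would replace the base point by $\CB:=\on{Sect}_\nabla(X,\CY)$ throughout, and observe that all the constructions in \secref{ss:Ran emb} make sense in families over any prestack, in particular over $\CB$. The functor $(\pi_\Ran)^{\IndCoh_\Ran}_*$ factors as: first push forward along $s'_{\CT,\Ran}$ to land in $\QCoh_{\on{co}}(\CT_{\CB,\Ran})$ over $\CB\times\Ran$, then apply the $\CB$-relative version of $\Gamma^{\IndCoh_\Ran}$. By the base-change compatibility of $\QCoh_{\on{co}}$ recorded in \secref{sss:iQCoh funct} (third bullet), forming these categories and functors commutes with base change along affine schemes mapping to $\CB$. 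The natural transformation in question is the counit $(s_{\CT,\Ran})^*\circ (s_{\CT,\Ran})_*\to\on{Id}$; since $s_{\CT,\Ran}=s'_{\CT,\Ran}$ up to the identification of \secref{sss:param vers}, and $s'_{\CT,\Ran}$ is the $\CB$-relative evaluation map $s_{\CZ_\CB}$, it suffices to prove that after composing with $(\pi_\Ran)^{\IndCoh_\Ran}_*$ (equivalently, with the $\CB$-relative Ran-integral $\Gamma^{\IndCoh_\Ran}_\CB$) this counit becomes an isomorphism on objects in the image of \eqref{e:Ran emb abs rel}.

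\textbf{Second step: run the absolute argument in families.} Objects in the image of \eqref{e:Ran emb abs rel} are, by construction, $\Psi_{\Sectna(X^{\on{gen}},\CZ/\CY)_\Ran}$ applied to $\pi_\Ran^!(-)$, hence are $\CB$-families of objects of the type $\Psi(\omega)$ to which \propref{p:Ran emb abs abs} applies fiberwise over $\CB$ (with $\CZ$ there replaced by $\CZ_\CB$). More precisely: $\pi_\Ran^!(\CM)$ for $\CM\in\IndCoh(\CB)$ restricts, after $\Psi$, to $\CM$-twisted copies of $\omega_{\Sectna(X^{\on{gen}},\CZ_\CB/\CB)_\Ran}$, and both sides of the relevant natural transformation are $\IndCoh(\CB)$-linear (or at least compatible with $\otimes\CM$ in the appropriate lax sense given the $\QCoh_{\on{co}}$ formalism). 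Thus the $\CB$-relative statement follows from the ``absolute'' \propref{p:Ran emb abs abs} applied to the D-scheme $\CZ_\CB$ over $\CB\times X_\dr$, once one checks that \propref{p:Ran emb abs abs} and its proof are uniform enough to run over a base. This is where the compatibility with the unital structures required at the end of \secref{sss:param vers} is used — it is exactly what makes the Ran-integral / factorization machinery in the proof of \propref{p:Ran emb abs abs} go through relatively.

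\textbf{Main obstacle.} The real work is not in the formal reduction but in checking that \propref{p:Ran emb abs abs} genuinely applies in families over $\CB=\on{Sect}_\nabla(X,\CY)$ — i.e.\ that the finiteness hypotheses (local almost finite type of $\Sectna(X^{\on{gen}},\CZ_\CB/\CB)_\Ran$, etc., from \secref{sss:fin cond D stacks}) hold and that the passage between $\IndCoh$ and $\QCoh_{\on{co}}$ via $\Psi_{\CW_\Ran}$ of \secref{sss:IndCoh to iQCoh/Ran} is compatible with base change along $\CB$, and with the commutative square \eqref{e:IndCoh to QCoh and Psi}. In particular, one must be careful that $\QCoh_{\on{co}}(-)_\Ran$, which is only known to be a limit-type construction and not obviously dualizable, behaves well under the relative formation over $\CB$; here one leans on Remark \ref{r:QCoh* as a sheaf of cats} (1-affineness of $\Ran$) to present everything as global sections of a sheaf of categories, and then the base change over $\CB$ is formal. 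I expect the bulk of the proof, as written in \secref{s:proof of Ran emb abs rel}, to consist of verifying these compatibilities and then invoking \propref{p:Ran emb abs abs}, with no genuinely new idea beyond the reduction sketched above.
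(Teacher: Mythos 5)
Your reduction captures one of the two ingredients in the paper's argument, but misses the other, and the miss is a genuine gap.

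The identification required in \secref{sss:param vers} does \emph{not} say that $s_{\CT,\Ran}$ agrees, up to anything, with the $\on{Sect}_\nabla(X,\CY)$-relative evaluation map $s_{\CZ_{\on{Sect}_\nabla(X,\CY)}}$. It says that the map $s'_{\CT,\Ran}\colon \on{Sect}_\nabla(X^{\on{gen}},\CZ/\CY)_\Ran \to \CT_{\on{Sect}_\nabla(X,\CY),\Ran}$ arising from the (commutative but \emph{non}-Cartesian) square \eqref{e:horizontal sections and T} is the relative evaluation map. But $s_{\CT,\Ran}$ itself factors as $s'_{\CY,\Ran}\circ s'_{\CT,\Ran}$, where $s'_{\CY,\Ran}$ is the base change of $s_{\CY,\Ran}\colon\on{Sect}(X,\CY)\times\Ran\to\fL^+_\nabla(\CY)_\Ran$, i.e.\ the map comparing genuine horizontal sections of $\CY$ with formal arcs. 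Consequently $(s_{\CT,\Ran})^*\circ(s_{\CT,\Ran})_*$ unpacks to $(s'_{\CT,\Ran})^*\circ(s'_{\CY,\Ran})^*\circ(s'_{\CY,\Ran})_*\circ(s'_{\CT,\Ran})_*$, and the inner comonad $(s'_{\CY,\Ran})^*\circ(s'_{\CY,\Ran})_*$ does not go away. When you write ``$s_{\CT,\Ran}=s'_{\CT,\Ran}$ up to the identification,'' you are collapsing exactly this piece.

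The paper splits the counit natural transformation into two pieces along this factorization and proves each is an isomorphism separately. The first, governed by $s'_{\CT,\Ran}$, is indeed a parametrized version of \propref{p:Ran emb unital local} (the refined form underlying \propref{p:Ran emb abs abs}) applied to the relative D-scheme $\CZ_{\on{Sect}_\nabla(X,\CY)}$ --- this is what your plan produces. The second, governed by $s'_{\CY,\Ran}$, reduces by base change to \propref{p:loc unital} (and its core \propref{p:loc unital fund}): the abstract statement that for a D-prestack $\CY$ with affine diagonal and mild finiteness, the localization functor $\Loc_\CY$ has a fully faithful right adjoint. This is the abstract form of \propref{p:Gamma ff}, proved via a factorization-homology argument \`a la \cite[Prop.\ 4.6.5]{BD2} applied relative to the diagonal of $\CY$, and it is a genuinely separate input. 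Running \propref{p:Ran emb abs abs} ``in families over $\CB$'' cannot produce it, because in the absolute case $\CY=\on{pt}$ the map $s_{\CY,\Ran}$ is an isomorphism and this second piece is invisible. That new ingredient is what your proposal is missing.
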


The proof will be given in \secref{s:proof of Ran emb abs rel}.

\sssec{}

Let us show how \propref{p:Ran emb abs rel} implies \propref{p:Ran emb}. 

\medskip 

We apply \propref{p:Ran emb abs rel} to the spaces specified in Remark \ref{r:apply paradigm}, so that  
$$\Sectna(X^{\on{gen}},\CZ/\CY)_\Ran=\Op^{\on{mon-free}}_\cG(X^{\on{gen}})_\Ran.$$

\medskip

We have a commutative diagram 
\begin{equation} \label{e:IndCoh and QCoh loc and glob 1} 
\CD
\IndCoh(\Op^{\on{mon-free}}_\cG(X^{\on{gen}})_\Ran) @>{\Psi_{\Op^{\on{mon-free}}_\cG(X^{\on{gen}})_\Ran}}>> 
\QCoh_{\on{co}}(\Op^{\on{mon-free}}_\cG(X^{\on{gen}}))_\Ran  \\
@V{(s_\Ran)^{\IndCoh}_*}VV @VV{(s_\Ran)_*}V \\
\IndCoh^*(\Op^{\on{mon-free}}_\cG)_\Ran @>{\Psi_{(\Op^{\on{mon-free}}_\cG)_\Ran}}>> 
\QCoh_{\on{co}}(\Op^{\on{mon-free}}_\cG)_\Ran.
\endCD
\end{equation} 

\medskip

Moreover, the diagram
\begin{equation} \label{e:IndCoh and QCoh loc and glob 2} 
\CD
\IndCoh(\Op^{\on{mon-free}}_\cG(X^{\on{gen}})_\Ran) @>{\Psi_{\Op^{\on{mon-free}}_\cG(X^{\on{gen}})_\Ran}}>> 
\QCoh_{\on{co}}(\Op^{\on{mon-free}}_\cG(X^{\on{gen}}))_\Ran  \\
@A{(s_\Ran)^{*,\IndCoh}}AA @AA{(s_\Ran)^*}A \\
\IndCoh^*(\Op^{\on{mon-free}}_\cG)_\Ran @>{\Psi_{(\Op^{\on{mon-free}}_\cG)_\Ran}}>> 
\QCoh_{\on{co}}(\Op^{\on{mon-free}}_\cG)_\Ran.
\endCD
\end{equation} 
obtained from \eqref{e:IndCoh and QCoh loc and glob 1} by passing to left adjoints along the vertical arrows, 
commutes as well. 

\medskip

The conclusion of \propref{p:Ran emb} follows now from \propref{p:Ran emb abs rel}, by juxtaposing the commutative diagrams
\eqref{e:IndCoh to QCoh and Psi}, \eqref{e:IndCoh and QCoh loc and glob 1} and \eqref{e:IndCoh and QCoh loc and glob 2}. 

\qed[\propref{p:Ran emb}]

\appendix

\section{Proof of \propref{p:ind oblv YZ}} \label{s:proof of ind oblv}

The idea of the proof of \propref{p:ind oblv YZ}
can be summarized by the following slogan: the unital version of the space of \emph{rational}
horizontal sections maps isomorphically to its own de Rham prestack. 

\medskip

We will deduce it from the main theorem of \cite{Ro} by a rather formal manipulation. 

\ssec{The unital Ran space}

In order to prove \propref{p:ind oblv YZ} we will need to work with the \emph{unital Ran space},
which is no longer a prestack (i.e., a functor from affine schemes to $\infty$-groupoids) but rather a 
\emph{categorical prestack}, i.e., a functor from affine schemes to $\infty$-categories (see
\cite[Sect. C.5]{GLC2} for a more detailed discussion). 

\sssec{}

Recall the notion of \emph{categorical prestack}, see \cite[Appendix C]{Ro}. By definition, this is a functor
$$(\affSch)^{\on{op}}\to \on{1-Cat},$$
where $\on{1-Cat}$ denotes the $(\infty,1)$-category of $(\infty,1)$-categories. 

\medskip

Thus, a categorical prestack $\CX$ assigns to an affine scheme $S$ a category, to be denoted $\CX(S)$,
and to a map $f:S_1\to S_2$ a functor 
$$\CX(f):\CX(S_2)\to \CX(S_1),$$
equipped with a datum of compatibility for compositions. 

\sssec{} \label{sss:untl Sect}

Let $\Ranu$ be the unital version of the Ran space, see \cite[Sect. 4.2]{Ga4} or \cite[Sect. 2.1]{Ro}. I.e.,
$\Ranu$ associates to an affine scheme $S$ the \emph{category} of finite subsets of $\Hom(S,X_\dr)$,
where the morphisms are given by inclusion.

\medskip

Let
$$\sft:\Ran\to \Ranu$$
denote the tautological map.

\sssec{} \label{sss:sects unital}

Along with the prestacks 
$$\on{Sect}_\nabla(X^{\on{gen}},\CZ)_\Ran,\,\, \on{Sect}_\nabla(X^{\on{gen}},\CZ/\CY)_\Ran,\,\, 
\left(\on{Sect}_\nabla(X^{\on{gen}},\CZ/\CY)_\Ran\right)_{\dr^{\on{rel}}},\,\, \text{etc}$$
one can consider their unital versions, which are now \emph{categorical prestacks}, denoted 
\begin{equation} \label{e:untl versions}
\on{Sect}_\nabla(X^{\on{gen}},\CZ)_\Ranu,\,\, \on{Sect}_\nabla(X^{\on{gen}},\CZ/\CY)_\Ranu,\,\, 
\left(\on{Sect}_\nabla(X^{\on{gen}},\CZ/\CY)_\Ranu\right)_{\dr^{\on{rel}}},
\end{equation}
respectively, see \cite[Sect. 3.3.1]{Ro}. 

\medskip

Explicitly, for an affine scheme $S$, the category $\on{Sect}_\nabla(X^{\on{gen}},\CZ)_\Ranu(S)$ consists 
of pairs $(\ul{x},z)$, where $\ul{x}\in \Ranu(S)$ and $z$ is a horizontal section of $\CZ$ on
$X\times S-\on{Graph}_{\ul{x}}$. 

\medskip

A morphism $(\ul{x}_1,z_1)\to (\ul{x}_2,z_2)$ is an inclusion $\ul{x}_1\subseteq \ul{x}_2$ and an identification
$$z_1|_{X\times S-\on{Graph}_{\ul{x}_2}}\simeq z_2.$$

\medskip

And similarly for the other two categorical prestacks in \eqref{e:untl versions}. 

\sssec{}

By definition, the projections from the categorical prestacks in \eqref{e:untl versions} to 
$\Ranu$ are \emph{value-wise co-Cartesian fibrations in groupoids}. 

\medskip

Denote by $\pi_\Ranu$ the projection from 
$$\left(\on{Sect}_\nabla(X^{\on{gen}},\CZ/\CY)_\Ranu\right)_{\dr^{\on{rel}}}\to 
\on{Sect}_\nabla(X,\CY).$$

\sssec{}

We will denote by $\sft$ the maps from the non-unital to the unital versions. We have
$$\pi_\Ranu\circ \sft=\pi_\Ran.$$

\ssec{IndCoh on categorical prestacks}

\sssec{}

Let $\CX$ be a categorical prestack \emph{locally almost of finite type}, see \cite[Sect. C.1.3]{Ro} for what this means. 
In this case, it makes sense to talk about the category $\IndCoh(\CX)$ (see \cite[Sect. 2.2]{Ga4} or \cite[Sect. C.3]{Ro}).

\medskip

Namely, an object $\CF\in \IndCoh(\CX)$ associates to an affine scheme $S$ (assumed almost of finite type) a functor
$$\CX(S)\to \IndCoh(S),$$
in a way compatible with !-pullback for morphisms between affine schemes.

\medskip

We will denote this data as follows: 

\begin{itemize}

\item For an object $x\in \CX(S)$, we have an object 
$$x^!(\CF)\in \IndCoh(S);$$

\item For a morphism $x_1\overset{\alpha}\to x_2$ in $\CX(S)$ a morphism
$$x_1^!(\CF)\to x_2^!(\CF)$$
in $\IndCoh(S)$.

\end{itemize}

\sssec{}

We let 
$$\IndCoh(\CX)_{\on{str}}\subset \IndCoh(\CX)$$
be the full subcategory, consisting of objects $\CF\in \IndCoh(\CX)$ such that for every affine 
test-scheme $S$ and an arrow
$$x_1\overset{\alpha}\to x_2, \quad x_1,x_2\in \CX(S),$$
the resulting map
$$x_1^!(\CF)\to x_2^!(\CF)$$
is an isomorphism.

\medskip

In other words, if we denote by 
$$\CX \overset{\on{str}}\to \CX_{\on{str}}$$ the prestack, obtained from $\CX$ by inverting all arrows, the 
pullback functor
$$\IndCoh(\CX_{\on{str}})\overset{\on{str}^!}\longrightarrow \IndCoh(\CX)$$
defines an equivalence
$$\IndCoh(\CX_{\on{str}})\overset{\sim}\to \IndCoh(\CX)_{\on{str}}.$$

\sssec{}

We claim:

\begin{lem}\label{l:str reflects}
The natural diagram of categories
$$
\xymatrix{
\IndCoh(\CX_{\dr})_{\on{str}} \ar[r]\ar[d] & \IndCoh(\CX)_{\on{str}} \ar[d]\\
\IndCoh(\CX_{\dr}) \ar[r] & \IndCoh(\CX)
}
$$
is a pullback square.
\end{lem}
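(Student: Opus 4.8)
The plan is to reduce the pullback assertion to a concrete statement about the full subcategories $\IndCoh(-)_{\on{str}}$, and then to prove that statement by comparing reduced and non-reduced affine test schemes. First I would observe that the two vertical functors in the square are fully faithful (being inclusions of full subcategories) and that the two horizontal functors are induced by $!$-pullback along the canonical map $p\colon\CX\to\CX_{\dr}$, the upper one being its corestriction to the $\on{str}$-subcategories (which makes sense by the easy implication below). For a square of this shape, being a pullback is equivalent to the assertion that an object $\CF\in\IndCoh(\CX_{\dr})$ lies in $\IndCoh(\CX_{\dr})_{\on{str}}$ if and only if $p^{!}\CF$ lies in $\IndCoh(\CX)_{\on{str}}$: indeed, since $\IndCoh(\CX)_{\on{str}}\hookrightarrow\IndCoh(\CX)$ is fully faithful, the fibre product $\IndCoh(\CX_{\dr})\times_{\IndCoh(\CX)}\IndCoh(\CX)_{\on{str}}$ is exactly the full subcategory of $\IndCoh(\CX_{\dr})$ spanned by the $\CF$ with $p^{!}\CF$ in $\IndCoh(\CX)_{\on{str}}$. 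So it will suffice to prove the two implications.

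The implication ``$\CF$ is $\on{str}$ $\Rightarrow$ $p^{!}\CF$ is $\on{str}$'' is formal: recalling that $x^{!}(p^{!}\CF)\simeq(p_{S}x)^{!}(\CF)$ naturally in $x\in\CX(S)$ --- where $p_{S}\colon\CX(S)\to\CX_{\dr}(S)$ is the functor on $S$-points --- the map $x_{1}^{!}(p^{!}\CF)\to x_{2}^{!}(p^{!}\CF)$ induced by an arrow $x_{1}\to x_{2}$ of $\CX(S)$ is identified with the map $(p_{S}x_{1})^{!}(\CF)\to(p_{S}x_{2})^{!}(\CF)$ induced by the arrow $p_{S}x_{1}\to p_{S}x_{2}$ of $\CX_{\dr}(S)$, which is invertible since $\CF$ is $\on{str}$. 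For the converse I would use that $\CX_{\dr}$ depends only on reduced test schemes: $\CX_{\dr}(S)=\CX(S_{\red})$, and $p_{S}=\on{id}$ for $S$ reduced. Hence, if $p^{!}\CF$ is $\on{str}$ on $\CX$, then testing already on reduced $S$ shows that for any arrow $y_{1}\to y_{2}$ of $\CX_{\dr}(S)$ with $S$ reduced the induced map $y_{1}^{!}(\CF)\to y_{2}^{!}(\CF)$ is an isomorphism. To pass to an arbitrary $S\in\affSch_{\on{aft}}$, given an arrow $\beta\colon y_{1}\to y_{2}$ in $\CX_{\dr}(S)$, I would note that the functor $\CX_{\dr}(\iota)\colon\CX_{\dr}(S)\to\CX_{\dr}(S_{\red})$ attached to $\iota\colon S_{\red}\hookrightarrow S$ is an equivalence (since $\CX_{\dr}$ factors through $S\mapsto S_{\red}$); writing $\bar\beta$ for the image of $\beta$, compatibility of $\CF$ with $\iota^{!}$ identifies $\iota^{!}$ applied to $y_{1}^{!}(\CF)\to y_{2}^{!}(\CF)$ with the corresponding map attached to $\bar\beta$, which is an isomorphism by the reduced case; conservativity of $\iota^{!}\colon\IndCoh(S)\to\IndCoh(S_{\red})$ then gives that $y_{1}^{!}(\CF)\to y_{2}^{!}(\CF)$ is an isomorphism, i.e. $\CF$ is $\on{str}$ on $\CX_{\dr}$.

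The only non-formal ingredient is this conservativity of $\iota^{!}$ for a nilpotent closed embedding $\iota\colon S_{\red}\hookrightarrow S$ of affine schemes almost of finite type: since the nilradical of $\CO_{S}$ is nilpotent, every object of $\Coh(S)$ admits a finite filtration with subquotients of the form $\iota^{\IndCoh}_{*}(\CG)$, $\CG\in\Coh(S_{\red})$, so such objects compactly generate $\IndCoh(S)$ and the right adjoint $\iota^{!}$ of $\iota^{\IndCoh}_{*}$ is conservative. (Equivalently, one may rephrase the previous step as the observation that $\IndCoh(\CX_{\dr})$ is a limit indexed only by reduced affine test schemes, on which $p$ is an isomorphism.) I do not expect a serious obstacle in any of this; the point that will require care is keeping track of the discrepancy between $\CX$ and $\CX_{\dr}$ over non-reduced test schemes, and it is precisely the conservativity just recorded that makes that discrepancy harmless.
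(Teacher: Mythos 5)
Your proposal is correct and takes essentially the same route as the paper: the paper's one-line proof cites exactly the conservativity of $\iota^!$ for the nil-embedding $\iota\colon S_{\red}\hookrightarrow S$, which is the non-formal ingredient you isolate, and the rest of your argument (reducing the pullback assertion to ``$\CF$ is strict iff $p^!\CF$ is strict,'' the easy forward implication, and the reduction to reduced test schemes via $\CX_{\dr}(S)=\CX(S_{\red})$) is the formal padding the paper leaves implicit. The only place to be slightly careful is that for \emph{derived} affine $S\in\affSch_{\on{aft}}$ the filtration argument you sketch for conservativity should be phrased via the t-structure on $\Coh(S)$ rather than the literal nilradical; but the fact itself is standard and is what the paper also invokes.
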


\begin{proof}

Follows from the fact that for an affine scheme $S$ almost of finite type, the !-pullback functor with respect to
$S_{\on{red}}\to S$ is conservative.

\end{proof}

\ssec{A reformulation}

\sssec{}

Note that the projection
$$\left(\on{Sect}_\nabla(X^{\on{gen}},\CZ/\CY)_\Ran\right)_{\dr^{\on{rel}}}\overset{\pi_{\Ran,\dR^{\on{rel}}}}\to 
\on{Sect}_\nabla(X,\CY)$$
factors as
\begin{multline*} 
\left(\on{Sect}_\nabla(X^{\on{gen}},\CZ/\CY)_\Ran\right)_{\dr^{\on{rel}}}
\overset{\sft}\to 
\left(\on{Sect}_\nabla(X^{\on{gen}},\CZ/\CY)_\Ranu\right)_{\dr^{\on{rel}}}\overset{\on{str}}\to \\
\to\left(\on{Sect}_\nabla(X^{\on{gen}},\CZ/\CY)_{\Ranustr}\right)_{\dr^{\on{rel}}} \overset{(\pi_{\Ranu,\dR^{\on{rel}}})_{\on{str}}}\to 
\on{Sect}_\nabla(X,\CY),
\end{multline*} 
where
$$\left(\on{Sect}_\nabla(X^{\on{gen}},\CZ/\CY)_{\Ranustr}\right)_{\dr^{\on{rel}}}:=
\left(\left(\on{Sect}_\nabla(X^{\on{gen}},\CZ/\CY)_{\Ranu}\right)_{\dr^{\on{rel}}}\right)_{\on{str}}.$$

Hence, the pullback functor
$$\pi_{\Ranu.\dR^{\on{rel}}}^!:\IndCoh(\on{Sect}_\nabla(X,\CY)) \to
\IndCoh\left(\left(\on{Sect}_\nabla(X^{\on{gen}},\CZ/\CY)_\Ranu\right)_{\dr^{\on{rel}}}\right)$$
maps to
$$\IndCoh\left(\left(\on{Sect}_\nabla(X^{\on{gen}},\CZ/\CY)_\Ranu\right)_{\dr^{\on{rel}}}\right)_{\on{str}}\subset
\IndCoh\left(\left(\on{Sect}_\nabla(X^{\on{gen}},\CZ/\CY)_\Ranu\right)_{\dr^{\on{rel}}}\right).$$

\sssec{}

We obtain that \propref{p:ind oblv YZ} follows from the next more precise statement:

\begin{prop} \label{p:ind oblv untl}
The counit of the adjunction
$$\ind^{\on{rel}}\circ \oblv^{\on{rel}}\to \on{Id}$$
is an isomorphism, when evaluated on objects in the essential image along $\sft^!$ of 
$$\IndCoh\left(\left(\on{Sect}_\nabla(X^{\on{gen}},\CZ/\CY)_\Ranu\right)_{\dr^{\on{rel}}}\right)_{\on{str}}\subset
\IndCoh\left(\left(\on{Sect}_\nabla(X^{\on{gen}},\CZ/\CY)_\Ranu\right)_{\dr^{\on{rel}}}\right).$$
\end{prop}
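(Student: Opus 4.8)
The plan is to deduce \propref{p:ind oblv untl} from the main theorem of \cite{Ro}, which — translated into the notation of \secref{sss:D-stacks} — asserts that, under exactly the finiteness hypotheses of \secref{sss:fin cond D stacks}, the canonical map
$$\Sectna(X^{\on{gen}},\CZ/\CY)_\Ranustr \longrightarrow \left(\Sectna(X^{\on{gen}},\CZ/\CY)_\Ranustr\right)_{\dr^{\on{rel}}}$$
is an isomorphism over $\Sectna(X,\CY)$; informally, once the ``add a point'' arrows are inverted, the space of rational horizontal sections coincides with its own relative de Rham prestack. Granting this input, the rest is a formal manipulation with categorical prestacks. I abbreviate $\CW := \Sectna(X^{\on{gen}},\CZ/\CY)_\Ran$, $\CW^u := \Sectna(X^{\on{gen}},\CZ/\CY)_\Ranu$ and $\CW^{\on{str}} := \Sectna(X^{\on{gen}},\CZ/\CY)_\Ranustr$, all over $\CB := \Sectna(X,\CY)$, with $q\colon \CW\to \CW_{\dr^{\on{rel}}}$ and $q^u\colon \CW^u\to (\CW^u)_{\dr^{\on{rel}}}$ the relative de Rham maps, and $\sft_{\dr^{\on{rel}}}$, $\on{str}_{\dr^{\on{rel}}}$ the maps induced on relative de Rham prestacks by $\sft$ and $\on{str}$; thus $\oblv^{\on{rel}}=q^!$ and $\ind^{\on{rel}}=(q^!)^L$.

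First I would identify the category occurring in the statement. By the equivalence $\IndCoh(\CX)_{\on{str}}\simeq\IndCoh(\CX_{\on{str}})$ recalled above, $\IndCoh\!\big((\CW^u)_{\dr^{\on{rel}}}\big)_{\on{str}}\simeq\IndCoh\!\big(((\CW^u)_{\dr^{\on{rel}}})_{\on{str}}\big)$; and since inverting arrows commutes with $(-)_\dr$ (both computed valuewise) and with the fibre product defining the relative de Rham prestack, one has $((\CW^u)_{\dr^{\on{rel}}})_{\on{str}}\simeq(\CW^{\on{str}})_{\dr^{\on{rel}}}$. Feeding in the theorem of \cite{Ro}, i.e.\ $(\CW^{\on{str}})_{\dr^{\on{rel}}}\simeq\CW^{\on{str}}$, we conclude $\IndCoh\!\big((\CW^u)_{\dr^{\on{rel}}}\big)_{\on{str}}\simeq\IndCoh(\CW^{\on{str}})$, and the objects to be treated become those of the form $\sft_{\dr^{\on{rel}}}^!\,m^!\CF_0$ with $\CF_0\in\IndCoh(\CW^{\on{str}})$, where $m\colon (\CW^u)_{\dr^{\on{rel}}}\to\CW^{\on{str}}$ is the composite of the strictification of $(\CW^u)_{\dr^{\on{rel}}}$ with the inverse of the \cite{Ro} isomorphism.

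Next I would compute $\oblv^{\on{rel}}$ of such an object. Using the commuting square relating $q$, $q^u$, $\sft$ and $\sft_{\dr^{\on{rel}}}$, we get $\oblv^{\on{rel}}(\sft_{\dr^{\on{rel}}}^!\,m^!\CF_0)=q^!\sft_{\dr^{\on{rel}}}^!m^!\CF_0=\sft^!(q^u)^!m^!\CF_0=\sft^!(m\circ q^u)^!\CF_0$, and by the universal property of $q^u$ together with the triangle identity, $m\circ q^u$ is canonically the total localization map $\CW^u\to\CW^{\on{str}}$; hence $\oblv^{\on{rel}}(\sft_{\dr^{\on{rel}}}^!\,m^!\CF_0)\simeq\lambda^!\CF_0$ where $\lambda\colon\CW\to\CW^{\on{str}}$ is $\on{str}\circ\sft$. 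Writing $g:=m\circ\sft_{\dr^{\on{rel}}}\colon\CW_{\dr^{\on{rel}}}\to\CW^{\on{str}}$, so that $g\circ q=\lambda$ and $\sft_{\dr^{\on{rel}}}^!\,m^!\CF_0=g^!\CF_0$, the assertion reduces to the claim that the counit $\ind^{\on{rel}}\!\big(q^!g^!\CF_0\big)\to g^!\CF_0$ is an isomorphism, i.e.\ that $\oblv^{\on{rel}}=q^!$ is fully faithful on the essential image of $g^!$. Here the relevant feature is that the target $\CW^{\on{str}}$ of $g$ is, by \cite{Ro}, relatively de Rham over $\CB$. This last point I would prove by invoking \lemref{l:str reflects}: applied to $\CX=\CW^u$ and its relative de Rham prestack, and after passing to the right adjoints of the two vertical localization functors (the coreflections onto the strict subcategories), the pullback square of that lemma identifies $\oblv^{\on{rel}}$ with the functor that on strict objects is the identity — because $\CW^{\on{str}}=((\CW^u)_{\dr^{\on{rel}}})_{\on{str}}$ is already relatively de Rham — which, combined with the adjunction $\ind^{\on{rel}}\dashv\oblv^{\on{rel}}$ and the full faithfulness of the strict inclusions, gives $\ind^{\on{rel}}\circ q^!\circ g^!\simeq g^!$ with comparison map the counit.

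I expect the real work — as opposed to the bookkeeping — to be concentrated in the very first step: extracting from \cite{Ro} the correct \emph{relative} statement over $\Sectna(X,\CY)$ (rather than its absolute version for a single D-scheme), and checking that ``$\CZ$ meromorphically sectionally laft relative to $\CY$'' together with ``$\CY$ sectionally laft'' are precisely the hypotheses that make it apply. Once the strict unital space of rational horizontal sections is known to be its own relative de Rham prestack, the deduction of \propref{p:ind oblv untl} — and hence of \propref{p:ind oblv YZ} via the reduction already carried out above — is formal, with \lemref{l:str reflects} handling the transfer of the ``strict'' condition along $\sft^!$.
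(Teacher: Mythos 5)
Your proposal takes a genuinely different route from the paper, and the difference matters. The paper's proof of \propref{p:ind oblv untl} does \emph{not} assert a prestack-level isomorphism $(\CW^{\on{str}})_{\dr^{\on{rel}}}\simeq\CW^{\on{str}}$. That statement appears only as an informal ``slogan'' in the preamble to Appendix~A; the actual input imported from \cite{Ro} is the fiber-product description of the category of relative D-modules, \cite[Corollary 4.6.10]{Ro}, stated here as \thmref{t:Nick x} and upgraded to families as \thmref{t:Nick fam}. Concretely, the paper first reduces \propref{p:ind oblv untl} to \propref{p:conn autom} (the functor $\oblv^{\on{rel}}_{\on{untl,str}}$ between the \emph{strict} $\IndCoh$ categories is an equivalence), using the compatibility of the $\ind^{\on{rel}}$/$\oblv^{\on{rel}}$ adjunction with $\sft^!$ and $\on{str}^!$. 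It then proves \propref{p:conn autom} by (i) applying \thmref{t:Nick fam} to get full faithfulness of the strict pullback via the fiber-product description, (ii) using \lemref{l:cofinal} — a cofinality statement about the diagonal of $\Ranu$ — to collapse the formal-completion factor, and (iii) getting essential surjectivity from \lemref{l:str reflects}. You skip all of this by postulating the prestack isomorphism directly; this is a strictly stronger input than the categorical statement the paper actually uses, and it is not clear that \cite{Ro} proves it at the prestack level in the needed relative generality.

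There is also a concrete gap in your final step, independent of whether the prestack isomorphism holds. You invoke ``right adjoints of the two vertical localization functors'' of \lemref{l:str reflects}, but the vertical arrows there are full-subcategory \emph{inclusions} $\IndCoh(\CX)_{\on{str}}\hookrightarrow\IndCoh(\CX)$; they have no evident right adjoint (the strictness condition is closed under limits, so one would more naturally look for a \emph{left} adjoint, which is also not obviously available). The phrase ``identifies $\oblv^{\on{rel}}$ with the functor that on strict objects is the identity'' is not a well-defined reduction of the counit claim. The honest way to close this is precisely what the paper does: use the commutation of both $\ind^{\on{rel}}$ and $\oblv^{\on{rel}}$ with $\sft^!$ (recorded after \eqref{e:ind unital diagram}) and with $\on{str}^!$ (recorded after \eqref{e:ind oblv pair str}), so that the counit on $\sft^!\on{str}^!(\CF_0)$ is $\sft^!\on{str}^!$ applied to the counit of the $(\ind^{\on{rel}}_{\on{untl,str}},\oblv^{\on{rel}}_{\on{untl,str}})$-adjunction — and then establish that the latter is an equivalence, which is \propref{p:conn autom}. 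Your argument, as written, never makes this two-step compatibility explicit, and without it the reduction to ``$\oblv^{\on{rel}}$ is fully faithful on the image of $g^!$'' does not close.

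In short: your first step (replacing the fiber-product description by a prestack isomorphism) is an unverified strengthening of the cited input, and your last step (counit isomorphism via \lemref{l:str reflects} and putative right adjoints) is not a proof. If you want to follow a route close to yours, the correct move is: from whatever you extract from \cite{Ro}, establish \propref{p:conn autom} directly (that the pullback along $\CW^{\on{str}}\to(\CW^{\on{str}})_{\dr^{\on{rel}}}$ is an equivalence of $\IndCoh$ categories — a weaker claim than a prestack isomorphism), then apply the $\sft^!$ and $\on{str}^!$ compatibilities of the $\ind$/$\oblv$ pair.
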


\sssec{}

Consider the commutative diagram
\begin{equation} \label{e:ind unital diagram}
\CD
\on{Sect}_\nabla(X^{\on{gen}},\CZ/\CY)_\Ran @>{\sft}>> \on{Sect}_\nabla(X^{\on{gen}},\CZ/\CY)_\Ranu \\ 
@VVV @VVV \\
\left(\on{Sect}_\nabla(X^{\on{gen}},\CZ/\CY)_\Ran\right)_{\dr^{\on{rel}}} @>{\sft}>> \left(\on{Sect}_\nabla(X^{\on{gen}},\CZ/\CY)_\Ranu\right)_{\dr^{\on{rel}}}
\endCD
\end{equation}

This diagram is value-wise Cartesian. Hence, we have a well-defined pair of adjoint functors

\begin{multline} \label{e:ind oblv pair lax}
\ind^{\on{rel}}_{\on{untl}}:\IndCoh(\on{Sect}_\nabla(X^{\on{gen}},\CZ/\CY)_\Ranu)\rightleftarrows \\
\IndCoh\left(\left(\on{Sect}_\nabla(X^{\on{gen}},\CZ/\CY)_\Ranu\right)_{\dr^{\on{rel}}}\right):\oblv^{\on{rel}}_{\on{untl}},
\end{multline} 
and both functors are compatible with their non-unital counterparts \eqref{e:ind oblv pair} via $\sft^!$.  

\sssec{}

We also have a commutative diagram
$$
\CD
\on{Sect}_\nabla(X^{\on{gen}},\CZ/\CY)_\Ranu @>{\on{str}}>> 
\on{Sect}_\nabla(X^{\on{gen}},\CZ/\CY)_{\Ranustr}\\
@VVV @VVV \\
\left(\on{Sect}_\nabla(X^{\on{gen}},\CZ/\CY)_\Ranu\right)_{\dr^{\on{rel}}}
@>{\on{str}}>>\left(\on{Sect}_\nabla(X^{\on{gen}},\CZ/\CY)_{\Ranustr}\right)_{\dr^{\on{rel}}},
\endCD
$$
where
$$\on{Sect}_\nabla(X^{\on{gen}},\CZ/\CY)_{\Ranustr}:=\left(\on{Sect}_\nabla(X^{\on{gen}},\CZ/\CY)_{\Ranu}\right)_{\on{str}},$$
which is value-wise Cartesian. Hence, we have another pair of adjoint functors

\medskip

\begin{multline} \label{e:ind oblv pair str}
\ind^{\on{rel}}_{\on{untl,str}}:\IndCoh\left(\on{Sect}_\nabla(X^{\on{gen}},\CZ/\CY)_\Ranustr\right) \rightleftarrows \\
\rightleftarrows \IndCoh\left(\left(\on{Sect}_\nabla(X^{\on{gen}},\CZ/\CY)_\Ranustr\right)_{\dr^{\on{rel}}}\right):\oblv^{\on{rel}}_{\on{untl,str}},
\end{multline} 
where both functors are compatible with their non-strict counterparts \eqref{e:ind oblv pair lax} via $\on{str}^!$.  

\medskip

We can equivalently think of \eqref{e:ind oblv pair str} as an adjunction
\begin{multline} \label{e:ind oblv pair str bis}
\ind^{\on{rel}}_{\on{untl,str}}:\left(\IndCoh\left(\on{Sect}_\nabla(X^{\on{gen}},\CZ/\CY)_\Ranu\right)\right)_{\on{str}} \rightleftarrows \\
\rightleftarrows \left(\IndCoh\left((\on{Sect}_\nabla(X^{\on{gen}},\CZ/\CY)_\Ranu)_{\dr^{\on{rel}}}\right)\right)_{\on{str}}:\oblv^{\on{rel}}_{\on{untl,str}}.
\end{multline} 

\sssec{}

The assertion of \propref{p:ind oblv untl} follows from the following even more precise statement:

\begin{prop} \label{p:conn autom}
The functor 
$$\oblv^{\on{rel}}_{\on{untl,str}}: \left(\IndCoh\left((\on{Sect}_\nabla(X^{\on{gen}},\CZ/\CY)_\Ranu)_{\dr^{\on{rel}}}\right)\right)_{\on{str}}\to
\left(\IndCoh\left(\on{Sect}_\nabla(X^{\on{gen}},\CZ/\CY)_\Ranu\right)\right)_{\on{str}}$$
is an equivalence. 
\end{prop}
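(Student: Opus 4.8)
The idea is to reduce \propref{p:conn autom} to the statement that a single morphism of (categorical) prestacks is an isomorphism, and to recognize that statement as (the relative form of) the main theorem of \cite{Ro}. Recall from the excerpt the equivalence $\IndCoh(\CX_{\on{str}})\overset{\sim}\to \IndCoh(\CX)_{\on{str}}$ for a categorical prestack $\CX$, and note that the (relative) de Rham prestack construction is applied value-wise and therefore commutes with value-wise strictification, so that $\left(\left(\Sectna(X^{\on{gen}},\CZ/\CY)_{\Ranu}\right)_{\dr^{\on{rel}}}\right)_{\on{str}}$ is identified with the honest relative de Rham prestack of $\Sectna(X^{\on{gen}},\CZ/\CY)_{\Ranustr}$ over $\Sectna(X,\CY)$. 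Under these identifications the functor $\oblv^{\on{rel}}_{\on{untl,str}}$ becomes the $!$-pullback functor along the canonical morphism
\[
p:\Sectna(X^{\on{gen}},\CZ/\CY)_{\Ranustr}\longrightarrow \left(\Sectna(X^{\on{gen}},\CZ/\CY)_{\Ranustr}\right)_{\dr^{\on{rel}}}
\]
from a prestack to its relative de Rham prestack. Under the finiteness hypotheses of \secref{sss:fin cond D stacks} both prestacks are locally almost of finite type, so it suffices to prove that $p$ is an isomorphism: $!$-pullback along an isomorphism of prestacks is an equivalence on $\IndCoh$.

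\textbf{From the relative case to the absolute case.} Being an isomorphism can be tested after base change along affine schemes $S\to \Sectna(X,\CY)$, since the relative de Rham prestack is computed fiberwise over $\Sectna(X,\CY)$ and commutes with such base change. By the parametrized description of \secref{sss:param vers} (cf. \remref{r:apply paradigm}), $\Sectna(X^{\on{gen}},\CZ/\CY)_\Ran$ is, relatively over $\Sectna(X,\CY)$, the space of rational horizontal sections of the $\Sectna(X,\CY)$-family of affine D-schemes $\CZ_{\Sectna(X,\CY)}$; hence its base change along $S\to\Sectna(X,\CY)$ is the space of rational horizontal sections, over $S$, of the affine D-scheme $\CZ_S:=S\underset{\Sectna(X,\CY)}\times\CZ_{\Sectna(X,\CY)}$ over $X$. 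The hypothesis that $\CZ$ is meromorphically sectionally laft relative to $\CY$ is precisely what guarantees that each $\CZ_S$ is (meromorphically) sectionally laft over $S$. Thus the isomorphy of $p$ follows, uniformly in $S$, from the corresponding \emph{absolute} assertion for the D-scheme $\CZ_S$.

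\textbf{The absolute case via \cite{Ro}.} In the absolute case the required statement is exactly the main theorem of \cite{Ro}: for a meromorphically sectionally laft affine D-scheme $\CZ'$ over $X$, the strictified unital space of rational horizontal sections $\Sectna(X^{\on{gen}},\CZ')_{\Ranustr}$ maps isomorphically to its own de Rham prestack (equivalently, the forgetful functor $\Dmod(\Sectna(X^{\on{gen}},\CZ')_{\Ranustr})\to\IndCoh(\Sectna(X^{\on{gen}},\CZ')_{\Ranustr})$ is an equivalence). Heuristically: extending a rational horizontal section across a nilpotent thickening of the base is, by flatness of the connection, controlled by the de Rham cohomology of a punctured curve $X-\ul{x}$, which is affine; the unital structure lets $\ul{x}$ be enlarged arbitrarily, and passing to the strictification realizes exactly the colimit in which both the obstruction to and the indeterminacy of the extension vanish. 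We invoke \cite{Ro} for this, and with it in hand \propref{p:conn autom} follows.

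\textbf{Main obstacle.} The substantive input is entirely \cite{Ro}; the work on our side is to check that the manipulations above are genuinely formal. The two points demanding care are (i) the compatibility of strictification with the relative de Rham prestack and with base change over $\Sectna(X,\CY)$, so that $\oblv^{\on{rel}}_{\on{untl,str}}$ is correctly identified with $p^!$ and so that the absolute theorem of \cite{Ro} can be matched against it; and (ii) verifying that the family $\CZ_{\Sectna(X,\CY)}$ of \secref{sss:param vers} together with the base $\Sectna(X,\CY)$ meets the finiteness hypotheses under which \cite{Ro} applies --- which is exactly what the conditions of \secref{sss:fin cond D stacks} were imposed to ensure.
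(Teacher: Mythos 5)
Your plan takes the paper's own slogan --- ``the unital space of rational horizontal sections maps isomorphically to its own de~Rham prestack'' --- at face value as a theorem of \cite{Ro}, and this is where the gap lies. The paper explicitly describes this as a \emph{slogan} summarizing the idea, not as a statement it is entitled to cite; and what it actually imports from \cite{Ro} (Cor.~4.6.10, recorded as Theorems \ref{t:Nick x} and \ref{t:Nick fam}) is a fiber-product description of $\IndCoh$ of the relative de~Rham prestack in terms of a formal completion along $\on{add}:\Ranu\times\Ranu\to\Ranu$. That is a statement at the level of $\IndCoh$ categories, not an isomorphism of prestacks. Your parenthetical ``(equivalently, the forgetful functor $\Dmod\to\IndCoh$ is an equivalence)'' is also not an equivalence: the prestack-level isomorphism is strictly stronger than the $\IndCoh$-level assertion, and you appeal to the stronger form. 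Moreover, the $\IndCoh$-level assertion in the absolute, strict-unital case is, after the identifications you make in the first paragraph, literally the content of \propref{p:conn autom} itself --- so reading it off as ``exactly the main theorem of~\cite{Ro}'' would make the whole section of the paper unnecessary, which it plainly is not.

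The missing work is precisely what the paper does once \thmref{t:Nick fam} is in hand: (i) restrict the equivalence of \thmref{t:Nick fam} to the strict subcategories to get a fully faithful functor \eqref{e:Nick fam str}; (ii) invoke \lemref{l:cofinal} --- a cofinality statement that, after strictification, the diagonal $\Ranu\to\Ranu\times\Ranu$ collapses the formal-completion factor $\on{Sect}_\nabla(X^{\on{gen}},\CZ/\CY)^\wedge_{\on{add}}$ onto $\on{Sect}_\nabla(X^{\on{gen}},\CZ/\CY)_{\on{pr}_1}$ --- so the RHS of \eqref{e:Nick fam str} projects isomorphically onto its first factor and $\oblv^{\on{rel}}_{\on{untl,str}}$ is fully faithful; and (iii) use \lemref{l:str reflects} to upgrade fully faithfulness to essential surjectivity by checking that the relative D-module structure produced by \thmref{t:Nick fam} on a strict object is again strict. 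None of these three steps is supplied by your outline, and the cofinality input (ii) in particular is not a formal consequence of the statement you are citing; it is the step that encodes why passing to the unital Ran direction and strictifying eliminates the infinitesimal data captured by the formal completion. Your reduction from the relative to the absolute case (base change over $\Sectna(X,\CY)$) and the identification of $\oblv^{\on{rel}}_{\on{untl,str}}$ with a $!$-pullback via $\IndCoh(\CX_{\on{str}})\simeq\IndCoh(\CX)_{\on{str}}$ are fine, but they only relocate the problem; they do not solve it.
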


%

\ssec{A description of relative D-modules}

In order to prove \propref{p:conn autom}, we will describe the category 
$$\IndCoh\left((\on{Sect}_\nabla(X^{\on{gen}},\CZ/\CY)_\Ranu)_{\dr^{\on{rel}}}\right)$$
\`a la \cite[Corollary 4.6.10]{Ro}. 

\sssec{}

As a warm-up, let us fix a point $\ul{x}$, and consider the prestack
$$\on{Sect}_\nabla(X-\ul{x},\CZ/\CY):=\on{Sect}_\nabla(X-\ul{x},\CZ)\underset{\on{Sect}_\nabla(X-\ul{x},\CY)}\times \on{Sect}_\nabla(X,\CY)$$
along with its variant
\begin{multline*} 
\on{Sect}_\nabla(X-\ul{x},\CZ/\CY)_{\dr^{\on{rel}}}:=
\on{Sect}_\nabla(X-\ul{x},\CZ)_\dr\underset{\on{Sect}_\nabla(X-\ul{x},\CY)_\dr}\times \on{Sect}_\nabla(X,\CY)\simeq \\
\simeq \left(\on{Sect}_\nabla(X-\ul{x},\CZ/\CY)\right)_\dr\underset{\on{Sect}_\nabla(X,\CY)_\dr}\times \on{Sect}_\nabla(X,\CY).
\end{multline*} 

\medskip

We will describe the category
$$\IndCoh(\on{Sect}_\nabla(X-\ul{x},\CZ/\CY)_{\dr^{\on{rel}}})$$
along with its forgetful (i.e., pullback) functor to $\IndCoh(\on{Sect}_\nabla(X-\ul{x},\CZ/\CY))$. 

\sssec{}

Consider the map 
$$\on{add}_{\ul{x}}:\Ranu\to \Ranu,$$
given by 
$$\ul{y}\mapsto \ul{y}\cup \ul{x}.$$

\medskip

Set
$$\on{Sect}_\nabla(X^{\on{gen}},\CZ/\CY)_{\Ranu,\ul{x}}:=
\on{Sect}_\nabla(X^{\on{gen}},\CZ/\CY)_\Ranu \underset{\Ranu,\on{add}_{\ul{x}}}\times \Ranu.$$

Restriction along $X-(\ul{y}\cup \ul{x})\subset X-\ul{x}$ gives rise to a map
\begin{equation} \label{e:form compl x}
\on{Sect}_\nabla(X-\ul{x},\CZ/\CY)\times \Ranu\to \on{Sect}_\nabla(X^{\on{gen}},\CZ/\CY)_{\Ranu,\ul{x}}.
\end{equation}

\sssec{}

Denote by 
$$\on{Sect}_\nabla(X^{\on{gen}},\CZ/\CY)^\wedge_{\Ranu,\ul{x}}$$
the formal completion of $\on{Sect}_\nabla(X^{\on{gen}},\CZ/\CY)_{\Ranu,\ul{x}}$ along \eqref{e:form compl x}. 

\medskip

The projection
$$\on{Sect}_\nabla(X-\ul{x},\CZ/\CY)\times \Ranu\to \on{Sect}_\nabla(X-\ul{x},\CZ/\CY)_{\dr^{\on{rel}}}$$
tautologically extends to map 
\begin{equation} \label{e:compl to dR x}
\on{Sect}_\nabla(X^{\on{gen}},\CZ/\CY)^\wedge_{\Ranu,\ul{x}}\to \on{Sect}_\nabla(X-\ul{x},\CZ/\CY)_{\dr^{\on{rel}}}.
\end{equation} 

\sssec{}

The following is a version of \cite[Corollary 4.6.10]{Ro}, where we allow poles at $\ul{x}$:

\begin{thm} \label{t:Nick x}
The functor
\begin{multline*}
\IndCoh\left(\on{Sect}_\nabla(X-\ul{x},\CZ/\CY)_{\dr^{\on{rel}}}\right)\to \\
\to \IndCoh\left(\on{Sect}_\nabla(X-\ul{x},\CZ/\CY)\right)\underset{\IndCoh\left(\on{Sect}_\nabla(X-\ul{x},\CZ/\CY)\times \Ranu\right)}\times 
\IndCoh\left(\on{Sect}_\nabla(X^{\on{gen}},\CZ/\CY)^\wedge_{\Ranu,\ul{x}}\right),
\end{multline*}
given by pullback along the maps $\on{Sect}_\nabla(X-\ul{x},\CZ/\CY)\to \on{Sect}_\nabla(X-\ul{x},\CZ/\CY)_{\dr^{\on{rel}}}$ and \eqref{e:compl to dR x},
is an equivalence. 
\end{thm}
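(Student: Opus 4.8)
The plan is to obtain \thmref{t:Nick x} as a formal consequence of \cite[Corollary 4.6.10]{Ro}, which is precisely the analogous gluing statement in the case of no poles — i.e.\ for $\on{Sect}_\nabla(X,\CZ/\CY)$ in place of $\on{Sect}_\nabla(X-\ul{x},\CZ/\CY)$ — reflecting the slogan that the unital version of a space of horizontal sections maps isomorphically onto its own (relative) de Rham prestack, so that the latter is glued out of the space itself and the formal completions of its $\Ranu$-family along the ``no extra points'' locus. Thus the content of \thmref{t:Nick x} is \emph{loc.\ cit.}\ plus two essentially bookkeeping extensions: passage to the de Rham prestack \emph{relative} to $\on{Sect}_\nabla(X,\CY)$, and the introduction of the fixed poles at $\ul{x}$.

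First I would recast the situation so that \emph{loc.\ cit.}\ applies. Under the finiteness hypotheses of \secref{sss:fin cond D stacks}, the morphism $\CZ\to\CY$ of D-prestacks over $X$ is viewed as a D-prestack over the base $\on{Sect}_\nabla(X,\CY)\times X$, with $\on{Sect}_\nabla(X,\CY)$ supplying a ``constant'' parameter. The relative de Rham prestack $\on{Sect}_\nabla(X-\ul{x},\CZ/\CY)_{\dr^{\on{rel}}}$ is then exactly the de Rham prestack over this base in the sense of \cite{Ro}, and the sectionally-laft conditions of \secref{sss:fin cond D stacks} are precisely what guarantees that all prestacks occurring are locally almost of finite type, so that \emph{loc.\ cit.}\ is applicable.

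Next, to incorporate the fixed poles at $\ul{x}$, I would base change the $\Ranu$-family $\on{Sect}_\nabla(X^{\on{gen}},\CZ/\CY)_\Ranu$ of relative rational section spaces along $\on{add}_{\ul{x}}:\Ranu\to\Ranu$, $\ul{y}\mapsto\ul{y}\cup\ul{x}$; equivalently, one passes to the Ran space $\Ranusubsetx$ of finite subsets containing $\ul{x}$, or works with the punctured de Rham curve $(X-\ul{x})_\dr$. The fiber of the base-changed family over the basepoint $\{\ul{x}\}$ is $\on{Sect}_\nabla(X-\ul{x},\CZ/\CY)$, its formal completion along the ``no extra points'' locus \eqref{e:form compl x} is by definition $\on{Sect}_\nabla(X^{\on{gen}},\CZ/\CY)^\wedge_{\Ranu,\ul{x}}$, and forming $\dr^{\on{rel}}$ is compatible with this base change. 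Since \cite[Corollary 4.6.10]{Ro} is a statement about factorization over the curve and is insensitive to such a change of Ran space, it then yields precisely the asserted description of $\IndCoh\big(\on{Sect}_\nabla(X-\ul{x},\CZ/\CY)_{\dr^{\on{rel}}}\big)$ as the fiber product over $\IndCoh(\on{Sect}_\nabla(X-\ul{x},\CZ/\CY)\times\Ranu)$, and one reads off that the comparison functor it produces is pullback along $\on{Sect}_\nabla(X-\ul{x},\CZ/\CY)\to\on{Sect}_\nabla(X-\ul{x},\CZ/\CY)_{\dr^{\on{rel}}}$ and along \eqref{e:compl to dR x}.

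The step I expect to be the main obstacle is exactly the ``formal manipulation'' just invoked: checking that Rozenblyum's construction of the de Rham prestack, together with its colimit/descent presentation, is compatible with the relative-over-$\on{Sect}_\nabla(X,\CY)$ variant and with base change along $\on{add}_{\ul{x}}$ followed by passage to the fiber over $\{\ul{x}\}$. Concretely, one must verify that $\IndCoh$ of the relative de Rham prestack still admits the presentation built from adjoining finitely many points — so that the two generators, namely $\IndCoh$ of the plain section space and $\IndCoh$ of the formal completion, genuinely suffice — and that this presentation commutes with the above base changes; this rests on the laft hypotheses of \secref{sss:fin cond D stacks} together with the stability of formal completions under base change. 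Everything beyond this is routine.
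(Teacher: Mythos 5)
Your strategy---deriving Theorem \ref{t:Nick x} from \cite[Corollary 4.6.10]{Ro} by formally incorporating the poles at $\ul{x}$---is sound, but the paper's remark following the theorem takes a shorter route that eliminates precisely the compatibility-checking you flag as the main obstacle. Rather than base-changing the $\Ranu$-family along $\on{add}_{\ul{x}}$ (or passing to the Ran space of subsets containing $\ul{x}$) and then re-examining whether Rozenblyum's colimit presentation of $\IndCoh$ of the de Rham prestack survives this modification, the paper replaces $\CZ$ by its restriction of scalars (Weil restriction) along the open immersion $X - \ul{x} \hookrightarrow X$. For $\CZ' := \on{Res}_{X-\ul{x}/X}(\CZ)$ one has $\on{Sect}_\nabla(U, \CZ') \simeq \on{Sect}_\nabla(U \cap (X - \ul{x}), \CZ)$ for any open $U \subseteq X$, so horizontal sections of $\CZ'$ over all of $X$ (resp.\ over $X - \ul{y}$ for $\ul{y}\in \Ranu$) are exactly those of $\CZ$ with poles at $\ul{x}$ (resp.\ at $\ul{x}\cup\ul{y}$), and likewise the formal completions match up. Theorem \ref{t:Nick x} is then a \emph{verbatim} instance of loc.\ cit.\ applied to $\CZ'/\CY$, with no need to reopen its proof or verify base-change compatibilities of the colimit presentation. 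The trade-off is clear: the Weil-restriction maneuver turns the polar case into a new input to the existing theorem, while your route proves a parallel variant of it, which is logically possible but requires extra work at exactly the place you identify. Your handling of the relative-over-$\on{Sect}_\nabla(X,\CY)$ aspect is fine and agrees with the paper; that part is already built into the formulation of loc.\ cit.
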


\begin{rem}
In fact, this theorem is a particular case of \cite[Corollary 4.6.10]{Ro}: replace the original $\CZ$ by its restriction of scalars along
$X-\ul{x}\to X$.
\end{rem} 

\sssec{}

We will now state a version of \thmref{t:Nick x}, where we let $\ul{x}$ vary along $\Ranu$.  Consider the map
$$\on{add}: \Ranu\times \Ranu\to \Ranu, \quad \ul{x}_1,\ul{x}_2\mapsto \ul{x}_1\cup \ul{x}_2.$$

Set
$$\on{Sect}_\nabla(X^{\on{gen}},\CZ/\CY)_{\on{add}}:=
\on{Sect}_\nabla(X^{\on{gen}},\CZ/\CY)_\Ranu \underset{\Ranu,\on{add}}\times (\Ranu\times \Ranu),$$
and
$$\on{Sect}_\nabla(X^{\on{gen}},\CZ/\CY)_{\on{pr}_1}:=
\on{Sect}_\nabla(X^{\on{gen}},\CZ/\CY)_\Ranu \underset{\Ranu,\on{pr}_1}\times (\Ranu\times \Ranu),$$
where 
$$\on{pr}_1:\Ranu\times \Ranu\to \Ranu$$
is the projection on the first factor. In other words,
$$\on{Sect}_\nabla(X^{\on{gen}},\CZ/\CY)_{\on{pr}_1}\simeq 
\on{Sect}_\nabla(X^{\on{gen}},\CZ/\CY)_\Ranu\times \Ranu.$$

\medskip

Restriction along $X-(\ul{x}_1\cup \ul{x}_2)\subset X-\ul{x}_1$ gives rise to a map
\begin{equation} \label{e:form compl fam}
\on{Sect}_\nabla(X^{\on{gen}},\CZ/\CY)_{\on{pr}_1}\to
\on{Sect}_\nabla(X^{\on{gen}},\CZ/\CY)_{\on{add}}.
\end{equation} 

\sssec{}

Denote by $$\on{Sect}_\nabla(X^{\on{gen}},\CZ/\CY)^\wedge_{\on{add}}$$ the formal completion of 
$\on{Sect}_\nabla(X^{\on{gen}},\CZ/\CY)_{\on{add}}$ along \eqref{e:form compl fam}.

\medskip

The projection
$$\on{Sect}_\nabla(X^{\on{gen}},\CZ/\CY)_{\on{pr}_1}\to (\on{Sect}_\nabla(X^{\on{gen}},\CZ/\CY)_\Ranu)_{\dr^{\on{rel}}}$$
tautologically extends to a map
\begin{equation} \label{e:compl to dR fam}
\on{Sect}_\nabla(X^{\on{gen}},\CZ/\CY)^\wedge_{\on{add}}\to
(\on{Sect}_\nabla(X^{\on{gen}},\CZ/\CY)_\Ranu)_{\dr^{\on{rel}}}.
\end{equation}

The following is a version of \thmref{t:Nick x} in families:
\begin{thm} \label{t:Nick fam}
The functor
\begin{multline} \label{e:Nick fam}
\IndCoh\left((\on{Sect}_\nabla(X^{\on{gen}},\CZ/\CY)_\Ranu)_{\dr^{\on{rel}}}\right)
\to \\
\to \IndCoh\left(\on{Sect}_\nabla(X^{\on{gen}},\CZ/\CY)_\Ranu\right) 
\underset{\IndCoh\left(\on{Sect}_\nabla(X^{\on{gen}},\CZ/\CY)_{\on{pr}_1}\right)}
\times \IndCoh\left(\on{Sect}_\nabla(X^{\on{gen}},\CZ/\CY)^\wedge_{\on{add}}\right),
\end{multline}
given by pullback along the maps
$\on{Sect}_\nabla(X^{\on{gen}},\CZ/\CY)_\Ranu\to (\on{Sect}_\nabla(X^{\on{gen}},\CZ/\CY)_\Ranu)_{\dr^{\on{rel}}}$ and \eqref{e:compl to dR fam},
is an equivalence. 
\end{thm}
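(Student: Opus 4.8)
The plan is to deduce \thmref{t:Nick fam} from \thmref{t:Nick x} by organizing both sides of \eqref{e:Nick fam} as sheaves of categories over the categorical prestack $\Ranu$ and checking the asserted equivalence fiber-wise. The underlying observation is that \thmref{t:Nick fam} \emph{is} \thmref{t:Nick x} with the ``base'' finite subset allowed to vary over $\Ranu$: the $\Ranu$-factor appearing through $\on{add}$ and $\on{pr}_1$ plays the role of the ``$\ul{x}_1$'' that was held fixed in \thmref{t:Nick x}, while the tautological $\Ranu$ on which $\on{Sect}_\nabla(X^{\on{gen}},\CZ/\CY)_\Ranu$ lives plays the role of the varying ``$\ul{y}$''.

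The first step is the structural observation that every prestack and categorical prestack intervening in \eqref{e:Nick fam} carries a natural projection to $\Ranu$ — the ``$\ul{x}_1$''-factor — and that all of the maps involved (the two pullback maps defining the functor, and the two maps defining the fiber product on the right-hand side) are maps over $\Ranu$. For $\on{Sect}_\nabla(X^{\on{gen}},\CZ/\CY)_\Ranu$ and $(\on{Sect}_\nabla(X^{\on{gen}},\CZ/\CY)_\Ranu)_{\dr^{\on{rel}}}$ this uses the tautological $\Ranu$-structure together with the fact that $\Ranu$ is de Rham-local (i.e. $\Ranu(S)\simeq\Ranu(S_\dr)$), so that $(-)_{\dr^{\on{rel}}}$ does not disturb the projection; for $\on{Sect}_\nabla(X^{\on{gen}},\CZ/\CY)_{\on{pr}_1}$ and $\on{Sect}_\nabla(X^{\on{gen}},\CZ/\CY)^\wedge_{\on{add}}$ it comes from $\on{pr}_1\colon\Ranu\times\Ranu\to\Ranu$. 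Since $\IndCoh$ of a value-wise co-Cartesian fibration in groupoids over $\Ranu$ is glued from the categories $\IndCoh$ of its fibers over affine schemes mapping to $\Ranu$, compatibly with $!$-pullback (the formalism of \cite[Appendix C]{Ro}; compare \remref{r:QCoh* as a sheaf of cats}), the functor \eqref{e:Nick fam} is a morphism of sheaves of categories over $\Ranu$, and it suffices to prove it is an equivalence after base change along an arbitrary $\ul{x}_S\colon S\to\Ranu$ with $S$ an affine test scheme.

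The second step is to identify this $S$-fiber with the $S$-relative form of the equivalence of \thmref{t:Nick x}: base change along $\ul{x}_S$ turns $\on{Sect}_\nabla(X^{\on{gen}},\CZ/\CY)_\Ranu$ (viewed via its map to the $\on{pr}_1$-version) into $\on{Sect}_\nabla(X\times S-\on{Graph}_{\ul{x}_S},\CZ/\CY)$, the $\on{pr}_1$-version into $\on{Sect}_\nabla(X\times S-\on{Graph}_{\ul{x}_S},\CZ/\CY)\times\Ranu$, the completion $\on{Sect}_\nabla(X^{\on{gen}},\CZ/\CY)^\wedge_{\on{add}}$ into the $S$-relative analogue of $\on{Sect}_\nabla(X^{\on{gen}},\CZ/\CY)^\wedge_{\Ranu,\ul{x}_S}$ (using that formation of the formal completion along \eqref{e:form compl fam} commutes with the base change, which holds under the finiteness hypotheses of \secref{sss:fin cond D stacks}), and $(\on{Sect}_\nabla(X^{\on{gen}},\CZ/\CY)_\Ranu)_{\dr^{\on{rel}}}$ into the relative de Rham prestack of $\on{Sect}_\nabla(X\times S-\on{Graph}_{\ul{x}_S},\CZ/\CY)$ over $\on{Sect}_\nabla(X,\CY)$. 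Granting these identifications and their compatibility with the structure maps, the $S$-fiber of \eqref{e:Nick fam} is precisely the equivalence of \thmref{t:Nick x} for the ``rational points'' $\ul{x}_S$ — which, exactly as for \thmref{t:Nick x} itself, is an instance of \cite[Corollary 4.6.10]{Ro} applied to the restriction of scalars of $\CZ$ along $X\times S-\on{Graph}_{\ul{x}_S}\to X\times S$, the finiteness conditions of \secref{sss:fin cond D stacks} being exactly what makes the hypotheses of {\it loc. cit.} hold in this relative setting. This completes the deduction.

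I expect the main obstacle to be the bookkeeping in the second step: checking that passage to the $S$-fiber commutes with the two operations that are not mere (co)limits of affine schemes, namely the relative de Rham prestack $(-)_{\dr^{\on{rel}}}$ and the formal completion $(-)^\wedge_{\on{add}}$, and that the resulting identifications are compatible with all the structure maps, so that the $S$-fiber of the functor in \eqref{e:Nick fam} is genuinely the functor of \thmref{t:Nick x}. An alternative that sidesteps the descent step would be to invoke \cite[Corollary 4.6.10]{Ro} directly in a form relative to $\Ranu$ (i.e. applied to the ``restriction of scalars of $\CZ$ along $X^{\on{gen}}\to X$'' in the Ran-relative sense), but this still requires the same base-change compatibilities for formal completions in families.
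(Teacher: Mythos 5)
The paper states \thmref{t:Nick fam} without explicit proof, framing it as ``a version of \thmref{t:Nick x} in families'' (and \thmref{t:Nick x} in turn is noted in the remark following it to be a particular case of \cite[Corollary 4.6.10]{Ro}); your proposal — reduce to the $S$-fiber over each $\ul{x}_S\colon S\to\Ranu$ using the sheaf-of-categories formalism of \cite[Appendix C]{Ro}, and recognize that fiber as the $S$-relative form of \thmref{t:Nick x}, i.e.\ as \cite[Corollary 4.6.10]{Ro} applied to the restriction of scalars of $\CZ$ along $X\times S - \on{Graph}_{\ul{x}_S}\to X\times S$ — is exactly the elaboration the paper leaves implicit, and the commutations you flag ($(-)_{\dr^{\on{rel}}}$ and formal completion against base change along $S\to\Ranu$) are indeed the technical content, handled by the de Rham-locality of $\Ranu$ and the finiteness hypotheses of \secref{sss:fin cond D stacks}. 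One expository slip worth correcting: comparing \eqref{e:form compl fam} with \eqref{e:form compl x}, it is the \emph{tautological} $\Ranu$-index of $\on{Sect}_\nabla(X^{\on{gen}},\CZ/\CY)_\Ranu$ (the first factor of $\Ranu\times\Ranu$, the one $\on{pr}_1$ retains) that plays the role of the fixed $\ul{x}$ of \thmref{t:Nick x}, while the extra $\Ranu$-factor plays the varying $\ul{y}$; your opening paragraph inverts these roles, though your second step already base changes along the correct (tautological) factor, so the argument itself is unaffected.
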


\ssec{Proof of \propref{p:conn autom}}

\sssec{}

The functor \eqref{e:Nick fam} induces a functor
\begin{multline} \label{e:Nick fam str}
\IndCoh\left((\on{Sect}_\nabla(X^{\on{gen}},\CZ/\CY)_\Ranu)_{\dr^{\on{rel}}}\right)_{\on{str}}
\to \\
\to \IndCoh(\on{Sect}_\nabla(X^{\on{gen}},\CZ/\CY)_\Ranu)_{\on{str}}
\underset{\IndCoh\left(\on{Sect}_\nabla(X^{\on{gen}},\CZ/\CY)_{\on{pr}_1}\right)_{\on{str}}}
\times \IndCoh\left(\on{Sect}_\nabla(X^{\on{gen}},\CZ/\CY)^\wedge_{\on{add}}\right)_{\on{str}},
\end{multline}
where the two sides in \eqref{e:Nick fam str} are full subcategories in the corresponding sides in \eqref{e:Nick fam}.  

\medskip

Since the functor \eqref{e:Nick fam} is an equivalence, we obtain that \eqref{e:Nick fam str} is fully faithful.

\sssec{}

We will prove:

\begin{lem} \label{l:cofinal}
The functor
$$\IndCoh\left(\on{Sect}_\nabla(X^{\on{gen}},\CZ/\CY)^\wedge_{\on{add}}\right)_{\on{str}}\to \IndCoh\left(\on{Sect}_\nabla(X^{\on{gen}},\CZ/\CY)_{\on{pr}_1}\right)_{\on{str}}$$
is an equivalence. 
\end{lem}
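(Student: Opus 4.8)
The plan is to realize the functor in the lemma as the pullback $\iota^{!}$ along the tautological map
$$\iota\colon \Sectna(X^{\on{gen}},\CZ/\CY)_{\on{pr}_1}\longrightarrow \Sectna(X^{\on{gen}},\CZ/\CY)^{\wedge}_{\on{add}}$$
of \eqref{e:form compl fam}, and to prove that it is an equivalence on str-objects by a cofinality argument that ``uses up'' the infinitesimal directions of the formal completion against the unital Ran structure. First I would record the two relevant features of $\iota$. By the very definition of $\Sectna(X^{\on{gen}},\CZ/\CY)^{\wedge}_{\on{add}}$ as the formal completion of $\Sectna(X^{\on{gen}},\CZ/\CY)_{\on{add}}$ along the image of $\iota$, the map $\iota$ is a nil-isomorphism; in particular it induces an isomorphism on reduced prestacks, hence also on value-wise de Rham prestacks. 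This alone is not enough, since $\IndCoh(-)_{\on{str}}$ is sensitive to infinitesimal structure in general (cf.\ the discussion around \lemref{l:str reflects}); what saves the day is that here the normal formal directions of $\iota$ are ``in the Ran direction'' and get contracted by the categorical structure --- this is exactly the cofinality statement referred to by the name of the lemma, and it is the relative, meromorphic incarnation of the guiding slogan of this appendix.

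Concretely, I would establish the following combinatorial fact. Let $S$ be an affine test-scheme and let $p=(\ul x_1,\ul x_2,z)$ be an $S$-point of $\Sectna(X^{\on{gen}},\CZ/\CY)^{\wedge}_{\on{add}}$, so that $z$ is a horizontal section of $\CZ/\CY$ on $X\times S-\on{Graph}_{\ul x_1\cup \ul x_2}$ whose restriction to $S_{\on{red}}$ extends over $X\times S_{\on{red}}-\on{Graph}_{\ul x_1}$. Then the triple $p':=(\ul x_1\cup\ul x_2,\ul x_2,z)$ is an honest $S$-point of $\Sectna(X^{\on{gen}},\CZ/\CY)_{\on{pr}_1}$ --- its section is already defined on $X\times S-\on{Graph}_{\ul x_1\cup\ul x_2}$ --- and the inclusion $\ul x_1\subseteq \ul x_1\cup\ul x_2$, together with $\on{id}_z$, defines a morphism $p\to \iota(p')$ in the category $\Sectna(X^{\on{gen}},\CZ/\CY)^{\wedge}_{\on{add}}(S)$. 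Thus every object of that category receives a map from an object in the essential image of $\iota$; moreover the comma category of such maps out of a fixed $p$ is a filtered poset (one may always enlarge $\ul x_1$ further), with $p'$ cofinal, hence it has contractible nerve. All of this is compatible with base change, so $\iota$ is ``value-wise right cofinal'' in the required sense.

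To conclude, I would use $\IndCoh(\CX)_{\on{str}}\simeq \IndCoh(\CX_{\on{str}})$ to reduce to inverting the arrows, and then argue that $\iota$ induces an equivalence on $\IndCoh$ of the localized prestacks. The second paragraph gives essential surjectivity after localization; the remaining point --- full faithfulness after localization, equivalently the ``invisibility'' of the formal thickening $\Sectna(X^{\on{gen}},\CZ/\CY)_{\on{pr}_1}\hookrightarrow\Sectna(X^{\on{gen}},\CZ/\CY)^{\wedge}_{\on{add}}$ to the localization --- is the relative, with-poles version of \cite[Corollary~4.6.10]{Ro} and \thmref{t:Nick fam} (apply \thmref{t:Nick x} with $\CZ$ replaced by its restriction of scalars along $X-\ul x_1\to X$), and once it is granted the cofinality of the second paragraph upgrades essential surjectivity to the desired equivalence on str-subcategories, the inverse being the right Kan extension $F\mapsto\bigl(p\mapsto \lim_{(p\to\iota(q))} q^{!}F\bigr)$ over the contractible comma categories above. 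I expect the main obstacle to be precisely this ``de Rham collapse'' of the transverse infinitesimal directions: leveraging the unital structure to trivialize them rigorously within the $\IndCoh$-on-categorical-prestacks formalism is the genuine content and is where the machinery of \cite{Ro} is really needed; a minor auxiliary point is to check that the regularity condition cutting out $\Sectna(X^{\on{gen}},\CZ/\CY)_{\on{pr}_1}$ inside $\Sectna(X^{\on{gen}},\CZ/\CY)_{\on{add}}$ is closed, although the combinatorial argument above only uses the explicit description of $S$-points and not this closedness.
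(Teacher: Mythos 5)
Your central idea --- that $\iota\colon\on{Sect}_\nabla(X^{\on{gen}},\CZ/\CY)_{\on{pr}_1}\to \on{Sect}_\nabla(X^{\on{gen}},\CZ/\CY)^{\wedge}_{\on{add}}$ is value-wise right cofinal, hence induces an isomorphism on strictifications --- is correct and does give a working alternative to the paper's argument, but one of the two reasons you offer for contractibility of $p/\iota$ is wrong, and your last paragraph introduces a circularity. The object $p'=(\ul x_1\cup\ul x_2,\ul x_2,z)$ is neither initial nor terminal in $p/\iota$: a map $p'\to q$ would require $\ul x_2\subseteq$ (first coordinate of $q$), which is not implied by having a map $\alpha\colon p\to\iota(q)$; a map $q\to p'$ would force the second coordinate of $q$ to equal $\ul x_2$. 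So the parenthetical ``with $p'$ cofinal'' is false. What is correct, and is all you need, is that $p/\iota$ is nonempty (your $p'$ exhibits this) and filtered: given $(q_1,\alpha_1)$ and $(q_2,\alpha_2)$ with $q_i=(\ul y^i_1,\ul y^i_2,w_i)$, take $\ul y$ to be the union of all four Ran coordinates and $q_3:=(\ul y,\ul y,z|_{X-\ul y})$; compatibility of sections holds because each $w_i$ agrees with $z$ on $X-(\ul y^i_1\cup\ul y^i_2)\supseteq X-\ul y$. A nonempty filtered poset is weakly contractible, so Quillen's Theorem~A applies. Note that the joint enlargement you need is to a \emph{diagonal} Ran point, not merely an enlargement of $\ul x_1$.

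The more serious issue is your final paragraph. Once $(\on{pr}_1)_{\on{str}}\to(\on{add}^\wedge)_{\on{str}}$ is an isomorphism of prestacks, the equivalence on $\IndCoh(-)_{\on{str}}\simeq\IndCoh((-)_{\on{str}})$ is immediate; there is no separate ``full faithfulness after localization'' step, and invoking \thmref{t:Nick fam} or \cite[Corollary~4.6.10]{Ro} at this point would be circular --- those results are what \lemref{l:cofinal} is \emph{combined with} in the proof of \propref{p:conn autom}, not inputs to the lemma itself, which is purely combinatorial. For comparison, the paper's own proof reaches the conclusion without touching any comma category: it records the abstract \lemref{l:cofinal initial} (pullback along the value-wise cofinal diagonal $\Delta\colon\Ranu\to\Ranu\times\Ranu$ does not change the strictification of any value-wise co-Cartesian fibration), and then observes that after pullback along $\Delta$ the map $\iota$ becomes an honest isomorphism of prestacks, because $\on{pr}_1|_\Delta=\on{add}|_\Delta$ on the nose and formal completion along an isomorphism is trivial. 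That ``collapse onto the diagonal'' is exactly what powers your filteredness argument; the paper's packaging isolates the general cofinality principle from the specific geometry and is correspondingly cleaner.
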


Let us assume this lemma for a moment and finish the proof of \propref{p:conn autom}. 

\sssec{}

By \lemref{l:cofinal}, we obtain that the right-hand side in \eqref{e:Nick fam str} projects isomorphically onto the first factor.
Hence, we obtain that the pullback functor
\begin{equation} \label{e:almost equiv}
\IndCoh\left((\on{Sect}_\nabla(X^{\on{gen}},\CZ/\CY)_\Ranu)_{\dr^{\on{rel}}}\right)_{\on{str}}\to 
\IndCoh(\on{Sect}_\nabla(X^{\on{gen}},\CZ/\CY)_\Ranu)_{\on{str}},
\end{equation}
which is the functor $\oblv^{\on{rel}}_{\on{untl,str}}$ of \propref{p:conn autom}, is fully faithful. 

\medskip

It remains to show that the functor \eqref{e:almost equiv} is essentially surjective.

\sssec{}

Let $\CF$ be an object in $\IndCoh(\on{Sect}_\nabla(X^{\on{gen}},\CZ/\CY)_\Ranu)_{\on{str}}$, which, by \lemref{l:cofinal},
we interpret as an object in the right-hand side of \eqref{e:Nick fam str}. 

\medskip

By \thmref{t:Nick fam} it corresponds to an object 
$\CF_\dR\in \IndCoh\left((\on{Sect}_\nabla(X^{\on{gen}},\CZ/\CY)_\Ranu)_{\dr^{\on{rel}}}\right)$, and 
we only need to show that $\CF_\dR$ is strict. However, this follows from (a relative version of) \lemref{l:str reflects}.

\qed[\propref{p:conn autom}] 

\ssec{Proof of \lemref{l:cofinal}}

\sssec{}

We will prove that the map
$$\left(\on{Sect}_\nabla(X^{\on{gen}},\CZ/\CY)_{\on{pr}_1}\right)_{\on{str}}\to 
\left(\on{Sect}_\nabla(X^{\on{gen}},\CZ/\CY)^\wedge_{\on{add}}\right)_{\on{str}}$$
is an isomorphism of prestacks. 

\sssec{}

We claim:

\begin{lem}  \label{l:cofinal initial}
Let $\CW\to \Ranu\times \Ranu$ be a map of categorical prestacks, which is a value-wise co-Cartesian fibration.
Then then the induced map
$$\CW\underset{\Ranu\times \Ranu,\Delta}\times \Ranu\to \CW$$
induces an isomorphism
$$\left(\CW\underset{\Ranu\times \Ranu,\Delta}\times \Ranu\right){}_{\on{str}}\to \CW_{\on{str}}.$$
\end{lem}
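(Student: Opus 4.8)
The plan is to reduce the statement to a value-wise assertion about ordinary categories and then combine the $\infty$-categorical Grothendieck-construction (``Thomason'') theorem with a cofinality observation. Since strictification is computed value-wise — $\CX_{\on{str}}(S)$ is the localization of the category $\CX(S)$ at all of its morphisms, i.e.\ its classifying space $|\CX(S)|$ — and since fiber products of categorical prestacks are likewise computed value-wise, with the value-wise pullback being homotopy-invariant because $\CW(S)\to(\Ranu\times\Ranu)(S)$ is a co-Cartesian fibration (in particular an isofibration), it suffices to fix $S\in\affSch$ and show that the functor
$$\CW(S)\underset{(\Ranu\times\Ranu)(S)}\times \Ranu(S)\longrightarrow \CW(S)$$
becomes an equivalence after applying $|{-}|$.

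The structural input I would use is that the diagonal $\Delta\colon \Ranu(S)\to(\Ranu\times\Ranu)(S)$ is a \emph{right adjoint}, with left adjoint the union map $(\ul x_1,\ul x_2)\mapsto \ul x_1\cup \ul x_2$: for finite subsets of $\Hom(S,X_\dr)$ one has $\ul x_1\cup \ul x_2\subseteq \ul y$ iff $\ul x_1\subseteq \ul y$ and $\ul x_2\subseteq \ul y$. A right adjoint functor is cofinal, since for each $(\ul x_1,\ul x_2)$ the relevant comma category — objects being pairs $\bigl(\ul y,\ (\ul x_1,\ul x_2)\to(\ul y,\ul y)\bigr)$ — has the initial object $\ul y=\ul x_1\cup\ul x_2$ with its tautological structure map, hence is weakly contractible.

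Next I would invoke the $\infty$-categorical Thomason theorem: for a co-Cartesian fibration $p\colon \CE\to\CB$ classified by $\phi\colon\CB\to\inftyCat$, one has $|\CE|\simeq \underset{\CB}{\on{colim}}\,(|{-}|\circ\phi)$ in spaces — indeed $\CE$ is the lax colimit of $\phi$, its localization at the co-Cartesian arrows is the genuine colimit of $\phi$ formed in $\inftyCat$, the functor $|{-}|$ carries that colimit to $\underset{\CB}{\on{colim}}\,(|{-}|\circ\phi)$ because it is a left adjoint, and localizing further at the remaining arrows does not change $|\CE|$. Applying this to $p\colon \CW(S)\to(\Ranu\times\Ranu)(S)$ with straightening $\phi$, and to its pullback along $\Delta$ — classified by $\phi\circ\Delta$, with total category $\CW(S)\times_{(\Ranu\times\Ranu)(S)}\Ranu(S)$ — the map under consideration becomes, after $|{-}|$, the canonical comparison
$$\underset{\Ranu(S)}{\on{colim}}\,\bigl(|{-}|\circ\phi\circ\Delta\bigr)\longrightarrow \underset{(\Ranu\times\Ranu)(S)}{\on{colim}}\,\bigl(|{-}|\circ\phi\bigr),$$
which is an equivalence because $\Delta$ is cofinal; this gives the lemma.

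The step I expect to require the most care is the opening reduction: pinning down that the paper's notion of strictification coincides with value-wise $\infty$-categorical localization, and that the value-wise fiber product is the homotopy-invariant one — the latter being precisely where the hypothesis that $\CW\to\Ranu\times\Ranu$ is a value-wise co-Cartesian fibration is used. Everything after that is formal: the sole non-elementary ingredient, $\infty$-Thomason, is standard, and the cofinality of the diagonal is the elementary lattice computation above.
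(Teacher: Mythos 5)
Your argument is correct and rests on the same key fact as the paper's proof, namely that $\Delta\colon\Ranu\to\Ranu\times\Ranu$ is value-wise cofinal; the paper simply cites this cofinality and stops there, whereas you additionally supply the justification for it (the union map as left adjoint) and spell out the mechanism by which cofinality gives the isomorphism on strictifications (value-wise reduction plus the $\infty$-Thomason identification of $|\CE|$ with the colimit over the base).
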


\begin{proof}

This follows from the fact that the diagonal map $\Delta:\Ranu\to \Ranu\times \Ranu$ is value-wise cofinal.

\end{proof}

\sssec{}

Applying \lemref{l:cofinal initial}, it suffices to show that the map 
$$\on{Sect}_\nabla(X^{\on{gen}},\CZ/\CY)_{\on{pr}_1}\underset{\Ranu\times \Ranu,\Delta}\times \Ranu\to
\on{Sect}_\nabla(X^{\on{gen}},\CZ/\CY)^\wedge_{\on{add}}\underset{\Ranu\times \Ranu,\Delta}\times \Ranu$$
induces an isomorphism on strictifications.

\medskip

We claim that the above map is actually an isomorphism as-is.

\sssec{}

Since the operation of formal completion commutes with fiber products, it suffices to show that the map
$$\on{Sect}_\nabla(X^{\on{gen}},\CZ/\CY)_{\on{pr}_1}\underset{\Ranu\times \Ranu,\Delta}\times \Ranu\to
\on{Sect}_\nabla(X^{\on{gen}},\CZ/\CY)_{\on{add}}\underset{\Ranu\times \Ranu,\Delta}\times \Ranu$$
is an isomorphism.

\medskip

However, the latter is evident on the nose.

\qed[\lemref{l:cofinal}]

\section{Proof of \propref{p:Ran emb abs abs}} \label{s:proof of Ran emb abs abs}

In this section we let $\CZ$ be an arbitrary affine D-scheme over $X$. 

\medskip

We will show that the assertion of \propref{p:Ran emb abs abs} essentially amounts to 
\cite[Proposition 4.6.5]{BD1}, combined with some \emph{unitality} considerations.

\ssec{A reformulation in terms of unital structure}

\sssec{}

Let us return to the setting of \secref{sss:iQCoh Ran}. Suppose that $\CW_\Ran$ extends to a categorical prestack $\CW_\Ranu$ over $\Ranu$,
so that
$$\CW_\Ran\simeq \CW_\Ranu\underset{\Ranu}\times \Ran,$$
and 
$$\CW_\Ranu\to \Ranu$$
is a value-wise co-Cartesian fibration in groupoids.

\medskip

Effectively, this means that for $S\in \affSch$ and a map $\alpha:\ul{x}_1\to \ul{x}_2$ in $\Ranu(S)$ we have a map of prestacks
$$\alpha_\CW:\CW_\Ran\underset{\Ran,\ul{x}_1}\times S \to \CW_\Ran\underset{\Ran,\ul{x}_2}\times S.$$

\medskip

We will refer to $\CW_\Ranu$ as the unital structure on $\CW_\Ran$. 

\medskip

Let $\sft$ denote the tautological map
$$\CW_\Ran\to \CW_\Ranu.$$

\sssec{} \label{sss:iQCoh unital}

To the data as above we can attach a category $\QCoh_{\on{co}}(\CW)_\Ranu$. Namely, the data of an object of $\QCoh_{\on{co}}(\CW)_\Ranu$
consists of an object $\CF\in \QCoh_{\on{co}}(\CW)_\Ran$, and for every $\alpha$ as above of a map
$$\left((\alpha_\CW)_*\otimes \on{Id}_{\IndCoh(S)}\right)(\CF_{S,\ul{x}_1})  \to \CF_{S,\ul{x}_2}$$
in 
$$\QCoh_{\on{co}}(\CW_\Ran\underset{\Ran,\ul{x}_2}\times S)\underset{\QCoh(S)}\otimes \IndCoh(S).$$

\medskip

Let $\sft^!$ denote the tautological forgetful functor
$$\QCoh_{\on{co}}(\CW)_\Ranu\to \QCoh_{\on{co}}(\CW)_\Ran.$$

\sssec{}

Note that the space $\Sectna(X^{\on{gen}},\CZ)_\Ranu$ from \secref{sss:sects unital} provides a unital structure on 
$\Sectna(X^{\on{gen}},\CZ)_\Ran$.

\medskip

We will deduce \propref{p:Ran emb abs abs} from the following more precise statement:

\begin{prop} \label{p:Ran emb unital}
The natural transformation
$$\Gamma^{\IndCoh_\Ran}(\Sectna(X^{\on{gen}},\CZ)_\Ran,-)\circ (s_{\CZ,\Ran})^*\circ (s_{\CZ,\Ran})_*\to \Gamma^{\IndCoh_\Ran}(\Sectna(X^{\on{gen}},\CZ)_\Ran,-)$$
is an isomorphism, when evaluated on the essential image of the functor 
$$\sft^!:\QCoh_{\on{co}}(\Sectna(X^{\on{gen}},\CZ))_\Ranu\to \QCoh_{\on{co}}(\Sectna(X^{\on{gen}},\CZ))_\Ran.$$
\end{prop}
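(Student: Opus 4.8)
The plan is to recognize \propref{p:Ran emb unital} as a repackaging of \cite[Proposition 4.6.5]{BD1} relative to the unital Ran space. The first move is to transport the statement from $\Sectna(X^{\on{gen}},\CZ)_\Ran$ to the loop space $\fL_\nabla(\CZ)_\Ran$. Since the structure map of $\Sectna(X^{\on{gen}},\CZ)_\Ran$ over $\Ran$ factors through $s_{\CZ,\Ran}$ followed by the structure map of $\fL_\nabla(\CZ)_\Ran$, functoriality of $*$-pushforward for $\QCoh_{\on{co}}$ gives a canonical identification
$$\Gamma^{\IndCoh_\Ran}(\Sectna(X^{\on{gen}},\CZ)_\Ran,-)\simeq \Gamma^{\IndCoh_\Ran}(\fL_\nabla(\CZ)_\Ran,\,(s_{\CZ,\Ran})_*(-)).$$
Writing $\G := (s_{\CZ,\Ran})_*(\sft^!\CF)$ for $\CF\in \QCoh_{\on{co}}(\Sectna(X^{\on{gen}},\CZ))_\Ranu$, the natural transformation in question becomes the map
$$\Gamma^{\IndCoh_\Ran}(\fL_\nabla(\CZ)_\Ran,\,(s_{\CZ,\Ran})_*(s_{\CZ,\Ran})^*\G)\longrightarrow \Gamma^{\IndCoh_\Ran}(\fL_\nabla(\CZ)_\Ran,\,\G)$$
induced by the counit of the $((s_{\CZ,\Ran})^*,(s_{\CZ,\Ran})_*)$-adjunction. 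A preliminary observation is that $s_{\CZ,\Ran}$ upgrades to a morphism of categorical prestacks over $\Ranu$, so that $(s_\Ran)_*$, $(s_\Ran)^*$ and $\sft^!$ all intertwine the unital structures; in particular $\G$ and $(s_\Ran)^*\G$ lie in the essential images of $\sft^!$ from the corresponding unital categories, and the displayed map is the $\sft^!$-image of the analogous map formed over $\Ranu$.

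The heart of the argument is to reduce this comparison to a statement on formal punctured disks, to which \cite[Proposition 4.6.5]{BD1} applies. The leverage provided by unitality is the standard device by which $C^\cdot_c(\Ran,-)$, evaluated on an object pulled back from $\Ranu$, may be computed through the strictified unital Ran space $\Ranustr$ --- equivalently, as a colimit over the poset of finite subsets of $X$; this lets the Ran integral be performed disk by disk, the same mechanism that underlies the homological contractibility of $\Ran$. Under this reduction, the operator $(s_{\CZ,\Ran})^*(s_{\CZ,\Ran})_*$, which replaces a horizontal section on $X-\ul x$ by its restriction to the formal punctured disks $\cD_{\ul x}-\ul x$, becomes the operation whose neutrality after chiral homology is exactly what \cite[Proposition 4.6.5]{BD1} asserts: restriction to the formal punctured disk is a local equivalence, and chiral homology only sees the local picture. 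The role of the unital hypothesis on $\CF$ --- as opposed to working merely with the structure sheaf $\omega$, which is the case singled out in \propref{p:Ran emb abs abs} --- is to ensure that the object fed into this machine is defined compatibly over the whole tower of finite subsets, so that the cited proposition applies with the coefficients we need.

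The step I expect to be the main obstacle is the translation between the two formalisms. Reference \cite{BD1} works with $\mathcal D$-modules on the (classical, non-unital) Ran space and with the chiral-homology complex, whereas \propref{p:Ran emb unital} is phrased in terms of $\IndCoh$ and $\QCoh_{\on{co}}$ on the categorical prestack $\Sectna(X^{\on{gen}},\CZ)_\Ranu$ and of the functor $\Gamma^{\IndCoh_\Ran}$. Making this dictionary precise --- matching the unital structures on all the spaces in sight, checking that $(s_{\CZ,\Ran})_*$ and $(s_{\CZ,\Ran})^*$ correspond to the expected operations on the commutative factorization algebra attached to $\CZ$ and on its modules, and verifying that passing through the colimit presentation of $C^\cdot_c(\Ran,-)$ loses no derived information --- is where the substance lies; the remaining manipulations with adjunctions and base change are formal. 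One must also keep track of the finiteness hypotheses on $\CZ$ imported from \secref{sss:fin cond D stacks}, so that $\IndCoh$, $\QCoh_{\on{co}}$ and the relevant $!$- and $*$-pullbacks are defined throughout, but this is routine given those assumptions.
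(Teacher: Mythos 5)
Your plan follows the paper's proof in its essential structure: transport $\Gamma^{\IndCoh_\Ran}(\Sectna(X^{\on{gen}},\CZ)_\Ran,-)\simeq \Gamma^{\IndCoh_\Ran}(\fL_\nabla(\CZ)_\Ran,(s_{\CZ,\Ran})_*(-))$, observe that $(s_{\CZ,\Ran})_*$, $(s_{\CZ,\Ran})^*$ and $\sft^!$ all intertwine the unital structures so that the relevant coefficients descend to $\Ranu$, and then apply the parametrized version of \cite[Proposition 4.6.5]{BD1} together with a cofinality/strictification argument. The one step your sketch elides is the paper's use of the triangle identity to pass from the counit-induced map to the unit-induced map before isolating it as a self-contained statement about $\QCoh_{\on{co}}(\fL_\nabla(\CZ))_\Ranu$ (Proposition~\ref{p:Ran emb unital local}); this matters because the factorization-homology input (Proposition~\ref{p:fact hom}) is naturally phrased in terms of the unit $\on{Id}\to (s_{\CZ,\Ran})_*(s_{\CZ,\Ran})^*$, so without that flip one would need to reformulate the key computation, but this is a formal maneuver rather than a substantive gap.
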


\medskip

In the rest of this subsection we will show how \propref{p:Ran emb unital} implies \propref{p:Ran emb abs abs}.

\sssec{}

Assume that $\CW_\Ranu$ is locally almost of finite type (in particular, $\CW_\Ran$ is locally almost of finite type, so that
the category $\IndCoh(\CW_\Ran)$ is well-ddefined). We claim that we have a well-defined category $\IndCoh(\CW_\Ranu)$.

\medskip

By definition, the data of an object of $\IndCoh(\CW_\Ranu)$ consists of an object $\CF\in \QCoh_{\on{co}}(\CW_\Ran)$, i.e., for every
$\ul{x}\in \Ran(S)$ we have an object $\CF_{S,\ul{x}}\in \IndCoh(\CW_\Ran\underset{\Ran,\ul{x}}\times S)$, and 
for every $\alpha$ as above of a map
$$(\alpha_\CW)_*(\CF_{S,\ul{x}_1})  \to \CF_{S,\ul{x}_2}$$
in $\IndCoh(S)$. 

\medskip

Let $\sft^!$ denote the tautological forgetful functor
$$\IndCoh(\CW_\Ranu)\to \IndCoh(\CW_\Ran).$$

\sssec{}

Assume now that $\CW_\Ran\to \Ran$ is a relative ind-scheme. Then as in \secref{sss:IndCoh to iQCoh/Ran} we have a functor
\begin{equation} \label{e:IndCoh to iQCoh/Ranu}
\Psi_{\CW_\Ranu}:\IndCoh(\CW_\Ranu)\to \QCoh_{\on{co}}(\CW)_\Ranu,
\end{equation}
which makes the diagram
$$
\CD
\IndCoh(\CW_\Ranu) @>{\Psi_{\CW_\Ranu}}>> \QCoh_{\on{co}}(\CW)_\Ranu \\
@V{\sft^!}VV @VV{\sft^!}V \\
\IndCoh(\CW_\Ran) @>{\Psi_{\CW_\Ran}}>> \QCoh_{\on{co}}(\CW)_\Ran
\endCD
$$
commutes. 

\sssec{}

Thus, we obtain that in order to prove \propref{p:Ran emb abs abs}, it suffices to show that the object
$$\omega_{\Sectna(X^{\on{gen}},\CZ)_\Ran}\in \IndCoh(\Sectna(X^{\on{gen}},\CZ)_\Ran)$$
lies in the essential image of the functor
$$\sft^!:\IndCoh(\Sectna(X^{\on{gen}},\CZ)_\Ranu)\to \IndCoh(\Sectna(X^{\on{gen}},\CZ)_\Ran).$$

\medskip

However, this is true for any $\CW_\Ranu$ for which the maps $\alpha_\CW$ are proper,
which is the case for $\Sectna(X^{\on{gen}},\CZ)_\Ranu$. 

\qed[\propref{p:Ran emb abs abs}]

\ssec{The local unital structure}

\sssec{}

Recall the categories $\QCoh(\fL^+_\nabla(\CZ))_\Ran$ and $\QCoh_{\on{co}}(\fL_\nabla(\CZ))_\Ran$, see \secref{sss:iQCoh Ran}. We will now introduce their 
variants, to be denoted 
$$\QCoh(\fL^+_\nabla(\CZ))_\Ranu \text{ and } \QCoh_{\on{co}}(\fL_\nabla(\CZ))_\Ranu,$$
respectively. 

\medskip

In order to do so, as in \secref{sss:iQCoh unital}, we must attach to a map
$\alpha:\ul{x}_1\to \ul{x}_2$ in $\Ranu(S)$ functors
\begin{equation} \label{e:local arr}
\QCoh(\fL^+_\nabla(\CZ)_\Ran\underset{\Ran,\ul{x}_1}\times S)\to 
\QCoh(\fL^+_\nabla(\CZ)_\Ran\underset{\Ran,\ul{x}_2}\times S)
\end{equation} 
and 
\begin{equation} \label{e:local arr punct}
\QCoh_{\on{co}}(\fL_\nabla(\CZ)_\Ran\underset{\Ran,\ul{x}_1}\times S)
\to \QCoh_{\on{co}}(\fL_\nabla(\CZ)_\Ran\underset{\Ran,\ul{x}_2}\times S),
\end{equation} 
respectively.

\sssec{} \label{sss:add unit}

Recall that for a point $\ul{x}$ of $\Ran$ we have
$$\fL^+_\nabla(\CZ)_{\ul{x}}\simeq \on{Sect}_\nabla(\cD_{\ul{x}},\CZ) \text{ and } \fL_\nabla(\CZ)_{\ul{x}}\simeq \on{Sect}_\nabla(\cD_{\ul{x}}-\ul{x},\CZ).$$

For $\ul{x}_1\subseteq \ul{x}_2$, set
$$\fL_\nabla^{\on{mer}\rightsquigarrow \on{reg}}(\CZ)_{\ul{x}_1\subseteq \ul{x}_2}:=\on{Sect}_\nabla(\cD_{\ul{x_2}}-\ul{x}_1,\CZ).$$

Restriction along
$$\cD_{\ul{x_1}}\subseteq \cD_{\ul{x_2}}$$ gives rise to a map
\begin{equation} \label{e:loc sect map}
\fL^+_\nabla(\CZ)_{\ul{x}_2}\to \fL^+_\nabla(\CZ)_{\ul{x}_1}.
\end{equation} 

We define the functor 
\begin{equation} \label{e:local arr x}
\QCoh(\fL^+_\nabla(\CZ)_{\ul{x}_1})\to \QCoh(\fL^+_\nabla((\CZ)_{\ul{x}_2})
\end{equation} 
to be given by pullback along \eqref{e:loc sect map}. 

\sssec{} \label{sss:add unit punct}

Restriction along 
$$\cD_{\ul{x_1}}-\ul{x}_1\subseteq \cD_{\ul{x_2}}-\ul{x}_1 \supseteq \cD_{\ul{x_2}}-\ul{x}_2$$
defines maps
\begin{equation} \label{e:loc sect map punct}
\fL_\nabla(\CZ)_{\ul{x}_1} \leftarrow \fL_\nabla^{\on{mer}\rightsquigarrow \on{reg}}(\CZ)_{\ul{x}_1\subseteq \ul{x}_2}\to \fL_\nabla(\CZ)_{\ul{x}_2}.
\end{equation} 

The operation of *-pull and *-push along \eqref{e:loc sect map punct} gives rise to a functor
\begin{equation} \label{e:local arr punct x}
\QCoh_{\on{co}}(\fL_\nabla(\CZ)_{\ul{x}_1})\to \QCoh_{\on{co}}(\fL_\nabla(\CZ)_{\ul{x}_2}). 
\end{equation} 

\sssec{}

The operations in \eqref{e:local arr x} and \eqref{e:local arr punct x} make sense when $\ul{x}_1$ and $\ul{x}_2$
are $S$-points of $\Ran$, and give rise to the sought-for functors \eqref{e:local arr} and \eqref{e:local arr punct}, respectively.  

\medskip

We will denote by $\sft^!$ the corresponding forgetful functors
$$\QCoh(\fL^+_\nabla(\CZ))_\Ranu\to \QCoh(\fL^+_\nabla(\CZ))_\Ran \text{ and } \QCoh_{\on{co}}(\fL_\nabla(\CZ))_\Ranu\to \QCoh_{\on{co}}(\fL_\nabla(\CZ))_\Ran,$$
respectively. 

\sssec{}

We will deduce \propref{p:Ran emb unital} from the following even more precise assertion:

\begin{prop} \label{p:Ran emb unital local}
The natural transformation 
\begin{multline*}
\Gamma^{\IndCoh_\Ran}(\fL_\nabla(\CZ)_\Ran,-) \to \Gamma^{\IndCoh_\Ran}(\fL_\nabla(\CZ)_\Ran,-) \circ (s_{\CZ,\Ran})_* \circ (s_{\CZ,\Ran})^* \simeq \\
\simeq \Gamma^{\IndCoh_\Ran}(\Sectna(X^{\on{gen}},\CZ)_\Ran,-)\circ (s_{\CZ,\Ran})^*,
\end{multline*}
arising from the unit of the $((s_{\CZ,\Ran})^*,(s_{\CZ,\Ran})_*)$-adjunction, is an isomorphism,
when evaluated on objects lying in the essential image of the functor
$$\sft^!:\QCoh_{\on{co}}(\fL_\nabla(\CZ))_\Ranu\to \QCoh_{\on{co}}(\fL_\nabla(\CZ))_\Ran.$$
\end{prop}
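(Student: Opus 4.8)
The statement to be proved, \propref{p:Ran emb unital local}, is an assertion about the adjunction $((s_{\CZ,\Ran})^*,(s_{\CZ,\Ran})_*)$ relating local horizontal loop spaces to global rational sections, evaluated on objects pulled back from the unital version. My plan is to reduce it to the classical \cite[Proposition 4.6.5]{BD1}, which is exactly the statement that ``integrating over the Ran space'' (i.e., applying $\Gamma^{\IndCoh_\Ran}$) turns the unit of the local-to-global restriction into an isomorphism, once one has a unital structure to work with. Concretely, I would first unwind both sides of the natural transformation in the proposition at a fixed $S$-point $\ul{x}\in\Ran(S)$, using the definition of $\QCoh_{\on{co}}(-)_\Ran$ from \secref{sss:iQCoh Ran} as a limit over $S\to\Ran$ of tensored-up categories $\QCoh_{\on{co}}(\fL_\nabla(\CZ)_\Ran\underset{\Ran}\times S)\underset{\QCoh(S)}\otimes\IndCoh(S)$, and the factorization of $s_{\CZ,\Ran}$ through the diagonal restriction maps.

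**Main steps, in order.** (1) Translate the problem into the categorical-prestack language: the unital structure on $\fL_\nabla(\CZ)_\Ran$ is encoded by the restriction maps of \secref{sss:add unit punct} along $\cD_{\ul{x}_1}-\ul{x}_1\subseteq\cD_{\ul{x}_2}-\ul{x}_1\supseteq\cD_{\ul{x}_2}-\ul{x}_2$, and the object $\sft^!(\CF)$ is one equipped with all the compatible comparison maps. (2) Observe that for an object in the image of $\sft^!$, the section $\CF_{S,\ul{x}}$ at a point $\ul{x}=\{x_1,\dots,x_n\}$ is determined, via the unital maps, by its ``local'' behavior near each $x_i$; this is where unitality does the real work — it lets one replace $\fL_\nabla(\CZ)_{\ul{x}}$ (loops on the punctured formal disc at $\ul x$) by the genuinely local object, and identify $\Gamma^{\IndCoh_\Ran}$ of the loop space with the corresponding integral computed on $\on{Sect}_\nabla(X^{\on{gen}},\CZ)_\Ran$ after applying $s_{\CZ,\Ran}^*$. (3) Invoke \cite[Proposition 4.6.5]{BD1}: the key input is that the chiral/factorization homology of the (unital) local loop space agrees with that of rational sections; equivalently, the comparison map ``$\fL^+$-version $\to$ $\fL$-version'' becomes an equivalence after $\Gamma^{\IndCoh}(\Ran,-)$. (4) Deduce \propref{p:Ran emb unital} by combining the local statement with the pushforward along the relative ind-scheme structure, and then \propref{p:Ran emb abs abs} follows by the reduction already carried out in the excerpt (namely that $\omega_{\Sectna(X^{\on{gen}},\CZ)_\Ran}$ lies in the essential image of $\sft^!$ because the arrows $\alpha_\CW$ are proper, so $\sft^!$ of the relative dualizing sheaf is again dualizing).

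**Expected obstacle.** The genuinely delicate point is step (2)–(3): matching the bookkeeping of the present paper's $\QCoh_{\on{co}}(-)_\Ran$ formalism (a limit over $\affSch_{\on{aft}}$ of $\QCoh(S)$-tensored categories, with the $\IndCoh(S)$-twist) against the chiral-homology setup of \cite{BD1}, and checking that the unit map of the $((s_{\CZ,\Ran})^*,(s_{\CZ,\Ran})_*)$-adjunction is literally the map whose $\Ran$-integral \cite[Proposition 4.6.5]{BD1} computes. One must be careful that $\CZ$ is an \emph{arbitrary} affine D-scheme (no smoothness or finiteness hypothesis beyond affineness over $X$), so the argument cannot use anything special about opers; it must be the soft statement that ``$\fL^+_\nabla(\CZ)\hookrightarrow\fL_\nabla(\CZ)$ becomes invisible after factorization-integrating over the unital Ran space,'' which is precisely the content of \cite[Prop. 4.6.5]{BD1}. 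I would isolate this comparison as the one nontrivial lemma, prove \propref{p:Ran emb unital local} from it by the diagram-chase sketched above, and then the two displayed reductions (local $\Rightarrow$ \propref{p:Ran emb unital} $\Rightarrow$ \propref{p:Ran emb abs abs}) are formal and already spelled out in the text.
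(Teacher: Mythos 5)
Your high-level plan (reduce to the Beilinson--Drinfeld statement plus a unitality argument) correctly identifies the paper's point of departure; the paper indeed invokes this as its Proposition~\ref{p:fact hom}(b). However, there is a genuine gap in how you imagine the pieces fitting together.

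The key misconception is the claim that the isomorphism ``is precisely the content of [BD, Prop.\ 4.6.5].'' It is not. That result (in the paper's guise, Proposition~\ref{p:fact hom}(b)) is only a \emph{reformulation}: it identifies $\Gamma^{\IndCoh_\Ran}(\Sectna(X^{\on{gen}},\CZ)_\Ran,-)\circ (s_{\CZ,\Ran})^*$ with a push--pull composite over the arrows Ran space $\Ran^{\subseteq}$ through the insertion-of-unit functor, and identifies the unit of the $((s_{\CZ,\Ran})^*,(s_{\CZ,\Ran})_*)$-adjunction with a map built out of the diagonal section $\on{diag}\colon \Ran\to \Ran^{\subseteq}$. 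This identification holds for \emph{all} inputs, not only unital ones, and by itself gives no isomorphism. The actual ``unitality considerations'' that you need -- and that your sketch does not pin down -- are a two-step cofinality argument. First (the paper's Lemma~\ref{l:inherit}), one checks that when the input lies in the image of $\sft^!$, the resulting object of $\IndCoh(\Ran^{\subseteq})$ descends through $(\sft^{\subseteq})^!$ from $\IndCoh(\Ran^{\subseteq,\on{untl}})$. Second (Proposition~\ref{p:diag cofinal}), one shows that $\sft^{\subseteq}\colon \Ran^{\subseteq}\to \Ran^{\subseteq,\on{untl}}$ is universally homologically cofinal and that $\sft^{\subseteq}\circ\on{diag}\colon \Ran\to \Ran^{\subseteq,\on{untl}}$ is value-wise cofinal, so that after taking $\on{C}^\cdot_c(\Ran,-)$ the diagonal map becomes an isomorphism. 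This cofinality of the diagonal in the unital arrows Ran space is where the unit of the adjunction gets killed, and it is absent from your argument; ``the section is determined by its local behavior near each $x_i$'' points at a different phenomenon and does not yield the conclusion.

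A smaller but real confusion: your slogan concerns the inclusion $\fL^+_\nabla(\CZ)\hookrightarrow \fL_\nabla(\CZ)$ (arcs into loops), whereas the proposition concerns the map $s_{\CZ,\Ran}\colon \Sectna(X^{\on{gen}},\CZ)_\Ran\to \fL_\nabla(\CZ)_\Ran$ (global rational sections into the local loop space). Both parts~(a) and~(b) of Proposition~\ref{p:fact hom} are statements about the latter kind of map (for $\fL^+_\nabla$ and $\fL_\nabla$ respectively), not about the arcs-to-loops inclusion. Keeping these straight is necessary to set up the cofinality argument correctly.
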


In the rest of this subsection we will show how \propref{p:Ran emb unital local} implies 
\propref{p:Ran emb unital}. 

\sssec{}

Let $\CF_{\on{glob}}$ be an object of $\QCoh_{\on{co}}(\Sectna(X^{\on{gen}},\CZ))_\Ran$, and assume that 
it lies in the essential image of
$$\sft^!:\QCoh_{\on{co}}(\Sectna(X^{\on{gen}},\CZ))_\Ranu \to \QCoh_{\on{co}}(\Sectna(X^{\on{gen}},\CZ))_\Ran.$$

We wish to show that the map
\begin{multline*}
\Gamma^{\IndCoh_\Ran}\left(\fL_\nabla(\CZ)_\Ran, (s_{\CZ,\Ran})_*\circ (s_{\CZ,\Ran})^*\circ (s_{\CZ,\Ran})_*(\CF_{\on{glob}})\right) \to \\
\to \Gamma^{\IndCoh_\Ran}\left(\fL_\nabla(\CZ)_\Ran,(s_{\CZ,\Ran})_*(\CF_{\on{glob}})\right),
\end{multline*}
induced by the \emph{counit} of the $((s_{\CZ,\Ran})^*,(s_{\CZ,\Ran})_*)$-adjunction, is an isomorphism.

\medskip

It is sufficient to show that the map
\begin{multline*}
\Gamma^{\IndCoh_\Ran}\left(\fL_\nabla(\CZ)_\Ran, (s_{\CZ,\Ran})_*(\CF_{\on{glob}})\right)\to \\
\to \Gamma^{\IndCoh_\Ran}\left(\fL_\nabla(\CZ)_\Ran,(s_{\CZ,\Ran})_*\circ (s_{\CZ,\Ran})^*\circ (s_{\CZ,\Ran})_*(\CF_{\on{glob}})\right),
\end{multline*}
induced by the \emph{unit} of the adjunction, is an isomorphism.

\sssec{}

Denote 
$$\CF_{\on{loc}}:=(s_{\CZ,\Ran})_*(\CF_{\on{glob}})\in \QCoh_{\on{co}}(\fL_\nabla(\CZ))_\Ran.$$

\medskip

Thus, we have to show that the map
\begin{equation} \label{e:F loc}
\Gamma^{\IndCoh_\Ran}(\fL_\nabla(\CZ)_\Ran,\CF_{\on{loc}})
\to \Gamma^{\IndCoh_\Ran}(\fL_\nabla(\CZ)_\Ran,(s_{\CZ,\Ran})_*\circ (s_{\CZ,\Ran})^*(\CF_{\on{loc}})),
\end{equation}  
induced by the unit of the adjunction, is an isomorphism.

\sssec{}

Note that the functor 
$$(s_{\CZ,\Ran})_*:\QCoh_{\on{co}}(\Sectna(X^{\on{gen}},\CZ))_\Ran\to \QCoh_{\on{co}}(\fL_\nabla(\CZ))_\Ran$$
gives rise to a functor
$$(s_{\CZ,\Ranu})_*:\QCoh_{\on{co}}(\Sectna(X^{\on{gen}},\CZ))_\Ranu\to \QCoh_{\on{co}}(\fL_\nabla(\CZ))_\Ranu,$$
so that the diagram
$$
\CD
\QCoh_{\on{co}}(\Sectna(X^{\on{gen}},\CZ))_\Ranu @>{(s_{\CZ,\Ran}u)_*}>> \QCoh_{\on{co}}(\fL_\nabla(\CZ))_\Ranu \\
@V{\sft^!}VV @VV{\sft^!}V \\
\QCoh_{\on{co}}(\Sectna(X^{\on{gen}},\CZ))_\Ran @>{(s_{\CZ,\Ran})_*}>> \QCoh_{\on{co}}(\fL_\nabla(\CZ))_\Ran
\endCD
$$
commutes. 

\sssec{}

Hence, the assumption on $\CF_{\on{glob}}$ implies that $\CF_{\on{loc}}$ lies in the essential image of 
$$\sft^!:\QCoh_{\on{co}}(\fL_\nabla(\CZ))_\Ranu\to \QCoh_{\on{co}}(\fL_\nabla(\CZ))_\Ran.$$

Hence, the isomorphism \eqref{e:F loc} follows from \propref{p:Ran emb unital local}. 

\qed[\propref{p:Ran emb unital}]

\ssec{An expression for the global sections functor} \label{ss:fact hom}

In this subsection we will recall the expression for the functor 
$$\Gamma^{\IndCoh_\Ran}\left(\Sectna(X^{\on{gen}},\CZ)_\Ran,-\right)\circ (s_{\CZ,\Ran})^*$$
in terms of \emph{factorization homology} \`a la \cite[Sect. 4.6]{BD1}.

\sssec{} \label{sss:arr}

To any categorical prestack $\CW$ we can attach the prestack $\CW^\to$ that classifies arrows in $\CW$. I.e.,
for a test affine scheme $S$, the groupoid $\CW^\to(S)$ classifies triples
$$(w_1\in \CW(S),w_2\in \CW(S),\alpha:w_1\to w_2).$$

\sssec{}

Denote 
$$\Ran^{\subseteq}:=(\Ranu)^\to.$$

Denote by 
$$\on{pr}_{\on{small}},\on{pr}_{\on{big}}:\Ran^{\subseteq}\to \Ran$$
the maps 
that correspond to the source and the target of the arrow, respectively.

\medskip

Explicitly, the groupoid $\Ran^{\subseteq}(S)$ consists of 
$$\{\ul{x}_1,\ul{x}_2\in \Ran(S)\,|\, \ul{x}_1\subseteq \ul{x}_2\},$$
and the maps $\on{pr}_{\on{small}}$ and $\on{pr}_{\on{big}}$ send a point
as above to $\ul{x}_1$ and $\ul{x}_2$, respectively.

\sssec{}

Denote 
$$\fL_\nabla^+(\CZ)_{\Ran^{\subseteq},\on{small}}:=\fL_\nabla^+(\CZ)_\Ran\underset{\Ran,\on{pr}_{\on{small}}}\times \Ran^{\subseteq},\,\,
\CZ_{\Ran^{\subseteq},\on{big}}:=\fL_\nabla^+(\CZ)_\Ran\underset{\Ran,\on{pr}_{\on{big}}}\times \Ran^{\subseteq}$$
and
$$\fL_\nabla(\CZ)_{\Ran^{\subseteq},\on{small}}:=\fL_\nabla(\CZ)_\Ran\underset{\Ran,\on{pr}_{\on{small}}}\times \Ran^{\subseteq},\,\, 
\fL_\nabla(\CZ)_{\Ran^{\subseteq},\on{big}}:=\fL_\nabla(\CZ)_\Ran\underset{\Ran,\on{pr}_{\on{big}}}\times \Ran^{\subseteq}.$$

Proceeding as in \secref{sss:iQCoh Ran}, one can define the corresponding categories
$$\QCoh(\fL^+_\nabla(\CZ))_{\Ran^{\subseteq},\on{small}},\,\, \QCoh(\fL^+_\nabla(\CZ))_{\Ran^{\subseteq},\on{big}},$$
and 
$$\QCoh_{\on{co}}(\fL_\nabla(\CZ))_{\Ran^{\subseteq},\on{small}} \text{ and } \QCoh_{\on{co}}(\fL_\nabla(\CZ))_{\Ran^{\subseteq},\on{big}}$$
respectively.

\sssec{}

Denote by 
$$p_{\fL^+_\nabla(\CZ)_\Ran}:\fL^+_\nabla(\CZ)_\Ran\to \Ran$$
and 
$$p_{\fL_\nabla^+(\CZ)_{\Ran^{\subseteq},\on{small}}}:\fL_\nabla^+(\CZ)_{\Ran^{\subseteq},\on{small}}\to \Ran^{\subseteq} \text{ and }
p_{\fL_\nabla^+(\CZ)_{\Ran^{\subseteq},\on{big}}}:\fL_\nabla^+(\CZ)_{\Ran^{\subseteq},\on{big}}\to \Ran^{\subseteq},$$
as well as
$$p_{\fL_\nabla(\CZ)_\Ran}:\fL_\nabla(\CZ)_\Ran\to \Ran$$
and 
$$p_{\fL_\nabla(\CZ)_{\Ran^{\subseteq},\on{small}}}:\fL_\nabla(\CZ)_{\Ran^{\subseteq},\on{small}}\to \Ran^{\subseteq} \text{ and }
p_{\fL_\nabla(\CZ)_{\Ran^{\subseteq},\on{big}}}:\fL_\nabla(\CZ)_{\Ran^{\subseteq},\on{big}}\to \Ran^{\subseteq},$$
the resulting maps. 

\medskip

We will consider the corresponding functors
$$(p_{\fL^+_\nabla(\CZ)_\Ran})_*:\QCoh(\fL^+_\nabla(\CZ))_\Ran\to \IndCoh(\Ran)$$
and
$$(p_{\fL^+_\nabla(\CZ)_{\Ran^{\subseteq},\on{big}}})_*:\QCoh(\fL^+_\nabla(\CZ))_{\Ran^{\subseteq},\on{big}}\to \IndCoh(\Ran^{\subseteq})$$
as well 
$$(p_{\fL_\nabla(\CZ)_\Ran})_*:\QCoh_{\on{co}}(\fL_\nabla(\CZ))_\Ran\to \IndCoh(\Ran)$$
and 
$$
(p_{\fL_\nabla(\CZ)_{\Ran^{\subseteq},\on{big}}})_*:
\QCoh_{\on{co}}(\fL_\nabla(\CZ))_{\Ran^{\subseteq},\on{big}}\to \IndCoh(\Ran^{\subseteq}).$$

\sssec{}

Note now that the unital structures on the categories $\QCoh(\fL^+_\nabla(\CZ))_\Ran$ and 
$\QCoh_{\on{co}}(\fL_\nabla(\CZ))_\Ran$, determined by $\QCoh(\fL^+_\nabla(\CZ))_\Ranu$, 
and $\QCoh_{\on{co}}(\fL_\nabla(\CZ))_\Ranu$, respectively, 
give rise to functors
\begin{equation} \label{e:ins vacuum}
\QCoh_{\on{co}}(\fL^+_\nabla(\CZ))_{\Ran^{\subseteq},\on{small}}\to \QCoh_{\on{co}}(\fL^+_\nabla(\CZ))_{\Ran^{\subseteq},\on{big}}
\end{equation}
and 
\begin{equation} \label{e:ins vacuum punct}
\QCoh_{\on{co}}(\fL_\nabla(\CZ))_{\Ran^{\subseteq},\on{small}}\to \QCoh_{\on{co}}(\fL_\nabla(\CZ))_{\Ran^{\subseteq},\on{big}}.
\end{equation}

Denote the compositions
$$\QCoh(\fL^+_\nabla(\CZ))_\Ran \overset{\on{pr}_{\on{small}}^!}\longrightarrow 
\QCoh(\fL^+_\nabla(\CZ))_{\Ran^{\subseteq},\on{small}} 
\overset{\text{\eqref{e:ins vacuum}}}\longrightarrow \QCoh(\fL^+_\nabla(\CZ))_{\Ran^{\subseteq},\on{big}}$$
and 
$$\QCoh_{\on{co}}(\fL_\nabla(\CZ))_\Ran \overset{\on{pr}_{\on{small}}^!}\longrightarrow 
\QCoh_{\on{co}}(\fL_\nabla(\CZ))_{\Ran^{\subseteq},\on{small}} \overset{\text{\eqref{e:ins vacuum punct}}}\longrightarrow \QCoh_{\on{co}}(\fL_\nabla(\CZ))_{\Ran^{\subseteq},\on{big}}$$
in both instances 
by $\on{ins.unit}$; we will refer to this functor as the ``insertion of the unit".

\sssec{}

Let 
$$\on{diag}:\Ran\to \Ran^{\subseteq}$$ denote the diagonal map,
$$\ul{x}\mapsto (\ul{x}\subseteq \ul{x}).$$

In terms of \secref{sss:arr}, it corresponds to the identity morphisms
on objects of $\CW_\Ranu(S)$.

\medskip

Note that we have pullback squares
\begin{equation} \label{e:diag sq non-punct}
\CD
\fL^+_\nabla(\CZ)_\Ran @>{\on{diag}}>> \fL^+_\nabla(\CZ)_{\Ran^{\subseteq},\on{big}} \\
@V{p_{\fL_\nabla^+(\CZ)_\Ran}}VV @VV{p_{\fL^+_\nabla(\CZ)_{\Ran^{\subseteq},\on{big}}}}V \\
\Ran @>{\on{diag}}>> \Ran^{\subseteq}. 
\endCD
\end{equation}
and 
\begin{equation} \label{e:diag sq}
\CD
\fL_\nabla(\CZ)_\Ran @>{\on{diag}}>> \fL_\nabla(\CZ)_{\Ran^{\subseteq},\on{big}} \\
@V{p_{\fL_\nabla(\CZ)_\Ran}}VV @VV{p_{\fL_\nabla(\CZ)_{\Ran^{\subseteq},\on{big}}}}V \\
\Ran @>{\on{diag}}>> \Ran^{\subseteq}. 
\endCD
\end{equation}

Note also that we have a canonical identification
$$\on{diag}^!\circ \on{ins.unit}\simeq \on{Id},$$
in both instances. 

\sssec{}

Recall that $s_{\CZ,\Ran}$ (or simply $s_{\CZ,\Ran}$) denotes the map
$$\Sectna(X^{\on{gen}},\CZ)_\Ran\to \fL_\nabla(\CZ)_\Ran.$$

We will use the same symbol $s_{\CZ,\Ran}$ (or simply $s_{\CZ,\Ran}$) to denote the map 
$$\Sectna(X,\CZ)\times \Ran\to \fL^+_\nabla(\CZ)_\Ran.$$

\medskip

The following assertion is a variant with parameters of \cite[Proposition 4.6.5]{BD2}:

\begin{prop} \label{p:fact hom} \hfill

\smallskip

\noindent{\em(a)} 
The functor 
$$\QCoh(\fL^+_\nabla(\CZ))_\Ran \overset{(s_{\CZ,\Ran})^*}\longrightarrow \QCoh(\Sectna(X,\CZ))\otimes \IndCoh(\Ran) 
\overset{\Gamma(\Sectna(X,\CZ),-)\otimes \on{Id}}\longrightarrow  \IndCoh(\Ran)$$
identifies canonically with
\begin{multline*}
\QCoh(\fL^+_\nabla(\CZ))_\Ran \overset{\on{ins.unit}}\longrightarrow \QCoh(\fL^+_\nabla(\CZ))_{\Ran^{\subseteq},\on{big}}
\overset{(p_{\fL^+_\nabla(\CZ)_{\Ran^{\subseteq},\on{big}}})_*}\longrightarrow \IndCoh(\Ran^{\subseteq})\to  \\
\overset{(\on{pr}_{\on{small}})^{\IndCoh}_*}\longrightarrow \IndCoh(\Ran).
\end{multline*}
Under the above identification, the map 
$$(p_{\fL^+_\nabla(\CZ)_\Ran})_*\to (p_{\fL^+_\nabla(\CZ)_\Ran})_*\circ (s_{\CZ,\Ran})_*\circ (s_{\CZ,\Ran})^*,$$
given by the unit of the $((s_{\CZ,\Ran})^*,(s_{\CZ,\Ran})_*)$-adjunction, corresponds to the map
\begin{multline*}
(p_{\fL^+_\nabla(\CZ)_\Ran})_*\simeq (\on{pr}_{\on{small}})_* \circ (p_{\fL^+_\nabla(\CZ)_{\Ran^{\subseteq},\on{big}}})_* \circ \on{diag}_* \simeq \\
\simeq (\on{pr}_{\on{small}})_* \circ (p_{\fL^+_\nabla(\CZ)_{\Ran^{\subseteq},\on{big}}})_* \circ \on{diag}_*\circ \on{diag}^! \circ \on{ins.unit}
\to (\on{pr}_{\on{small}})_* \circ (p_{\fL^+_\nabla(\CZ)_{\Ran^{\subseteq},\on{big}}})_* \circ \on{ins.unit}.
\end{multline*}

\smallskip

\noindent{\em(b)} 
The functor 
$$\QCoh_{\on{co}}(\fL_\nabla(\CZ))_\Ran \overset{(s_{\CZ,\Ran})^*}\longrightarrow \QCoh_{\on{co}}(\Sectna(X^{\on{gen}},\CZ))_\Ran 
\overset{(p_{\Sectna(X^{\on{gen}},\CZ))_\Ran})_*}\longrightarrow  \IndCoh(\Ran)$$
identifies canonically with
\begin{multline*}
\QCoh_{\on{co}}(\fL_\nabla(\CZ))_\Ran \overset{\on{ins.unit}}\longrightarrow \QCoh_{\on{co}}(\fL_\nabla(\CZ))_{\Ran^{\subseteq},\on{big}}
\overset{(p_{\fL_\nabla(\CZ)_{\Ran^{\subseteq},\on{big}}})_*}\longrightarrow \IndCoh(\Ran^{\subseteq}) \\
\to \overset{(\on{pr}_{\on{small}})_*}\longrightarrow \IndCoh(\Ran).
\end{multline*}
Under the above identification, the map 
$$(p_{\fL_\nabla(\CZ)_\Ran})_*\to (p_{\fL_\nabla(\CZ)_\Ran})_*\circ (s_{\CZ,\Ran})_*\circ (s_{\CZ,\Ran})^*,$$
given by the unit of the $((s_{\CZ,\Ran})^*,(s_{\CZ,\Ran})_*)$-adjunction, corresponds to the map
\begin{multline*}
(p_{\fL_\nabla(\CZ)_\Ran})_*\simeq (\on{pr}_{\on{small}})_* \circ (p_{\fL_\nabla(\CZ)_{\Ran^{\subseteq},\on{big}}})_* \circ \on{diag}_* \simeq \\
\simeq (\on{pr}_{\on{small}})_* \circ (p_{\fL_\nabla(\CZ)_{\Ran^{\subseteq},\on{big}}})_* \circ \on{diag}_*\circ \on{diag}^! \circ \on{ins.unit}
\to (\on{pr}_{\on{small}})_* \circ (p_{\fL_\nabla(\CZ)_{\Ran^{\subseteq},\on{big}}})_* \circ \on{ins.unit}.
\end{multline*}
\end{prop}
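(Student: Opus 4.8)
The plan is to deduce both parts from the Beilinson--Drinfeld formula for the functor of global horizontal sections over $X$ (resp.\ over the punctured curve) of an affine D-scheme, i.e.\ from \cite[Proposition 4.6.5]{BD2}, rephrased in the language of \emph{unital factorization spaces} over $\Ran$. The two parts are entirely parallel: part (b) is the ``meromorphic'' analogue of part (a), obtained by replacing $\cD_{\ul x}$, $\cD_{\ul x}-\ul x$, $X$ by $\cD_{\ul x_2}-\ul x_1$, etc., and $\QCoh$ by $\QCoh_{\on{co}}$. I would carry out (a) in detail and indicate the (routine) changes for (b). First I would record that, $\CZ$ being affine over $X$, the arc space $\fL^+_\nabla(\CZ)_\Ran\to\Ran$ (resp.\ the loop space $\fL_\nabla(\CZ)_\Ran\to\Ran$) is a \emph{unital factorization space}, so that $(p_{\fL^+_\nabla(\CZ)_\Ran})_*(\CO)$ is a unital commutative factorization algebra in $\IndCoh(\Ran)$ and $\QCoh(\fL^+_\nabla(\CZ))_\Ran$ carries a factorization-module structure over it. Crucially, the unital structure functors \eqref{e:ins vacuum} and \eqref{e:ins vacuum punct} introduced in \secref{ss:fact hom}, built out of restriction along $\cD_{\ul x_1}\subseteq\cD_{\ul x_2}$ (resp.\ $\cD_{\ul x_2}-\ul x_1\hookleftarrow\cdots$), are precisely the ``insertion of the factorization unit'' on the arrow space $\Ran^{\subseteq}$.

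\textbf{The functor identification.} With this set-up in place, the first assertion of part (a) --- the identification of $\left(\Gamma(\Sectna(X,\CZ),-)\otimes\on{Id}\right)\circ(s_{\CZ,\Ran})^*$ with $(\on{pr}_{\on{small}})^{\IndCoh}_*\circ(p_{\fL^+_\nabla(\CZ)_{\Ran^{\subseteq},\on{big}}})_*\circ\on{ins.unit}$ --- is exactly \cite[Proposition 4.6.5]{BD2}: the space $\Sectna(X,\CZ)$ together with the evaluation map $s_{\CZ,\Ran}$ is designed so that $\Gamma(\Sectna(X,\CZ),-)$ computes the factorization (= chiral) homology of the above algebra over $X$, and for a \emph{unital} factorization algebra that homology is computed by the $\Ran^{\subseteq}$-correspondence $\Ran\xleftarrow{\on{pr}_{\on{small}}}\Ran^{\subseteq}\xrightarrow{\on{pr}_{\on{big}}}\Ran$ together with the unit insertion. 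The only features not literally present in \emph{loc.\ cit.}\ are the ``small-point'' parameter $\ul x_1\in\Ran$ and the general sheaf $\CF$ (rather than just $\CO$); but the BD computation is a colimit over the poset of finite subsets that is linear over $\QCoh(\fL^+_\nabla(\CZ))_\Ran$ and compatible with base change along $\Ran$, so it applies verbatim with $\CF$ inserted and $\ul x_1$ running in families. For part (b) one runs the identical argument with the local model $\cD_{\ul x_2}$ replaced by the half-open disc $\fL_\nabla^{\on{mer}\rightsquigarrow\on{reg}}(\CZ)_{\ul x_1\subseteq\ul x_2}=\on{Sect}_\nabla(\cD_{\ul x_2}-\ul x_1,\CZ)$ of \secref{sss:add unit punct}; the resulting meromorphic version of \cite[Proposition 4.6.5]{BD2} expresses $\Gamma(\Sectna(X^{\on{gen}},\CZ)_\Ran,-)\circ(s_{\CZ,\Ran})^*$ at $\ul x_1$ as the integral over $\ul x_2\supseteq\ul x_1$ of $\Gamma$ over $\fL_\nabla(\CZ)_{\ul x_2}$.

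\textbf{Identifying the unit-of-adjunction maps.} It remains to match the two natural transformations coming from the $((s_{\CZ,\Ran})^*,(s_{\CZ,\Ran})_*)$-adjunction. This is a formal manipulation: the map $(p_{\fL^+_\nabla(\CZ)_\Ran})_*\to(p_{\fL^+_\nabla(\CZ)_\Ran})_*\circ(s_{\CZ,\Ran})_*\circ(s_{\CZ,\Ran})^*$ is $(p_{\fL^+_\nabla(\CZ)_\Ran})_*$ applied to the unit $\on{Id}\to(s_{\CZ,\Ran})_*(s_{\CZ,\Ran})^*$, and under the identification of the previous step this unit corresponds on the $\Ran^{\subseteq}$-side to restricting along the diagonal $\on{diag}\colon\Ran\to\Ran^{\subseteq}$ and back. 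Concretely, using the base-change square \eqref{e:diag sq non-punct} one writes $(p_{\fL^+_\nabla(\CZ)_\Ran})_*\simeq(\on{pr}_{\on{small}})_*\circ(p_{\fL^+_\nabla(\CZ)_{\Ran^{\subseteq},\on{big}}})_*\circ\on{diag}_*$ (note $\on{pr}_{\on{small}}\circ\on{diag}=\on{Id}$), then feeds in the canonical isomorphism $\on{diag}^!\circ\on{ins.unit}\simeq\on{Id}$ together with the counit $\on{diag}_*\circ\on{diag}^!\to\on{Id}$ of the $(\on{diag}_*,\on{diag}^!)$-adjunction --- available since $\on{diag}$ is proper. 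Chasing this diagram, which commutes by construction of the factorization-homology identification, yields exactly the displayed formula for the map; the same chase, with \eqref{e:diag sq} replacing \eqref{e:diag sq non-punct}, settles part (b).

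\textbf{The main obstacle.} The substantive point is the extraction of \cite[Proposition 4.6.5]{BD2} in the precise form needed. Beilinson--Drinfeld work in the classical (abelian, non-derived), non-unital setting at a fixed finite set of points, whereas here one needs the statement (i) derived and $\infty$-categorical, (ii) unital, i.e.\ organized over $\Ranu$ / $\Ran^{\subseteq}$ via the unit-insertion functors of \secref{ss:fact hom}, (iii) with the ``small'' point $\ul x_1$ varying in a family over $\Ran$ and with an arbitrary coefficient sheaf $\CF$, and (iv), for part (b), with poles allowed along the divisor $\ul x_1$. Each upgrade is by now standard --- indeed the unital factorization formalism of \secref{ss:Ran emb} and \secref{sss:iQCoh unital} is built for exactly this --- but verifying that the BD colimit computation survives all four upgrades simultaneously is the real content of the proof; everything else is bookkeeping with base change and adjunctions.
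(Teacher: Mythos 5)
The paper gives no proof of this proposition, merely remarking that it is ``a variant with parameters of \cite[Proposition 4.6.5]{BD2}.'' Your proposal fleshes out precisely this intended argument --- reducing to the Beilinson--Drinfeld chiral-homology formula, identifying $\on{ins.unit}$ with insertion of the factorization unit over $\Ran^{\subseteq}$, and chasing the diagonal adjunction --- and correctly flags the derived, unital, parameterized, and meromorphic upgrades that the paper implicitly treats as routine, so it is consistent with the paper's approach.
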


\ssec{Inputing the unitality structure}

In this subsection we will prove \propref{p:Ran emb unital local} by combining \propref{p:fact hom}(b) with a 
cofinality argument.

\sssec{}

Note that in the situation of \secref{sss:arr}, the prestack $\CW^\to$ itself can also be extended to a categorical
prestack.

\medskip

Applying this to $\Ranu$, we obtain a categorical prestack, denoted $\Ran^{\subseteq,\on{untl}}$. Explicitly, the space of morphisms 
$$(\ul{x}_1\subseteq \ul{x}_2)\to (\ul{x}'_1\subseteq \ul{x}'_2)$$
is 
$$
\begin{cases} 
&\{*\} \text{ if } \ul{x}_1\subseteq \ul{x}'_1 \text{ and } \ul{x}_2\subseteq \ul{x}'_2,\\
&\emptyset,\, \text{ otherwise}.
\end{cases}
$$

\medskip

Denote by
$$\sft^{\subseteq}:\Ranu\to \Ran^{\subseteq,\on{untl}}$$
the corresponding map.

\sssec{}

The following assertion is obtained by unwinding the constructions:

\begin{lem} \label{l:inherit}
The composite functor
\begin{multline*} 
\QCoh_{\on{co}}(\fL_\nabla(\CZ))_\Ranu\overset{\sft^!}\to \QCoh_{\on{co}}(\fL_\nabla(\CZ))_\Ran \overset{\on{ins.unit}}\longrightarrow \\
\to \QCoh_{\on{co}}(\fL_\nabla(\CZ))_{\Ran^{\subseteq},\on{big}} \overset{(p_{\fL^+_\nabla(\CZ)_{\Ran^{\subseteq},\on{big}}})_*}\longrightarrow \IndCoh(\Ran^{\subseteq})
\end{multline*}
factors via a functor
$$\QCoh_{\on{co}}(\fL_\nabla(\CZ))_\Ranu\to \IndCoh(\Ran^{\subseteq,\on{untl}}),$$
followed by $(\sft^{\subseteq})^!$.
\end{lem}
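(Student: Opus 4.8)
The statement is a matter of unwinding the definitions, so the plan is to make each of the three functors in the composite explicit fibrewise, and then to exhibit the extra functoriality that promotes the resulting $\IndCoh(\Ran^{\subseteq})$-valued object to an $\IndCoh(\Ranusubset)$-valued one.

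First I would spell out the fibres. Fix an affine test-scheme $S$ and a point $(\ul{x}_1\subseteq \ul{x}_2)\in \Ran^{\subseteq}(S)$, whose image under $\on{pr}_{\on{small}}$ is $\ul{x}_1$. For $\CF\in \QCoh_{\on{co}}(\fL_\nabla(\CZ))_\Ranu$, the object $\sft^!(\CF)$ has value $\CF_{S,\ul{x}_1}$ at $\ul{x}_1$; the functor $\on{ins.unit}$ applies to it the transition functor \eqref{e:local arr punct x} attached to the inclusion $\ul{x}_1\subseteq \ul{x}_2$, i.e.\ it $*$-pulls and $*$-pushes along the correspondence \eqref{e:loc sect map punct}; and finally $(p_{\fL_\nabla(\CZ)_{\Ran^{\subseteq},\on{big}}})_*$ takes relative global sections over $\fL^{\on{mer}\rightsquigarrow\on{reg}}_\nabla(\CZ)_{\ul{x}_1\subseteq \ul{x}_2}$. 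Thus the fibre of the composite at $(\ul{x}_1\subseteq \ul{x}_2)$ is the object of $\IndCoh(S)$ given by the relative sections of $\CF_{S,\ul{x}_1}$ over that ``meromorphic-to-regular'' loop space.

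Next I would identify the data required to lift this to $\IndCoh(\Ranusubset)$. By the description of $\IndCoh$ on a categorical prestack, one must supply, for every morphism $(\ul{x}_1\subseteq \ul{x}_2)\to(\ul{x}'_1\subseteq \ul{x}'_2)$ in $\Ranusubset(S)$ --- equivalently, for every pair of inclusions $\ul{x}_1\subseteq \ul{x}'_1$ and $\ul{x}_2\subseteq \ul{x}'_2$ --- a transition map between the two fibres, subject to the usual associativity and unit constraints. I would assemble it from two elementary pieces: (i) the unital transition datum of $\CF$ along $\ul{x}_1\subseteq \ul{x}'_1$, which is exactly the structure present in $\QCoh_{\on{co}}(\fL_\nabla(\CZ))_\Ranu$ but forgotten by $\sft^!$ --- this is the point at which the hypothesis of starting in the unital category is used; and (ii) the tautological restriction map $\fL^{\on{mer}\rightsquigarrow\on{reg}}_\nabla(\CZ)_{\ul{x}'_1\subseteq \ul{x}'_2}\to \fL^{\on{mer}\rightsquigarrow\on{reg}}_\nabla(\CZ)_{\ul{x}_1\subseteq \ul{x}_2}$ coming from the inclusions of punctured formal discs, which is precisely what enters the construction of the functor \eqref{e:ins vacuum punct}. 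The required coherences, and the commutation of (i) with (ii), then follow formally from the fact that $\fL^+_\nabla(\CZ)_\Ranu$, $\fL_\nabla(\CZ)_\Ranu$ and the spaces $\fL^{\on{mer}\rightsquigarrow\on{reg}}_\nabla(\CZ)$ organise into categorical prestacks over $\Ranu$, together with base change and the projection formula for $*$-pushforward along the correspondences \eqref{e:loc sect map punct}; no new geometric input is needed.

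Finally, I would observe that $!$-restriction of the object of $\IndCoh(\Ranusubset)$ just built along the tautological map $\sft^{\subseteq}$ recovers the original $\IndCoh(\Ran^{\subseteq})$-valued composite, which gives the asserted factorization. The only genuine obstacle is the bookkeeping: one must keep straight the two independent ``unital directions'' --- enlarging the base index $\ul{x}_1$ of $\CF$ versus enlarging the pole locus $\ul{x}_2$ --- as well as the correspondences defining the $\QCoh_{\on{co}}$-transition functors, after which all higher compatibilities are automatic.
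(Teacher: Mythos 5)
The paper itself gives no proof of this lemma, remarking only that it ``is obtained by unwinding the constructions'', and your proposal is exactly such an unwinding: describe the fibre of the composite at $(\ul x_1\subseteq\ul x_2)$, then build the required transition maps for $\Ranusubset$ by combining the unital structure of $\CF$ with the restriction maps on the mer-to-reg spaces, and observe that the coherences are formal. This is the right approach and is what the authors have in mind.

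One small correction to your step (ii): the restriction map you write, $\fL^{\on{mer}\rightsquigarrow\on{reg}}_\nabla(\CZ)_{\ul x'_1\subseteq\ul x'_2}\to \fL^{\on{mer}\rightsquigarrow\on{reg}}_\nabla(\CZ)_{\ul x_1\subseteq\ul x_2}$, does not exist when $\ul x_1\subsetneq\ul x'_1$, since $\cD_{\ul x_2}-\ul x_1$ is not contained in $\cD_{\ul x'_2}-\ul x'_1$. What one should do is factor the arrow $(\ul x_1\subseteq\ul x_2)\to(\ul x'_1\subseteq\ul x'_2)$ in $\Ranusubset$ through the intermediate point $(\ul x_1\subseteq\ul x'_2)$: for the first leg $(\ul x_1\subseteq\ul x_2)\to(\ul x_1\subseteq\ul x'_2)$ the only input is the restriction map $\fL^{\on{mer}\rightsquigarrow\on{reg}}_\nabla(\CZ)_{\ul x_1\subseteq\ul x'_2}\to \fL^{\on{mer}\rightsquigarrow\on{reg}}_\nabla(\CZ)_{\ul x_1\subseteq\ul x_2}$ (which does exist, the small index being held fixed), together with the unit of the corresponding $(r^*,r_*)$-adjunction; for the second leg $(\ul x_1\subseteq\ul x'_2)\to(\ul x'_1\subseteq\ul x'_2)$ one uses the unital transition datum of $\CF$ along $\ul x_1\subseteq\ul x'_1$, as you correctly identify. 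With this bookkeeping fix, your argument goes through.
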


\sssec{}

Note that from \eqref{e:diag sq}, we obtain commutative diagrams
$$
\CD 
\QCoh_{\on{co}}(\fL_\nabla(\CZ))_{\Ran^{\subseteq},\on{big}} @>{\on{diag}^!}>> \QCoh_{\on{co}}(\fL_\nabla(\CZ))_\Ran \\
@V{(p_{\fL_\nabla(\CZ)_{\Ran^{\subseteq},\on{big}}})_*}VV @VV{(p_{\fL_\nabla(\CZ)_\Ran})_*}V \\
\IndCoh(\Ran^{\subseteq}) @>{\on{diag}^!}>>  \IndCoh(\Ran) 
\endCD
$$
and
$$
\CD 
\QCoh_{\on{co}}(\fL_\nabla(\CZ))_\Ran @>{\on{diag}_*}>> \QCoh_{\on{co}}(\fL_\nabla(\CZ))_{\Ran^{\subseteq},\on{big}}  \\
@V{(p_{\fL_\nabla(\CZ)_\Ran})_*}VV @VV{(p_{\fL_\nabla(\CZ)_{\Ran^{\subseteq},\on{big}}})_*}V \\
\IndCoh(\Ran)  @>{\on{diag}_*}>> \IndCoh(\Ran^{\subseteq}). 
\endCD
$$

\medskip

Hence, combining \propref{p:fact hom}(b) and \lemref{l:inherit}, we obtain that in order deduce \propref{p:Ran emb unital local},
it suffices to prove the following assertion:

\begin{prop} \label{p:diag cofinal}
The natural transformation 
\begin{multline*} 
\Gamma^{\IndCoh}(\Ran,-)\circ \on{diag}^! \simeq \Gamma^{\IndCoh}(\Ran,-)\circ (\on{pr}_{\on{small}})_*\circ \on{diag}_* \circ \on{diag}^! \to \\
\to \Gamma^{\IndCoh}(\Ran,-)\circ  (\on{pr}_{\on{small}})_*\simeq \Gamma^{\IndCoh}(\Ran^{\subseteq},-)
\end{multline*}
of functors $\IndCoh(\Ran^{\subseteq})\rightrightarrows  \Vect$, is an isomorphism, when evaluated 
on the essential image of the functor 
$$(\sft^{\subseteq})^!:\IndCoh(\Ran^{\subseteq,\on{untl}})\to \IndCoh(\Ran^{\subseteq}).$$
\end{prop}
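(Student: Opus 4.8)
The plan is to reduce \propref{p:diag cofinal} to the cofinality of the ``identity-arrow'' section, which encodes the contractibility of the pointed unital Ran space. First, since $\Gamma^{\IndCoh}(\Ran^{\subseteq},-)=\Gamma^{\IndCoh}(\Ran,-)\circ(\on{pr}_{\on{small}})_*$ and $\on{pr}_{\on{small}}\circ\on{diag}=\on{id}_\Ran$, the natural transformation in question is obtained by applying $\on{C}^\cdot_c(\Ran,-)\circ(\on{pr}_{\on{small}})_*$ to the counit $\on{diag}_*\circ\on{diag}^!\to\on{Id}$; this counit exists because $\on{diag}$ is a closed embedding of ind-schemes. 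Hence it suffices to show that for $\mathcal{G}$ in the essential image of $(\sft^{\subseteq})^!$ the resulting map
$$
\on{C}^\cdot_c\big(\Ran,\on{diag}^!\mathcal{G}\big)\longrightarrow \on{C}^\cdot_c\big(\Ran^{\subseteq},\mathcal{G}\big)
$$
is an isomorphism.

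Next I would lift the situation to categorical prestacks, in the spirit of the arguments around \lemref{l:cofinal initial}. Write $\mathcal{G}=(\sft^{\subseteq})^!\mathcal{H}$ with $\mathcal{H}\in\IndCoh(\Ran^{\subseteq,\on{untl}})$, and recall that $\on{diag}$ and $\on{pr}_{\on{small}}$ fit into a commutative square with the identity-arrow map $\delta\colon\Ranu\to\Ran^{\subseteq,\on{untl}}=(\Ranu)^{\to}$ and the source projection $(\Ranu)^{\to}\to\Ranu$ (the vertical maps being the tautological $\sft$'s). Value-wise, $\on{pr}_{\on{small}}$ is a cocartesian fibration, $\delta$ is its section by the initial objects of the fibers, and $\delta$ is \emph{value-wise cofinal}: for an affine $S$ and an arrow $(\ul{x}_1\subseteq\ul{x}_2)$, the under-category $\Ranu(S)_{(\ul{x}_1\subseteq\ul{x}_2)/}$ relative to $\delta$ is the poset $\{\ul{y}\supseteq\ul{x}_2\}$, which has a least element and is thus contractible — this is exactly the contractibility of a pointed unital Ran space, the same mechanism as in \lemref{l:cofinal initial}. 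Therefore $\delta$ induces an equivalence on the $\on{C}^\cdot_c$ of categorical prestacks, and the plan is to identify this equivalence with the comparison map displayed above.

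The main obstacle is precisely that identification: one has to show that, on objects pulled back from the unital versions along the $\sft$'s, the prestack-level functors $\on{C}^\cdot_c(\Ran,-)$ and $\on{C}^\cdot_c(\Ran^{\subseteq},-)$ agree with $\on{C}^\cdot_c$ computed directly on $\Ranu$, resp.\ $\Ran^{\subseteq,\on{untl}}$, and that under this agreement the counit of $(\on{diag}_*,\on{diag}^!)$ corresponds to the canonical comparison attached to the cofinal functor $\delta$ (equivalently, to the counit of the adjunction $\delta\dashv\on{pr}_{\on{small}}$). This is a formal but delicate manipulation with the description of $\IndCoh$ on categorical prestacks of \cite[Appendix C]{Ro}, with the $\on{str}$-localization, and with the ``unital contractibility'' input of \cite{BD1} (see also \cite{AGKRRV1,AGKRRV2}); it involves no geometry beyond these. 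Granting it, \propref{p:diag cofinal} — and hence \thmref{t:up and down opers} — follows.
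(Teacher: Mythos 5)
Your proposal is essentially the same as the paper's argument, and in one respect you state it more carefully. The paper also reduces to showing that the natural transformation $\on{C}^\cdot_c(\Ran,-)\circ\on{diag}^!\circ(\sft^{\subseteq})^! \to \on{C}^\cdot_c(\Ran^{\subseteq},-)\circ(\sft^{\subseteq})^!$ is an isomorphism, and proves it by identifying both sides with $\on{C}^\cdot_c(\Ran^{\subseteq,\on{untl}},-)$ via two cofinality inputs: (i) $\sft^{\subseteq}\colon\Ran^{\subseteq}\to\Ran^{\subseteq,\on{untl}}$ is universally homologically cofinal, citing \cite[Theorem~4.6.2]{Ga4}; (ii) the composition $\sft^{\subseteq}\circ\on{diag}\colon\Ran\to\Ran^{\subseteq,\on{untl}}$, which is stated to be ``value-wise cofinal.''

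Where you differ is that you factor $\sft^{\subseteq}\circ\on{diag}=\delta\circ\sft$ and prove the cofinality of the identity-arrow section $\delta\colon\Ranu\to\Ran^{\subseteq,\on{untl}}$ directly, observing that the under-category is the poset $\{\ul{y}\supseteq\ul{x}_2\}$, which has an initial element. This is in fact the cleaner formulation: the paper's assertion that $\sft^{\subseteq}\circ\on{diag}$ is value-wise cofinal is, taken literally, problematic, since $\Ran(S)$ is discrete and the comma category at a point $(\ul{x}_1\subseteq\ul{x}_2)$ is the discrete set $\{\ul{y}\supseteq\ul{x}_2\}$, which is typically not weakly contractible. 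What actually holds, and what the paper surely intends, is homological cofinality of the composite, which follows exactly as you say: $\sft$ is universally homologically cofinal and $\delta$ is genuinely value-wise cofinal, so the composite $\delta\circ\sft$ is homologically cofinal. Your ``main obstacle'' --- the comparison $\on{C}^\cdot_c(\Ran,-)\circ\sft^!\simeq\on{C}^\cdot_c(\Ranu,-)$ and its $\Ran^{\subseteq}$ analogue --- is precisely the universal homological cofinality of $\sft$ and $\sft^{\subseteq}$, i.e.\ \cite[Theorem~4.6.2]{Ga4}, so it is not a gap but a citation you should add; and the remaining identification of the counit of $(\on{diag}_*,\on{diag}^!)$ with the cofinal-comparison map is the same formal compatibility that the paper's ``combining'' step implicitly uses.
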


\qed[\propref{p:Ran emb unital local}]

\sssec{Proof of \propref{p:diag cofinal}}

%
%

We need to show that the natural transformation 
\begin{equation} \label{e:diag t cofinal}
\on{C}^\cdot_c(\Ran,-)\circ \on{diag}^! \circ (\sft^{\subseteq})^! \to \on{C}^\cdot_c(\Ran^{\subseteq},-)\circ (\sft^{\subseteq})^!,
\end{equation} 
as functors
$$\IndCoh(\Ran^{\subseteq,\on{untl}})\rightrightarrows \Vect,$$
is an isomorphism. 

\medskip

First, as in \cite[Theorem 4.6.2]{Ga4}, one shows that the map $\sft^{\subseteq}$ is \emph{universally homologically cofinal}. Hence, 
the natural transformation
$$\on{C}^\cdot_c(\Ran^{\subseteq},-)\circ (\sft^{\subseteq})^!\to \on{C}^\cdot_c(\Ran^{\subseteq,\on{untl}},-),$$
as functors
$$\IndCoh(\Ran^{\subseteq,\on{untl}})\rightrightarrows \Vect,$$
is an isomorphism.

\medskip

Consider now the composition
$$\sft^{\subseteq}\circ \on{diag}:\Ran\to \Ran^{\subseteq,\on{untl}}.$$

It is easily seen to be value-wise cofinal. Hence, the natural transformation
$$\on{C}^\cdot_c(\Ran,-)\circ \on{diag}^! \circ (\sft^{\subseteq})^!\to \on{C}^\cdot_c(\Ran^{\subseteq,\on{untl}},-),$$
as functors
$$\IndCoh(\Ran^{\subseteq,\on{untl}})\rightrightarrows \Vect,$$
is an isomorphism.

\medskip

Combining, we obtain that \eqref{e:diag t cofinal} is also an isomorphism, as desired. 

\qed[\propref{p:diag cofinal}]

\section{Proof of \propref{p:Ran emb abs rel}}. \label{s:proof of Ran emb abs rel}

We will show that \propref{p:Ran emb abs rel} amounts to a parameterized version of 
\propref{p:Ran emb abs abs}, combined with a fully-faithfulness assertion regarding the
\emph{localization functor} $\Loc_\CY$. 

\ssec{Localization functor in the abstract setting}

\sssec{} \label{sss:good D stacks}

Let $\CY_\Ran$ satisfy the following conditions:

\begin{itemize}

\item The diagonal map $\CY_\Ran\to \CY_\Ran\underset{\Ran}\times  \CY_\Ran$ is affine. 
Note that this formally implies that the diagonal map of $\on{Sect}(X,\CY)$ is affine;

\medskip

\item For every $S\to \Ran$, the prestack $\CY_\Ran\underset{\Ran}\times S$ is 
\emph{passable} (see \cite[Chapter 3, Sect. 3.5.1]{GaRo2} for what this means);

\smallskip

\item The prestack $\on{Sect}(X,\CY)$ is passable.

\end{itemize}

\sssec{}

Note that the above conditions imply that the morphism 
$$s_{\CY,\Ran}:\on{Sect}(X,\CY)\times \Ran\to \CY_\Ran$$
behaves nicely with respect to push-forwards:

\medskip

For any prestack $\CW$ mapping to $\CY_\Ran$, and the base-changed map 
$$s'_{\CY,\Ran}:(\on{Sect}(X,\CY)\times \Ran)\underset{\CY_\Ran}\times \CW\to \CW,$$
the functor 
$$(s'_{\CY,\Ran})_*:\QCoh((\on{Sect}(X,\CY)\times \Ran)\underset{\CY_\Ran}\times \CW)\to \QCoh(\CW),$$
right adjoint to $(s'_{\CY,\Ran})^*$, commutes with colimits,
and satisfies the base change formula. This follows from \cite[Chapter 3, Proposition 3.5.3]{GaRo2}. 

\sssec{}

Consider the resulting pair of adjoint functors
$$(s_{\CY,\Ran})^*:\QCoh(\CY)_\Ran\rightleftarrows \QCoh(\on{Sect}(X,\CY))\otimes \IndCoh(\Ran):(s_{\CY,\Ran})_*.$$

\medskip

Denote
$$\Loc_\CY:=(\on{Id}\otimes \Gamma^{\IndCoh}(\Ran,-))\circ (s_{\CY,\Ran})^*, \quad \QCoh(\CY)_\Ran\to \QCoh(\on{Sect}(X,\CY)).$$

The right adjoint $\Loc_\CY^R$ of $\Loc_\CY$ is thus given by
$$(s_{\CY,\Ran})_*\circ (\on{Id}\otimes (\omega_\Ran\otimes -)), \quad \QCoh(\on{Sect}(X,\CY))\to \QCoh(\CY)_\Ran.$$

\sssec{}

We will prove:

\begin{prop} \label{p:loc unital}
Let $\CY$ be a D-prestack with an affine diagonal, satisfying the finiteness assumptions of \secref{sss:good D stacks} above. 
Then the natural transformation
$$\Loc_\CY\circ (s_{\CY,\Ran})_*\simeq 
(\on{Id}\otimes \Gamma^{\IndCoh}(\Ran,-))\circ (s_{\CY,\Ran})^*\circ (s_{\CY,\Ran})_*\to (\on{Id}\otimes \Gamma^{\IndCoh}(\Ran,-)),$$
arising from the counit of the $((s_{\CY,\Ran})^*,(s_{\CY,\Ran})_*)$-adjunction, is an isomorphism, when evaluated on the essential image of 
the functor 
$$\sft^!\otimes \on{Id}:\QCoh(\on{Sect}(X,\CY))\otimes \IndCoh(\Ranu)\to \QCoh(\on{Sect}(X,\CY))\otimes \IndCoh(\Ran).$$
\end{prop}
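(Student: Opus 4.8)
The strategy is the one used throughout \secref{s:proof of Ran emb abs abs}: rewrite the functor $(\on{Id}\otimes\Gamma^{\IndCoh}(\Ran,-))\circ (s_{\CY,\Ran})^*$ in terms of the ``insertion of the unit'' formalism over $\Ran^{\subseteq}$, and then reduce to the cofinality statement of \propref{p:diag cofinal}, which has already been proved. The heuristic underlying the statement is the familiar Beilinson--Drinfeld principle: $\CY_\Ran$ is built out of horizontal sections of $\CY$ over the formal discs $\cD_{\ul x}$, and in the colimit over all $\ul x\in\Ran$ computed by $\Gamma^{\IndCoh}(\Ran,-)$ these discs ``fill up'' $X$, so that $\CY_\Ran$ becomes $\on{Sect}(X,\CY)\times\Ran$ and $s_{\CY,\Ran}$ becomes an equivalence.

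First I would equip $\CY_\Ran$ with its tautological unital structure $\CY_\Ranu$ (coming from restriction of scalars along $\cD_{\ul x_1}\hookrightarrow\cD_{\ul x_2}$ for $\ul x_1\subseteq\ul x_2$), which promotes $\QCoh(\CY)_\Ran$ to $\QCoh(\CY)_\Ranu$ and produces the ``insertion of the unit'' functor to $\Ran^{\subseteq}$. Next I would establish the analog of \propref{p:fact hom}(a) in the present setting, with the affine D-scheme $\CZ$ (and $\fL^+_\nabla(\CZ)_\Ran$) replaced by the passable D-prestack $\CY$ (and $\CY_\Ran$): the functor $(\on{Id}\otimes\Gamma^{\IndCoh}(\Ran,-))\circ (s_{\CY,\Ran})^*$ identifies canonically with ``insert the unit, push forward to $\Ran^{\subseteq}$, push along $\on{pr}_{\on{small}}$'', together with the matching description of the unit of the $((s_{\CY,\Ran})^*,(s_{\CY,\Ran})_*)$-adjunction. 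The proof is the same factorization bookkeeping as in \cite[Proposition 4.6.5]{BD1}; the role previously played by the affineness of $\CZ$ is now played by the affine diagonal of $\CY$, and the passability conditions of \secref{sss:good D stacks} are exactly what guarantees that the (base changes of the) pushforward $(s_{\CY,\Ran})_*$ commute with colimits and satisfy the base-change and projection-formula identities used in the manipulation.

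With this in hand, I would argue as in \secref{sss:iQCoh unital} and \lemref{l:inherit}: since the object on which the natural transformation is tested has the form $(\sft^!\otimes\on{Id})(F')$, it carries a unital structure, so the composite ``insert the unit and push to $\Ran^{\subseteq}$'' factors through $\IndCoh(\Ranusubset)$ followed by $(\sft^{\subseteq})^!$. Using the base-change squares \eqref{e:diag sq} and the standard juggling between the unit and the counit of the $((s_{\CY,\Ran})^*,(s_{\CY,\Ran})_*)$-adjunction (as in the proof of \propref{p:Ran emb unital local}), the assertion of \propref{p:loc unital} then becomes precisely \propref{p:diag cofinal} — carrying the factor $\QCoh(\on{Sect}(X,\CY))$ along, which is harmless since $\Gamma^{\IndCoh}(\Ran,-)$ and the cofinality argument are compatible with this extra tensor factor — and we are done.

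The main obstacle I anticipate is the second step: \propref{p:fact hom} is proved only for affine D-schemes, so one genuinely has to re-run the factorization-homology computation for a passable D-prestack $\CY$. Concretely, the delicate points are (i) checking that the unital structure and the ``insertion of the unit'' diagrams make sense and commute at this level of generality, and (ii) verifying that each pushforward appearing in the argument — in particular the non-proper ones along $\CY_\Ran\to\Ran$ and along the base changes of $s_{\CY,\Ran}$ — is computed by the same formula as in the affine case, which is where the passability hypotheses (via \cite[Chapter 3, Proposition 3.5.3]{GaRo2}, already invoked in \secref{sss:good D stacks}) must be used. Once these are settled the rest is formal.
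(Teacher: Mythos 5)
Your proposal contains a genuine gap, and the gap lies exactly where you flag it, but the fix you envision is not the one the paper uses and, as you describe it, does not work.

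You propose to re-run the factorization--homology computation (the analogue of \propref{p:fact hom}(a)) with the affine D-scheme $\CZ$ replaced by the passable D-prestack $\CY$, and you suggest that ``the role previously played by the affineness of $\CZ$ is now played by the affine diagonal of $\CY$.'' These two hypotheses play quite different roles, and one cannot substitute for the other. Affineness of $\CZ$ makes $\fL^+_\nabla(\CZ)_\Ran\to\Ran$ a relative (affine) scheme, which is what the chiral-homology formalism of \cite[Proposition 4.6.5]{BD2} actually requires; an affine diagonal of $\CY$ does nothing to make $\CY_\Ran\to\Ran$ schematic (the key example $\CY=\on{pt}/\cG$ already has $\CY_\Ran$ a stack of jets into $\on{pt}/\cG$). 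So the version of \propref{p:fact hom} you would need is not obtained by the same bookkeeping with passability thrown in, and your proof stops being a proof at precisely that step.

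The paper's route avoids this entirely. First it reduces \propref{p:loc unital} to the ``fundamental'' case \propref{p:loc unital fund} — i.e., the unit object $\omega_\Ranu$ — by observing that $\Gamma^{\IndCoh}(\Ranu,-)$ is genuinely symmetric monoidal (because $\Delta:\Ranu\to\Ranu\times\Ranu$ is value-wise cofinal), so the whole functor is $\IndCoh(\Ranu)$-linear and the general unital object is the unit tensored with something that can be pulled out. Then, to prove \propref{p:loc unital fund}, it uses passability of $\on{Sect}(X,\CY)$ to identify the comonad $\Loc_\CY\circ\Loc_\CY^R$ with a kernel in $\QCoh(\on{Sect}(X,\CY)\times\on{Sect}(X,\CY))$, reducing the assertion to an equation of objects rather than of functors. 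Finally, and this is the point you miss, it applies the \emph{relative} version \propref{p:fact hom rel} of the factorization--homology computation — which is stated for an \emph{affine morphism} $f:\CZ_1\to\CZ_2$ of D-prestacks — to the diagonal $\Delta_\CY:\CY\to\CY\times\CY$, which is affine exactly because $\CY$ has affine diagonal. This is where the affine-diagonal hypothesis enters: not as a substitute for affineness of the target of $\Loc_\CY$, but as what makes the diagonal map itself an affine morphism so that the already-established machinery applies to it. Your proposal has neither the linearity reduction to $\omega_\Ranu$ nor the passage to the diagonal kernel; absent those, the cofinality argument \propref{p:diag cofinal} does not yet have anything correct to apply to.
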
 

This proposition will be proved in \secref{ss:proof loc unital}.

\sssec{}

Combined with the contractibility of the Ran space, i.e., the fact that the map
$$\Gamma^{\IndCoh}(\Ran,\omega_\Ran)\to k,$$
given by the counit of the $(\Gamma^{\IndCoh}(\Ran,-),\omega_\Ran\otimes -)$-adjunction, 
is an isomorphism, from \propref{p:loc unital} we obtain:

\begin{prop} \label{p:loc unital fund}
Let $\CY$ be a D-prestack with an affine diagonal, satisfying the finiteness assumptions of \secref{sss:good D stacks} above. 
Then the counit of the adjunction 
$$\Loc_\CY\circ \Loc^R_\CY \to \on{Id},$$
is an isomorphism.
\end{prop}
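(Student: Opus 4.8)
The plan is to deduce this directly from \propref{p:loc unital} together with the contractibility of the Ran space, by unwinding the counit of the $(\Loc_\CY,\Loc^R_\CY)$-adjunction. Recall that $\Loc_\CY = (\on{Id}\otimes\Gamma^{\IndCoh}(\Ran,-))\circ (s_{\CY,\Ran})^*$ and $\Loc^R_\CY = (s_{\CY,\Ran})_*\circ (\on{Id}\otimes(\omega_\Ran\otimes -))$. Since the counit of a composite adjunction is the composite of the whiskered counits, for $\CF\in\QCoh(\on{Sect}(X,\CY))$ the counit map $\Loc_\CY\circ\Loc^R_\CY(\CF)\to\CF$ factors as
$$\Loc_\CY\circ(s_{\CY,\Ran})_*\big(\CF\boxtimes\omega_\Ran\big)\longrightarrow (\on{Id}\otimes\Gamma^{\IndCoh}(\Ran,-))\big(\CF\boxtimes\omega_\Ran\big)\simeq \CF\otimes \on{C}^\cdot_c(\Ran,\omega_\Ran)\longrightarrow \CF,$$
where $\CF\boxtimes\omega_\Ran$ is shorthand for $(\on{Id}\otimes(\omega_\Ran\otimes -))(\CF)\in\QCoh(\on{Sect}(X,\CY))\otimes\IndCoh(\Ran)$, the first arrow is $(\on{Id}\otimes\Gamma^{\IndCoh}(\Ran,-))$ applied to the counit $\varepsilon$ of the $((s_{\CY,\Ran})^*,(s_{\CY,\Ran})_*)$-adjunction (whiskered on the right by $\on{Id}\otimes(\omega_\Ran\otimes -)$), and the last arrow is $\CF$ tensored with the counit $\on{C}^\cdot_c(\Ran,\omega_\Ran)\to k$.

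First I would observe that $\CF\boxtimes\omega_\Ran$ lies in the essential image of $\sft^!\otimes\on{Id}$: indeed $\omega_\Ran\simeq\sft^!(\omega_{\Ranu})$, since $!$-pullback carries the dualizing complex of the (categorical) unital Ran space to that of $\Ran$ — equivalently, the constant family $\CF\boxtimes\omega_\Ran$ carries the tautological unital structure whose transition maps are identities. Granting this, \propref{p:loc unital} applies and shows that the first arrow above is an isomorphism. The middle identification is the formal commutation of $\Gamma^{\IndCoh}(\Ran,-)$ with the $\QCoh(\on{Sect}(X,\CY))$-action, and the last arrow is an isomorphism by the contractibility of the Ran space, i.e.\ by the fact that $\on{C}^\cdot_c(\Ran,\omega_\Ran)\to k$ is an isomorphism. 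Composing, the counit $\Loc_\CY\circ\Loc^R_\CY\to\on{Id}$ is an isomorphism, as claimed.

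The substantive content has of course been pushed into \propref{p:loc unital}; within the present argument the only points requiring genuine care are the bookkeeping identifying the counit of the composite adjunction with the two-step composite displayed above, and the compatibility of the unital structure on $\omega_\Ran$ (the factorization $\omega_\Ran\simeq\sft^!\omega_{\Ranu}$) with the essential-image hypothesis in \propref{p:loc unital}. Both are straightforward once the higher-categorical data of \secref{sss:good D stacks} and of the unital Ran space have been laid out, so I expect no real obstacle here — the work of the section is entirely in \propref{p:loc unital} itself, and \propref{p:loc unital fund} is its formal corollary.
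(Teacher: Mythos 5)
Your deduction is logically correct as stated, and indeed the paper itself asserts exactly this implication in the expository text immediately preceding the proposition (``Combined with the contractibility of the Ran space \dots from \propref{p:loc unital} we obtain: \propref{p:loc unital fund}''). The unwinding of the composite counit into the $(s_{\CY,\Ran}^*,\,(s_{\CY,\Ran})_*)$-counit followed by the $(\Gamma^{\IndCoh}(\Ran,-),\,\omega_\Ran\otimes-)$-counit is right, and the observation that $\CF\boxtimes\omega_\Ran$ lies in the essential image of $\sft^!\otimes\on{Id}$ because $\omega_\Ran\simeq\sft^!\omega_{\Ranu}$ is the correct way to invoke \propref{p:loc unital}.

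However, this is not a proof of \propref{p:loc unital fund} within the logical architecture of the paper, and in fact it is circular with it. The remark just after \corref{c:loc unital bis} states explicitly that the paper's \emph{proof} goes the other way: \propref{p:loc unital fund} is established first, directly, and \propref{p:loc unital} is then deduced from it (see the opening lines of the proof of \propref{p:loc unital} in \secref{ss:proof loc unital}, which reduces that proposition to \propref{p:loc unital fund} via $\IndCoh(\Ranu)$-linearity of $\Loc_\CY$). If you take \propref{p:loc unital} as an input, you have no independent source for it. The paper's actual proof of \propref{p:loc unital fund} is a genuinely different argument: it uses passability of $\on{Sect}(X,\CY)$ to identify the identity endofunctor of $\QCoh(\on{Sect}(X,\CY))$ with the kernel $(\Delta_{\on{Sect}(X,\CY)})_*(\CO_{\on{Sect}(X,\CY)})$, rewrites the composition $\Loc_\CY\circ\Loc^R_\CY$ applied to this kernel as a push--pull along an explicit correspondence involving $\CY_\Ran\underset{\Ran}\times\CY_\Ran$, applies base change, and then invokes the factorization-homology identity \propref{p:fact hom rel} (a parametrized version of \propref{p:fact hom}(a)) and \corref{c:fact hom rel} evaluated on $(p_{\CY_\Ran})^*(\omega_\Ran)$. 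None of that appeals to \propref{p:loc unital}. So while your argument would be a perfectly good way to \emph{organize} the two statements if \propref{p:loc unital} had an independent proof, in the actual development the direct argument via the diagonal kernel and factorization homology is what must be supplied here, and your proposal has not provided it.
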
 

Note that \propref{p:loc unital fund} can be restated as:

\begin{cor} \label{c:loc unital bis}
Under the above assumptions on $\CY$, the functor
$$\Loc^R_\CY:\QCoh(\on{Sect}(X,\CY))\to \QCoh(\CY)_\Ran$$
is fully faithful.
\end{cor}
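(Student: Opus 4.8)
The plan rests on the observation that \corref{c:loc unital bis} is merely a reformulation of \propref{p:loc unital fund}: since $\Loc_\CY$ is by construction the left adjoint of $\Loc^R_\CY$, the functor $\Loc^R_\CY$ is fully faithful exactly when the counit $\Loc_\CY\circ \Loc^R_\CY\to \on{Id}$ is an isomorphism, which is the content of \propref{p:loc unital fund}. So it suffices to prove \propref{p:loc unital fund}, and for that I would reduce to \propref{p:loc unital} together with the homological contractibility of $\Ran$.

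Concretely, unwinding the definitions one has $\Loc^R_\CY(\CF)=(s_{\CY,\Ran})_*(\omega_\Ran\otimes \CF)$, where $\omega_\Ran\otimes -$ denotes the standard equivalence $\QCoh(\Ran)\simeq\IndCoh(\Ran)$ applied in the $\Ran$-direction, so that $\Loc_\CY\circ \Loc^R_\CY(\CF)=(\on{Id}\otimes \Gamma^{\IndCoh}(\Ran,-))\circ (s_{\CY,\Ran})^*\circ (s_{\CY,\Ran})_*(\omega_\Ran\otimes \CF)$. To invoke \propref{p:loc unital} I need $\omega_\Ran\otimes \CF$ to lie in the essential image of $\sft^!\otimes \on{Id}$; this holds because $\omega_\Ran\simeq \sft^!\,\omega_{\Ranu}$ ($!$-pullback preserves dualizing objects), i.e.\ $\omega_\Ran$ carries a canonical unital structure. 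Granting this, \propref{p:loc unital} gives $\Loc_\CY\circ \Loc^R_\CY(\CF)\simeq \CF\otimes \Gamma^{\IndCoh}(\Ran,\omega_\Ran)$, and the contractibility of $\Ran$ recalled above identifies the right-hand side with $\CF$; a short diagram chase confirms that under these identifications the map $\CF\to\CF$ so produced is the counit of the $(\Loc_\CY,\Loc^R_\CY)$-adjunction. This establishes \propref{p:loc unital fund}, hence \corref{c:loc unital bis}.

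The substance of the argument is therefore \propref{p:loc unital}, which I would prove in parallel with \propref{p:Ran emb abs abs} and its refinements. The idea: after replacing objects of $\QCoh(\CY)_\Ran$ by objects carrying a unital structure --- so that they are pulled back along $\sft$ from the unital Ran space --- one rewrites the composite $(\on{Id}\otimes \Gamma^{\IndCoh}(\Ran,-))\circ (s_{\CY,\Ran})^*\circ (s_{\CY,\Ran})_*$, following the factorization-homology computation of \secref{ss:fact hom} (a parameterized form of \cite[Proposition 4.6.5]{BD1}), as an iterated pushforward over the inclusion space $\Ran^{\subseteq}$ with the vacuum inserted along the complement. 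Under this rewriting the counit of the $((s_{\CY,\Ran})^*,(s_{\CY,\Ran})_*)$-adjunction becomes the map restoring the diagonal, and the desired isomorphism reduces to a cofinality statement for $\on{diag}\colon \Ran\to \Ran^{\subseteq}$ tested against objects coming from the unital version $\Ran^{\subseteq,\on{untl}}$, exactly in the spirit of \propref{p:diag cofinal}. The passability hypotheses of \secref{sss:good D stacks} intervene precisely to guarantee that $(s_{\CY,\Ran})_*$ commutes with colimits and satisfies base change, which legitimizes these manipulations.

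The main obstacle will not be any single homological input but the careful tracking of the unital structures: one must check both that $\omega_\Ran$ (and hence $\omega_\Ran\otimes\CF$) genuinely comes from $\IndCoh(\Ranu)$, and --- in the proof of \propref{p:loc unital} itself --- that the insertion-of-unit functor and the factorization-homology rewriting are compatible with the prescribed unital structure, which is the analogue of \lemref{l:inherit} and is precisely where the cofinality of $\sft$ (and of $\sft^{\subseteq}\circ \on{diag}$) gets used. Everything else is formal manipulation with adjunctions and base change, and the homological contractibility of $\Ran$ closes the argument.
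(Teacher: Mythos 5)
Your first step---reading \corref{c:loc unital bis} off from \propref{p:loc unital fund} via the standard criterion (counit an isomorphism $\Leftrightarrow$ right adjoint fully faithful)---matches the paper exactly. After that your route genuinely diverges from the paper's, and the difference is worth spelling out. You propose to deduce \propref{p:loc unital fund} from \propref{p:loc unital} (using that $\omega_\Ran\simeq\sft^!\omega_\Ranu$ and the contractibility of $\Ran$) and then to prove \propref{p:loc unital} directly, by a factorization-homology and cofinality argument in the spirit of the proof of \propref{p:Ran emb abs abs}. The implication you use is valid and is in fact recorded in the paper just before the corollary, but the remark that follows the corollary flags that the paper's actual proof goes the \emph{other} way around. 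The paper first proves \propref{p:loc unital fund} directly: passability of $\on{Sect}(X,\CY)$ lets one represent $\Loc_\CY\circ\Loc_\CY^R$ and $\on{Id}$ by kernels in $\QCoh(\on{Sect}(X,\CY)\times\on{Sect}(X,\CY))$; base change along the diagonal of $\CY_\Ran$ reduces the comparison of kernels to \propref{p:fact hom rel} applied to $f=\Delta\colon\CY\to\CY\times\CY$ and the single manifestly unital object $(p_{\CY_\Ran})^*(\omega_\Ran)$. It then deduces \propref{p:loc unital} from \propref{p:loc unital fund} by a purely formal $\IndCoh(\Ranu)$-linearity argument (strict monoidality of $\Gamma^{\IndCoh}(\Ranu,-)$, which is itself a cofinality statement for the diagonal of $\Ranu$). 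What the paper's order of proof buys you is a reduction to a single unital object; your order would instead require establishing \propref{p:loc unital} for all unital inputs in $\QCoh(\on{Sect}(X,\CY))\otimes\IndCoh(\Ran)$ directly, which is more work than necessary once one notices the linearity reduction. Your sketch also glosses over a genuine bookkeeping point: the unitality condition in \propref{p:loc unital} lives on the $\IndCoh(\Ran)$ tensor-factor, while in \propref{p:fact hom rel} it lives on $\QCoh(\CZ_1)_\Ran$; matching these up is precisely the bookkeeping the kernel-and-diagonal trick sidesteps. None of this makes your route wrong---both directions of implication hold, and the underlying inputs (factorization homology, cofinality of diagonals, passability) are the same---but the paper's decomposition into ``special case first, then formal linearity'' is the cleaner one, and you should be aware that your plan defers the kernel-identification argument (or an equivalent) rather than avoiding it.
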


\sssec{} \label{sss:proof Gamma spec ff}

Note that for $\CY$ being the constant D-stack with fiber $\on{pt}/\cG$, we have
$$\on{Sect}(X,\CY)\simeq \LS_\cG.$$

Furthermore, we have a tautological identification
$$\QCoh(\CY)_\Ran\simeq \Rep(\cG)_\Ran.$$

Under this identification, we have
$$\Loc_\CY\simeq \Loc_\cG^{\on{spec}}$$
and
$$\Loc_\CY^R\simeq \Gamma_\cG^{\on{spec}}.$$

Hence, \corref{c:loc unital bis} contains \propref{p:Gamma ff} as a particular case.

\begin{rem}
As far as the actual proof of \propref{p:loc unital} is concerned, we will first establish \propref{p:loc unital fund}, and then deduce
the general case stated in \propref{p:loc unital}.
\end{rem}

\ssec{Proof of \propref{p:Ran emb abs rel}}

In this subsection we will assume \propref{p:loc unital} and deduce \propref{p:Ran emb abs rel}. 

\sssec{}

Along with the category $\QCoh_{\on{co}}(\on{Sect}_\nabla(X^{\on{gen}},\CZ/\CY))_\Ran$,
we will consider its unital version $$\QCoh_{\on{co}}(\on{Sect}_\nabla(X^{\on{gen}},\CZ/\CY))_\Ranu,$$
equipped with a forgetful functor 
\begin{equation} \label{e:Ranu to Ran Z/Y glob}
\sft^!:\QCoh_{\on{co}}(\on{Sect}_\nabla(X^{\on{gen}},\CZ/\CY))_\Ranu\to
\QCoh_{\on{co}}(\on{Sect}_\nabla(X^{\on{gen}},\CZ/\CY))_\Ran.
\end{equation} 

\medskip

We will show that the natural transformation
\begin{equation} \label{e:factor nat transf compos}
(\pi_\Ran)^{\IndCoh_\Ran}_*\circ (s_{\CT,\Ran})^*\circ (s_{\CT,\Ran})_*\to (\pi_\Ran)^{\IndCoh_\Ran}_*,
\end{equation} 
induced by the counit of the $((s_{\CT,\Ran})^*,(s_{\CT,\Ran})_*)$-adjunction, is an isomorphism,
when evaluated on objects lying in the essential image of the forgetful functor \eqref{e:Ranu to Ran Z/Y glob}.

\medskip

This will imply the assertion of \propref{p:Ran emb abs rel}, since the functor \eqref{e:Ran emb abs rel} factors as
\begin{multline*} 
\IndCoh(\on{Sect}_\nabla(X,\CY)) \overset{\pi_\Ranu^!}\longrightarrow 
\IndCoh(\on{Sect}_\nabla(X^{\on{gen}},\CZ/\CY)_\Ranu) \to \\
\overset{\Psi_{\Sectna(X^{\on{gen}},\CZ/\CY)_\Ranu}}\longrightarrow
\QCoh_{\on{co}}(\on{Sect}_\nabla(X^{\on{gen}},\CZ/\CY))_\Ranu \overset{\sft^!}\to \QCoh_{\on{co}}(\on{Sect}_\nabla(X^{\on{gen}},\CZ/\CY))_\Ran.
\end{multline*}

\sssec{}

Recall the space 
$$\CT_{\on{Sect}_\nabla(X,\CY),\Ran}:=(\on{Sect}_\nabla(X,\CY)\times \Ran)\underset{\fL^+(\CY)_\Ran}\times \CT_\Ran.$$

\medskip

Consider the corresponding category
$$\QCoh_{\on{co}}(\CT_{\on{Sect}_\nabla(X,\CY)})_\Ran.$$

%
%
%
%

Let us denote by $\pi'_\Ran$ the projection
$$\CT_{\on{Sect}_\nabla(X,\CY),\Ran} \to 
\on{Sect}(X,\CY).$$

\medskip

A procedure similar to that defining the functor $(\pi_\Ran)^{\IndCoh_\Ran}_*$ gives rise to a functor
$$(\pi'_\Ran)^{\IndCoh_\Ran}_*:\QCoh_{\on{co}}(\CT_{\on{Sect}_\nabla(X,\CY)})_\Ran\to 
\QCoh(\on{Sect}(X,\CY)).$$

\sssec{}

Note that the morphism
$$s_{\CT,\Ran}: \on{Sect}_\nabla(X^{\on{gen}},\CZ/\CY)_\Ran\to  \CT_\Ran$$
factors as 
$$\on{Sect}_\nabla(X^{\on{gen}},\CZ/\CY)_\Ran \overset{s'_{\CT,\Ran}}\longrightarrow
\CT_{\on{Sect}_\nabla(X,\CY),\Ran}
\overset{s'_{\CY,\Ran}}\longrightarrow \CT_\Ran,$$
where $s'_{\CY,\Ran}$ is a base change of the map
$$s_{\CY,\Ran}:\on{Sect}(X,\CY)\times \Ran\to \fL^+_\nabla(\CY)_\Ran,$$
which appears in \propref{p:loc unital}. 

\medskip

Thus, we can factor $(s_{\CT,\Ran})_*$ as
$$\QCoh_{\on{co}}(\on{Sect}_\nabla(X^{\on{gen}},\CZ/\CY))_\Ran
\overset{(s'_{\CT,\Ran})_*}\longrightarrow \QCoh_{\on{co}}(\CT_{\on{Sect}_\nabla(X,\CY)})_\Ran
\overset{(s'_{\CY,\Ran})_*}\longrightarrow  \QCoh_{\on{co}}(\CT_\Ran)$$
and $(s_{\CT,\Ran})^*$ as 
$$
\QCoh_{\on{co}}(\CT_\Ran) \overset{(s'_{\CY,\Ran})^*}\longrightarrow
\QCoh_{\on{co}}(\CT_{\on{Sect}_\nabla(X,\CY)})_\Ran \overset{(s'_{\CT,\Ran})^*}\longrightarrow 
\QCoh_{\on{co}}(\on{Sect}_\nabla(X^{\on{gen}},\CZ/\CY))_\Ran.$$

\sssec{}

Consider the natural transformation 
\begin{multline} \label{e:factor nat transf 1}
(\pi'_\Ran)^{\IndCoh_\Ran}_*\circ (s'_{\CY,\Ran})^*\circ (s'_{\CY,\Ran})_*\circ (s'_{\CT,\Ran})_*\to \\
\to (\pi_\Ran)^{\IndCoh_\Ran}_*\circ (s'_{\CT,\Ran})^*\circ (s'_{\CY,\Ran})^*\circ (s'_{\CY,\Ran})_*\circ (s'_{\CT,\Ran})_*,
\end{multline}
arising from the \emph{unit} of the $((s'_{\CT,\Ran})^*,(s'_{\CT,\Ran})_*)$-adjunction.

\medskip

Its composition with \eqref{e:factor nat transf compos} is the natural transformation
\begin{equation} \label{e:factor nat transf 2}
(\pi'_\Ran)_*\circ (s'_{\CY,\Ran})^*\circ (s'_{\CY,\Ran})_*\circ (s'_{\CT,\Ran})_*\to 
(\pi'_\Ran)_*\circ (s'_{\CT,\Ran})_* \simeq (\pi_\Ran)_*,
\end{equation}
arising from the \emph{counit} of the $((s'_{\CY,\Ran})^*,(s'_{\CY,\Ran})_*)$-adjunction.

\medskip

We will show that both \eqref{e:factor nat transf 1} and \eqref{e:factor nat transf 2} are isomorphisms
when evaluated on objects lying in the essential image of the functor \eqref{e:Ranu to Ran Z/Y glob}.
This will imply that \eqref{e:factor nat transf compos} is also an isomorphism on such objects. 

\sssec{Verification that \eqref{e:factor nat transf 1} is an isomorphism}

Along with $\QCoh_{\on{co}}(\CT_{\on{Sect}_\nabla(X,\CY)})_\Ran$ we can consider its unital version
$$\QCoh_{\on{co}}(\CT_{\on{Sect}_\nabla(X,\CY)})_\Ranu.$$

\medskip

Note that the functors
$$(s'_{\CY,\Ran})^*,\,\,(s'_{\CY,\Ran})_*,\,\, (s'_{\CT,\Ran})^*,\,\, (s'_{\CT,\Ran})_*$$
upgrade to functors 
$$(s'_{\CY,\Ranu})^*,\,\,(s'_{\CY,\Ranu})_*,\,\, (s_{\CT,\Ranu})^*,\,\, (s_{\CT,\Ranu})_*$$
between the corresponding unital categories.

\medskip

Hence, the functor 
$$(s'_{\CY,\Ran})^*\circ (s'_{\CY,\Ran})_*\circ (s'_{\CT,\Ran})_*$$
sends objects that lie in the essential image of the functor \eqref{e:Ranu to Ran Z/Y glob} to objects that lie in the essential
image of the corresponding functor
\begin{equation} \label{e:Ranu to Ran Z/Y glob bis}
\sft^!:\QCoh_{\on{co}}(\CT_{\on{Sect}_\nabla(X,\CY)})_\Ranu\to \QCoh_{\on{co}}(\CT_{\on{Sect}_\nabla(X,\CY)})_\Ran.
\end{equation}

\medskip

We obtain that it is enough to show that the natural transformation
\begin{equation} \label{e:factor nat transf 1 red}
(\pi'_\Ran)^{\IndCoh_\Ran}_*\circ (s'_{\CT,\Ran})^*\to (\pi_\Ran)^{\IndCoh_\Ran}_*,
\end{equation}
arising from the \emph{counit} of the $((s'_{\CT,\Ran})^*,(s'_{\CT,\Ran})_*)$-adjunction, is an isomorphism when evaluated on 
objects lying in the essential image of the functor \eqref{e:Ranu to Ran Z/Y glob bis}. 

\medskip

However, thanks to the identification
$$s'_{\CT,\Ran}\simeq s_{\CZ_{\on{Sect}_\nabla(X,\CY)}}$$
(see \secref{sss:param vers}), the latter statement is a parameterized (by $\on{Sect}(X,\CY)$) version of \propref{p:Ran emb unital local} for the
relative affine scheme $\CZ_{\on{Sect}_\nabla(X,\CY)}$. 

\sssec{Verification that \eqref{e:factor nat transf 2} is an isomorphism}

As above, it is enough to show that the natural transformation
\begin{equation} \label{e:factor nat transf 2 red}
(\pi'_\Ran)^{\IndCoh_\Ran}_*\circ (s'_{\CY,\Ran})^*\circ (s'_{\CY,\Ran})_*\to (\pi'_\Ran)^{\IndCoh_\Ran}_*
\end{equation}
is an isomorphism when evaluated on objects lying in the essential image of the functor \eqref{e:Ranu to Ran Z/Y glob bis}. 

\medskip

However, by base change along the Cartesian diagram
$$
\CD
\CT_{\on{Sect}_\nabla(X,\CY),\Ran} @>{s'_{\CY,\Ran}}>>  \CT_\Ran \\
@VVV @VVV \\
\on{Sect}(X,\CY) \times \Ran @>{s_{\CY,\Ran}}>> \CY_\Ran, 
\endCD
$$
this reduces to the assertion of \propref{p:loc unital}. 

\qed[\propref{p:Ran emb abs rel}]

\ssec{Proof of \propref{p:loc unital}} \label{ss:proof loc unital}

\sssec{}

We will first reduce the assertion of \propref{p:loc unital} to that of \propref{p:loc unital fund}, and then prove \propref{p:loc unital fund}.

\medskip

We need to show that the natural transformation
$$(\on{Id}\otimes \Gamma^{\IndCoh}(\Ranu,-))\circ (s_\Ranu)^*\circ (s_\Ranu)_*\to (\on{Id}\otimes \Gamma^{\IndCoh}(\Ranu,-))$$
arising from the counit of the $((s_\Ranu)^*,(s_\Ranu)_*)$-adjunction, is an isomorphism. 

\sssec{}

First, note that the left-lax symmetric monoidal structure on the functor 
$$\Gamma^{\IndCoh}(\Ranu,-):\IndCoh(\Ranu)\to \Vect,$$
arising by adjunction from the monoidal structure on the functor $\omega_{\Ranu}\otimes -$, is actually 
strictly symmetric monoidal structure. Indeed, this follows from the fact that the diagonal morphism
$$\Ranu\to \Ranu\times \Ranu$$
is value-wise cofinal. 

\sssec{}

Similarly, we obtain that the functor
$$(\on{Id}\otimes \Gamma^{\IndCoh}(\Ranu,-))\circ (s_\Ranu)^*: \QCoh(\CY)_\Ran\to \QCoh(\on{Sect}(X,\CY))$$
is $\IndCoh(\Ranu)$-linear, where $\IndCoh(\Ranu)$ acts on $\QCoh(\on{Sect}(X,\CY))$ via the 
symmetric monoidal functor $\Gamma^{\IndCoh}(\Ranu,-)$. 

\sssec{}

This implies that we have a canonical isomorphism between the functor 
$$(\on{Id}\otimes \Gamma^{\IndCoh}(\Ranu,-))\circ (s_\Ranu)^*\circ (s_\Ranu)_*$$
and 
$$\left((\on{Id}\otimes \Gamma^{\IndCoh}(\Ranu,-))\circ (s_\Ranu)^*\circ (s_\Ranu)_*\circ (\on{Id}\otimes (\omega_\Ranu\otimes -))\right)\otimes \Gamma^{\IndCoh}(\Ranu,-),$$
and this isomorphism is compatible with the map of both to 
$$(\on{Id}\otimes \Gamma^{\IndCoh}(\Ranu,-)) \simeq
\left((\on{Id}\otimes \Gamma^{\IndCoh}(\Ranu,-)) \circ (\on{Id}\otimes (\omega_\Ranu\otimes -))\right) \otimes \Gamma^{\IndCoh}(\Ranu,-).$$

However, the latter map is an isomorphism, by \propref{p:loc unital fund}. 

\qed[\propref{p:loc unital}]

\ssec{Proof of \propref{p:loc unital fund}}

\sssec{}

Due to the assumption that $\on{Sect}(X,\CY)$
is passable, self-functors on $\QCoh(\on{Sect}(X,\CY))$ are in bijection with objects of $\QCoh(\on{Sect}(X,\CY)\times \on{Sect}(X,\CY))$, and the identity 
endofunctor is given by
$$(\Delta_{\on{Sect}(X,\CY)})_*(\CO_{\on{Sect}(X,\CY)}).$$

\medskip

Thus, we need to show that the map
\begin{equation} \label{e:diag comp}
(\on{Id}\otimes (\Loc_\CY\circ \Loc^R_\CY))((\Delta_{\on{Sect}(X,\CY)})_*(\CO_{\on{Sect}(X,\CY)}))\to (\Delta_{\on{Sect}(X,\CY)})_*(\CO_{\on{Sect}(X,\CY)})
\end{equation}
is an isomorphism.

\sssec{}

We rewrite the left-hand side in \eqref{e:diag comp} as the image of $\CO_{\on{Sect}(X,\CY)}$ along the push-pull along the diagram
$$
\CD
\on{Sect}(X,\CY) \\
@V{\Delta_{\on{Sect}(X,\CY)}}VV \\
\on{Sect}(X,\CY)\times \on{Sect}(X,\CY) @<<< \on{Sect}(X,\CY)\times \on{Sect}(X,\CY)\times \Ran \\
& & @VV{\on{id}\times s_{\CY,\Ran}}V \\
& & \on{Sect}(X,\CY)\times \CY_\Ran  @<{\on{id}\times s_{\CY,\Ran}}<< \on{Sect}(X,\CY)\times \on{Sect}(X,\CY)\times \Ran \\
& & & & @VVV \\
& & & & \on{Sect}(X,\CY)\times \on{Sect}(X,\CY).
\endCD
$$

By base change, we rewrite this as push-pull along

$$
\CD
& & \on{Sect}(X,\CY)\times \on{Sect}(X,\CY)\times \Ran @>>> \on{Sect}(X,\CY)\times \on{Sect}(X,\CY) \\
& & @VV{s_{\CY,\Ran}\underset{\Ran}\times s_{\CY,\Ran}}V \\
\CY_\Ran @>{\Delta_{\CY_\Ran/\Ran}}>> \CY_\Ran\underset{\Ran}\times \CY_\Ran
\endCD
$$
of the object
$$(p_{\CY_\Ran})^*(\omega_\Ran)\in \QCoh(\CY)_\Ran.$$

\sssec{}

Consider the following version of the set-up of \secref{ss:fact hom}. 

\medskip

Let $f:\CZ_1\to \CZ_2$ is an affine morphism of D-prestacks. 
Consider the following commutative (but non-Cartesian) diagram
$$
\CD
\on{Sect}(X,\CZ_1)\times \Ran @>{s_{\CZ_1,\Ran}}>> \CZ_{1,\Ran} \\
@V{\on{Sect}(f)\times \on{id}}VV @VV{f_\Ran}V \\
\on{Sect}(X,\CZ_2)\times \Ran @>>{s_{\CZ_2,\Ran}}> \CZ_{2,\Ran}.
\endCD
$$

The $((s_{\CZ_1,\Ran})^*,(s_{\CZ_1,\Ran})_*)$- and $((s_{\CZ_2,\Ran})^*,(s_{\CZ_2,\Ran})_*)$-adjunctions give
rise to natural transformation 
$$s_{\CZ_2,\Ran}^*\circ (f_\Ran)_*\to (\on{Sect}(f)_*\otimes \on{Id})_*\circ s_{\CZ_1,\Ran}^*$$
as functors
$$\QCoh(\CZ_1)_\Ran\rightrightarrows  \QCoh(\on{Sect}(X,\CZ_2))\otimes \IndCoh(\Ran).$$

\medskip

Consider the induced natural transformation
\begin{multline} \label{e:chir hom nat transf rel}
(\on{Id}\otimes \Gamma^{\IndCoh}(\Ran,-))\circ s_{\CZ_2,\Ran}^*\circ (f_\Ran)_*\to \\
\to (\on{Id}\otimes \Gamma^{\IndCoh}(\Ran,-))\circ (\on{Sect}(f)_*\otimes \on{Id})_*\circ s_{\CZ_1,\Ran}^*\simeq
\on{Sect}(f)_* \circ (\on{Id}\otimes \Gamma^{\IndCoh}(\Ran,-))\circ s_{\CZ_1,\Ran}^*
\end{multline}
as functors
$$\QCoh(\CZ_1)_\Ran\rightrightarrows  \QCoh(\on{Sect}(X,\CZ_2)).$$

\medskip

The following is a parametrized version of \propref{p:fact hom}(a): 

\begin{prop} \label{p:fact hom rel}
The natural transformation \eqref{e:chir hom nat transf rel} is an isomorphism, when evaluated on objects lying
in the essential image of the forgetful functor
$$\sft^!:\QCoh(\CZ_1)_\Ranu\to \QCoh(\CZ_1)_\Ran.$$
\end{prop}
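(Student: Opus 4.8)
The statement is the ``parametrized'' version of \propref{p:fact hom}(a) — parametrized by the section space $\on{Sect}(X,\CZ_2)$ and carrying an affine morphism $f:\CZ_1\to\CZ_2$ of unital $D$-prestacks — and the plan is to run the same Beilinson--Drinfeld-style ``localization of chiral homology'' argument that underlies \propref{p:fact hom}(a), now keeping the extra base and the map $f$ along for the ride.

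First I would reduce \eqref{e:chir hom nat transf rel} to an assertion about a single map of prestacks. Since $f$ is affine, $f_\Ran:\CZ_{1,\Ran}\to\CZ_{2,\Ran}$ is schematic affine, so $(f_\Ran)_*$ is continuous and satisfies base change; moreover $(f_\Ran)_*$ commutes with the forgetful functors $\sft^!$ from the unital categories, since for the \emph{existence} of the unital structure maps on $(f_\Ran)_*\CF$ one only needs functoriality of restriction of horizontal sections, not that the squares involved be Cartesian. Applying base change along the tautologically Cartesian square obtained by pulling $f_\Ran$ back along $s_{\CZ_2,\Ran}$, and writing $P:=(\on{Sect}(X,\CZ_2)\times\Ran)\underset{\CZ_{2,\Ran}}\times\CZ_{1,\Ran}$ with its projections $\tilde f:P\to\on{Sect}(X,\CZ_2)\times\Ran$ and $\tilde s:P\to\CZ_{1,\Ran}$ and with the canonical map $g:\on{Sect}(X,\CZ_1)\times\Ran\to P$ (so that $\tilde s\circ g=s_{\CZ_1,\Ran}$ and $\tilde f\circ g=\on{Sect}(f)\times\on{id}$), one identifies the left-hand side of \eqref{e:chir hom nat transf rel} with $(\on{pr}\circ\tilde f)_*\circ\tilde s^*$ and its right-hand side with $(\on{pr}\circ\tilde f)_*\circ g_*\circ g^*\circ\tilde s^*$, the natural transformation becoming the one induced by the unit $\on{id}\to g_*g^*$. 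Thus it suffices to prove that $(\on{pr}\circ\tilde f)_*$ carries the map $\tilde s^*\CF\to g_*g^*\tilde s^*\CF$ to an isomorphism for every $\CF$ in the essential image of $\sft^!:\QCoh(\CZ_1)_\Ranu\to\QCoh(\CZ_1)_\Ran$.

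Next I would run the ``insertion of the unit'' argument. One observes that, over $\on{Sect}(X,\CZ_2)$, the space $P$ is the relative horizontal loop space of an affine family of $D$-schemes — namely $\CZ_1$ pulled back along the universal map $\on{Sect}(X,\CZ_2)\times X_\dr\to\CZ_2$ — which is exactly the type of datum appearing in \secref{sss:param vers}; and $\tilde s$ is compatible with unital structures, so $\tilde s^*\CF$ is unital on $P$. Passing to $\Ranu$ and to the arrow-space $\Ran^\subseteq$, one then applies the relative version of \propref{p:fact hom}(a) over $\on{Sect}(X,\CZ_2)$ (proved by the same argument) to express $(\on{pr}\circ\tilde f)_*\circ\tilde s^*$, on unital objects, through the insertion-of-the-unit procedure: at $\ul x_1\subseteq\ul x_2$ one starts with $\CF$ at $\ul x_1$, inserts the unit section on the extra disc $\cD_{\ul x_2\setminus\ul x_1}$ to land on jets at $\ul x_2$, pulls back along the evaluation map to $\on{Sect}(X,\CZ_2)$, and integrates over $\Ran^\subseteq$. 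Since $f$ carries the unit of $\CZ_1$ to that of $\CZ_2$ (the compatibility-with-units clause of \secref{sss:param vers}), ``insert the $\CZ_1$-unit, then apply $f$'' agrees with ``insert the $\CZ_2$-unit'', so the two insertion-of-unit diagrams — the one computing $(\on{pr}\circ\tilde f)_*\circ g_*\circ g^*\circ\tilde s^*$, equivalently $\on{Sect}(f)_*$ applied to the $\CZ_1$-diagram, and the one computing $(\on{pr}\circ\tilde f)_*\circ\tilde s^*$ — are canonically identified term by term; passing to the colimit over $\Ran^\subseteq$ gives the asserted isomorphism. Here, as in \propref{p:fact hom} and as in \cite{Ga4}, one uses that $\sft:\Ran\to\Ranu$ and the diagonal $\Ran\to\Ran^\subseteq$ are homologically cofinal, and the $\IndCoh$-versus-$\QCoh_{\on{co}}$ twists and $\IndCoh(\Ran)$-linear structures are handled exactly as in the proofs of \propref{p:fact hom} and \propref{p:loc unital}.

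The main obstacle is precisely that the square in the statement is genuinely \emph{not} Cartesian: a horizontal $\CZ_1$-section over a disc $\cD_{\ul x_2}$ is strictly more data than a horizontal $\CZ_2$-section over $\cD_{\ul x_2}$ together with a $\CZ_1$-lift over a smaller disc $\cD_{\ul x_1}$, the $\CZ_1$-structure over the extra discs being missing. So the proposition cannot be obtained by naive base change; its content is that the chiral-homology integral over $\Ran$ ``fills in'' this discrepancy using the compatible unit sections on the missing discs. Making this precise — tracking the unital structures on the categories $\QCoh_{\on{co}}(-)_\Ranu$ \emph{relative to the base} $\on{Sect}(X,\CZ_2)$, and verifying that $(f_\Ran)_*$ intertwines them once everything has been rewritten at the $\Ran^\subseteq$ level — is the delicate step, and it is there that the finiteness and unitality hypotheses of \secref{sss:good D stacks} and \secref{sss:param vers} are used.
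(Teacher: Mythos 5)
The paper states this proposition without proof (flagging it only as "a parametrized version of \propref{p:fact hom}(a)", which is itself cited as a variant with parameters of \cite[Proposition 4.6.5]{BD2}), so your attempt fills a gap the paper leaves implicit, and I can only assess it on its own merits. Your structural reduction is correct and is the right starting point: base change along the Cartesian square for $f_\Ran$ produces $P$ with its projections $\tilde f,\tilde s$ and the evaluation-like map $g$, correctly identifies the natural transformation \eqref{e:chir hom nat transf rel} with $(\on{pr}\circ\tilde f)_*$ applied to the unit $\tilde s^*\CF\to g_*g^*\tilde s^*\CF$, and the key observation — that $P$ is the relative jet space over $\on{Sect}(X,\CZ_2)$ of the family $\CZ_1\underset{\CZ_2}\times(\on{Sect}(X,\CZ_2)\times X_{\dr})$, that $g$ is its evaluation map, and that $\tilde s^*\CF$ inherits a unital structure on $P$ — is exactly the right input.

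However, the passage "Since $f$ carries the unit of $\CZ_1$ to that of $\CZ_2$ (...) `insert the $\CZ_1$-unit, then apply $f$' agrees with `insert the $\CZ_2$-unit'" contains two errors, and they are load-bearing as written. The appeal to "the compatibility-with-units clause of \secref{sss:param vers}" is misplaced: those hypotheses concern the specific data $(\CZ,\CY,\CT,\CZ_{\on{Sect}_\nabla(X,\CY)})$, whereas \propref{p:fact hom rel} is stated for an \emph{arbitrary} affine $f:\CZ_1\to\CZ_2$ with no such hypothesis. More seriously, the asserted commutation of insertion with $(f_{\ul x_i})_*$ is false, for exactly the reason you identify as "the main obstacle": at a fixed $\ul x_1\subseteq\ul x_2$, the square with verticals $f_{\ul x_i}:\fL^+_\nabla(\CZ_1)_{\ul x_i}\to\fL^+_\nabla(\CZ_2)_{\ul x_i}$ and horizontal restriction maps is not Cartesian, so the Beck--Chevalley map $\on{restr}_{\CZ_2}^*\circ(f_{\ul x_1})_*\to(f_{\ul x_2})_*\circ\on{restr}_{\CZ_1}^*$ is not an isomorphism, and the two diagrams are not "identified term by term". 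Fortunately neither claim is needed: once $\tilde s^*\CF$ is unital on $P$, the relative version of \propref{p:fact hom}(a) over $\on{Sect}(X,\CZ_2)$, applied to $P$ \emph{alone} (with no $\CZ_1$-versus-$\CZ_2$ comparison), identifies the unit map with the diagonal-to-full map on a single $\Ran^{\subseteq}$-picture, and the isomorphism is then a consequence of the cofinality argument of \propref{p:diag cofinal} (using that the unitality of $\tilde s^*\CF$ lets the relevant $\Ran^{\subseteq}$-object factor through $\Ran^{\subseteq,\on{untl}}$). You should delete the "$f$ respects units" step and let the argument run directly on $P$.
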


\begin{cor} \label{c:fact hom rel}
The natural transformation \eqref{e:chir hom nat transf rel} is an isomorphism, when evaluated on the object
$$(p_{\CZ_{1,\Ran}})^*(\omega_\Ran)\in \QCoh(\CZ_1)_\Ran.$$
\end{cor}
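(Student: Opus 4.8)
The plan is to deduce Corollary \ref{c:fact hom rel} immediately from Proposition \ref{p:fact hom rel}, by checking that the object $(p_{\CZ_{1,\Ran}})^*(\omega_\Ran)$ lies in the essential image of the forgetful functor $\sft^!:\QCoh(\CZ_1)_\Ranu\to \QCoh(\CZ_1)_\Ran$.

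First I would note that $\omega_\Ran\in\IndCoh(\Ran)$ is pulled back from the unital Ran space: since $\sft:\Ran\to\Ranu$ is a map into a categorical prestack and the functor $\sft^!$ is by construction the $!$-pullback, compatible with $!$-pullbacks along affine test schemes, there is a tautological identification $\sft^!(\omega_\Ranu)\simeq \omega_\Ran$, where $\omega_\Ranu\in\IndCoh(\Ranu)$ denotes the dualizing object of the categorical prestack $\Ranu$. (Equivalently, one can appeal to the fact, already used in the proof of \propref{p:Ran emb abs abs}, that $\omega$ lies in the image of $\sft^!$ whenever the unital transition maps are proper, as is the case for $\Ranu$, where they are closed embeddings.)

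Next, the projection $p_{\CZ_{1,\Ran}}:\CZ_{1,\Ran}\to\Ran$ is the base change along $\sft$ of $p_{\CZ_{1,\Ranu}}:\CZ_{1,\Ranu}\to\Ranu$, and the resulting square is value-wise Cartesian. Hence the forgetful functors $\sft^!$ on the two sides intertwine the $*$-pullback functors $(p_{\CZ_{1,-}})^*$ that produce objects of $\QCoh(\CZ_1)_{-}$ out of objects of $\IndCoh(-)$; that is, there is a canonical isomorphism $\sft^!\circ (p_{\CZ_{1,\Ranu}})^*\simeq (p_{\CZ_{1,\Ran}})^*\circ \sft^!$. Combining this with the previous paragraph gives
$$(p_{\CZ_{1,\Ran}})^*(\omega_\Ran)\simeq (p_{\CZ_{1,\Ran}})^*\bigl(\sft^!(\omega_\Ranu)\bigr)\simeq \sft^!\bigl((p_{\CZ_{1,\Ranu}})^*(\omega_\Ranu)\bigr),$$
which exhibits $(p_{\CZ_{1,\Ran}})^*(\omega_\Ran)$ in the essential image of $\sft^!:\QCoh(\CZ_1)_\Ranu\to\QCoh(\CZ_1)_\Ran$, so that \propref{p:fact hom rel} applies and yields the claim. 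The only point that needs unwinding — and the single minor obstacle here — is the compatibility of $\sft^!$ with $(p_{\CZ_{1,-}})^*$ for the limit categories of \secref{sss:iQCoh Ran} and \secref{sss:iQCoh unital}: this is checked by tracing through the value-wise Cartesian property of the square defining $\CZ_{1,\Ranu}$ over $\Ranu$, using that $(p_{\CZ_{1,-}})^*$ is defined test-scheme-wise by pullback along $\CZ_{1,-}\underset{-}\times S\to S$.
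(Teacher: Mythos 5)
Your proof is correct and is exactly the (implicit) argument the paper has in mind: the Corollary is stated as a consequence of \propref{p:fact hom rel} with no separate proof, and the only thing to check is that $(p_{\CZ_{1,\Ran}})^*(\omega_\Ran)$ lies in the essential image of $\sft^!$, which you do by lifting $\omega_\Ran$ to $\omega_{\Ranu}$ and observing that the $*$-pullback of the structure sheaf construction is compatible with the unital transition maps.

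One small presentational caveat: the unital structure on $\QCoh(\fL^+_\nabla(\CZ_1))_\Ran$ is defined in \secref{sss:add unit} via $*$-pullback along the \emph{restriction} maps $\fL^+_\nabla(\CZ_1)_{\ul{x}_2}\to \fL^+_\nabla(\CZ_1)_{\ul{x}_1}$, which go in the direction opposite to what a value-wise co-Cartesian fibration over $\Ranu$ would produce. So the phrase ``$p_{\CZ_{1,\Ran}}$ is the base change along $\sft$ of $p_{\CZ_{1,\Ranu}}$, and the resulting square is value-wise Cartesian'' doesn't literally describe the mechanism; the intertwining $\sft^!\circ (p_{\CZ_{1,\Ranu}})^*\simeq (p_{\CZ_{1,\Ran}})^*\circ \sft^!$ instead holds simply because $*$-pullback along any affine restriction map sends $\CO$ to $\CO$, so the object $\CO\boxtimes\omega_S$ is tautologically preserved by the transition functors of \eqref{e:local arr}. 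This is, in substance, what you flag as ``the single minor obstacle'' and resolve; I only note that invoking Cartesian base change isn't what makes it go, and one could simply check the lift by hand as just described.
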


\sssec{}

We will apply the above to
$$\CZ_1=\CY,\,\, \CZ_2=\CY\times \CY$$
and $f$ being the diagonal map.

\medskip

Unwinding the definitions, we obtain that the map
\begin{multline}
(\on{Id}\otimes \Gamma^{\IndCoh}(\Ran,-))\circ (s_{\CY,\Ran}\underset{\Ran}\times s_{\CY,\Ran})^*\circ (\Delta_{\CY_\Ran})_*((p_{\CY_\Ran})^*(\omega_\Ran))
\overset{\text{\eqref{e:chir hom nat transf rel}}}\longrightarrow \\
\to (\Delta_{\on{Sect}(X,\CY)})_* \circ (\on{Id}\otimes \Gamma^{\IndCoh}(\Ran,-))\circ (s_{\CY,\Ran})^*((p_{\CY_\Ran})^*(\omega_\Ran)) 
\overset{\sim}\to (\Delta_{\on{Sect}(X,\CY)})_*(\CO_{\on{Sect}(X,\CY)})
\end{multline}
identifies with the map 
$$(\on{Id}\otimes \Gamma^{\IndCoh}(\Ran,-))\circ (s_{\CY,\Ran}\underset{\Ran}\times s_{\CY,\Ran})^*\circ (\Delta_{\CY_\Ran})_*((p_{\CY_\Ran})^*(\omega_\Ran))\to
(\Delta_{\on{Sect}(X,\CY)})_*(\CO_{\on{Sect}(X,\CY)})$$
of \eqref{e:diag comp}. 

\medskip

Hence, the latter map is an isomorphism by \corref{c:fact hom rel}. 

\qed[\propref{p:loc unital fund}]

\newpage


\begin{thebibliography}{99}

\bibitem[Ari]{Ari} D.~Arinkin, {\it Irreducible connections admit generic oper structures}, arXiv:1602.08989.


\bibitem[AGKRRV1]{AGKRRV1} D.~Arinkin, D.Gaitsgory, D.~Kazhdan, S.~Raskin, N.~Rozenblyum and Y.~Varshavsky, \newline
{\em The stack of local systems with restricted variation and geometric Langlands theory with nilpotent singular support},
arXiv:2010.01906. 

\bibitem[AGKRRV2]{AGKRRV2} D.~Arinkin, D.Gaitsgory, D.~Kazhdan, S.~Raskin, N.~Rozenblyum and Y.~Varshavsky, \newline
{\em Duality for automorphic sheaves with nilpotent singular support}, 
arXiv:2012.07665.

\bibitem[BD1]{BD1} A.~Beilinson and V.~Drinfeld, {\it Quantization of Hitchin's integrable system and Hecke eigensheaves}, 
available at http://people.math.harvard.edu/$\sim$gaitsgde/grad$\underline{\text{\,\,\,}}$2009/

\bibitem[BD2]{BD2} A.~Beilinson and V.~Drinfeld, {\it Chiral algebras}, AMS Colloquium Publications {\bf 51}, AMS (2004). 

\bibitem[BKS]{BKS} D.~Beraldo, D.~Kazhdan and T.~Schlank, {\it Contractibility of the space of generic opers for classical groups}, \hfill \newline
 arXiv:1808.05801.

\bibitem[Be1]{Be1} D.~Beraldo, {\it On the extended Whittaker category}, Selecta Mathematica {\bf 25} (2019). 

\bibitem[Be2]{Be2} D.~Beraldo, {\it On the geometric Ramanujan conjecture}, arXiv:2103.17211.

\bibitem[BG]{BG} A.~Braverman and D.~Gaitsgory, {\it Deformations of local systems and Eisenstein series}, \newline
joint with A.~Braverman, GAFA {\bf 17} (2008), 1788--1850.








\bibitem[DG]{DG} V.~Drinfeld and D.~Gaitsgory, {\it Compact generation of the category of D-modules on the stack of G-bundles on a curve}
Cambridge Math Journal, {\bf 3} (2015), 19--125. 

\bibitem[FR]{FR} J.~Faergeman and S.~Raskin, {\it Non-vanishing of geometric Whittaker coefficients for reductive groups}, \newline
arXiv:2207.02955.





\bibitem[FGV]{FGV} E.~Frenkel, D.~Gaitsgory and K.~Vilonen, {\it On the geometric Langlands conjecture},
Jour. Amer. Math. Soc. {\bf 15} (2002), 367--417. 



\bibitem[Ga1]{Ga1} D.~Gaitsgory, {\it A ``strange" functional equation for Eisenstein series and Verdier
duality on the moduli stack of bundles}, Annales Scientifiques de l'ENS {\bf 50} (2017), 1123--1162. 

\bibitem[Ga2]{Ga2} D.~Gaitsgory, {\it On a vanishing conjecture appearing in the geometric Langlands correspondence}, 
Ann. Math. {\bf 160} (2004), 617--682.

\bibitem[Ga3]{Ga3} D.~Gaitsgory, {\it Sheaves of categories and the notion of 1-affineness}, Contemporary Mathematics {\bf 643}  (2015), 1--99.

\bibitem[Ga4]{Ga4} D.~Gaitsgory, {\it The Atiyah-Bott formula for the cohomology of $\Bun_G$}, arXiv:1505.02331. 




%


%

\bibitem[GaRo1]{GaRo1}  D.~Gaitsgory and N.~Rozenblyum, {\it Crystals and D-modules}, PAMQ {\bf 10}, (2014), 57--155.

\bibitem[GaRo2]{GaRo2}  D.~Gaitsgory and N.~Rozenblyum, {\it A study in derived algebraic geometry, Vol. 1: Correspondences and Duality}, 
Mathematical surveys and monographs {\bf 221} (2017), AMS, Providence, RI.

\bibitem[GLC1]{GLC1} D.~Gaitsgory and S.~Raskin, {\it Proof of the geometric Langlands conjecture I: construction of the functor}, \hfill\newline
arXiv:2405.03599

\bibitem[GLC2]{GLC2} D.~Arinkin, D.~Beraldo, D.~Gaitsgory, J.~Faergeman, L.~Chen, K.~Lin, S.~Raskin and N.~Rozenblyum, \newline
{\it Proof of the geometric Langlands conjecture II: Kac-Moody localization and the FLE}, arXiv:2405.03648

\bibitem[GLC3]{GLC3} J.~Campbell, L.~Chen, D.~Gaitsgory and S.~Raskin, {\it Proof of the geometric Langlands conjecture III: \newline
compatibility with parabolic induction}, arXiv:2409.07051




%
%


\bibitem[Lin]{Lin} K.~Lin, {\it Poincar\'e series and miraculous duality}, arXiv:2211.05282.
%
%
%
%
\bibitem[Ro]{Ro} N.~Rozenblyum, {\it Connections on moduli spaces and infinitesimal Hecke modifications}, arXiv:2108.07745. 
%




\end{thebibliography}
\end{document}